\title{Differential Restriction Categories}
\author{J.R.B. Cockett\thanks{Partially supported by NSERC, Canada.}, G.S.H. Cruttwell\thanks{Partially supported by PIMS, Calgary.}, 
            and J. D. Gallagher 
\\ Department of Computer Science,
University of Calgary,\\ Alberta, Canada}
\newtheorem{observation}{Remark}[section]
\newtheorem{lemma}[observation]{Lemma}  
\newtheorem{theorem}[observation]{Theorem}
\newtheorem{definition}[observation]{Definition}
\newtheorem{example}[observation]{Example}
\newtheorem{proposition}[observation]{Proposition} 
\newtheorem{corollary}[observation]{Corollary} 
\newcommand{\proof}{\noindent{\sc Proof:}\xspace}
\def\endproof{~\hfill$\Box$\vskip 10pt}
\newcommand{\x}{\times}
\newcommand{\<}{\langle}
\renewcommand{\>}{\rangle}
\newcommand{\rfour}{{\bf [R.4]}}
\newcommand{\dr}[1]{{\bf [DR.{#1}]}}
\newcommand{\tup}[5]{\ensuremath{\left(\overrightarrow{x}^{#3} \mapsto \left({#1}_i,{#2}_i\right)_{i=1}^{#4},{\mathcal {#5}}\right)}}
\newcommand{\tupc}[5]{\ensuremath{\left(\overrightarrow{x}^{#3} \mapsto \left({#1}_i,{#2}_i\right)_{i=1}^{#4},\left\langle {#5}\right\rangle\right)}}
\newcommand{\stup}[5]{\ensuremath{\left(\overrightarrow{x}^{#3} \mapsto \left({#1},{#2}\right)_{i=1}^{#4},{\mathcal {#5}}\right)}}
\newcommand{\stupc}[5]{\ensuremath{\left(\overrightarrow{x}^{#3} \mapsto \left({#1},{#2}\right)_{i=1}^{#4},\left\langle {#5}\right\rangle\right)}}
\newcommand{\fr}{\ensuremath{{\mathfrak f}{\mathfrak r}}}
\newcommand{\rzro}[2]{\ensuremath{\stupc{1}{1}{{#1}}{{#2}}{0}}}
\newcommand{\Par}{{\sf Par}\xspace}
\newcommand{\rst}[1]{\ensuremath{\overline{#1}}}  
\newcommand{\rs}[1]{\ensuremath{\overline{#1}\,}}  
\newcommand{\A}{\ensuremath{\mathbb A}\xspace}
\newcommand{\N}{\ensuremath{\mathbb N}\xspace}
\newcommand{\R}{\ensuremath{\mathbb R}\xspace}
\newcommand{\X}{\ensuremath{\mathbb X}\xspace}
\newcommand{\Y}{\ensuremath{\mathbb Y}\xspace}
\newcommand{\Z}{\ensuremath{\mathbb Z}\xspace}
\newcommand{\da}{\ensuremath{\downarrow} \! \!}
\newcommand{\jn}{\ensuremath{\mathbf{Jn}}}
\newcommand{\cl}{\ensuremath{\mathbf{Cl}}}
\newcommand{\op}{\mbox{\scriptsize op}}
\newcommand\nats{\hbox{$I \kern - .38em N$}} 
\newcommand\ints{\hbox{$Z \kern - .65em Z$}} 
\newdimen\w@dth
\def\setw@dth#1#2{\setbox\z@\hbox{\scriptsize $#1$}\w@dth=\wd\z@
\setbox\@ne\hbox{\scriptsize $#2$}\ifnum\w@dth<\wd\@ne \w@dth=\wd\@ne \fi
\advance\w@dth by 1.2em}
\def\t@^#1_#2{\allowbreak\def\n@one{#1}\def\n@two{#2}\mathrel
{\setw@dth{#1}{#2}
\mathop{\hbox to \w@dth{\rightarrowfill}}\limits
\ifx\n@one\empty\else ^{\box\z@}\fi
\ifx\n@two\empty\else _{\box\@ne}\fi}}
\def\t@@^#1{\@ifnextchar_ {\t@^{#1}}{\t@^{#1}_{}}}
\def\t@left^#1_#2{\def\n@one{#1}\def\n@two{#2}\mathrel{\setw@dth{#1}{#2}
\mathop{\hbox to \w@dth{\leftarrowfill}}\limits
\ifx\n@one\empty\else ^{\box\z@}\fi
\ifx\n@two\empty\else _{\box\@ne}\fi}}
\def\t@@left^#1{\@ifnextchar_ {\t@left^{#1}}{\t@left^{#1}_{}}}
\def\two@^#1_#2{\def\n@one{#1}\def\n@two{#2}\mathrel{\setw@dth{#1}{#2}
\mathop{\vcenter{\hbox to \w@dth{\rightarrowfill}\kern-1.7ex
                 \hbox to \w@dth{\rightarrowfill}}%
       }\limits
\ifx\n@one\empty\else ^{\box\z@}\fi
\ifx\n@two\empty\else _{\box\@ne}\fi}}
\def\tw@@^#1{\@ifnextchar_ {\two@^{#1}}{\two@^{#1}_{}}}
\def\tofr@^#1_#2{\def\n@one{#1}\def\n@two{#2}\mathrel{\setw@dth{#1}{#2}
\mathop{\vcenter{\hbox to \w@dth{\rightarrowfill}\kern-1.7ex
                 \hbox to \w@dth{\leftarrowfill}}%
       }\limits
\ifx\n@one\empty\else ^{\box\z@}\fi
\ifx\n@two\empty\else _{\box\@ne}\fi}}
\def\t@fr@^#1{\@ifnextchar_ {\tofr@^{#1}}{\tofr@^{#1}_{}}}
\newdimen\W@dth
\def\setW@dth#1#2{\setbox\z@\hbox{$#1$}\W@dth=\wd\z@
\setbox\@ne\hbox{$#2$}\ifnum\W@dth<\wd\@ne \W@dth=\wd\@ne \fi
\advance\W@dth by 1.2em}
\def\T@^#1_#2{\allowbreak\def\N@one{#1}\def\N@two{#2}\mathrel
{\setW@dth{#1}{#2}
\mathop{\hbox to \W@dth{\rightarrowfill}}\limits
\ifx\N@one\empty\else ^{\box\z@}\fi
\ifx\N@two\empty\else _{\box\@ne}\fi}}
\def\T@@^#1{\@ifnextchar_ {\T@^{#1}}{\T@^{#1}_{}}}
\def\T@left^#1_#2{\def\N@one{#1}\def\N@two{#2}\mathrel{\setW@dth{#1}{#2}
\mathop{\hbox to \W@dth{\leftarrowfill}}\limits
\ifx\N@one\empty\else ^{\box\z@}\fi
\ifx\N@two\empty\else _{\box\@ne}\fi}}
\def\T@@left^#1{\@ifnextchar_ {\T@left^{#1}}{\T@left^{#1}_{}}}
\def\Tofr@^#1_#2{\def\N@one{#1}\def\N@two{#2}\mathrel{\setW@dth{#1}{#2}
\mathop{\vcenter{\hbox to \W@dth{\rightarrowfill}\kern-1.7ex
                 \hbox to \W@dth{\leftarrowfill}}%
       }\limits
\ifx\N@one\empty\else ^{\box\z@}\fi
\ifx\N@two\empty\else _{\box\@ne}\fi}}
\def\T@fr@^#1{\@ifnextchar_ {\Tofr@^{#1}}{\Tofr@^{#1}_{}}}
\def\Two@^#1_#2{\def\N@one{#1}\def\N@two{#2}\mathrel{\setW@dth{#1}{#2}
\mathop{\vcenter{\hbox to \W@dth{\rightarrowfill}\kern-1.7ex
                 \hbox to \W@dth{\rightarrowfill}}%
       }\limits
\ifx\N@one\empty\else ^{\box\z@}\fi
\ifx\N@two\empty\else _{\box\@ne}\fi}}
\def\Tw@@^#1{\@ifnextchar_ {\Two@^{#1}}{\Two@^{#1}_{}}}
\def\to{\@ifnextchar^ {\t@@}{\t@@^{}}}
\def\from{\@ifnextchar^ {\t@@left}{\t@@left^{}}}
\def\tofro{\@ifnextchar^ {\t@fr@}{\t@fr@^{}}}
\def\To{\@ifnextchar^ {\T@@}{\T@@^{}}}
\def\From{\@ifnextchar^ {\T@@left}{\T@@left^{}}}
\def\Two{\@ifnextchar^ {\Tw@@}{\Tw@@^{}}}
\def\Tofro{\@ifnextchar^ {\T@fr@}{\T@fr@^{}}}
\begin{document}
\maketitle

\begin{abstract}
We combine two recent ideas: cartesian differential categories, and restriction categories.  The result is a new structure which axiomatizes the category of smooth maps defined on open subsets of 
$\R^n$ in a way that is completely algebraic.  We also give other models for the resulting structure, discuss what it means for a partial map to be additive or linear, and show that differential 
restriction structure can be lifted through various completion operations.  
\end{abstract}

\tableofcontents

\section{Introduction}

In \cite{cartDiff}, the authors proposed an alternative way to view differential calculus.  The derivative was seen as an operator on maps, with many of its typical properties (such as the chain 
rule) axioms on this operation.  The resulting categories were called cartesian differential categories, and the standard model is smooth maps between the spaces $\R^n$.  One interesting aspect 
of this project was the algebraic feel it gave to differential calculus.  The seven axioms of a cartesian differential category described all the necessary properties that the standard Jacobian 
has.  Thus, instead of reasoning with epsilon arguments, one could reason about differential calculus by manipulating algebraic axioms.

Moreover, as shown in \cite{resourceCalc}, cartesian (closed) differential categories provide a semantic basis for 
modeling the simply typed differential lambda-calculus described in \cite{diffLambda}.  This latter calculus is linked to various resource 
calculi which, as their name suggests, are useful in understanding the 
resource requirements of programs.  Thus, models of computation in 
settings with a differential operator are of interest in the semantics 
of computation when resource requirements are being considered.

Fundamental to computation is the possibility of non-termination.  Thus, an obvious extension of cartesian differential categories is to allow partiality of maps.  Of course, this has a natural 
analogue in the standard model: smooth maps defined on open subsets of $\R^n$ are a notion of partial smooth map which is ubiquitous in analysis.

To axiomatize these ideas, we combine cartesian differential categories with the restriction categories of $\cite{restrictionI}$.  Again, the axiomatization is completely algebraic: there are 
two operations (differentiation and restriction) that satisfy seven axioms for the derivative, four for the restriction, and two for the interaction of derivation and restriction.

Our goal in this paper is not only to give the definitions and examples of these ``differential restriction categories'', but also to show how natural the structure is.  There are a number of 
points of evidence for this claim.  In a differential restriction category, one can define what it means for a partial map such as 
\[ f(x) = \left\{ \begin{array}{ll}
         2x & \mbox{if $x \ne 5$};\\
        \uparrow & \mbox{if $x = 5$}.\end{array} \right. \]
to be ``linear''.  One can give a similar description for the notion of ``additive''.  The differential interacts so well with the restriction that not only does it preserve the order and 
compatibility relations, it also preserves joins of maps, should they exist.

Moreover, differential restriction structure is surprisingly robust\footnote{With the exception of being preserved when we take manifolds.  Understanding what happens when we take manifolds of a 
differential restriction category will be considered in a future paper: see the concluding section of this paper for further remarks.}.  In the final two sections of the paper, we show that 
differential structure lifts through two completion operations on restriction categories.  The first completion is the join completion, which freely add joins of compatible maps to a restriction 
category.  We show that if differential structure is present on the original restriction category, then one can lift this differential structure to the join completion.

The second completion operation is much more drastic: it adds ``classical'' structure to the restriction category, allowing one to classically reason about the restriction category's maps.  Again, 
we show that if the original restriction category has differential structure, then this differential structure lifts to the classical setting.  This is perhaps the most surprising result of the paper, 
as one typically thinks of differential structure as being highly non-classical.  In particular, it is not obvious how differentials of functions defined at a single point should work.  We show that 
what the classical completion is doing is adding germs of functions, so that a function defined on a point (or a closed set) is defined by how it works on any open set around that point (or closed set).  
It is these germs of functions on which one can define differential restriction structure.

The paper is laid out as follows.  In Section \ref{sectionRes}, we review the theory of restriction categories.  This includes reviewing the notions of joins of compatible maps, as well as the notion 
of a cartesian restriction category.

In Section \ref{sectionDiff}, we define differential restriction categories.  We must begin, however, by defining left additive restriction categories.  Left additive categories are categories in 
which it is possible to add two maps, but the maps themselves need not preserve the addition (for example, the set of smooth maps between $\R^n$).  Such categories were an essential base for defining 
cartesian differential categories, as the axioms need to discuss what happens when maps are added.  Here, we describe left additive restriction categories, in which the maps being added may only be 
partial.  One interesting aspect of this section is the definition of additive maps (those maps which do preserve the addition), which is slightly more subtle than its total counterpart.

With the theories of cartesian restriction categories and left additive restriction categories described, we are finally able to define differential restriction categories.  One surprise is that the 
differential automatically preserves joins.  Again, as with additive maps, the definition of linear is slightly more subtle than its total counterpart.  

In Section \ref{rat}, we develop a family of examples of differential restriction categories: rational functions over a commutative ring.  Rational functions (even over rigs), because of their ``poles'', 
provide a natural source of restriction structure.  We show that the formal derivative on these functions, together with this restriction, naturally forms a differential restriction category.  
The construction of rational functions presented here, is, we believe, novel: it involves the use of weak and rational rigs (described in \ref{rat:frac}).  While one can describe restriction 
categories of rational functions directly, the description of the restriction requires some justification.  Thus, we first characterize the desired categories abstractly, by 
showing they occur as subcategory of a particular, more general, partial map category.  This then makes the derivation of the concrete description straightforward.   Moreover, the theory we develop to 
support this abstract characterization appears to be interesting in its own right.  While many of the ideas of this section are implicit in algebraic geometry, the packaging of differential restriction 
categories makes both the partial aspects of these settings and their differential structure explicit.

In the next two sections, we describe what happens when we join or classically complete the underlying restriction category of a differential restriction category, and show that the differential 
structure lifts in both cases.  Again, this is important, as it shows how robust differential restriction structure is, as well as allowing one to differentiate in a classical setting.

Finally, in section \ref{sectionConclusion}, we discuss further developments.  An obvious step, given a differential restriction category with joins, is to use the manifold completion process of 
\cite{manifolds} to obtain a category of smooth manifolds.  While the construction does not yield a differential restriction category, it is clearly central to developing the differential geometry 
of such settings.  This is the subject of continuing work.

On that note, we would like to compare our approach to other categorical theories of smooth maps.  Lawvere's synthetic differential geometry (carried out in \cite{dubuc}, \cite{kock}, 
and \\ \cite{reyes}) is one such example.  The notion of smooth topos is central to Lawvere's program. A smooth topos is a topos which contains an object of ``infinitesimals''.  One thinks of the 
this object as the set $D = \{x: x^2 = 0 \}$.  Smooth toposes give an extremely elegant approach to differential geometry.  For example, one defines the tangent space of an object $X$ to be the 
exponential $X^D$.  This essentially makes the tangent space the space of all infinitesimal paths in $X$, which is precisely the intuitive notion of what the tangent space is.

The essential difference between the synthetic differential geometry approach and ours is the level of power of the relative settings.  A smooth topos is, in particular, a topos, and so enjoys a 
great number of powerful properties.  The differential restriction categories we describe here have fewer assumptions: we only ask for finite products, and assume no closed structure or subobject 
classifier.   Thus, our approach begins at a much more basic level.  While the standard model of a differential restriction category is smooth maps defined on open subsets of $\R^n$, the standard 
model of a smooth topos is a certain completion of smooth maps between all smooth manifolds.  In contrast to the synthetic differential geometry approach, our goal is thus to see at what minimal 
level differential calculus can be described, and only then move to more complicated objects such as smooth manifolds.

A number of authors have described others notions of smooth space: see, for example, \cite{chen}, \cite{fro}, \cite{sik}.  All have a similar approach, and the similarity is summed up in 
\cite{comparativeSmooth}:
\begin{quote}
``...we know what it means for a map to be smooth between certain subsets of Euclidean space and so in general we declare a function smooth if whenever, we examine it using those subsets, it 
is smooth.  This is a rather vague statement - what do we mean by `examine'? - and the various definitions can all be seen as ways of making this precise.''
\end{quote}
Thus, in each of these approaches, the author assumes an existing knowledge of smooth maps defined on open subsets of $\R^n$.  Again, our approach is more basic: we are seeking to understand 
the nature of these smooth maps between $\R^n$.  In particular, one could define Chen spaces, or Fr\"olicher spaces, based on a differential restriction category other than the standard model, 
and get new notions of generalized smooth space.

Finally, it is important to note that none of these other approaches work with \emph{partial} maps.  Our approach, in addition to starting at a more primitive level, gives us the ability to reason 
about the partiality of maps which is so central to differential calculus, geometry, and computation.  

\section{Restriction categories review}\label{sectionRes}

In this section, we begin by reviewing the theory of restriction categories.  Restriction categories were first described in $\cite{restrictionI}$ as an alternative to the notion of a ``partial map category''.  In a partial map category, one thinks of a partial map from $A$ to $B$ as a span
$$\xymatrix{& A' \ar[dl]_m \ar[dr]^f &\\A & & B\\}$$
where the arrow $m$ is a monic.  Thus, $A'$ describes the domain of definition of the partial map.  By contrast, a restriction category is a category which has to each arrow $f: A \to B$ a ``restriction'' $\rs{f}: A \to  A$.  One thinks of this $\rs{f}$ as giving the domain of definition:  in the case of sets and partial functions, the map $\rs{f}$ is given by 
$$\overline{f}x=
\begin{cases}
x & \mbox{ if} f(x) \text{ defined}\\
\text{undefined} & \text{ otherwise.}
\end{cases}$$
There are then four axioms which axiomatize the behavior of these restrictions (see below).

There are two advantages of restriction categories when compared to partial map categories.  The first is that they are more general than partial map categories.  In a partial map category, one needs to have as objects each of the possible domains of definition of the partial functions.  In a restriction category, this is not the case, as the domains of definition are expressed by the restriction maps.  This is important for the examples considered below.  In particular, the canonical example of a differential restriction category will have objects the spaces $\R^n$, and maps the smooth maps defined on open subsets of these spaces.  This is not an example of a partial map category, as the open subsets are not objects, but it is naturally a restriction category, with the same restriction as for sets and partial functions.

The second advantage is that the theory is completely algebraic.  In partial map categories, one deals with equivalence classes of spans and their pullbacks.  As a result, they are often difficult to work with directly.  In a restriction category, one simply manipulates equations involving the restriction operator, using the four given axioms.  As cartesian differential categories give a completely algebraic description of the derivatives of smooth maps, bringing these two algebraic theories together is a natural approach to capturing smooth maps which are partially defined.

\subsection{Definition and examples}

Restriction categories are axiomatized as follows.  Note that throughout this paper, we are using diagrammatic order of composition, so that ``$f$, followed by $g$'', is written $fg$.  

\begin{definition}
Given a category, $\X$, a {\bf restriction structure} on $\X$ gives for each, $A \stackrel{f}{\longrightarrow} B$, a restriction arrow, $A \stackrel{\rs{f}}{\longrightarrow} A$, that
satisfies four axioms:

\begin{enumerate}[{\bf [R.1]}]
\item $\rs{f} f = f$;
\item If $dom(f) = dom(g)$ then $\rs{g} \rs{f} = \rs{f} \rs{g}$;
\item If $dom(f) = dom(g)$ then $\rs{\rs{g}f} = \rs{g} \rs{f}$;
\item If $dom(h) = cod(f)$ then $f \rs{h} = \rs{fh} f$.
\end{enumerate}

A category with a specified restriction structure is a {\bf restriction category}.
\end{definition}

We have already seen two examples of restriction categories: sets and partial functions, and smooth functions defined on open subsets of $\R^n$.  For more examples see \cite{restrictionI}, as well as \cite{turing}, where restriction categories are used to describe categories of partial computable maps.  

A rather basic fact is that each restriction $\rst{f}$ is idempotent: we will call such idempotents {\bf restriction idempotents}.  We record this 
together with some other basic consequences of the definition:

\begin{lemma}
\label{lemma:restriction}
If \X is a restriction category then: 
\begin{enumerate}[{\em (i)}]
\item $\rs{f}$ is idempotent;
\item $\rs{f}\rs{fg} = \rs{fg}$;
\item $\rs{f\rs{g}}=\rs{fg}$; \label{oldR3}
\item $\rs{\rs{f}} = \rs{f}$;
\item $\rs{\rs{f}\rs{g}} = \rs{f}\rs{g}$;
\item If $f$ is monic then $\rs{f} = 1$ (and so in particular $\rs{1} = 1$);
\label{monics-total}
\item $\rs{f}g = g$ implies $\rs{g} = \rs{f}\rs{g}$.
\end{enumerate}
\end{lemma}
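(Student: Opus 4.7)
The plan is to prove the seven parts in the listed order, so that each may lean on its predecessors along with the four axioms. Parts (i), (ii), and (vii) all follow the same template: restrict both sides of an equation obtained from [R.1] and reshape the right hand side using [R.3]. For (i), begin from $\rs{f}f = f$ by [R.1] and restrict to obtain $\rs{f} = \rs{\rs{f}f}$; since $\rs{f}$ and $f$ share a domain, [R.3] rewrites the right side as $\rs{f}\rs{f}$. For (ii), replace $f$ by $fg$ in the same argument: by [R.1] we have $fg = \rs{f}fg$, so $\rs{fg} = \rs{\rs{f}fg} = \rs{f}\rs{fg}$. For (vii), restrict both sides of the hypothesis $\rs{f}g = g$ and apply [R.3].

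Part (iii) is the technical heart of the lemma, and the one place where [R.4] is essential. The idea is to swap $f$ past $\rs{g}$ using [R.4] to get $f\rs{g} = \rs{fg}f$, then take restrictions of both sides. The left becomes $\rs{f\rs{g}}$; on the right, [R.3] (with $dom(fg) = dom(f)$) converts $\rs{\rs{fg}f}$ into $\rs{fg}\rs{f}$, and [R.2] followed by part (ii) collapses this to $\rs{f}\rs{fg} = \rs{fg}$. This is the main obstacle, because it chains all four axioms together with (ii) and one must keep the domain conditions straight at each invocation.

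The remaining parts are corollaries. For (iv), apply (iii) with $f = 1_{dom(g)}$: the composites $fg$ and $f\rs{g}$ reduce to $g$ and $\rs{g}$ respectively, yielding $\rs{\rs{g}} = \rs{g}$. For (v), combine (iii) (rewriting $\rs{\rs{f}\rs{g}}$ as $\rs{\rs{f}g}$, where $\rs{f}$ plays the role of the outer map) with [R.3] (rewriting $\rs{\rs{f}g}$ as $\rs{f}\rs{g}$). For (vi), [R.1] gives $\rs{f}f = f = 1 \cdot f$, so right-cancelling the monic $f$ forces $\rs{f} = 1$; applying this to the identity, which is always monic, specializes to $\rs{1} = 1$.
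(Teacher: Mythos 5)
Your proof is correct: each step is a legitimate application of the axioms (with the domain conditions satisfied), the order of the parts avoids circularity, and the key computation in (iii) — conjugating with [R.4] and then collapsing via [R.3], [R.2], and part (ii) — is exactly the standard derivation. The paper leaves this lemma as an exercise, so there is no proof in the text to compare against; your argument fills that gap in the expected way.
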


\begin{proof}
Left as an exercise.
\end{proof}


\subsection{Partial map categories}

As alluded to in the introduction to this section, an alternative way of axiomatizing categories of partial maps is via spans where 
one leg is a monic.  We recall this notion here.  These will be important, as we shall see that rational functions over a commutative 
rig naturally embed in a larger partial map category.  

\begin{definition}\label{stablemonics}
Let $\X$ be a category, and $\mathcal{M}$ a class of monics in $\X$.  $\mathcal{M}$ is a {\bf stable system of monics} in case
\begin{enumerate}[{\bf [SSM.1]}]
\item all isomorphisms are in $\mathcal{M}$;
\item $\mathcal{M}$ is closed to composition;
\item for any $m : B' \rightarrow B \in \mathcal{M}$, $f : A \rightarrow B \in \bf{C}$
the following pullback, called an $\mathcal{M}$-pullback, exists and $m' \in \mathcal{M}$:

$$\xymatrix{
A' \ar[r]^{f'} \ar[d]_{m'} & B' \ar[d]^m\\
A \ar[r]_f & B\\
}$$

\end{enumerate}
\end{definition}

\begin{definition}
An {\bf $\mathcal{M}$-Category} is a pair $(\X,\mathcal{M})$ where $\X$ is a category with a specified system of stable monics $\mathcal{M}$.
\end{definition}

Given an $\mathcal{M}$-Category, we can define a category of partial maps.

\begin{definition}
Let $(\X,\mathcal{M})$ be an $\mathcal{M}$-Category.  Define $\Par(\X,\mathcal{M})$ to be the category where
\begin{description}
\item[Obj: ] The objects of $\X$
\item[Arr: ] $A \stackrel{(m,f)}{\longrightarrow} B$ are classes of spans $(m,f)$,
$$\xymatrix{& A' \ar[dl]_m \ar[dr]^f &\\A & & B\\}$$ where $m \in \mathcal{M}$.  The classes of spans are quotiented by the equivalence
relation $(m,f) \sim (m',f')$ if there is an isomorphism, $\phi$, such that both triangles in the following diagram commute.
$$\xymatrix{
& A' \ar@/_1pc/[dl]_m \ar@/_/[drr]^f \ar@{.>}[r]^\phi & A'' \ar@/^/[dll]^{m'} \ar@/^1pc/[dr]^{f'} &\\
A & & & B\\
}$$
\item[Id: ] $A \stackrel{(1_A,1_A)}{\longrightarrow} A$
\item[Comp: ] By pullback; i.e. given $A \stackrel{(m,f)}{\longrightarrow} B, B \stackrel{(m',f')}{\longrightarrow} C$, the pullback

$$\xymatrix{
& & A'' \ar[dl]_{m''} \ar[dr]^{f''}& &\\
& A' \ar[dl]_m \ar[dr]^f & & B' \ar[dl]_{m'} \ar[dr]^{f'} &\\
A & & B & & C\\
}$$

gives a composite $A \to^{(m''m,f''f')} C$.  (Note that without the equivalence relation on the arrows, the associative law would not hold.)  

\end{description}
\end{definition}

Moreover, this has restriction structure: given an arrow $(m,f)$, we can define its restriction to be $(m,m)$.  From \cite{restrictionI}, we have the following completeness result:

\begin{theorem}
Every restriction category is a full subcategory of a category of partial maps.
\end{theorem}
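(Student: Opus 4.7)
The plan is to embed $\X$ into $\Par(\Y, \mathcal{M})$, where the $\mathcal{M}$-category $(\Y, \mathcal{M})$ is obtained by first splitting the restriction idempotents of $\X$. The intuition is that each map $f \colon A \to B$ in $\X$ should correspond to a span whose left leg records its domain of definition $\rst{f}$; since $\rst{f}$ is only an idempotent (Lemma \ref{lemma:restriction}(i)), we must first pass to an ambient category in which all restriction idempotents split into genuine subobjects.

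\textbf{Construction.} Form the restriction idempotent splitting $\X'$, whose objects are pairs $(A, e)$ with $e$ a restriction idempotent of $\X$, and whose morphisms $(A, e) \to (B, e')$ are those $\X$-maps $f$ satisfying $ef = f = fe'$. The restriction of $\X$ descends to $\X'$: to see that $\rst{f}$ is a morphism $(A,e) \to (A,e)$, use that restriction idempotents commute together with Lemma \ref{lemma:restriction}(iii) and (v) to get $e \rst{f} = \rst{e\rst{f}} = \rst{ef} = \rst{f}$. Every restriction idempotent $\epsilon \colon (A,e) \to (A,e)$ in $\X'$ now splits through $(A, \epsilon)$, and $A \mapsto (A, 1_A)$ is a full restriction embedding $\X \hookrightarrow \X'$. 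Let $\Y = \Total(\X')$, and let $\mathcal{M}$ be the class of restriction monics, i.e., the maps $e \colon (A,e) \to (A, 1)$ coming from splittings. These are total by Lemma \ref{lemma:restriction}(\ref{monics-total}), contain all isomorphisms, and are closed under composition. Pullback-stability [SSM.3] of such an $e$ along a total $g \colon (C, 1) \to (A, 1)$ is obtained by splitting the restriction idempotent $\rst{ge}$, with axiom [R.4] providing the universal square.

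\textbf{The embedding.} Define $\Phi \colon \X \to \Par(\Y, \mathcal{M})$ by $\Phi(A) = A$ and $\Phi(f) = (m_f, f')$, where $m_f \colon (A, \rst{f}) \to (A, 1)$ is the splitting mono (equal to $\rst{f}$ as an $\X$-map, with retraction also $\rst{f}$) and $f' \colon (A,\rst{f}) \to (B, 1)$ is $f$ itself, total in $\Y$ since $\rst{f'} = \rst{f}$ is the identity of $(A, \rst{f})$. Identities are preserved because $\rst{1} = 1$. Faithfulness follows by recovering $f$ as the composite of the retraction with $f'$, which in $\X$ is $\rst{f} \cdot f = f$ by [R.1]. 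For fullness, any span $(m, h)$ in $\Par(\Y, \mathcal{M})$ with $m$ splitting some $e$ forces $\rst{h} = e$ (by totality of $h$ in $\Y$), whence $(m, h)$ is equivalent to $\Phi(h)$ via the canonical iso between splittings of the same idempotent.

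\textbf{Main obstacle.} The delicate verification is that $\Phi$ preserves composition. Given $f \colon A \to B$ and $g \colon B \to C$, the pullback-based composite of $\Phi(f)$ and $\Phi(g)$ in $\Par(\Y, \mathcal{M})$ must be shown equivalent to $\Phi(fg)$. This forces identification of the restriction idempotent on $(A, \rst{f})$ arising from the pullback of $m_g$ along $f'$ with the one yielding $(A, \rst{fg})$. The key calculation is $\rst{f\rst{g}} = \rst{fg}$ (Lemma \ref{lemma:restriction}(\ref{oldR3})) together with repeated applications of [R.4] to slide restriction idempotents past morphisms; once this identification is secured, the rest of the argument is bookkeeping.
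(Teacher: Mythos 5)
This is correct and is essentially the argument of Cockett and Lack in \cite{restrictionI}, which the paper cites rather than reproves: split the restriction idempotents, pass to the total maps, and take the restriction monics as the stable system of monics. The only point to tighten is that $\mathcal{M}$ must consist of \emph{all} splitting monos $(A,\epsilon) \to (A,e)$ with $\epsilon \leq e$, not just those into objects of the form $(A,1)$, since the pullback of such a mono along a map out of a general $(C,e')$ is a splitting mono into $(C,e')$; with that adjustment the verification of [SSM.1]--[SSM.3] and of functoriality goes through exactly as you indicate.
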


However, it is not true that every full subcategory of a category of partial maps is a category of partial maps, so the restriction notion is more general.

\subsection{Joins of compatible maps}\label{subsecJoins}

An important aspect of the theory of restriction categories is the idea of the join of two compatible maps.  We first describe what it means for two maps to be compatible, that is, equal where they are both defined.  

\begin{definition}
Two parallel maps $f,g$ in a restriction category are \emph{compatible}, written $f \smile g$, if $\rs{f}g = \rs{g}f$.  
\end{definition}
Note that compatibility is \emph{not} transitive.  Recall also the notion of when a map $f$ is less than or equal to a map $g$:
\begin{definition}
$f \leq g$ if $\rs{f}g = f$.
\end{definition}
This captures the notion of $g$ having the same values as $f$, but having a smaller domain of definition.  Note that this inequality is in fact anti-symmetric.

An important alternative characterization of compatibility is the following:
\begin{lemma}\label{lemmaAltComp}
In a restriction category,
	\[ f \smile g \Leftrightarrow \rs{f}g \leq f \Leftrightarrow \rs{g}f \leq g. \]
\end{lemma}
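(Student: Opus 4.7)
The plan is to unfold the definitions of $\leq$ and $\smile$ and then reduce one side to the other using the restriction axioms, in particular \textbf{[R.1]}, \textbf{[R.2]}, and \textbf{[R.3]}. Concretely, the equivalence $f \smile g \Leftrightarrow \rs{f}g \leq f$ is the substantive content; the second equivalence $f \smile g \Leftrightarrow \rs{g}f \leq g$ then follows immediately by swapping the roles of $f$ and $g$, since $f \smile g$ is symmetric by definition.

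First I would expand $\rs{f}g \leq f$ according to the definition of $\leq$: this means $\rs{\rs{f}g}\, f = \rs{f}g$. To simplify the restriction on the left, I would apply \textbf{[R.3]} to obtain $\rs{\rs{f}g} = \rs{f}\,\rs{g}$. Substituting back, the inequality $\rs{f}g \leq f$ becomes $\rs{f}\,\rs{g}\,f = \rs{f}g$.

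Next I would simplify the left-hand side of this equation. By \textbf{[R.2]} the restriction idempotents $\rs{f}$ and $\rs{g}$ commute, so $\rs{f}\,\rs{g}\,f = \rs{g}\,\rs{f}\,f$; and by \textbf{[R.1]} we have $\rs{f}\,f = f$, giving $\rs{g}f$. Hence $\rs{f}g \leq f$ is equivalent to $\rs{g}f = \rs{f}g$, which is exactly $f \smile g$.

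There is no real obstacle here since the computation is purely formal; the only small thing to be careful about is keeping the order of restriction idempotents and the map $f$ correct when invoking \textbf{[R.2]} (which strictly speaking applies to restriction idempotents commuting, a consequence of the axiom together with Lemma \ref{lemma:restriction}(i)). Finally, I would note that the chain of equivalences $\rs{g}f \leq g \Leftrightarrow \rs{g}f = \rs{f}g \Leftrightarrow f \smile g$ is obtained by the identical argument with $f$ and $g$ interchanged, completing the proof.
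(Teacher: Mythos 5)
Your proposal is correct and follows essentially the same route as the paper: unfold $\rs{f}g \leq f$ to $\rs{\rs{f}g}\,f = \rs{f}g$, apply \textbf{[R.3]}, \textbf{[R.2]}, and \textbf{[R.1]} to reduce the left side to $\rs{g}f$, and conclude the condition is exactly $\rs{g}f = \rs{f}g$, with the second equivalence by symmetry. The only cosmetic difference is that you present it as a single chain of equivalences where the paper argues the two implications separately.
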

\begin{proof}
If $f \smile g$, then $\rs{f}g = \rs{g}f \leq f$.  Conversely, if $\rs{f}g \leq f$, then by definition, $\rs{\rs{f}g}g = \rs{f}g$, so $\rs{g}f = \rs{f}g$.
\end{proof}

We can now describe what it means to take the join of compatible maps.  Intuitively, the join of two compatible maps $f$ and $g$ will be a map which is defined everywhere $f$ and $g$ are, while taking the value of $f$ where $f$ is defined, and the value of $g$ where $g$ is defined.  There is no ambiguity, since the maps are compatible.  
\begin{definition}
Let \X be a restriction category.  We say that \X is a {\bf join restriction category} if for any family of pairwise compatible maps $(f_i : X \to Y)_{i \in I}$, there is a map $\bigvee_{i \in I} f_i : X \to Y$ such that
\begin{itemize}
	\item for all $i \in I$, $f_i \leq \bigvee_{i \in I} f_i$;
	\item if there exists a map $g$ such that $f_i \leq g$ for all $i \in I$, then $\bigvee f_i \leq g$;
\end{itemize}
(that it, it is the join under the partial ordering of maps in a restriction category) and these joins are compatible with composition: that is, for any $h: Z \to X$,
\begin{itemize}
	\item $h(\bigvee_{i \in I} f_i) = \bigvee_{i \in I}hf_i$.
\end{itemize}
\end{definition}

Note that by taking an empty family of compatible maps between objects $X$ and $Y$, we get a ``nowhere-defined'' map $\emptyset_{X,Y}: X \to Y$ which is the bottom element of the partially ordered set of maps from $X$ to $Y$.

Obviously, sets and partial functions have all joins - simply take the union of the domains of the compatible maps.  Similarly, continuous functions on open subsets also have joins.

Note that the definition only asks for compatibility of joins with composition on the left.  In the following proposition, we show that this implies compatibility with composition on the right. 
\begin{proposition}\label{propJoins}
Let $\X$ be a join restriction category, and $(f_i)_{i \in I}: X \to Y$ a compatible family of arrows.
\begin{enumerate}[(i)]
\item for any $j \in I$, $\rs{f_j} (\bigvee_{i \in I} f_i) = f_j$;
\item $\rs{\bigvee_{i \in I} f_i} = \bigvee_{i \in I} \rs{f_i}$;
\item for any $h: Y \to Z$, $(\bigvee_{i \in I} f_i)h = \bigvee_{i \in I} f_ih$.
\end{enumerate}
\end{proposition}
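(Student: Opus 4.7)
The plan is to prove the items in order: (i) is immediate from the definition of the join, (ii) requires a non-trivial use of left-compatibility, and (iii) then follows from (ii) combined with R.4.

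For (i), the defining property of the join gives $f_j \leq \bigvee_i f_i$ directly, which unfolds to exactly $\rs{f_j}(\bigvee_i f_i) = f_j$. For (ii), set $e := \bigvee_i \rs{f_i}$, which exists because restriction idempotents on a common object commute (R.2) and so are pairwise compatible. I would show $e = \rs{f}$ by antisymmetry. One direction is easy: applying restriction to the identity from (i) and using R.3 gives $\rs{f_i} \leq \rs{f}$, so taking joins yields $e \leq \rs{f}$. A short calculation (take the restriction of $\rs{e}\rs{f} = e$ and simplify via Lemma \ref{lemma:restriction}) then shows $e = \rs{e}$, so $e$ is itself a restriction idempotent. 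Since each $\rs{f_i} \leq e$ with $e$ a restriction idempotent, $ef_i = f_i$, whence by left-compatibility $ef = \bigvee_i ef_i = \bigvee_i f_i = f$. From $ef = f$ with $e = \rs{e}$, R.3 gives $\rs{f} = \rs{ef} = e\rs{f}$, and R.2 then yields $\rs{f}e = \rs{f}$, i.e.\ $\rs{f} \leq e$.

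The hard part is (iii), since the join axiom only asserts compatibility with composition on the \emph{left}, so the expression $(\bigvee_i f_i)h$ cannot be distributed directly through the join. First I would check that right composition preserves $\leq$, i.e.\ $f_i \leq f$ implies $f_i h \leq fh$; this follows from R.2, R.3 and the auxiliary identity $\rs{fh}\,\rs{f_i} = \rs{f_i h}$ (itself immediate from R.3 together with (i)). Hence each $f_i h \leq fh$, the family $(f_i h)$ is compatible, and $\bigvee_i f_i h$ exists with $\bigvee_i f_i h \leq fh$. It then suffices to show the two maps have equal restrictions. To compute $\rs{fh}$, I would use R.4 to rewrite $f\rs{h}$ as $\rs{fh}\,f$, which \emph{is} a left composition of the join with $\rs{fh}$; applying left-compatibility together with $\rs{fh}f_i = f_i\rs{h}$ gives $f\rs{h} = \bigvee_i f_i\rs{h}$. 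Taking restrictions and using Lemma \ref{lemma:restriction} together with (ii) applied to the family $(f_i\rs{h})$ gives $\rs{fh} = \bigvee_i \rs{f_i h}$, which equals $\rs{\bigvee_i f_i h}$ by (ii) once more. Equal restrictions together with $\bigvee_i f_i h \leq fh$ then force $\bigvee_i f_i h = \rs{\bigvee_i f_i h}\cdot fh = \rs{fh}\cdot fh = fh$.
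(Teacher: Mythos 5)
Your proposal is correct and takes essentially the same route as the paper's proof: (i) is read off from the definition of the join, (ii) is obtained by showing $\bigvee_i \rs{f_i}$ is a restriction idempotent and using left-compatibility of joins together with (i), and (iii) proceeds by first establishing $(\bigvee_i f_i)\rs{h} = \bigvee_i f_i\rs{h}$ via \rfour and left-compatibility, then comparing restrictions using (ii) and concluding from $\bigvee_i f_ih \leq (\bigvee_i f_i)h$ with equal restrictions. The only cosmetic difference is in (ii), where you deduce $\rs{\bigvee_i f_i} \leq \bigvee_i \rs{f_i}$ from the identity $e(\bigvee_i f_i) = \bigvee_i f_i$ rather than from the paper's direct computation of $\rs{\bigvee_i f_i}\,\bigvee_j \rs{f_j}$, which is the same underlying idea.
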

\begin{proof}
\begin{enumerate}[(i)]
\item This is simply a reformulation of $f_j \leq \bigvee_i f_i$.

\item By the universal property of joins, we always have $\bigvee \rs{f_i} \leq \rs{\bigvee f_i}$.  Note that this also implies that $\bigvee \rs{f_i}$ is a restriction idempotent, since it is less than or equal to a restriction idempotent.  Now, to show the reverse inequality, consider:
\begin{eqnarray*}
&   & \rs{\bigvee_{i \in I} f_i} \bigvee_{j \in I} \rs{f_j} \\
& = & \rs{ \bigvee_{j \in I} \rs{f_j} \bigvee_{i \in I} f_i} \mbox{ since $\bigvee \rs{f_j}$ is a restriction idempotent,} \\
& = & \rs{ \bigvee_{j \in I} f_j} \mbox{ by (i),}
\end{eqnarray*}
as required. 
\item Again, by the universal property of joins, we automatically have $\bigvee (f_ih) \leq (\bigvee f_i)h$.  In this case, rather than show the reverse inequality, we will instead show that their restrictions are equal: if one map is less than or equal to another, and their restrictions agree, then they must be equal.  To show that their restrictions are equal, we first show $\bigvee (f_i\rs{h}) = (\bigvee f_i)\rs{h}$:
\begin{eqnarray*}
&   & \left(\bigvee_{i \in I}f_i\right)\rs{h} \\
& = & \rs{\left(\bigvee_{j \in I} f_j\right)h} \left(\bigvee_{i \in I} f_i\right) \mbox{ by \rfour,} \\
& = & \bigvee_{i \in I} \rs{\left(\bigvee_{j \in I}f_j\right)h} f_i \\
& = & \bigvee_{i \in I} \rs{\rs{f_i}\left(\bigvee_{j \in I}f_j\right)h} f_i \\
& = & \bigvee_{i \in I} \rs{f_ih} f_i \mbox{ by (i),} \\
& = & \bigvee_{i \in I} f_i\rs{h} \mbox{ by \rfour.}
\end{eqnarray*}
Now, we can show that the restrictions of $\bigvee (f_ih)$ and $(\bigvee f_i)h$ are equal:
\begin{eqnarray*}
&   & \rs{ \bigvee_{i \in I}f_ih} \\
& = & \bigvee_{i \in I} \rs{f_ih} \mbox{ by (ii),} \\
& = & \bigvee_{i \in I} \rs{f_i \rs{h}} \\
& = & \rs{ \bigvee_{i \in i} f_i \rs{h}} \\
& = & \rs{ \left(\bigvee f_i \right) \rs{h}} \mbox{ by the result above,}
\end{eqnarray*}
as required.  
    
\end{enumerate}
\end{proof}

\subsection{Cartesian restriction categories}

Not surprisingly, cartesian differential categories involve cartesian structure.  Thus, to develop the theory which combines cartesian differential categories with restriction categories, it will be important to recall how cartesian structure interacts with restrictions.  This was described in \cite{restrictionIII} where it was noted that the resulting structure was equivalent to the P-categories introduced in \cite{pCategories}.  We recall the basic idea here:

\begin{definition}
Let \X be a restriction category.  A {\bf restriction terminal object} is an object $T$ in \X such that for any object $A$, there is a unique total map $!_A : A \longrightarrow T$ which satisfies $!_T = 1_T$.  Further, these maps $!$ must satisfy the property that for any map $f : A \longrightarrow B$, $f!_B \leq !_A$, i.e. $f!_B = \overline{f!_B} \ !_A = \overline{f \overline{!_B}} \ !_A = \overline{f} \ !_A$.

A {\bf restriction product} of objects $A,B$ in \X is defined by total projections
\begin{eqnarray*}
\pi_0 : A \times B \longrightarrow A & & \pi_1 : A \times B \longrightarrow B
\end{eqnarray*}
satisfying the property that for any object $C$ and maps $f : C \longrightarrow A,g:C \longrightarrow B$ there is a unique pairing map, $\<f,g\> : C \longrightarrow A \times B$ such that both triangles below exhibit lax commutativity
$$\xymatrix{
& C \ar[dl]_{f} \ar[dr]^g \ar@{.>}[d]|{\<f,g\>} \ar@{}@<2ex>[dl]|{\geq} \ar@{}@<-2ex>[dr]|{\leq}& \\
A & A \times B \ar[l]^{\pi_0} \ar[r]_{\pi_1} & B\\
}$$
that is,
\begin{eqnarray*}
\<f,g\> \pi_0 = \overline{\<f,g\>}f & \text{and} & \<f,g\> \pi_1 = \overline{\<f,g\>} g.  
\end{eqnarray*}
In addition, we ask that $\rs{\<f,g\>} = \rs{f}\rs{g}$.
\end{definition}

We require \emph{lax} commutativity as a pairing $\<f,g\>$ should only be defined as much as both $f$ and $g$ are.

\begin{definition}
A restriction category \X is a {\bf cartesian restriction category} if \X has a restriction terminal object and all restriction products.
\end{definition}

Clearly, both sets and partial functions, and smooth functions defined on open subsets of $\R^n$ are cartesian restriction categories.  

The following contains a number of useful results.

\begin{proposition}\label{propCart}
In any cartesian restriction category,
\begin{enumerate}[(i)]
	\item $\<f,g\>\pi_0 = \rs{f}g$ and $\<f,g\>\pi_1 = \rs{g}f$;
	\item if $e = \rs{e}$, then $e\<f,g\> = \<ef,g\> = \<f,eg\>$;
	\item $f\<g,h\> = \<fg,fh\>$;
	\item if $f \leq f'$ and $g \leq g'$, then $\<f,g\> \leq \<f',g'\>$;
	\item if $f \smile f'$ and $g \smile g'$, then $\<f,g\> \smile \<f',g'\>$;
	\item if $f$ is total, then $(f \times g)\pi_1 = \pi_1g$.  If $g$ is total, $(f \times g)\pi_0 = \pi_0f$.
\end{enumerate}
\end{proposition}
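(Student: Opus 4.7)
The plan is to work through the six claims in sequence, using two basic ingredients: the lax triangle laws $\<f,g\>\pi_0 = \rs{\<f,g\>}f$ and $\<f,g\>\pi_1 = \rs{\<f,g\>}g$, and the identity $\rs{\<f,g\>} = \rs{f}\rs{g}$ packaged into the definition of a restriction product. Virtually everything reduces to manipulating restriction idempotents via \textbf{[R.1]}--\textbf{[R.4]} together with the consequences collected in Lemma \ref{lemma:restriction}.

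For (i), I substitute the lax triangle laws and use $\rs{\<f,g\>} = \rs{f}\rs{g}$, reducing the $\pi_0$ equation to simplifying the expression $\rs{f}\rs{g}f$: \textbf{[R.2]} permits commuting the idempotents (since both have the common domain of $\<f,g\>$), and \textbf{[R.1]} then absorbs $\rs{f}f = f$. The second equation follows by the analogous calculation.

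For (ii) and (iii), I would invoke the uniqueness clause of the universal property. In (ii), project $e\<f,g\>$ onto $A$ and $B$ using (i), push the restriction idempotent $e$ past the underlying map using \textbf{[R.4]}, and compute the restriction $\rs{e\<f,g\>} = \rs{ef}\rs{g} = \rs{f}\rs{eg}$ using Lemma \ref{lemma:restriction}. These match the lax triangles for both $\<ef,g\>$ and $\<f,eg\>$, so uniqueness forces the desired equalities. Part (iii) follows the same template: compute $f\<g,h\>\pi_i$ via (i), push $f$ inside using \textbf{[R.4]}, and compare with the required lax triangles for $\<fg,fh\>$.

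For (iv) and (v), reduce to componentwise statements using (ii). For (iv), from $\rs{f}f' = f$ and $\rs{g}g' = g$ one computes
\[ \rs{\<f,g\>}\<f',g'\> \;=\; \rs{f}\rs{g}\<f',g'\> \;=\; \<\rs{f}\rs{g}f',\,\rs{f}\rs{g}g'\> \;=\; \<f,g\>, \]
which is exactly $\<f,g\> \le \<f',g'\>$. Part (v) is the analogous bookkeeping to show $\rs{\<f,g\>}\<f',g'\> = \rs{\<f',g'\>}\<f,g\>$ by pulling the restriction idempotent inside each pairing and then rewriting using the componentwise compatibility hypotheses. For (vi), expand $f \times g = \<\pi_0 f, \pi_1 g\>$ and apply (i) to get $(f\times g)\pi_1$ as a restriction idempotent times $\pi_1 g$; totality of $f$ gives $\rs{f} = 1$, and Lemma \ref{lemma:restriction}(\ref{oldR3}) combined with totality of $\pi_0$ collapses that restriction idempotent to the identity. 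The other case is symmetric.

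The main obstacle is the restriction-idempotent bookkeeping in (ii), since several applications of \textbf{[R.4]} and the identities $\rs{f\rs{g}} = \rs{fg}$ and $\rs{\<f,g\>} = \rs{f}\rs{g}$ are needed to bring the two candidate pairings into the form demanded by the universal property; once (ii) is established, parts (iii)--(vi) follow essentially mechanically.
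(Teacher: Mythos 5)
Your proposal is correct and follows essentially the same route as the paper: derive (i) from the lax triangles together with $\rs{\<f,g\>}=\rs{f}\rs{g}$, obtain (ii) and (iii) by comparing composites with the projections and invoking uniqueness of pairings, and then handle (iv)--(vi) by shuttling restriction idempotents in and out of pairings. The only (immaterial) deviation is in (v), where you verify the compatibility equation $\rs{\<f,g\>}\<f',g'\>=\rs{\<f',g'\>}\<f,g\>$ directly instead of reducing to an inequality via Lemma \ref{lemmaAltComp} as the paper does.
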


\begin{proof}
\begin{enumerate}[(i)]
	\item By the lax commutativity, $\<f,g\>\pi_0 = \rs{\<f,g\>} f = \rs{f}\rs{g}f = \rs{g}f$ and similarly with $\pi_1$.
	\item Note that
		\[ e\<f,g\>\pi_0 = e\rs{g}f = \rs{e}\rs{g}f = \rs{\rs{e}g}f = \rs{eg}f = \<f,eg\>\pi_0 \]
              A similar result holds with $\pi_1$, and so by universality of pairing, $e\<f,g\> = \<f,eg\>$.  By symmetry, it also equals $\<ef,g\>$.
	\item Note that 
		\[ f\<g,h\>\pi_0 = f\bar{h}g = \rs{fh}fg = \<fg,fh\>\pi_0 \]
	       where the second equality is by \rfour.  A similar result holds for $\pi_1$, and so the result follows by universality of pairing.  
	\item Consider
		\begin{eqnarray*}
		& & \rs{\<f,g\>}\<f',g'\> \\
		& = & \rs{f}\rs{g}\<f',g'\> \mbox{ by (i)}\\ 
		& = & \<f',\rs{f}\rs{g}g'\> \mbox{ by (ii)}\\
		& = & \<f',\rs{f},g\> \mbox{ since $g \leq g'$} \\
		& = & \<\rs{f}f',g\> \mbox{ by (ii)} \\
		& = & \<f,g\> \mbox{ since $f \leq f'$}.
		\end{eqnarray*}
	      Thus $\<f,g\> \leq \<f',g'\>$. 
	\item By Lemma \ref{lemmaAltComp}, we only need to show that $\rs{\<f,g\>}\<f',g'\> \leq \<f,g\>$.  But, again by Lemma \ref{lemmaAltComp}, we have $\rs{f}f' \leq f$ and $\rs{g}g' \leq g$, so by (iv) we get $\<\bar{f}f',\bar{g}g'\> \leq \<f,g\>$ and thus by (ii) and (i), we get $\rs{\<f,g\>}\<f',g'\> \leq \<f,g\>$.  
	\item \[ (f \times g)\pi_1 = \<\pi_0f,\pi_1g\>\pi_1 = \rs{\pi_0f}\pi_1g = \pi_1g \]
\end{enumerate} 
\end{proof}

If $\X$ is a cartesian restriction category which also has joins, then the two structures are automatically compatible:  

\begin{proposition}
In any cartesian restriction category with joins,
\begin{enumerate}[(i)]
	\item $\<f \vee g, h\> = \<f,h\> \vee \<g,h\>$ and $\<f,\emptyset\> = \<\emptyset,f\> = \emptyset$;
	\item $(f \vee g) \times h = (f \times h) \vee (g \times h)$ and $f \times \emptyset = \emptyset \times f = \emptyset$.
\end{enumerate}
\end{proposition}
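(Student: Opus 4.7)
The plan is to establish each equality by the uniqueness of pairings in a restriction product: if maps $k, k' \colon C \to A \times B$ satisfy $k\pi_0 = k'\pi_0$, $k\pi_1 = k'\pi_1$, and $\rs{k} = \rs{k'}$, then both realize the pairing of their common projections and so are equal. Before invoking this, I must confirm that each join appearing on the right-hand side actually exists. Since $f \vee g$ is given, $f \smile g$, so combining with the trivial $h \smile h$ and Proposition \ref{propCart}(v) yields $\<f,h\> \smile \<g,h\>$. For (ii), \rfour\ gives $\pi_0 f \smile \pi_0 g$ from $f \smile g$, and then (v) once more supplies $f \times h \smile g \times h$.

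For part (i), the key projection calculation is
\begin{align*}
(\<f,h\> \vee \<g,h\>)\pi_0 &= \<f,h\>\pi_0 \vee \<g,h\>\pi_0 = \rs{h}f \vee \rs{h}g = \rs{h}(f \vee g),
\end{align*}
using right-distributivity of composition over joins (Proposition \ref{propJoins}(iii)), then Proposition \ref{propCart}(i), and finally left-distributivity from the definition of a join restriction category. The result is exactly $\<f \vee g, h\>\pi_0$, and the $\pi_1$-case is symmetric. For restrictions, Proposition \ref{propJoins}(ii) combined with left-distributivity gives
\[ \rs{\<f,h\> \vee \<g,h\>} = \rs{f}\rs{h} \vee \rs{g}\rs{h} = (\rs{f} \vee \rs{g})\rs{h} = \rs{f \vee g}\rs{h} = \rs{\<f \vee g, h\>}. \]
Uniqueness of pairings then delivers the equation. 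The empty identities $\<f, \emptyset\> = \<\emptyset, f\> = \emptyset$ follow because $\rs{\<f, \emptyset\>} = \rs{f}\rs{\emptyset} = \emptyset$, and any map whose restriction is empty must itself be $\emptyset$ (as it equals $\rs{k}k$ by \textbf{[R.1]}).

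For part (ii), I would unfold $f \times h = \<\pi_0 f, \pi_1 h\>$ and apply Proposition \ref{propJoins}(iii) to move the join past $\pi_0$, obtaining $(f \vee g) \times h = \<\pi_0 f \vee \pi_0 g, \pi_1 h\>$. Invoking part (i) then rewrites this as $\<\pi_0 f, \pi_1 h\> \vee \<\pi_0 g, \pi_1 h\> = (f \times h) \vee (g \times h)$. The identity $f \times \emptyset = \<\pi_0 f, \pi_1\emptyset\> = \<\pi_0 f, \emptyset\> = \emptyset$ reduces to the empty case of (i).

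The main obstacle is bookkeeping rather than any single difficult step: one must consistently track which direction of distributivity of composition over joins is in play (left-distributivity from the definition versus right-distributivity from Proposition \ref{propJoins}(iii)) and verify the appropriate compatibility condition every time a new join is formed; once these details are set up, the equalities fall out of direct applications of Propositions \ref{propCart} and \ref{propJoins}.
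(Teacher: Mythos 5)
Your proof is correct, but for part (i) it takes a genuinely different route from the paper's. The paper never touches the projections: it starts from $\<f \vee g, h\>$, inserts its own restriction, rewrites $\rs{f \vee g}\,\rs{h}$ as the join $\rs{f} \vee \rs{g}$ of restriction idempotents, distributes that join over the pairing using compatibility of joins with composition, and finishes with $\rs{f}(f\vee g) = f$ and $e\<a,b\> = \<ea,b\>$ (Propositions \ref{propJoins}(i) and \ref{propCart}(ii)) --- a short equational chain that avoids the universal property entirely. You instead compare composites with $\pi_0$ and $\pi_1$ (together with restrictions) and invoke uniqueness of pairings; this is legitimate --- your uniqueness principle does follow from the universal property since $\pi_0,\pi_1$ are total, and the paper uses the same device in proving Proposition \ref{propCart}(ii),(iii) --- and it is arguably the more systematic strategy, at the cost of having to verify in advance that the joins on the right-hand side exist and of tracking both directions of distributivity. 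Your treatment of the empty cases and of part (ii) coincides with the paper's. One small citation slip: in part (ii), $\pi_0(f \vee g) = \pi_0 f \vee \pi_0 g$ is an instance of the left-compatibility axiom $h(\bigvee_i f_i) = \bigvee_i hf_i$ from the definition of a join restriction category, not of Proposition \ref{propJoins}(iii), which is the right-handed version; you draw exactly this distinction correctly in part (i), so this is a typo rather than a gap.
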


\begin{proof}
\begin{enumerate}[(i)]
	\item Since $\rs{\<f,\emptyset\>} = \rs{f}\rs{\emptyset} = \rs{f}\emptyset = \emptyset$, by Proposition \ref{propJoins}, we have $\<f,\emptyset\> = \emptyset$.
	For pairing,
	\begin{eqnarray*}
	\< f \vee g, h \> & = & \rst{\<f \vee g, h \>} \<f \vee g, h \> \\
			  & = & \rst{f \vee g} \rst{h} \<f \vee g, h \> \\
			  & = & (\rst{f} \vee \rst{g})\<f \vee g, h \> \\
			  & = & (\rst{f}\<f \vee g, h\>) \vee (\rst{g}\<f \vee g, h \>) \\
			  & = & \<\rst{f}(f \vee g), h \> \vee \<\rst{g}(f \vee g), h \> \\
			  & = & \<f,h\> \vee \<g,h\>
	\end{eqnarray*}
	as required.	  
		
	\item Using part (a), $f \times \emptyset = \<\pi_0f, \pi_1\emptyset\> = \<\pi_0f, \emptyset\> = \emptyset$ and
	\begin{eqnarray*}
		(f \vee g) \times h & = & \<\pi_0(f \vee g), \pi_1 h \> \\
			            & = & \<(\pi_0f) \vee (\pi_0g), \pi_1h \> \\
			     	    & = & \<\pi_0f, \pi_1h \> \vee \<\pi_0g, \pi_1h \> \\
			     	    & = & (f \times h) \vee (g \times h)
	\end{eqnarray*}
\end{enumerate}
\end{proof}

We shall see that this pattern continues with left additive and differential restriction categories: if the restriction category has joins, then it is automatically compatible with left additive or differential structure.  

\section{Differential restriction categories}\label{sectionDiff}

Before we define differential restriction categories, we need to define left additive restriction categories.  Left additive categories were introduced in \cite{cartDiff} as a precursor to differential structure.  To axiomatize how the differential interacts with addition, one must define categories in which it is possible to add maps, but not have these maps necessarily preserve the addition (as is the case with smooth maps defined on real numbers).  The canonical example of one of these left additive categories is the category of commutative monoids with \emph{arbitrary} functions between them.  These functions have a natural additive structure given pointwise: $(f+g)(x) := f(x) + g(x)$, as well as $0$ maps: $0(x) := 0$.  Moreover, while this additive structure does not interact well with postcomposition by a function, it does with precomposition: $h(f+g) = hf + hg$, and $f0 = 0$.  This is essentially the definition of a left additive category.  

\subsection{Left additive restriction categories} 

To define left additive \emph{restriction} categories, we need to understand what happens when we add two partial maps, as well as the nature of the $0$ maps. Intuitively, the maps in a left additive category are added pointwise.  Thus, the result of adding two partial maps should only be defined where the original two maps were both defined.  Moreover, the $0$ maps should be defined everywhere.  Thus, the most natural requirement for the interaction of additive and restriction structure is that $\rst{f+g} = \rst{f}\rst{g}$, and that the $0$ maps be total.  

\begin{definition}
$\X$ is a {\bf left additive restriction category} if each $\X(A,B)$ is a commutative monoid with 
 $\rst{f+g} = \rst{f}\rst{g}$, $\rst{0} = 1$, and furthermore is left additive: $f(g+h) = fg + fh$ and $f0 = \rs{f}0$.  
\end{definition}
It is important to note the difference between the last axiom ($f0 = \rs{f}0$) and its form for left additive categories ($f0 = 0$).  $f0$ need not be total, so rather than ask that this be equal to $0$ (which is total), we must instead ask that $f0 = \rs{f}0$.  This phenomenon will return when we define differential restriction categories.  In general, any time an equational axiom has a 
variable which occurs on only one side, we must modify the axiom to ensure the variable occurs on both sides, by including 
the restriction of the variable on the other side.

There are two obvious examples of left additive restriction categories: commutative monoids with arbitrary partial functions between them, and the subcategory of these consisting of continuous or smooth functions defined on open subsets of $\R^n$.  \\

Some results about left additive structure:
\begin{proposition}\label{propLA} In any left additive restriction category:
\begin{enumerate}[(i)]
	\item f + g = \rs{g}f + \rs{f}g;
	\item if $e = \rs{e}$, then $e(f+g) = ef + g = f + eg$;
	\item if $f \leq f', g \leq g'$, then $f + g \leq f' + g'$;
	\item if $f \smile f', g \smile g'$, then $(f+g) \smile (f' + g')$.
\end{enumerate}
\end{proposition}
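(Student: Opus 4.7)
The plan is to prove (i) first, since it is the key identity, then derive (ii) from (i) plus left additivity, and finally obtain (iii) and (iv) from (i) together with the standard monotonicity of pre- and postcomposition under the partial order $\leq$.

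For (i), I would start from [R.1] applied to $f+g$ and substitute the left additive restriction axiom $\rs{f+g} = \rs{f}\rs{g}$ to get $f+g = \rs{f}\rs{g}(f+g)$. Expanding by left additivity yields $\rs{f}\rs{g}f + \rs{f}\rs{g}g$. All the restriction idempotents appearing are endomorphisms of the common domain object, so they commute by [R.2]; hence $\rs{f}\rs{g}f = \rs{g}\rs{f}f = \rs{g}f$ by [R.1], and symmetrically $\rs{f}\rs{g}g = \rs{f}g$.

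For (ii), left additivity reduces $e(f+g)$ to $ef+eg$, so it suffices to show $ef+eg = ef+g$; the equality $ef+eg = f+eg$ is symmetric. Both sides have the same restriction, namely $e\rs{f}\rs{g}$, using $e = e^2$ and [R.2]. Pulling out this common restriction on the left by [R.1] and expanding via left additivity, the commuting restriction idempotents let me rewrite the $g$-summand as $eg$, giving the desired equality. For (iii), assuming $\rs{f}f' = f$ and $\rs{g}g' = g$, I would compute
\[
\rs{f+g}(f'+g') = \rs{f}\rs{g}f' + \rs{f}\rs{g}g' = \rs{g}(\rs{f}f') + \rs{f}(\rs{g}g') = \rs{g}f + \rs{f}g = f+g,
\]
using left additivity, [R.2], and part (i).

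For (iv), I would invoke Lemma \ref{lemmaAltComp}: the hypotheses become $\rs{f}f' \leq f$ and $\rs{g}g' \leq g$, and the goal is $\rs{f+g}(f'+g') \leq f+g$. The same expansion as in (iii), followed by monotonicity of precomposition under $\leq$ (an immediate consequence of [R.4], since $\rs{ca}cb = c\rs{a}b = ca$ whenever $a \leq b$), gives $\rs{g}(\rs{f}f') \leq \rs{g}f$ and $\rs{f}(\rs{g}g') \leq \rs{f}g$; part (iii) then combines these into $\rs{f+g}(f'+g') \leq \rs{g}f + \rs{f}g$, and (i) identifies the right-hand side with $f+g$. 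The main subtlety throughout is keeping track of which restriction idempotents commute — since all are endomorphisms of the same object, this is automatic via [R.2] — so the proposition is really a careful packaging of [R.1], [R.2], [R.4], left additivity, and part (i).
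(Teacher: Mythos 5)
Your proof is correct and follows essentially the same route as the paper: (i) and (iii) are verbatim the paper's computations, and (iv) uses the same ingredients (Lemma \ref{lemmaAltComp}, part (i), monotonicity of addition, and the R-axioms), merely expanding $\rs{f+g}(f'+g')$ rather than massaging $\rs{f}f'+\rs{g}g'$ as the paper does. Your treatment of (ii) via equality of restrictions and [R.1] is a slight cosmetic variation on the paper's direct double application of (i), but it is the same kind of manipulation and is valid.
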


\begin{proof}
\begin{enumerate}[(i)]
	\item \[ f + g = \rs{f + g}(f+g) = \rs{f}\rs{g}(f + g) = \rs{g}\rs{f}f + \rs{f}\rs{g}g = \rs{g}f + \rs{f}g \]
	\item 
		\begin{eqnarray*}
		& & f + eg \\
		& = & \rs{eg}f + \rs{f}eg \mbox{ by (i)} \\
		& = & \rs{e}\, \rs{g}f + \rs{e}\rs{f}g \\
		& = & \rs{e}(\rs{g}f + \rs{f}g) \\
		& = & e(f + g) \mbox{ by (i)}\\
		\end{eqnarray*}
	\item Suppose $f \leq f'$, $g \leq g'$.  Then:
		\begin{eqnarray*}
		& & \rs{f+g}(f' + g') \\
		& = & \rs{f}\rs{g}(f' + g') \\
		& = & \rs{g}\rs{f}f' + \rs{f}\rs{g}g' \\
		& = & \rs{g}f + \rs{f}g \mbox{ since $f \leq f', g \leq g'$} \\
		& = & f + g \mbox{ by (i)}.
		\end{eqnarray*}
	      so $(f+g) \leq (f' + g')$.
	 \item Suppose $f \smile f'$, $g \smile g'$.  By lemma \ref{lemmaAltComp}, it suffices to show that $\rs{f+g}(f' + g') \leq f + g$.  By lemma \ref{lemmaAltComp}, we have $\rs{f}f' \leq f$ and $\rs{g}g' \leq g$, so by (ii), we can start with
	 	\begin{eqnarray*}
	 	\rs{f}f' + \rs{g}g' & \leq & f + g \\
	 	\rs{\rs{g}g'}\rs{f} f' + \rs{\rs{f}f'}\rs{g}g' & \leq & f + g \\
	 	\rs{g}\rs{g'}\rs{f}f' + \rs{f}\rs{f'}\rs{g}g' & \leq & f+ g \mbox{ by R3} \\
	 	\rs{f}\rs{g}(\rs{g'}f' + \rs{f'}g') & \leq & f + g \mbox{ by left additivity} \\
	 	\rs{f + g}(f' + g') & \leq & f + g \mbox{ by (i)}
	 	\end{eqnarray*}
\end{enumerate}
\end{proof}

If $\X$ has joins and left additive structure, then they are automatically compatible:
\begin{proposition}
If $\X$ is a left additive restriction category with joins, then:
\begin{enumerate}[(i)]
	\item $f + \emptyset = \emptyset$;
	\item $(\bigvee_i f_i) + (\bigvee_j  g_j) = \bigvee_{i,j} (f_i+g_j)$.
\end{enumerate}
\end{proposition}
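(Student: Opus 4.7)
The plan is to establish a few basic facts about the bottom element $\emptyset$, handle (i) by a direct restriction calculation, and then prove (ii) by the standard ``$a \leq b$ and $\rst{a}=\rst{b}$ imply $a=b$'' technique.

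First I would record that applying Proposition \ref{propJoins}(ii) to the empty family gives $\rst{\emptyset} = \bigvee_{i\in\emptyset}\rst{f_i} = \emptyset$, and that $h\emptyset = \emptyset = \emptyset h$ for every appropriately-typed $h$ (one inclusion is the join-composition axiom for compatible families applied to the empty left composition, the other is Proposition \ref{propJoins}(iii) applied to the empty right composition). Part (i) then follows immediately:
$$\rst{f + \emptyset} \;=\; \rst{f}\,\rst{\emptyset} \;=\; \rst{f}\emptyset \;=\; \emptyset,$$
so by \textbf{[R.1]}, $f + \emptyset = \rst{f+\emptyset}(f+\emptyset) = \emptyset(f+\emptyset) = \emptyset$.

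For (ii), write $F = \bigvee_i f_i$ and $G = \bigvee_j g_j$. The first step is to verify that $(f_i + g_j)_{(i,j)}$ is a compatible family, so that its join is defined; this is immediate from Proposition \ref{propLA}(iv), since $\{f_i\}$ and $\{g_j\}$ are each pairwise compatible. The second step is to show $\bigvee_{i,j}(f_i+g_j) \leq F + G$: as $f_i \leq F$ and $g_j \leq G$, Proposition \ref{propLA}(iii) gives $f_i + g_j \leq F + G$ for every pair, and the universal property of joins yields the claimed inequality.

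The third step, which is the main content, is to show these two maps have the same restriction, whence equality follows from the basic fact that $a \leq b$ with $\rst{a} = \rst{b}$ implies $a = \rst{a}b = \rst{b}b = b$. Using the left-additivity axiom $\rst{F+G} = \rst{F}\rst{G}$, then Proposition \ref{propJoins}(ii) to expand $\rst{F} = \bigvee_i \rst{f_i}$ and $\rst{G} = \bigvee_j \rst{g_j}$, and finally two successive applications of the join-composition axiom (once on the left, once on the right) to distribute the product of the two joins of restriction idempotents, I obtain
$$\rst{F+G} \;=\; \Bigl(\bigvee_i \rst{f_i}\Bigr)\Bigl(\bigvee_j \rst{g_j}\Bigr) \;=\; \bigvee_{i,j} \rst{f_i}\rst{g_j} \;=\; \bigvee_{i,j}\rst{f_i + g_j} \;=\; \rst{\bigvee_{i,j}(f_i+g_j)},$$
with the last equality again by Proposition \ref{propJoins}(ii). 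The only real obstacle is the bookkeeping in this double-index manipulation: distributing a composite of two joins over indices requires invoking left-compatibility and right-compatibility of composition with joins in sequence, and one must be careful that the intermediate families (e.g.\ each $\rst{f_i}\bigvee_j \rst{g_j}$) are indeed compatible so that the second join is legitimate — but this is automatic because every subfamily of a compatible family of restriction idempotents is compatible.
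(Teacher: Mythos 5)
Your proof is correct, and part (i) matches the paper's argument essentially verbatim. For part (ii), however, you take a genuinely different (though closely related) route. The paper proves the identity by a single forward equational chain: it writes $(\bigvee_i f_i)+(\bigvee_j g_j)$ as $\rst{(\bigvee_i f_i)+(\bigvee_j g_j)}\,\bigl((\bigvee_i f_i)+(\bigvee_j g_j)\bigr)$, expands the restriction as $\bigvee_{i,j}\rst{f_i}\,\rst{g_j}$, distributes this join of idempotents across the sum using left additivity of idempotents together with left compatibility of joins with composition, and then finishes with Proposition \ref{propJoins}(i) (namely $\rst{f_i}\bigvee_k f_k = f_i$) and Proposition \ref{propLA}(i) ($f+g=\rst{g}f+\rst{f}g$) to recombine each term into $f_i+g_j$. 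You instead use the order-theoretic scheme ``one inequality plus equal restrictions implies equality'' -- the same technique the paper itself uses to prove Proposition \ref{propJoins}(iii) -- getting $\bigvee_{i,j}(f_i+g_j)\leq (\bigvee_i f_i)+(\bigvee_j g_j)$ from Proposition \ref{propLA}(iii) and the universal property, and then matching restrictions via the axiom $\rst{f+g}=\rst{f}\,\rst{g}$ and Proposition \ref{propJoins}(ii). What your route buys is that the existence and compatibility of the family $(f_i+g_j)_{i,j}$ is made explicit (via Proposition \ref{propLA}(iv)), and the somewhat delicate step of distributing a join of idempotents over a sum of joins is replaced by a purely restriction-level computation; what the paper's route buys is a shorter direct derivation that never needs the universal property or the antisymmetry-style closing step. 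Both arguments rest on the same supporting lemmas, and yours is complete as stated.
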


\begin{proof}
\begin{enumerate}[(i)]
	\item $\rs{f + \emptyset} = \rs{f}\rs{\emptyset} = \rs{f}\emptyset = \emptyset$, so by Proposition \ref{propJoins}, $f + \emptyset = \emptyset$.	
	\item Consider:
\begin{eqnarray*}
(\bigvee_i f_i)+ (\bigvee_j g_j) 
& = & \rst{(\bigvee_i f_i)+ (\bigvee_j g_j)} (\bigvee_i f_i)+ (\bigvee_j g_j) \\
& = & (\bigvee_i \rst{f_i})(\bigvee_j \rst{g_j})((\bigvee_i f_i)+ (\bigvee_j g_j)) \\
& = & (\bigvee_{i,j} \rst{f_i}~\rst{g_j}) ((\bigvee_i f_i)+ (\bigvee_j g_j)) \\
& = & \bigvee_{i,j} \rst{g_j}~\rst{f_i} (\bigvee_i f_i)+ \rst{f_i}~\rst{g_j} (\bigvee_j g_j)) \\
& = & \bigvee_{i,j} \rst{g_j}~f_i + \rst{f_i}~g_j \mbox{ by Proposition \ref{propJoins},}\\
& = & \bigvee_{i,j} f_i + g_j, 
\end{eqnarray*}
as required.  
\end{enumerate}
\end{proof}

\subsection{Additive and strongly additive maps}

Before we get to the definition of a differential restriction category, it will be useful to have a slight detour, and investigate the nature of the additive maps in a left additive restriction category.  In a left additive category, arbitrary maps need not preserve the addition, in the sense that 
	\[ (x+y)f = xf + yf \mbox{ and } 0f = 0, \] 
are not taken as axioms.  Those maps which do preserve the addition (in the above sense) form an important subcategory, and such maps are called additive.  Similarly, it will be important to identify which maps in a left additive restriction category are additive.

Here, however, we must be a bit more careful in our definition.  Suppose we took the above axioms as our definition of additive in a left additive restriction category. In particular, asking for that equality would be asking for the restrictions to be equal, so that
\[ \rs{(x+y)f} = \rs{xf + yf} = \rs{xf}\rs{yf} \]
That is, $xf$ and $yf$ are defined exactly when $(x+y)f$ is.  Obviously, this is a problem in one direction: it would be nonsensical to ask that $f$ be defined on $x+y$ implies that $f$ is defined on both $x$ and $y$.  The other direction seems more logical: asking that if $f$ is defined on $x$ and $y$, then it is defined on $x+y$.  That is, in addition to being additive as a function, its domain is also additively closed.

Even this, however, is often too strong for general functions.  A standard example of a smooth partial function would be something $2x$, defined everywhere but $x=5$.  This map does preserve addition, wherever it is defined.  But it is not additive in the sense that its domain is not additively closed.  Thus, we need a weaker notion of additivity: we merely ask that $(x+y)f$ be \emph{compatible} with $xf + yf$.  Of course, the stronger notion, where the domain is additively closed, is also important, and will be discussed further below.    

\begin{definition}
Say that a map $f$ in a left additive restriction category is \textbf{additive} if for any $x,y$,
	\[ (x+y)f \smile xf + yf \mbox{ and } 0f \smile 0 \]
\end{definition}

We shall see below that for total maps, this agrees with the usual definition.  We also have the following alternate characterizations of additivity:

\begin{lemma}
A map $f$ is additive if and only if for any $x,y$,
	\[ \rs{xf}\rs{yf}(x + y)f \leq xf + yf \mbox{ and } 0f \leq 0 \]
or
	\[ (x\rs{f} + y\rs{f})f \leq xf + yf \mbox{ and } 0f \leq 0. \]
\end{lemma}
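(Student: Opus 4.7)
The strategy is to convert the compatibility assertion in the definition to an inequality via Lemma \ref{lemmaAltComp}, and then to show that the two displayed inequality forms encode the same underlying condition.

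For the equivalence between the compatibility form and the first inequality, I apply Lemma \ref{lemmaAltComp}: the statement $(x+y)f \smile xf + yf$ is equivalent to $\rs{xf+yf}(x+y)f \leq xf+yf$. Since $\rs{xf+yf} = \rs{xf}\rs{yf}$ is one of the defining axioms of a left additive restriction category, this is precisely the first displayed inequality. For the zero clause, Lemma \ref{lemmaAltComp} combined with $\rs{0}=1$ gives $0f \smile 0 \Leftrightarrow 0f \leq 0$ directly.

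For the equivalence between the two inequality forms, the crux is to establish the identity
\[ \rs{xf}\rs{yf}(x+y) \;=\; x\rs{f} + y\rs{f}, \]
since postcomposing with $f$ then shows that both inequalities share the same left-hand side. To prove this, I first use R.4 to rewrite $x\rs{f} = \rs{xf}x$ and $y\rs{f} = \rs{yf}y$. I then check that both sides have restriction $\rs{xf}\rs{yf}$: for the right side this follows from Lemma \ref{lemma:restriction}(iii) (giving $\rs{x\rs{f}} = \rs{xf}$), while for the left side R.3 yields the restriction $\rs{xf}\rs{yf}\rs{x}\rs{y}$, which collapses to $\rs{xf}\rs{yf}$ using the inequality $\rs{xf} \leq \rs{x}$ that comes from Lemma \ref{lemma:restriction}(ii). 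Next, using Proposition \ref{propLA}(ii) to distribute the restriction idempotent $\rs{xf}\rs{yf}$ across the sum, and then Proposition \ref{propLA}(iii) together with the pointwise bounds $\rs{xf}\rs{yf}x \leq \rs{xf}x$ and $\rs{xf}\rs{yf}y \leq \rs{yf}y$, I show the left side is below the right. Since two parallel maps with equal restrictions, one below the other, must be equal, the identity follows.

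The main obstacle is the restriction bookkeeping needed to verify this central identity, in particular showing that the apparently asymmetric restrictions on the two sides are in fact equal; once it is in hand, the equivalence of all three characterizations is immediate.
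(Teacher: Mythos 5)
Your proof is correct and follows essentially the same route as the paper's (very terse) proof: Lemma \ref{lemmaAltComp} together with $\rs{f+g}=\rs{f}\,\rs{g}$ for the first equivalence, and \rfour\ plus restriction bookkeeping to identify $\rs{xf}\,\rs{yf}(x+y)$ with $x\rs{f}+y\rs{f}$ for the second; you have simply made explicit the details the paper leaves to the reader. One trivial remark: the distribution of the idempotent $\rs{xf}\,\rs{yf}$ across the sum is just the left additivity axiom $f(g+h)=fg+fh$ rather than Proposition \ref{propLA}(ii), but this does not affect the argument.
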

\begin{proof}
	Use the alternate form of compatibility (Lemma \ref{lemmaAltComp}) for the first part, and then \rfour \ for the second.
\end{proof}

\begin{proposition}
In any left additive restriction category,
\begin{enumerate}[(i)]
	\item total maps are additive if and only if $(x+y)f = xf + yf$;
	\item restriction idempotents are additive;
	\item additive maps are closed under composition;
	\item if $g \leq f$ and $f$ is additive, then $g$ is additive;
	\item 0 maps are additive, and additive maps are closed under addition.
\end{enumerate}
\end{proposition}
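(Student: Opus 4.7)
The plan is to dispatch the five items using the alternate form of compatibility from Lemma \ref{lemmaAltComp} ($f \smile g \iff \rs{f}g \leq f$), together with the restriction axioms (especially R.3 and R.4), left additivity, and the order/compatibility preservation facts of Proposition \ref{propLA}. Items (i) and (v) are straightforward warm-ups, (iv) is the main calculation, (ii) will follow at once from (iv), and (iii) comes last.

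For (i), when $f$ is total ($\rs{f}=1$), Lemma \ref{lemma:restriction}(iii) together with the left additive axiom $\rs{f+g}=\rs{f}\rs{g}$ gives $\rs{(x+y)f} = \rs{x+y} = \rs{x}\rs{y} = \rs{xf}\rs{yf} = \rs{xf+yf}$; matching restrictions force compatibility to be equality, and similarly $0f \smile 0$ collapses to $0f=0$; the converse is immediate. For (v), the axiom $f\cdot 0 = \rs{f}\cdot 0$ gives $(x+y)\cdot 0 = \rs{x}\rs{y}\cdot 0$, and a short left additivity calculation (whose key absorption step is $\rs{x}\rs{y}\cdot 0 + \rs{x}\rs{y}\cdot 0 = \rs{x}\rs{y}(0+0) = \rs{x}\rs{y}\cdot 0$) shows $x\cdot 0 + y\cdot 0 = \rs{x}\cdot 0 + \rs{y}\cdot 0$ also equals $\rs{x}\rs{y}\cdot 0$; together with $0\cdot 0 = \rs{0}\cdot 0 = 0$, this makes $0$ additive. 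Closure under addition then follows from $(x+y)(f+g) = (x+y)f + (x+y)g$ and Proposition \ref{propLA}(iv), which preserves $\smile$ under sums.

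Item (iv) is the heart of the proof. Writing $g = \rs{g}f$ and applying R.4 yields $xg = \rs{xg}xf$ and analogous identities for $yg$ and $(x+y)g$. Expanding $\rs{(x+y)g}(xg+yg)$ via left additivity and consolidating restriction idempotents (using $\rs{xg}\rs{xf} = \rs{xg}$, etc., which hold because $g \leq f$ forces $\rs{g} \leq \rs{f}$) rewrites it as $\rs{xg}\rs{yg}\rs{(x+y)g}(xf+yf)$. Since $\rs{(x+y)g} \leq \rs{(x+y)f}$, the compatibility $(x+y)f \smile xf+yf$ supplied by $f$ additive applies, replacing $\rs{(x+y)g}(xf+yf)$ with $\rs{(x+y)g}(x+y)f = (x+y)g$. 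The result is $\rs{(x+y)g}(xg+yg) = \rs{xg}\rs{yg}(x+y)g \leq (x+y)g$, the required compatibility. The zero clause is analogous: $0g = \rs{0g}\cdot 0f = \rs{0g}\rs{0f}\cdot 0 = \rs{0g}\cdot 0 \leq 0$, using $0f \smile 0$ and $\rs{0g}\leq\rs{0f}$. Part (ii) is then immediate from (iv), since the identity is trivially additive and every restriction idempotent $e$ satisfies $e \leq 1$ (because $\rs{e}\cdot 1 = e$).

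For (iii), I would start from $f$'s alternate form $\rs{xf}\rs{yf}(x+y)f \leq xf+yf$. Post-composing with $g$ and using $\rs{xfg} \leq \rs{xf}$ (Lemma \ref{lemma:restriction}(ii)) gives $\rs{xfg}\rs{yfg}(x+y)fg \leq (xf+yf)g$. A small auxiliary observation---that $a \leq b$ combined with $a = pa$ for a restriction idempotent $p$ forces $a \leq pb$---then allows inserting $p = \rs{xfg}\rs{yfg}$ on the right, giving $\rs{xfg}\rs{yfg}(x+y)fg \leq \rs{xfg}\rs{yfg}(xf+yf)g$, which by $g$'s alternate form is $\leq xfg + yfg$. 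The zero case is direct: $0fg = \rs{0f}\rs{0g}\cdot 0 \leq 0$. The main obstacle throughout is that $\smile$ is not transitive, so one cannot chain compatibilities; every argument must be rephrased in the $\leq$ formulation, where restriction idempotents can be commuted and absorbed freely.
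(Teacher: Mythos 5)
Your proof is correct, and it uses the same basic toolkit as the paper's (the $\leq$-reformulations of compatibility and additivity, \rfour, and the order/compatibility-preservation facts of Proposition \ref{propLA}); parts (i), (iii) and (v) are essentially the argument the paper gives. The one genuine structural difference is in how (ii) and (iv) are obtained. The paper proves (ii) directly --- for a restriction idempotent $e$, \rfour{} gives $(xe+ye)\rs{e} = \rs{\rs{xe+ye}\,\rs{e}}(xe+ye) \leq xe+ye$, which is exactly the alternate form of additivity --- and then gets (iv) for free: $g \leq f$ means $g = \rs{g}f$, a composite of a restriction idempotent with an additive map, hence additive by (ii) and (iii). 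You invert this dependency: you prove (iv) by the direct calculation $\rs{(x+y)g}(xg+yg) = \rs{xg}\,\rs{yg}\,(x+y)g \leq (x+y)g$ (which checks out, with the stray $\rs{xf}\,\rs{yf}$ from invoking $f$'s compatibility being absorbed by $\rs{xg}\,\rs{yg}$ as you indicate), and then obtain (ii) as the special case $e \leq 1$ with $1$ additive. Both routes are valid; the paper's is shorter because the \rfour{} one-liner for idempotents does all the work, while yours makes (iv) self-contained and independent of (iii), which is a mild gain in modularity at the cost of a longer computation. Your treatment of the zero clause in (v) is actually more careful than the paper's, which elides the restriction idempotents in $x0 + y0$.
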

\begin{proof}
In each case, the 0 axiom is straightforward, so we only show the addition axiom.  
\begin{enumerate}[(i)]
\item It suffices to show that if $f$ is total, then $\rs{(x+y)f} = \rs{xf + yf}$.  Indeed, if $f$ is total,
       \[ \rs{(x+y)f} = \rs{x+y} = \rs{x}\rs{y} = \rs{xf}\rs{yf} = \rs{xf + yf}. \]
\item Suppose $e = \rs{e}$.  Then by \rfour,
       \[ (xe + ye)\rs{e} = \rs{\rs{xe + ye} \rs{e}} (xe + ye) \leq xe + ye \]
      so that $e$ is additive.
\item Suppose $f$ and $g$ are additive.  Then 
	\begin{eqnarray*}
	&   & \rs{xfg}\rs{yfg}(x+ y)fg \\
	& = & \rs{xfg}\rs{yfg}\rs{xf}\rs{yf}(x + y)fg \\
	& \leq & \rs{xfg}\rs{yfg}(xf + yf)g \mbox{ since $f$ is additive,} \\
	& \leq & xfg + yfg \mbox{ since $g$ is additive,} 
	\end{eqnarray*}
	as required.  
\item If $g \leq f$, then $g = \rs{g}f$, and since restriction idempotents are additive, and the composites of additive maps are additive, $g$ is additive.  
	\item For any $0$ map, $(x+y)0 = 0 = 0 + 0 = x0 + y0$, so it is additive.  For addition, suppose $f$ and $g$ are additive.  Then we have
		\[ (x+y)f \smile xf + yf \mbox{ and } (x+y)g \smile xg + yg.\]
	Since adding preserves compatibility, this gives
		\[ (x+y)f + (x+y)g \smile xf + yf + xg + yg. \]
	Then using \emph{left} additivity of $x, y$, and $x+y$, we get
		\[ (x+y)(f+g) \smile x(f+g) + y(f+g) \]
	so that $f+g$ is additive.
\end{enumerate}
\end{proof}

The one property we do not have is that if $f$ is additive and has a partial inverse $g$, then $g$ is additive.  Indeed, consider the left additive restriction category of arbitrary partial maps from $\Z$ to $\Z$.  In particular, consider the partial map $f$ which is only defined on $\{p,q,r\}$ for $r \ne p + q$, and maps those points to $\{n, m, n + m \}$.  In this case, $f$ is additive, since $(p+q)f$ is undefined.  However, $f$'s partially inverse $g$, which sends $\{n, m, n+m\}$ to  $\{p,q,r\}$, is not additive, since $ng + mg \ne (n+m)g$.  The problem is that $f$'s domain is not additively closed, and this leads us to the following definition.

\begin{definition}
Say that a map $f$ in a left additive restriction category is \textbf{strongly additive} if for any $x,y$,
	\[ xf + yf \leq (x+y)f \mbox{ and } 0f = 0. \]
\end{definition}

An alternate description, which can be useful for some proofs, is the following:

\begin{lemma}\label{strongadd}
$f$ is strongly additive if and only if $(x\rs{f} + y\rs{f})f = xf + yf$ and $0f = 0$.
\end{lemma}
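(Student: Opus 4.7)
The equivalence splits cleanly: the clause $0f = 0$ is identical in both formulations, so only the addition clause needs attention, namely that $xf + yf \leq (x+y)f$ for all $x,y$ iff $(x\rs{f} + y\rs{f})f = xf + yf$ for all $x,y$.

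For the forward direction, I would specialize the hypothesis to the arrows $x\rs{f}$ and $y\rs{f}$. Since \rfour \ gives $x\rs{f} = \rs{xf}x$, and hence $x\rs{f}f = \rs{xf}xf = xf$ by {\bf [R.1]} (similarly for $y$), this specialization produces $xf + yf \leq (x\rs{f} + y\rs{f})f$. For the reverse inequality, I would compute the restriction of the right-hand side: $\rs{(x\rs{f} + y\rs{f})f} \leq \rs{x\rs{f} + y\rs{f}} = \rs{x\rs{f}}\,\rs{y\rs{f}} = \rs{xf}\,\rs{yf} = \rs{xf+yf}$, using Lemma \ref{lemma:restriction}(\ref{oldR3}) and the left additive restriction axiom. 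Since the two arrows are therefore related by $\leq$ and have the same restriction, they must be equal.

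For the converse direction, I would show $\rs{xf+yf}(x+y)f = xf + yf$. Applying left additivity to precompose yields $\rs{xf}\,\rs{yf}(x+y) = a + b$ where $a := \rs{xf}\,\rs{yf}x$ and $b := \rs{xf}\,\rs{yf}y$. The key observation is that $a = a\rs{f}$ and $b = b\rs{f}$: by \rfour, $x\rs{f} = \rs{xf}x$, so $a\rs{f} = \rs{xf}\,\rs{yf}\,x\rs{f} = \rs{xf}\,\rs{yf}\,\rs{xf}\,x = a$. The hypothesis, applied to $a$ and $b$ (on which the extra factor of $\rs{f}$ in $a\rs{f}, b\rs{f}$ is vacuous), then gives $(a+b)f = af + bf$. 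Finally, commuting restriction idempotents and using {\bf [R.1]} gives $af = \rs{yf}xf$ and $bf = \rs{xf}yf$, so by Proposition \ref{propLA}(i), $af + bf = xf + yf$, as required.

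The main subtlety is in the converse: one cannot distribute $(x+y)f$ as $xf + yf$ directly, because $f$ is not assumed additive on the left-additive nose. The trick is to first precompose with the restriction idempotent $\rs{xf}\,\rs{yf}$, which turns $x$ and $y$ into arrows already factoring through $\rs{f}$, where the weaker hypothesis does permit distribution.
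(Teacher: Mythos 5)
Your proof is correct, but it follows a genuinely different route from the paper's. The paper proves the lemma as a chain of equivalences that holds \emph{pointwise}, for each fixed pair $x,y$: unfolding $xf+yf \leq (x+y)f$ to $\rs{xf+yf}(x+y)f = xf+yf$, rewriting $\rs{xf+yf} = \rs{xf}\,\rs{yf}$, distributing the idempotent over the sum to get $(\rs{xf}x+\rs{yf}y)f = xf+yf$, and then applying \rfour\ to turn $\rs{xf}x$ into $x\rs{f}$ -- so the two equations in question are literally the same equation, and no instantiation of either hypothesis at other arguments is ever needed. You instead treat both conditions as universally quantified and instantiate them at modified pairs: in the forward direction at $(x\rs{f},y\rs{f})$, followed by an antisymmetry argument comparing restrictions (valid, though you leave implicit that $xf+yf \leq (x\rs{f}+y\rs{f})f$ supplies the reverse inequality of restrictions), and in the converse at $(\rs{xf}\,\rs{yf}\,x,\ \rs{xf}\,\rs{yf}\,y)$, using that these arrows already absorb $\rs{f}$ so the weaker hypothesis applies, then reassembling via Proposition \ref{propLA}(i). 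What the paper's argument buys is brevity and a slightly stronger conclusion (the equivalence for each individual $x,y$); what yours buys is a self-contained two-directional argument that only ever uses the hypotheses in their stated universal form, at the price of the extra order-theoretic step and the shifted instantiations.
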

\begin{proof}
\begin{eqnarray*}
&  & xf + yf \leq (x+y)f \\
& \Leftrightarrow & \rs{xf + yf}(x+y)f = xf + yf \\
& \Leftrightarrow & \rs{xf}\rs{yf}(x+y)f = xf + yf \\
& \Leftrightarrow & (\rs{xf}x + \rs{yf}y)f = xf + yf \\
& \Leftrightarrow & (x\rs{f} + y\rs{f})f = xf + yf \mbox{ by \rfour.}
\end{eqnarray*}
\end{proof}

Intuitively, the strongly additive maps are the ones which are additive in the previous sense, but whose domains are also closed under addition and contain $0$.  Note then that not all restriction idempotents will be strongly additive, and a map less than or equal to a strongly additive map need not be strongly additive.  Excepting this, all of the previous results about additive maps hold true for strongly additive ones, and in addition, a partial inverse of a strongly additive map is strongly additive.

\begin{proposition}
In a left additive restriction category,
\begin{enumerate}[(i)]
	\item strongly additive maps are additive, and if $f$ is total, then $f$ is additive if and only if it is strongly additive;
	\item $f$ is strongly additive if and only if $\rs{f}$ is strongly additive and $f$ is additive;
	\item identities are strongly additive, and if $f$ and $g$ are strongly additive, then so is $fg$;
	\item $0$ maps are strongly additive, and if $f$ and $g$ are strongly additive, then so is $f+g$;
	\item if $f$ is strongly additive and has a partial inverse $g$, then $g$ is also strongly additive.
\end{enumerate}
\end{proposition}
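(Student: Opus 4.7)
The plan throughout is to use Lemma~\ref{strongadd}, which recasts strong additivity as the equation $(x\rs{f}+y\rs{f})f = xf+yf$ (plus $0f=0$), since this equational form interacts cleanly with \rfour \ and with Lemma~\ref{lemmaAltComp}. Part (i) is immediate: $\leq$ implies $\smile$, and $0f=0$ implies $0f\leq 0$; when $f$ is total, $\rs{f}=1$, so both notions reduce to $(x+y)f=xf+yf$, the additive-to-strongly-additive direction using $\rs{xf+yf}=\rs{x}\rs{y}=\rs{(x+y)f}$ to upgrade compatibility to equality. For the forward direction of (ii), apply \rfour \ to obtain $(x\rs{f}+y\rs{f})\rs{f}=\rs{(x\rs{f}+y\rs{f})f}(x\rs{f}+y\rs{f})$, then use strong additivity of $f$ to rewrite the restriction as $\rs{xf+yf}=\rs{x\rs{f}}\rs{y\rs{f}}=\rs{x\rs{f}+y\rs{f}}$, giving $(x\rs{f}+y\rs{f})\rs{f}=x\rs{f}+y\rs{f}$; also $0\rs{f}=\rs{0f}\,0=0$ since $0f=0$. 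Conversely, if $\rs{f}$ is strongly additive and $f$ is additive, taking restrictions in $x\rs{f}+y\rs{f}\leq(x+y)\rs{f}$ gives $\rs{xf+yf}=\rs{x\rs{f}+y\rs{f}}\leq\rs{(x+y)\rs{f}}=\rs{(x+y)f}$, which combined with $(x+y)f\smile xf+yf$ yields $xf+yf\leq(x+y)f$; and $0\rs{f}=\rs{0f}\,0=0$ forces $\rs{0f}=1$, so $0f=0$ from $0f\leq 0$.

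Parts (iii) and (iv) are direct applications of strong additivity in its $\leq$ form. Identities are trivial. For the composite of strongly additive $f$ and $g$, chain $xfg+yfg\leq(xf+yf)g\leq(x+y)fg$ by strong additivity of $g$ and then of $f$ (post-composed with $g$, using that $\leq$ is preserved by composition on the right). The zero morphism is strongly additive because $x\cdot 0+y\cdot 0=\rs{x}\,0+\rs{y}\,0=\rs{x}\,\rs{y}\,0=(x+y)\cdot 0$, using Proposition~\ref{propLA}(i) and $0+0=0$ in the commutative monoid. For sums, distribute and combine: $x(f+g)+y(f+g)=(xf+yf)+(xg+yg)\leq(x+y)f+(x+y)g=(x+y)(f+g)$ by Proposition~\ref{propLA}(iii), and $0(f+g)=0f+0g=0$.

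The main obstacle is (v). Given $fg=\rs{f}$ and $gf=\rs{g}$, the key step is to establish the identity $(xg+yg)f=x\rs{g}+y\rs{g}$. Since $gfg=\rs{g}g=g$, one has $xg\cdot\rs{f}=xg\cdot fg=x(gfg)=xg$, and likewise $yg\cdot\rs{f}=yg$, so Lemma~\ref{strongadd} applied to $f$ with inputs $xg$ and $yg$ yields $(xg+yg)f=(xg\cdot\rs{f}+yg\cdot\rs{f})f=xgf+ygf=x\rs{g}+y\rs{g}$. Composing on the right with $g$ gives $(x\rs{g}+y\rs{g})g=(xg+yg)fg=(xg+yg)\rs{f}$, and this equals $xg+yg$ because, by \rfour \ and the identity just proved, $\rs{(xg+yg)\rs{f}}=\rs{(xg+yg)f}=\rs{x\rs{g}+y\rs{g}}=\rs{xg+yg}$. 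By Lemma~\ref{strongadd}, the equation $(x\rs{g}+y\rs{g})g=xg+yg$ is exactly what is needed to conclude that $g$ is strongly additive; the zero axiom follows from $0g=0fg=0\rs{f}=0$.
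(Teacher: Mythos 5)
Your proof is correct and follows essentially the same route as the paper: each part is reduced to the equational form of Lemma \ref{strongadd}, with the same key computations for (ii)--(v) (in particular, the same trick in (v) of applying strong additivity of $f$ to the inputs $xg$ and $yg$ and using $g\rs{f}=g$). The only cosmetic difference is in the converse of (ii), where you derive $\rs{xf+yf}\leq\rs{(x+y)f}$ and combine it with compatibility rather than chaining equalities directly, but this is the same underlying idea.
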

\begin{proof}
In most of the following proofs, we omit the proof of the $0$ axiom, as it is straightforward.  
\begin{enumerate}[(i)]

\item Since $\leq$ implies $\smile$, strongly additive maps are additive, and by previous discussion, if $f$ is total, the restrictions 
      of $xf + yf$ and $(x+y)f$ are equal, so $\smile$ implies $\leq$.  
\item When $f$ is strongly additive then $f$ is additive.  To show that $\rs{f}$ is strongly additive we have:
	\begin{eqnarray*}
	&   & (x\rs{f} + y\rs{f})\rs{f} \\
	& = & \rs{(x\rs{f} + y\rs{f})f}(x\rs{f} + y\rs{f}) \mbox{ by \rfour,} \\
	& = & \rs{xf + yf}(x\rs{f} + y\rs{f}) \mbox{by \ref{strongadd} as $f$ is strongly additive,} \\
	& = & \rs{x\rs{f}}\rs{y\rs{f}}(x\rs{f} + y\rs{f}) \\
	& = & x\rs{f} + y\rs{f}
	\end{eqnarray*}
       Together with  $0\rs{f} = \rs{0f}0 = \rs{0}0 = 0$, this implies, using Lemma \ref{strongadd}, that $\rs{f}$ is strongly additive.

      Conversely, suppose $\rs{f}$ is strongly additive and $f$ is additive.  First, observe:
      \begin{eqnarray*}
       \rs{xf + yf} &=& \rs{xf}\rs{yf}\\
                    &=& \rs{x\rs{f}}\rs{y\rs{f}}\\
                    &=& \rs{x\rs{f}+y\rs{f}}\\
                    &=& \rs{(x\rs{f}+y\rs{f})\rs{f}} \mbox{by \ref{strongadd} as $\rs{f}$ is strongly additive}\\
                    &=& \rs{(x\rs{f}+y\rs{f})f}
      \end{eqnarray*}
       This can be used to show:
      \begin{eqnarray*}
        xf+yf &=& \rs{xf+yf}(xf+yf)\\
              &=& \rs{(x\rs{f}+y\rs{f})f}(xf+yf) \mbox{ by the above}\\
              &=& \rs{xf+yf}(x\rs{f}+y\rs{f})f \mbox{as $f$ is additive }\\
              &=& \rs{(x\rs{f}+y\rs{f})f}(x\rs{f}+y\rs{f})f \mbox{ by the above}\\
              &=& (x\rs{f}+y\rs{f})f,
      \end{eqnarray*}
	For the zero case we have:
      \begin{eqnarray*}
       0f &=& \rs{0f}0 \mbox{ since $f$ is additive}\\
          &=& \rs{0\rs{f}}0\\
          &=& \rs{0}0 \mbox{ since $\rs{f}$ is strongly additive}\\
          &=& 0
      \end{eqnarray*}
	Thus, by lemma \ref{strongadd}, $f$ is strongly additive.
	\item Identities are total and additive, so are strongly additive.  Suppose $f$ and $g$ are strongly additive.  Then
	\begin{eqnarray*}
	&   & xfg + yfg \\
	& \leq & (xf + yf)g \mbox{ since $g$ strongly additive,} \\
	& \leq & (x+y)fg \mbox{ since $f$ strongly additive,}
	\end{eqnarray*}
	so $fg$ is strongly additive.  
\item Since any $0$ is total and additive, $0$'s are strongly additive.  Suppose $f$ and $g$ are strongly additive.  Then
	\begin{eqnarray*}
	&   & x(f+g) + y(f+g) \\
	& = & xf + xg + yf + yf \mbox{ by left additivity,} \\
	& \leq & (x+y)f + (x+y)g \mbox{ since $f$ and $g$ are strongly additive,} \\
	& = & (x+y)(f+g) \mbox{ by left additivity,} 
	\end{eqnarray*}
	so $f+g$ is strongly additive.
	\item Suppose $f$ is strongly additive and has a partial inverse $g$.  Using the alternate form of strongly additive,
	\begin{eqnarray*}
	&   & (x\rs{g} + y\rs{g})g \\
	& = & (xgf + ygf)g \\
	& = & (xg\rs{f} + yg\rs{f})fg \mbox{ since $f$ is strongly additive,} \\
	& = & (xg\rs{f} + yg\rs{f})\rs{f} \\
	& = & xg\rs{f} + yg\rs{f} \mbox{ since $\rs{f}$ strongly additive,} \\
	& = & xg + yg
	\end{eqnarray*}
	and $0g = 0fg = 0\rs{f} = 0$, so $g$ is strongly additive. 
\end{enumerate}
\end{proof}

Finally, note that neither additive nor strongly additive maps are closed under joins.  For additive, the join of the additive maps $f: \{n,m\} \to \{p,q\}$ and $g: \{n+m\} \to \{r\}$, where $p+q \ne r$, is not additive.  For strongly additive, if $f$ is defined on multiples of $2$ and $g$ on multiples of $3$, their join is not closed under addition, so is not strongly additive.

\subsection{Cartesian left additive restriction categories}

In a differential restriction category, we will need both cartesian and left additive structure.  Thus, we describe here how cartesian and additive restriction 
structures must interact.  

\begin{definition}
$\X$ is a {\bf cartesian left additive restriction category} if it is both a left additive and cartesian restriction category such that the product functor preserves 
addition (that is $(f+g) \x (h+k) = (f \x h) + (g\x k)$ and $0 = 0 \x 0$) and the maps $\pi_0$,$\pi_1$, and $\Delta$ are additive.
\end{definition}

If $\X$ is a cartesian left additive restriction category, then each object becomes canonically a (total) commutative monoid by $+_X = \pi_0+\pi_1: X \x X \to X$ 
and $0: 1 \to X$.  Surprisingly, assuming these total commutative monoids are coherent with the cartesian structure, one can then recapture the additive structure, 
as the following theorem shows.  Thus, in the presence of cartesian restriction structure, it suffices to give additive structure on the total maps to get a cartesian 
left additive restriction category.

\begin{theorem}\label{thmAddCart}
$\X$ is a left additive cartesian restriction category if and only if $\X$ is a cartesian restriction category in which each object 
is canonically a total commutative monoid, that is, for each object $A$, there are given maps $A \times A \to^{+_A} A$ and $1 \to^{0_A} A$ 
making $A$ a total commutative monoid, such that following exchange\footnote{Recall that the exchange map is defined by  
$\mbox{ex} := \<\pi_0 \x \pi_0,\pi_1 \x \pi_1\>$ and that it satisfies, for example, $\<\<f,g\>,\<h,k\>\>\mbox{ex}  = \<\<f,h\>,\<g,k\>\>$ and 
$(\Delta \x \Delta)\mbox{ex}  = \Delta$.} axiom holds:
	\[ +_{X \x Y} = (X \x Y) \x (X \x Y)\to^{\sf ex} (X \x X) \x (Y \x Y) \to^{+_X \x +_Y} X \x Y.\] 
\end{theorem}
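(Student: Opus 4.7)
The plan is to prove both directions of the equivalence, with the reverse direction carrying most of the weight. For the forward direction, starting from a left additive cartesian restriction category, I will set $+_A := \pi_0 + \pi_1 : A \times A \to A$ and let $0_A : 1 \to A$ be the zero map. These are total since $\rs{+_A} = \rs{\pi_0}\rs{\pi_1} = 1$ and $0_A$ is total by assumption. Commutativity, associativity, and the unit laws for $(A, +_A, 0_A)$ should reduce---via left additivity and the additivity of $\pi_0$, $\pi_1$, $\Delta$, and the swap $\sigma$---to the commutative monoid structure on the hom-sets $\X(A^n, A)$. For the exchange axiom, I will expand $\text{ex} \cdot (+_X \times +_Y) = \text{ex} \cdot ((\pi_0^X + \pi_1^X) \times (\pi_0^Y + \pi_1^Y))$; the product-preserves-addition hypothesis converts this to $\text{ex} \cdot ((\pi_0^X \times \pi_0^Y) + (\pi_1^X \times \pi_1^Y))$, and then left additivity together with the direct computation $\text{ex} \cdot (\pi_i^X \times \pi_i^Y) = p_i$ (the $i$-th projection of $(X \times Y) \times (X \times Y)$) should yield $p_0 + p_1 = +_{X \times Y}$.

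For the backward direction, I will define $f + g := \<f, g\> \cdot +_B$ and let the zero map $A \to B$ be $!_A \cdot 0_B$ for parallel $f, g : A \to B$. The hom-set monoid laws transfer from those of $(B, +_B, 0_B)$: commutativity from $\sigma \cdot +_B = +_B$ combined with $\<g, f\> = \<f, g\> \sigma$; the unit law $f + 0 = f$ by factoring $\<f, !_A 0_B\> = \<f, f!_B 0_B\> = f \cdot \<1_B, !_B 0_B\>$ (using Proposition \ref{propCart}(ii) and (iii)) and then invoking the unit law for $+_B$. The restriction axiom $\rs{f+g} = \rs{f}\rs{g}$ follows from totality of $+_B$ and $\rs{\<f, g\>} = \rs{f}\rs{g}$. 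Left additivity $h(f+g) = hf + hg$ is immediate from Proposition \ref{propCart}(iii), while $h \cdot 0 = h!_A \cdot 0_B = \rs{h}!_A \cdot 0_B = \rs{h} \cdot 0$ follows from the restriction-terminal identity $h!_B = \rs{h}!_A$.

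The remaining step---and the main obstacle---is to show that the product functor preserves addition and that $\pi_0$, $\pi_1$, $\Delta$ are additive; this is exactly where the exchange axiom is essential. For $\Delta$, Proposition \ref{propCart}(iii) gives $(f+g)\Delta = \<f+g, f+g\>$, while unfolding $f\Delta + g\Delta = \<\<f,f\>, \<g,g\>\> \cdot \text{ex} \cdot (+_A \times +_A)$ and applying the exchange identity $\<\<f,f\>,\<g,g\>\> \cdot \text{ex} = \<\<f,g\>,\<f,g\>\>$ yields the same map. For $\pi_0$ (symmetrically $\pi_1$), the exchange axiom together with Proposition \ref{propCart}(vi) (using totality of $+_B$) gives $+_{A \times B} \cdot \pi_0 = (\pi_0 \times \pi_0) \cdot +_A$; precomposing with $\<u, v\>$ and invoking the identity $\<\rs{v} u \pi_0, \rs{u} v \pi_0\> = \<u \pi_0, v \pi_0\>$ (a consequence of Proposition \ref{propCart}(ii)) should produce $(u + v)\pi_0 = u\pi_0 + v\pi_0$. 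Product-preserves-addition is then essentially the reverse of the forward exchange computation. The genuine difficulty throughout this paragraph is that pairing does \emph{not} distribute over addition in general (so $\<f + f', g + g'\> \neq \<f, g\> + \<f', g'\>$); every calculation succeeds only because totality of $+_A$, $0_A$, and the projections allows the stray restriction idempotents to be reabsorbed via Proposition \ref{propCart}(ii), (iii), and (vi).
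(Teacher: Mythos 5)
Your proposal is correct and follows essentially the same route as the paper: for the substantive direction you define $f+g := \<f,g\>+_B$ and $0 := !_A 0_B$, check the hom-set monoid laws, the restriction interaction, and left additivity from the cartesian coherences, and then use the exchange axiom together with totality of $+$ and Proposition \ref{propCart} to get additivity of $\pi_0,\pi_1,\Delta$ and preservation of addition by $\times$ --- exactly the paper's argument, written equationally rather than diagrammatically. The forward direction you spell out (defining $+_A = \pi_0+\pi_1$ and verifying the exchange law from product-preserves-addition and left additivity) is only asserted, not proved, in the paper's preceding paragraph, so there is no discrepancy there either.
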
 

\begin{proof}
Given a canonical commutative monoid structure on each object, the left additive structure on $\X$ is defined by:
  $$\infer[\mbox{add}]{~~A \to_{f + g := \<f,g\>+_B} B~~}{A \to^f B & A \to^g B} ~~~~~ \infer[\mbox{zero}]{~~A \to_{0_{AB} := !_A 0_B} B~~}{}$$ 
That this gives a commutative monoid on each $\X(A,B)$ follows directly from the commutative monoid axioms on $B$ and 
the cartesian structure.  For example, to show $f + 0 = f$, we need to show $\<f,!_A 0_B \> = f$.  Indeed, we have
\[
\bfig
	\node a(0,0)[A]
	\node b(2100,0)[B \times B]
	\node c(700,-400)[A \times 1]
	\node d(1400,-800)[B \times 1]
	\node e(2100,-1200)[B]
	\arrow[a`b;\<f,!_A0_B\>]
	\arrow[a`c;\cong]
	\arrow[c`b;f \times 0_B]
	\arrow[c`d;f \times 1]
	\arrow[d`e;\cong]
	\arrow[b`e;+_B]
	\arrow[d`b;1 \times 0_B]
	\arrow|l|/{@{>}@/^-40pt/}/[a`e;f]
\efig
\]
the right-most shape commutes by one of the commutative monoid axioms for $B$, and the other shapes commute by coherences of the cartesian 
structure.  The other commutative monoid axioms are similar. 

For the interaction with restriction,
	\[ \rs{f+g} = \rs{\<f,g\> +_B} = \rs{\<f,g\> \rs{+_B}} = \rs{\<f,g\>} = \rs{f}\rs{g}, \]
and $\rs{0_{AB}} = \rs{!_A 0_B} = 1$ since $!$ and $0$ are themselves total.  

For the interaction with composition,
	\[ f(g+h) = f\<g,h\>+_C = \<fg,fh\>+_C = fg + fh \]
and
	\[ f0_{BC} = f!_B0_C = \rs{f}!_A0_C = \rs{f}0_{AC} \]
as required.

The requirement that $(f + g) \times (h + k) = (f \times h) + (g \times k)$ follows from the exchange axiom:
$$\xymatrix@C=4pc @R=2pc{A \x C \ar@/_5pc/[rrdd]_{\<f,g\> +_B \x \<h,k\> +_D} \ar[rr]^{\<f \x h, g \x k \>} \ar[dr]^{\<f, g\> \x \<h,k\>} 
             & & (B \x D) \x (B \x D) \ar[dl]_{\mbox{ex}} \ar[dd]^{+_{B \x D}} \\
             & (B \x B) \x (D \x D) \ar[dr]^{+_B \x +_D} \\
             & & B \x D}$$
the right triangle is the exchange axiom, and the other two shapes commute by the cartesian coherences.  

Since $\pi_0$ is total, $\pi_0$ is additive in case for all $f,g : A \to B \x C$, $(f+g)\pi_0 = f\pi_0 + g\pi_0$, which is shown 
by the following diagram:
$$\xymatrix@C=4pc @R=2pc{
A \ar@/_5pc/[drr]_{\<f\pi_0,g\pi_0\>}\ar[r]^{\<f,g\>} \ar[dr]|{\<\<f\pi_0,g\pi_0\>,\<f\pi_1,g\pi_1\>\>}
            & (B\x C) \x (B\x C) \ar[r]^{+_{(B\x C)}} \ar[d]|{\mbox{ex}}& B\x C \ar[r]^{\pi_0}& B\\
  & (B\x B) \x (C\x C) \ar[ur]_{+_B\x +_C} \ar[r]_{\pi_0} & B\x B \ar[ur]_{+_B} &
}$$

A similar argument shows that $\pi_1$ is additive. Since $\Delta$ is total, $\Delta$ is additive when for all 
$f,g: A \to B$, $f\Delta + g\Delta = (f+g)\Delta$.  This is shown by the following diagram:

$$\xymatrix@C=4pc @R=2pc{
 A \ar[r]^{\<f\Delta,g\Delta\>~~} \ar[d]_{\<f,g\>} \ar@/_5pc/[dd]_{\<f,g\>+_{B}} 
        & (B\x B)\x(B\x B) \ar[d]|{\mbox{ex}} \ar@/^5pc/@<3ex>[dd]^{+_{B\x B}}  \\
 B\x B \ar[d]_{+_{B}} \ar[ur]^{\Delta \x \Delta} \ar[dr]_{\<+_{B},+_{B}\>} \ar[r]^{\Delta} & (B\x B)\x(B\x B)\ar[d]|{+_{B}\x +_{B}} \\
 B \ar[r]_{\Delta} & B\x B
}$$
\end{proof}

\begin{proposition}\label{propCLA}
In a cartesian left additive restriction category:
\begin{enumerate}[(i)]
	\item $\<f,g\> + \<f',g'\> = \<f + f',g+g'\>$ and $\<0,0\> = 0$;
	\item if $f$ and $g$ are additive, then so is $\<f,g\>$;
	\item the projections are strongly additive, and if $f$ and $g$ are strongly additive, then so is $\<f,g\>$,
	\item $f$ is additive if and only if 
		\[ (\pi_0 + \pi_1)f \smile \pi_0f + \pi_1f \mbox{ and } \, 0f \smile 0; \]
	      (that is, in terms of the monoid structure on objects, $(+)(f) \smile (f \times f)(+)$ and $0f \smile 0$),
	\item $f$ is strongly additive if only if
		\[ (\pi_0 + \pi_1)f \geq \pi_0f + \pi_1f \mbox{ and } \, 0f = 0; \]
	      (that is, $(+)(f) \geq (f \times f)(+)$ and $0f \geq 0$).
\end{enumerate}
\end{proposition}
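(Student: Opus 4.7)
I would prove the five parts roughly in the listed order, since later parts build on earlier ones.

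For (i), I would appeal to the exchange axiom. Writing the sum via the monoid structure on $X \times Y$,
\[\<f,g\> + \<f',g'\> = \<\<f,g\>,\<f',g'\>\>\,+_{X \times Y} = \<\<f,g\>,\<f',g'\>\>\,\mbox{ex}\,(+_X \times +_Y),\]
the standard exchange identity $\<\<f,g\>,\<f',g'\>\>\mbox{ex} = \<\<f,f'\>,\<g,g'\>\>$ (noted in Theorem \ref{thmAddCart}) together with Prop \ref{propCart}(vi), applicable because $+_X$ and $+_Y$ are total, delivers $\<f+f', g+g'\>$. For $\<0,0\> = 0$, since the projections are additive and total, $0_{X \times Y}\pi_0 = 0$ and $0_{X \times Y}\pi_1 = 0$, so $0_{X \times Y} = \<0,0\>$ by universality of pairing among total maps; the general-domain case then reduces to this by factoring out $!_A$ via Prop \ref{propCart}(iii).

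Parts (ii) and (iii) follow immediately by combining (i) with Prop \ref{propCart}(v) and (iv) respectively: one computes $x\<f,g\> + y\<f,g\> = \<xf,xg\> + \<yf,yg\> = \<xf+yf, xg+yg\>$, while $(x+y)\<f,g\> = \<(x+y)f, (x+y)g\>$, and the componentwise compatibility or inequality lifts through the pairing. The projections and $\Delta$ are additive by definition, and since $\pi_0,\pi_1$ are total, the first clause of the previous proposition (total additive = strongly additive) supplies their strong additivity.

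The heart of the argument is (iv) and (v). The forward directions are immediate by specializing to $x = \pi_0, y = \pi_1$. For the converses, I would first observe that both $\smile$ and $\leq$ are preserved by precomposition, a two-line consequence of \rfour. Given arbitrary $x,y: C \to A$, I precompose the hypothesis by $\<x,y\>$. Using Prop \ref{propCart}(i), $\<x,y\>\pi_0 = \rs{y}x$ and $\<x,y\>\pi_1 = \rs{x}y$, so by Prop \ref{propLA}(i), $\<x,y\>(\pi_0 + \pi_1) = \rs{y}x + \rs{x}y = x+y$, whence $\<x,y\>(\pi_0 + \pi_1)f = (x+y)f$; and by left distributivity, $\<x,y\>(\pi_0 f + \pi_1 f) = \rs{y}xf + \rs{x}yf$. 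The main obstacle is then to verify the identity
\[\rs{y}xf + \rs{x}yf = xf + yf.\]
One direction uses $\rs{y}xf \leq xf$, $\rs{x}yf \leq yf$, plus monotonicity of addition (Prop \ref{propLA}(iii)). For the reverse, I rewrite $xf + yf = \rs{yf}xf + \rs{xf}yf$ using Prop \ref{propLA}(i), and note $\rs{yf} \leq \rs{y}$ (a direct consequence of Lemma \ref{lemma:restriction}(ii) and commutativity of same-domain restriction idempotents) to conclude $\rs{yf}xf \leq \rs{y}xf$, and symmetrically for the other summand. With this identity in hand, precomposing the hypothesis gives $(x+y)f \smile xf + yf$ for (iv) and the analogous inequality for (v). The zero clauses reduce to the respective hypotheses on $0: 1 \to A$ by writing $0_{C,A} = !_C\,0_{1,A}$ and precomposing by $!_C$, which preserves both $\smile$ and $\leq$.
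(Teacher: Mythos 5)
Your proposal is correct and follows essentially the same route as the paper: the substantive parts (iv) and (v) are handled exactly as in the paper's proof, by precomposing the hypothesis with $\<x,y\>$ and reducing $\rs{y}xf + \rs{x}yf$ to $xf + yf$ (a step the paper leaves implicit in its displayed computation but which you rightly isolate and justify). The only variation is in (i), where you route through the exchange axiom of Theorem \ref{thmAddCart} rather than post-composing with the additive projections as the paper does, and in (ii), where you push the sum inside the pairings and invoke Proposition \ref{propCart}(v) instead of checking compatibility against each projection; both variants are valid.
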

Note that $f$ being strongly additive only implies that $+$ and $0$ are lax natural transformations.  
\begin{proof}
\begin{enumerate}[(i)]
	\item Since the second term is a pairing, it suffices to show they are equal when post-composed with projections.  Post-composing with $\pi_0$, we get
	\begin{eqnarray*}
		&   & (\<f,g\> + \<f',g'\>)\pi_0 \\
		& = & \<f,g\>\pi_0 + \<f',g'\>\pi_0 \mbox{ since $\pi_0$ is additive,} \\
		& = & \rs{g}f + \rs{g'}f' \\
		& = & \rs{g}\rs{g'}(f + f') \\
		& = & \rs{g + g'}(f + f') \\
		& = & \<f + f',g + g'\>\pi_0
	\end{eqnarray*}
	as required.  The 0 result is direct.
	\item We need to show
		\[ (x+y)\<f,g\> \smile x\<f,g\> + y\<f,g\>; \]
	however, since the first term is a pairing, it suffices to show they are compatible when post-composed by the projections.  Indeed,
		\[ (x+y)\<f,g\>\pi_0 = (x+y)\rs{g}f \smile x\rs{g}f + y\rs{g}f \]
	while since $\pi_0$ is additive,
		\[ (x\<f,g\> + y\<f,g\>)\pi_0 = x\<f,g\>\pi_0 + y\<f,g\>\pi_0 = x\rs{g}f + y\rs{g}f \]
	so the two are compatible, as required.  Post-composing with $\pi_1$ is similar.
	\item Since projections are additive and total, they are strongly additive.  If $f$ and $g$ are strongly additive,
	\begin{eqnarray*}
	&   & x\<f,g\> + y\<f,g\> \\
	& = & \<xf,xg\> + \<yf,yg\> \\
	& = & \<xf + yf, xg + yg\> \mbox{ by (i)} \\
	& \leq & \<(x+y)f, (x+y)g\> \mbox{ since $f$ and $g$ are strongly additive,} \\
	& = & (x+y)\<f,g\>
	\end{eqnarray*}
	so $\<f,g\>$ is strongly additive.  
	\item If $f$ is additive, the condition obviously holds.  Conversely, if we have the condition, then $f$ is additive, since
		\[ (x+y)f = \<x,y\>(\pi_0 + \pi_1)f \smile \<x,y\>(\pi_0f + \pi_1f) = xf + yf \]
	as required.
	\item Similar to the previous proof.  
\end{enumerate}
\end{proof}

\subsection{Differential restriction categories}

With cartesian left additive restriction categories defined, we turn to defining differential restriction categories.  To do this, we begin by recalling the notion of a cartesian differential category.  The idea is to axiomatize the Jacobian of smooth maps.  Normally, the Jacobian of a map $f: X \to Y$ gives, for each point of $X$, a linear map $X \to Y$.  That is, $D[f]: X \to [X,Y]$.  However, we don't want to assume that our category has closed structure.  Thus, uncurrying, we get that the derivative should be of the type $D[f]: X \times X \to Y$.  The second coordinate is simply the point at which the derivative is being taken, while the first coordinate is the direction in which this derivative is being evaluated.  With this understanding, the first five axioms of a cartesian differential category should be relatively clear.  Axioms 6 and 7 are slightly more tricky, but in essence they say that the derivative is linear in its first variable, and that the order of partial differentiation does not matter.  For more discussion of these axioms, see \cite{cartDiff}.
\begin{definition}
A \textbf{cartesian differential category} is a cartesian left additive category with a differentiation operation
$$\infer{X \x X \to_{D[f]} Y}{X \to^f Y}$$
such that 
\begin{enumerate}[{\bf [CD.1]}]
\item $D[f+g] = D[f]+D[g]$ and $D[0]=0$ (additivity of differentiation);
\item $\< g+h,k\>D[f] = \< g,k\>D[f] + \< h,k\>D[f]$ and $\< 0,g\>D[f] = 0$ (additivity of a derivative in its first variable);
\item $D[1] = \pi_0, D[\pi_0] = \pi_0\pi_0$, and $D[\pi_1] = \pi_0\pi_1$ (derivatives of projections);
\item $D[\<f,g\>] = \< D[f],D[g]\>$ (derivatives of pairings);
\item $D[fg] = \<D[f],\pi_1f \>D[g]$ (chain rule);
\item $\<\<g,0\>,\<h,k\>\>D[D[f]] = \<g,k\>D[f]$ (linearity of the derivative in the first variable);
\item $\< \< 0,h\>,\< g,k\>\> D[D[f]] = \<\< 0,g\>,\< h,k\>\> D[D[f]]$ (independence of partial differentiation).
\end{enumerate}
\end{definition}

We now give the definition of a differential restriction category.  Axioms 8 and 9 are the additions to the above.  Axiom 8 says that the differential of a restriction is similar to the derivative of an identity, with the partiality of $f$ now included.  Axiom 9 says that the restriction of a differential is nothing more than $1 \times \rst{f}$: the first component, being simply the co-ordinate of the direction the derivative is taken, is always total.  In addition to these new axioms, one must also modify axioms 2 and 6 to take into account the partiality when one loses maps, and remove the first part of axiom 3 ($D[1] = \pi_0$), since axiom 8 makes it redundant.  

\begin{definition}
A \textbf{differential restriction category} is a cartesian left additive restriction category with a differentiation operation
$$\infer{X \x X \to_{D[f]} Y}{X \to^f Y}$$
such that 
\begin{enumerate}[{\bf [DR.1]}]
\item $D[f+g] = D[f]+D[g]$ and $D[0]=0$;
\item $\< g+h,k\>D[f] = \< g,k\>D[f] + \< h,k\>D[f]$ and $\< 0,g\>D[f] = \rst{gf}0$;
\item $D[\pi_0] = \pi_0\pi_0$, and $D[\pi_1] = \pi_0\pi_1$;
\item $D[\<f,g\>] = \< D[f],D[g]\>$;
\item $D[fg] = \<D[f],\pi_1f \>D[g]$;
\item $\<\<g,0\>,\<h,k\>\>D[D[f]] = \rst{h} \<g,k\>D[f]$;
\item $\< \< 0,h\>,\< g,k\>\> D[D[f]] = \<\< 0,g\>,\< h,k\>\> D[D[f]]$;
\item $D[\rst{f}] = (1 \times \rst{f})\pi_0$;
\item $\rst{D[f]} = 1 \x \rst{f}$.
\end{enumerate}
\end{definition}

Of course, any cartesian differential category is a differential restriction category, when equipped with the trivial restriction structure ($\rs{f} = 1$ for all $f$).  The standard example with a non-trivial restriction is smooth functions defined on open subsets of $\R^n$; that this is a differential restriction category is readily verified.  In the next section, we will present a more sophisticated example (rational functions over a commutative ring).

There is an obvious notion of differential restriction functor:

\begin{definition}
If $\X$ and $\Y$ are differential restriction categories, a \textbf{differential restriction functor} $\X \to^F \Y$ is a restriction functor such that
\begin{itemize}
	\item $F$ preserves the addition and zeroes of the homsets;
	\item $F$ preserves products strictly: $F(A \times B) = FA \times FB, F1 = 1$, as well as pairings and projections,
	\item $F$ preserves the differential: $F(D[f]) = D[F(f)]$.
\end{itemize}
\end{definition}  

The differential itself automatically preserves both the restriction ordering and the compatibility relation:

\begin{proposition}\label{propDiff} In a differential restriction category:
\begin{enumerate}[(i)]
\item $D[\rst{f}g] = (1 \x \rst{f}) D[g]$;
\item If $f \leq g$ then $D[f] \leq D[g]$;
\item If $f \smile g$ then $D[f] \smile D[g]$.
\end{enumerate}
\end{proposition}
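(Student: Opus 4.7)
The plan is to establish (i) first by a direct chain-rule computation, and then deduce (ii) and (iii) as essentially immediate consequences of (i) combined with \dr{9}.

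For (i), I start with the chain rule \dr{5}:
\[ D[\rst{f}g] = \<D[\rst{f}], \pi_1\rst{f}\>D[g]. \]
By \dr{8}, $D[\rst{f}] = (1\times\rst{f})\pi_0$. The key small observation is that $\pi_1\rst{f} = (1\times\rst{f})\pi_1$: unfolding $1 \times \rst{f} = \<\pi_0, \pi_1\rst{f}\>$ and applying Proposition \ref{propCart}(i),
\[ (1\times\rst{f})\pi_1 = \<\pi_0, \pi_1\rst{f}\>\pi_1 = \rst{\pi_0}\,\pi_1\rst{f} = \pi_1\rst{f}. \]
Therefore
\[ D[\rst{f}g] = \<(1\times\rst{f})\pi_0, (1\times\rst{f})\pi_1\>\,D[g] = (1\times\rst{f})\<\pi_0,\pi_1\>\,D[g] = (1\times\rst{f})\,D[g], \]
where the middle step uses Proposition \ref{propCart}(iii).

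For (ii), assume $f \leq g$, i.e.\ $\rst{f}g = f$. Applying $D$ to both sides and using (i) together with \dr{9},
\[ D[f] = D[\rst{f}g] = (1\times\rst{f})\,D[g] = \rst{D[f]}\,D[g], \]
which is precisely $D[f] \leq D[g]$.

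For (iii), assume $f \smile g$, i.e.\ $\rst{f}g = \rst{g}f$. Applying $D$ to both sides and using (i) and \dr{9} on each side,
\[ \rst{D[f]}\,D[g] = (1\times\rst{f})\,D[g] = D[\rst{f}g] = D[\rst{g}f] = (1\times\rst{g})\,D[f] = \rst{D[g]}\,D[f], \]
so $D[f] \smile D[g]$.

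The only non-routine point is the identity $\pi_1\rst{f} = (1\times\rst{f})\pi_1$, which is where the ``laxness'' of the pairing axiom has to be handled carefully (one cannot simply appeal to Proposition \ref{propCart}(vi), since $\rst{f}$ is not in general total); everything else is a mechanical substitution.
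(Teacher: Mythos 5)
Your proposal is correct and follows essentially the same route as the paper: (i) by the chain rule \dr{5} plus \dr{8} and the cartesian lemmas, then (ii) and (iii) by applying $D$ to $\rst{f}g = f$ resp.\ $\rst{f}g = \rst{g}f$ and invoking \dr{9}. The only difference is cosmetic: where the paper cites ``naturality'' for $\pi_1\rst{f} = (1\times\rst{f})\pi_1$, you spell that step out via Proposition \ref{propCart}(i), which is a fair (indeed slightly more careful) rendering of the same argument.
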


\begin{proof}
\begin{enumerate}[(i)]
\item Consider:
\begin{eqnarray*}
&   & D[\rst{f}g] \\
& = & \< D \rs{f}, \pi_1 \rs{f} \> D[g] \mbox{ by [\dr{5}]} \\
& = & \< (1 \times \rs{f}) \pi_0 , \pi_1 \rs{f} \> D[g] \mbox{ by [\dr{8}]} \\
& = & \< (1 \times \rs{f}) \pi_0 , (1 \times \rs{f}) \pi_1  \> D[g] \mbox{ by
naturality} \\
& = & (1 \times \rs{f})D[g] \mbox{ by Lemma \ref{propCart}.}
\end{eqnarray*}
as required.  
\item If $f \leq g$, then
    \[ \rs{D[f]} D[g] = (1 \times \rs{f}) D[g] = D[\rs{f}g] = D[f], \]\
      so $D[f] \leq D[g]$.
\item If $f \smile g$, then
    \[ \rs{D[f]} D[g] = (1 \times \rs{f}) D[g] = D[\rs{f}g] = D[\rs{g}f] = (1
\times \rs{g})D[f] = \rs{D[g]}D[f], \]
      so $D[f] \smile D[g]$.  
\end{enumerate}
\end{proof}

Moreover, just as for cartesian and left additive structure, if $\X$ has joins and differential structure, then they are automatically compatible:

\begin{proposition}\label{propjoindiffcompat} In a differential restriction category with joins, 
\begin{enumerate}[(i)]
	\item $D[\emptyset] = \emptyset$,
	\item $D \left[ \bigvee_i f_i \right] = \bigvee_i D[f_i]$.
\end{enumerate}
\end{proposition}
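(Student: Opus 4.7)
The plan is to prove both parts by computing restrictions via \dr{9} and then invoking the standard restriction-category fact that if $f\leq g$ and $\rst{f}=\rst{g}$ then $f=g$.

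For (i), I would start from \dr{9}, which gives $\rst{D[\emptyset]} = 1\x\rst{\emptyset} = 1\x\emptyset$. By the compatibility of the cartesian structure with joins (shown in the final proposition of Section~\ref{sectionRes}), $1\x\emptyset = \emptyset$, so $\rst{D[\emptyset]}=\emptyset$. Then $D[\emptyset] = \rst{D[\emptyset]}D[\emptyset] = \emptyset\,D[\emptyset] = \emptyset$, using that composing $\emptyset$ on either side yields $\emptyset$ (itself a trivial consequence of Proposition~\ref{propJoins}(iii) applied to the empty family).

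For (ii), I would first use Proposition~\ref{propDiff}(iii) to observe that the family $(D[f_i])_{i\in I}$ is pairwise compatible, so the join $\bigvee_i D[f_i]$ exists. The easy direction $\bigvee_i D[f_i] \leq D[\bigvee_i f_i]$ follows directly from Proposition~\ref{propDiff}(ii) applied to each inequality $f_i \leq \bigvee_i f_i$, combined with the universal property of joins. For the reverse direction, rather than trying to show the inequality directly, I would show the restrictions of both sides coincide, since the ordering combined with equal restrictions forces equality. The computation is:
\begin{eqnarray*}
\rst{D\left[\bigvee_i f_i\right]}
& = & 1\x\rst{\bigvee_i f_i} \mbox{ by \dr{9},}\\
& = & 1\x\bigvee_i \rst{f_i} \mbox{ by Proposition~\ref{propJoins}(ii),}\\
& = & \bigvee_i (1\x\rst{f_i}) \mbox{ by compatibility of products with joins,}\\
& = & \bigvee_i \rst{D[f_i]} \mbox{ by \dr{9},}\\
& = & \rst{\bigvee_i D[f_i]} \mbox{ by Proposition~\ref{propJoins}(ii).}
\end{eqnarray*}

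No real obstacle is expected: the entire argument is a bookkeeping exercise combining \dr{9} with the join-compatibility results already established for restriction, cartesian, and order-theoretic structure. The only mildly delicate point is to remember that the reverse inequality is not proved directly, but obtained from ``$\leq$ plus equal restrictions implies $=$''; this is the same pattern used in the proof of Proposition~\ref{propJoins}(iii).
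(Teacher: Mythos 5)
Your proof is correct, and part (i) is essentially the paper's argument (the paper just compresses the final step into ``by Lemma \ref{propJoins}''). For part (ii) your route differs mildly but genuinely in structure from the paper's. The paper never splits the statement into two inequalities: it gives a single equational chain starting from $\bigvee_i D[f_i]$, rewriting each $f_i$ as $\rs{f_i}\bigvee_j f_j$ (Proposition \ref{propJoins}(i)), applying Proposition \ref{propDiff}(i) in the form $D[\rs{f_i}\,\bigvee_j f_j] = (1\times\rs{f_i})D[\bigvee_j f_j]$, pulling the join of the idempotents $1\times\rs{f_i}$ out of the composite, identifying it with $\rs{D[\bigvee_i f_i]}$ via Proposition \ref{propJoins}(ii) and \dr{9}, and finishing with {\bf [R.1]}. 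You instead get the lax direction $\bigvee_i D[f_i]\leq D[\bigvee_i f_i]$ from monotonicity of $D$ (Proposition \ref{propDiff}(ii)) and the universal property of joins, note pairwise compatibility via Proposition \ref{propDiff}(iii) (a point the paper leaves implicit), and then close by comparing restrictions, exactly the ``$\leq$ plus equal restrictions implies $=$'' pattern of Proposition \ref{propJoins}(iii). The ingredients are the same in both cases --- \dr{9}, $\rs{\bigvee_i f_i}=\bigvee_i\rs{f_i}$, and the (arbitrary-arity) compatibility of $\times$ with joins, which the paper also uses tacitly when it writes $\bigvee_i(1\times\rs{f_i}) = 1\times\bigvee_i\rs{f_i}$ --- so neither argument is more demanding than the other; the paper's is slightly more economical in avoiding the order-theoretic closing step, while yours makes explicit where each comparison comes from and mirrors a proof pattern already established earlier in the paper.
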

\begin{proof}
\begin{enumerate}[(i)]
	\item $\rs{D[\emptyset]} = 1 \times \rs{\emptyset} = \emptyset$, so by Lemma \ref{propJoins}, $D[\emptyset] = \emptyset$.
	\item Consider:
	\begin{eqnarray*}
		&  & \bigvee_{i \in I} D[f_i] \\
		& = & \bigvee_{i \in I} D\left[\rs{f_i} \bigvee_{j \in I} f_j \right] \mbox{ by Lemma \ref{propJoins}} \\
		& = & \bigvee_{i \in I} (1 \times \rs{f_i}) D\left[ \bigvee_{j \in I} f_j \right] \mbox{ by Lemma \ref{propDiff}} \\
		& = & \left( 1 \times \bigvee_{i \in I} \rs{f_i} \right) D \left[ \bigvee_{j \in I} f_j \right] \\
		& = & \left( 1 \times \rs{\bigvee_{i \in I} f_i} \right) D \left[ \bigvee_{j \in I} f_j \right] \\
		& = & \rs{D \left[ \bigvee_{i \in I} f_i \right]} D \left[ \bigvee_{j \in I} f_j \right] \mbox{ by \dr{9}} \\
		& = & D \left[ \bigvee_{i \in I} f_i \right]
	\end{eqnarray*}
	as required.  
\end{enumerate}
\end{proof}

\subsection{Linear maps}

Just as we had to modify the definition of additive maps for left additive restriction categories, so too do we have to modify linear maps when dealing with differential restriction categories.  Recall that in a cartesian differential category, a map is linear if $D[f] = \pi_0f$.  If we asked for this in a differential restriction category, we would have
	\[ \rs{\pi_0f} = \rs{D[f]} = 1 \times \rs{f} = \rs{\pi_1f}, \]
which is never true unless $f$ is total.  In contrast to the additive situation, however, there is no obvious preference for one side to be more defined that the other.  Thus, a map will be linear when $D[f]$ and $\pi_0f$ are compatible.  

\begin{definition}
A map $f$ in a differential restriction category is {\bf linear} if 
	\[ D[f] \smile \pi_0 f \]
\end{definition}

We shall see below that for total $f$, this agrees with the usual definition.  We also have the following alternate characterizations of linearity:

\begin{lemma}
In a differential restriction category,
\begin{eqnarray*}
	&   & f \mbox{ is linear} \\
	& \Leftrightarrow & \rst{\pi_1f}\pi_0f \leq D[f] \\
	& \Leftrightarrow &  \rst{\pi_0f}D[f] \leq \pi_0f
\end{eqnarray*}
\end{lemma}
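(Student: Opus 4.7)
The plan is to derive both equivalences from Lemma~\ref{lemmaAltComp}, which says $h \smile k$ iff $\rst{h}k \leq h$ iff $\rst{k}h \leq k$. Applied to $h = D[f]$ and $k = \pi_0 f$, this immediately tells us that $f$ is linear (i.e.\ $D[f] \smile \pi_0 f$) iff $\rst{D[f]}\pi_0 f \leq D[f]$ iff $\rst{\pi_0 f}D[f] \leq \pi_0 f$. The third of these characterizations is exactly the statement of the lemma, so that equivalence is free.

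The only real work is simplifying $\rst{D[f]}\pi_0 f$ to $\rst{\pi_1 f}\pi_0 f$. By \dr{9} we have $\rst{D[f]} = 1 \times \rst{f}$. Expanding this product as the pairing $\langle \pi_0, \pi_1 \rst{f}\rangle$ and composing with $\pi_0$, Proposition~\ref{propCart}(i) gives
\[
(1 \times \rst{f})\pi_0 \;=\; \langle \pi_0, \pi_1\rst{f}\rangle \pi_0 \;=\; \rst{\pi_1 \rst{f}}\pi_0 \;=\; \rst{\pi_1 f}\pi_0,
\]
where the last equality is Lemma~\ref{lemma:restriction}(\ref{oldR3}). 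Post-composing with $f$ yields $\rst{D[f]}\pi_0 f = \rst{\pi_1 f}\pi_0 f$, so $\rst{D[f]}\pi_0 f \leq D[f]$ is the same inequality as $\rst{\pi_1 f}\pi_0 f \leq D[f]$, giving the remaining equivalence.

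There is no real obstacle here; the proof is a one-line application of Lemma~\ref{lemmaAltComp} combined with the product computation above. The only point of care is to remember, when unfolding $1 \times \rst{f}$, that its composite with $\pi_0$ picks up the restriction $\rst{\pi_1\rst{f}} = \rst{\pi_1 f}$ rather than collapsing cleanly to $\pi_0$.
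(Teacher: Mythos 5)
Your proof is correct and follows essentially the same route as the paper, which likewise deduces both characterizations directly from Lemma \ref{lemmaAltComp} applied to $D[f] \smile \pi_0 f$; you merely spell out the computation $\rst{D[f]}\pi_0 f = (1 \times \rst{f})\pi_0 f = \rst{\pi_1 f}\pi_0 f$ via \dr{9} and Proposition \ref{propCart}, which the paper leaves implicit.
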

\begin{proof}
	Use the alternate form of compatibility (Lemma \ref{lemmaAltComp}).
\end{proof}

Linear maps then have a number of important properties.  Note one surprise: while additive maps were not closed under partial inverses, linear maps are.  

\begin{proposition}\label{propAdd}
In a differential restriction category:
\begin{enumerate}[(i)]
	\item if $f$ is total, $f$ is linear if and only if $D[f] = \pi_0f$;
	\item if $f$ is linear, then $f$ is additive;
	\item restriction idempotents are linear;
	\item if $f$ and $g$ are linear, so is $fg$;
	\item if $g \leq f$ and $f$ is linear, then $g$ is linear;
	\item $0$ maps are linear, and if $f$ and $g$ are linear, so is $f+g$;
	\item projections are linear, and if $f$ and $g$ are linear, so is $\<f,g\>$;
	\item $\<1,0\>D[f]$ is linear for any $f$;
	\item if $f$ is linear and has a partial inverse $g$, then $g$ is also linear.
\end{enumerate}
\end{proposition}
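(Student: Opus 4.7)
The plan is to handle the nine parts in order, using throughout the alternative characterizations of linearity (in particular that $f$ is linear iff $\rst{\pi_0 f} D[f] \leq \pi_0 f$) and the axioms \dr{1}--\dr{9}. Part (i) is immediate: if $f$ is total then $\rst{D[f]} = 1 \times \rst{f} = 1$ and $\rst{\pi_0 f} = \rst{\pi_0 \rst{f}} = 1$, so the compatibility $D[f] \smile \pi_0 f$ collapses to equality. Part (ii) follows by taking the additivity equations in \dr{2} and replacing $D[f]$ by $\pi_0 f$ up to $\smile$. Part (iii) is a direct application of \dr{8}: for $e = \rst{e}$, $D[e] = (1 \times e)\pi_0 = \rst{\pi_1 e}\pi_0$, which is compatible with $\pi_0 e = \rst{\pi_0 e}\pi_0$ since the two restriction idempotents commute.

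For parts (iv)--(vii) I would first record the auxiliary fact that compatibility is preserved by both pre- and post-composition. Pre-composition is immediate from \rfour, while post-composition follows because $u \smile v$ forces $\rst{u}\rst{v}u = \rst{u}\rst{v}v$, hence $\rst{uh}\rst{vh}uh = \rst{uh}\rst{vh}vh$, and absorbing the restriction idempotents gives $\rst{uh}vh = \rst{vh}uh$. Granted this, (iv) follows by expanding $D[fg] = \<D[f], \pi_1 f\>D[g]$ via \dr{5}, first replacing $D[g]$ by $\pi_0 g$ (linearity of $g$, pre-composition) and then $D[f]$ by $\pi_0 f$ (linearity of $f$, post-composition), using that $a \leq b \smile c$ implies $a \smile c$. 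Part (v) writes $g = \rst{g}f$ and invokes (iii) and (iv). Part (vi) follows from \dr{1} together with the compatibility of addition established in Proposition \ref{propLA}(iv). Part (vii) uses \dr{3} (projections are linear on the nose) and \dr{4} together with Proposition \ref{propCart}(v) (compatibility of pairings).

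For (viii), expanding $D[\<1,0\>D[f]]$ via \dr{4}, \dr{5}, \dr{1}, and \dr{8} yields $\<\<\pi_0, 0\>, \<\pi_1, 0\>\>D[D[f]]$, and \dr{6} with $g := \pi_0$, $h := \pi_1$, $k := 0$ collapses this to $\rst{\pi_1}\<\pi_0, 0\>D[f] = \<\pi_0, 0\>D[f] = \pi_0 \cdot \<1,0\>D[f]$, giving equality on the nose. The main obstacle is (ix). From $fg = \rst{f}$ the chain rule yields $\<D[f], \pi_1 f\>D[g] = D[\rst{f}] = \rst{\pi_1 f}\pi_0$ by \dr{8}. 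Using linearity of $f$ to substitute $\pi_0 f$ for $D[f]$ inside the pairing (up to $\smile$) produces $(f \times f)D[g] \smile \rst{\pi_1 f}\pi_0$. Pre-composing by $g \times g$ and using $(g \times g)(f \times f) = (gf) \times (gf) = \rst{g} \times \rst{g} = \rst{\pi_0 g}\rst{\pi_1 g}$ on the left, together with the computation $(g \times g)\rst{\pi_1 f}\pi_0 = \rst{\pi_1 g}\pi_0 g$ on the right, yields $\rst{\pi_0 g}\rst{\pi_1 g}D[g] = \rst{\pi_1 g}\pi_0 g$. Since $\rst{D[g]} = 1 \times \rst{g} = \rst{\pi_1 g}$ by \dr{9}, the left-hand side simplifies to $\rst{\pi_0 g}D[g]$, and we conclude $\rst{\pi_0 g}D[g] \leq \pi_0 g$, which is exactly the alternate characterization of linearity. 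The delicate step is the restriction-idempotent bookkeeping, especially in ensuring that the partial-inverse relations $fg = \rst{f}$ and $gf = \rst{g}$ together with linearity of $f$ propagate correctly through the chain rule expansion.
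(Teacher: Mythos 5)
Most of your plan coincides with the paper's proof: (i), (iii), (v) and (viii) are argued the same way, and your treatment of (vi) and (vii) -- pushing $D[f]\smile\pi_0f$ and $D[g]\smile\pi_0g$ through addition and pairing via \dr{1}, \dr{4}, Proposition \ref{propLA}(iv) and Proposition \ref{propCart}(v) -- is if anything cleaner than the paper's explicit restriction computations. Your auxiliary fact that $\smile$ is preserved by pre- and post-composition is correct as stated.

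The gap is in (iv) (and, in miniature, in your one-line gloss of (ii)). As described, you replace $D[g]$ by $\pi_0g$ up to $\smile$ and then replace $D[f]$ by $\pi_0f$ up to $\smile$; but $\smile$ is not transitive, so two successive replacements cannot be chained, and the lemma ``$a\leq b\smile c$ implies $a\smile c$'' does not bridge them. Even if you substitute simultaneously (compatibility is indeed preserved by composition in both arguments at once, though that needs its own small argument), what you obtain is $D[fg]\smile\<\pi_0f,\pi_1f\>\pi_0g=\rs{\pi_1f}\,\pi_0fg$, which is merely $\leq \pi_0fg$; compatibility with a restriction of a map does not yield compatibility with the map itself. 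To finish you need the extra bookkeeping $\rs{D[fg]}=1\times\rs{fg}\leq 1\times\rs{f}$, which lets the idempotent $\rs{\pi_1f}$ be absorbed. The paper avoids all of this by using the inequality form of linearity for \emph{both} maps, $\rs{\pi_1f}\,\pi_0f\leq D[f]$ and $\rs{\pi_1g}\,\pi_0g\leq D[g]$, together with monotonicity of pairing and composition, so that (iv) is a single $\geq$-computation ending in $\rs{\pi_1fg}\,\pi_0fg$. A similar remark applies to (ix): after pre-composing with $g\times g$ your relation is only $\smile$, not $=$, yet you write an equality. The equality does hold, but only because both sides have the same restriction $\rs{\pi_0g}\,\rs{\pi_1g}$ (compatible maps with equal restrictions are equal), and this should be said; the paper sidesteps the issue by using linearity of $f$ as the equality $\rs{\pi_0f}D[f]=\rs{\pi_1f}\,\pi_0f$ inside a chain of inequalities starting from $(\rs{g}\times\rs{g})D[g]\leq D[g]$.
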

\begin{proof}
\begin{enumerate}[(i)]
	\item It suffices to show that if $f$ is total, $\rs{D[f]} = \rs{\pi_0f}$.  Indeed, if $f$ is total,
		\[ \rs{D[f]} = 1 \times \rs{f} = \rs{f} \times 1 = \rs{\pi_0f}. \]
	\item For the 0 axiom:
	\begin{eqnarray*}
	0f & = & \rs{0f}0f \\
	& = & \rs{\<0,0\>\pi_1f}\<0,0\>\pi_0f \\
	& = & \<0,0\>\rs{\pi_1f}\pi_0f \mbox{ by \rfour,} \\
	& \leq & \<0,0\>D[f] \mbox{ since $f$ linear,} \\
	& = & \rs{0f}0 \mbox{ by \dr{2},} \\
	& \leq & 0
	\end{eqnarray*}
	and for the addition axiom:
	\begin{eqnarray*}
	\rst{(x+y)f)}(xf + yf) & = & \rst{(x+y)f} (\rst{xf}xf + \rst{y}\rst{xf}\rst{x}yf) \\
			       & = & \rst{(x+y)f} (\rst{xf}xf + \rst{\rst{y}xf}\rst{x}yf) \\
			       & = & \rst{(x+y)f} (\rst{\<x,x\>\pi_1f}\<x,x\>\pi_0f + \rst{\<y,x\>\pi_1f}\<y,x\>\pi_0f) \\
			       & = & \rst{(x+y)f} (\<x,x\>\rst{\pi_1f} \pi_0f + \<y,x\>\rst{\pi_1f} \pi_0f) \\
			       & \leq & \rst{(x+y)f} (\<x,x\>D[f] + \<y,x\>D[f]) \mbox{ since $f$ is linear} \\
			       & = & \rst{\<x+y,x\>\pi_0f} \<x+y,x\>D[f] \mbox{ by \dr{2}} \\
			       & = & \<x+y,x\>\rst{\pi_0f} D[f] \\
			       & = & \<x+y,x\>\rst{\pi_1f} \pi_0f \mbox{ since $f$ is linear} \\
			       & = & \rst{x+y,x\>\pi_1f}\<x+y,x\> \pi_0f \\
			       & = & \rst{\rst{x+y}xf}\rst{x}(x+y)f \\
			       & \leq & (x+y)f
	\end{eqnarray*}
	as required.
	\item Suppose $e = \rs{e}$.  Then consider
		\begin{eqnarray*}
		&   & \rs{\pi_1e}\pi_0 \rs{e} \\
		& = & \rs{\pi_1e}\rs{\pi_0 e} \pi_0 \\
		& \leq & \rs{\pi_1e}\pi_0 \\
		& = & \<\pi_0e,\pi_1e\>\pi_0 \\
		& = & (1 \times e)\pi_0 \\
		& = & D[e]
		\end{eqnarray*}
	so that $e$ is additive.
	\item Suppose $f$ and $g$ are linear; then consider
		\begin{eqnarray*}
		D[fg] & = & \<D[f],\pi_1f\>D[g] \\
		      & \geq & \<\rst{\pi_1f}\pi_0f, \pi_1f\>\rst{\pi_1g} \pi_0g \mbox{ since $f$ and $g$ are linear} \\
		      & = & \rst{\<\rst{\pi_1f}\pi_0f, \pi_1f\> \pi_1g} \<\rst{\pi_1f}\pi_0f, \pi_1f\> \pi_0g \mbox{ by \rfour} \\
		      & = & \rst{\rst{\pi_1f} \rst{\pi_0f} \pi_1 fg} \rst{\pi_1f} \rst{\pi_1f} \pi_0fg \\
		      & = & \rst{\pi_1f}\rst{\pi_1fg}\rst{\pi_0f}\pi_0 fg \\
		      & = & \rst{\pi_1 fg} \pi_0 fg 
		\end{eqnarray*}	
	\item If $g \leq f$, then $g = \rs{g}f$; since restriction idempotents are linear and the composite of linear maps is linear, $g$ is linear.
	\item Since $D[0] = 0 = \pi_00$, $0$ is linear.  Suppose $f$ and $g$ are linear; then consider
	\begin{eqnarray*}
		\rst{\pi_0(f+g)} D[f+g] & = & \rst{\pi_0f + \pi_0g}(D[f] + D[g]) \\
					      & = & \rst{\pi_0f}\rst{\pi_0g} (D[f] + D[g]) \\
					      & = & \rst{\pi_0f}D[f] + \rst{\pi_0g}D[g] \\
					      & = & \rst{\pi_1f}\pi_0f + \rst{\pi_1g}\pi_0g \mbox{ since $f$ and $g$ are linear} \\
					      & = & \rst{\pi_1f}\rst{\pi_1g}\pi_0(f + g) \\
					      & \leq & \pi_0(f+g)
	\end{eqnarray*}
	as required.
	\item By \dr{3}, projections are linear.  Suppose $f$ and $g$ are linear; then consider
		\begin{eqnarray*}
		D[\<f,g\>] & = & \<D[f],D[g] \> \\
	  		   & \geq & \<\rst{\pi_1f}\pi_0f, \rst{\pi_1g}\pi_0g \> \mbox{ since $f$ and $g$ are linear} \\
	  		   & = & \rst{\pi_1f}\rst{\pi_1g} \pi_0\<f,g\> \\
	  		   & = & \rst{\rst{\pi_1f}\pi_1g} \pi_0\<f,g\> \\
	  		   & = & \rst{\rst{\pi_1\rst{f}}\pi_1\rst{g}} \pi_0\<f,g\> \\
	  		   & = & \rst{\pi_1\rst{f}\rst{g}} \pi_0\<f,g\> \mbox{ by \rfour} \\
	  		   & = & \rst{\pi_1 \<f,g\>} \pi_0\<f,g\>
	  	\end{eqnarray*}
	  	as required.
	\item The proof is identical to that for total differential categories:
		\begin{eqnarray*}
		D[\<1,0\>D[f]] & = & \<D[\<1,0\>], \pi_1\<1,0\>\> D[D[f]] \\
		               & = & \< \<\pi_0,0\>, \<\pi_1, 0\>\> D[D[f]] \\
		               & = & \<\pi_0,0\>D[f] \mbox{ by \dr{6}} \\
		               & = & \pi_0\<1,0\>D[f]
		\end{eqnarray*}
		as required.
	\item If $g$ is the partial inverse of a linear map $f$, then
		\begin{eqnarray*}
		D[g] & \geq & (\rs{g} \times \rs{g})D[g] \\
		& = & (gf \times gf)D[g] \\
		& = & (g \times g)(f \times f)D[g] \\
		& = & (g \times g)\<\pi_0f, \pi_1f\>D[g] \\
		& = & (g \times g)\<\rs{\pi_1f}\pi_0f, \pi_1f\>D[g] \\
		& = & (g \times g)\<\rs{\pi_0f}D[f], \pi_1f\>D[g] \mbox{ since $f$ is linear,} \\
		& = & (g \times g)\rs{\pi_0f}\<D[f],\pi_1f\>D[g] \\
		& = & (g \times g)\rs{\pi_0f}D[fg] \mbox{ by \dr{5},} \\
		& = & (g \times g)\rs{\pi_0f}D[\rs{f}] \\
		& = & (g \times g)\rs{\pi_0f}(1 \times \rs{f})\pi_0 \mbox{ by \dr{8},} \\
		& = & \rs{(g \times g)\pi_0f} (g \times g)(1 \times \rs{f})\pi_0 \mbox{ by \rfour,} \\
		& = & \rs{\rs{\pi_1g}\pi_0gf} (g \times g)\pi_0 \\
		& = & \rs{\pi_1g}\rs{\pi_0\rs{g}}\rs{\pi_1g}\pi_0g \\
		& = & \rs{\pi_1g}\pi_0 g
		\end{eqnarray*}
		as required.                
		
\end{enumerate}
\end{proof}

Note that the join of linear maps need not be linear.  Indeed, consider the linear partial maps $2x: (0,2) \to (0,4)$ and $3x: (3,5) \to (9,15)$.   If their join was linear, then it would be additive.  But this is a contradiction, since $2(1.75) + 2(1.75) \ne 3(3.5)$.  However, the join of linear maps is a standard concept of analysis:

\begin{definition}
If $f$ is a finite join of linear maps, say that $f$ is \textbf{piecewise linear}.
\end{definition}

An interesting result from \cite{cartDiff} is the nature of the differential of additive maps.  We get a similar result in our context:
\begin{proposition}
If $f$ is additive, then $D[f]$ is additive and
	\[ D[f] \smile \pi_0\<1,0\>D[f]; \]
if $f$ is strongly additive, then $D[f]$ is strongly additive and
	\[ D[f] \leq \pi_0\<1,0\>D[f]. \]
\end{proposition}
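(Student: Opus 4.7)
The plan is to derive both clauses from a single differentiation of the ``global'' identities that characterise additive (resp.\ strongly additive) maps given in Proposition~\ref{propCLA}(iv)--(v); the clause $D[f]\smile\pi_0\<1,0\>D[f]$ (resp.\ $D[f]\leq\pi_0\<1,0\>D[f]$) is then a specialisation.

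\emph{Step 1: additivity of $D[f]$.} By Proposition~\ref{propCLA}(iv)/(v), $f$ is additive (resp.\ strongly additive) iff $(\pi_0+\pi_1)f\smile\pi_0 f+\pi_1 f$ (resp.\ $\geq$) together with the appropriate zero-condition. Apply $D$ to this identity and invoke Proposition~\ref{propDiff}(iii) for the compatibility case (resp.\ (ii) for the $\leq$-case). Using the chain rule \dr{5}, the derivatives of projections \dr{3}, and the fact that $+ = \pi_0+\pi_1$ is linear (it is a sum of linear projections by Proposition~\ref{propAdd}, so $D[+]=\pi_0(+)$), the left-hand side rewrites as $\<\pi_0(\pi_0+\pi_1),\pi_1(\pi_0+\pi_1)\>D[f]$; using \dr{1} in addition, the right-hand side rewrites as $\<\pi_0\pi_0,\pi_1\pi_0\>D[f]+\<\pi_0\pi_1,\pi_1\pi_1\>D[f]$. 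Precomposing both sides with the exchange map $\mathsf{ex}:(A\times A)\times(A\times A)\to(A\times A)\times(A\times A)$ reshapes the result into $(\pi_0+\pi_1)D[f]\smile\pi_0 D[f]+\pi_1 D[f]$ (resp.\ $\geq$), which by Proposition~\ref{propCLA}(iv)/(v) is exactly the required additivity of $D[f]$. The zero-condition is direct from \dr{2}: $0\cdot D[f]=\<0,0\>D[f]=\rs{0f}\cdot 0$, which is $\leq 0$ in both cases, and equals $0$ precisely when $0f$ is total.

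\emph{Step 2: compatibility (resp.\ inequality) with $\pi_0\<1,0\>D[f]$.} Precompose the additivity relation for $D[f]$ from Step 1 with the map $\<\<1,0\>,\<0,1\>\>:A\times A\to(A\times A)\times(A\times A)$. Using $h+0=h$ and $0+k=k$, this specialises to
\[
  D[f]\;\smile\;D[f]\cdot\<1,0\>+D[f]\cdot\<0,1\>\qquad(\text{resp.\ }\geq).
\]
By \dr{2}, $D[f]\cdot\<0,1\>=\rs{\pi_1 f}\cdot 0$, and Proposition~\ref{propLA}(ii) absorbs this restricted zero into the first summand, producing $\rs{\pi_1 f}\cdot\pi_0\<1,0\>D[f]=\rs{D[f]}\cdot\pi_0\<1,0\>D[f]$ (using \dr{9} for the last equality). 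In the additive case, Lemma~\ref{lemmaAltComp} converts the resulting compatibility into $\rs{D[f]}\cdot\pi_0\<1,0\>D[f]\leq D[f]$, which (again by Lemma~\ref{lemmaAltComp}) is precisely $D[f]\smile\pi_0\<1,0\>D[f]$. In the strongly additive case, the analogous inequality $\rs{D[f]}\cdot\pi_0\<1,0\>D[f]\leq D[f]$ holds by construction; but now $0f=0$ is total, so $\rs{\pi_0\<1,0\>D[f]}=\rs{D[f]}$, and an inequality of maps with matching restrictions is an equality, giving $D[f]=\rs{D[f]}\cdot\pi_0\<1,0\>D[f]\leq\pi_0\<1,0\>D[f]$.

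The main obstacle is the bookkeeping of inner versus outer projections in Step~1: verifying that the exchange-map precomposition reshapes the ``pointwise'' identity $D[f](h+h',k+k')\smile D[f](h,k)+D[f](h',k')$ produced by the chain rule into the ``pairwise'' additivity of $D[f]$ required by Proposition~\ref{propCLA}(iv)/(v). Once Step~1 is in hand, the strongly additive clause is essentially a free upgrade, driven by the observation that totality of $0f$ is exactly the ingredient needed to promote a compatibility between $D[f]$ and $\pi_0\<1,0\>D[f]$ to an inequality.
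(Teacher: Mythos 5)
Your argument is sound, and it reaches the proposition by a route that is only partly the paper's.  For the first clause the paper does not argue at all: it simply defers to the total-case proof in \cite{cartDiff}, ``with $\smile$ or $\leq$ replacing $=$''.  You instead differentiate the global characterisation of Proposition \ref{propCLA}(iv)/(v) and push $\smile$ (resp.\ $\leq$) through $D$ via Proposition \ref{propDiff}(iii)/(ii); the bookkeeping you flag does close: $\mbox{ex}\<\pi_0(\pi_0+\pi_1),\pi_1(\pi_0+\pi_1)\> = \mbox{ex}(+\times +)=+_{X\times X}=\pi_0+\pi_1$, while $\mbox{ex}\<\pi_0\pi_0,\pi_1\pi_0\>=\pi_0$ and $\mbox{ex}\<\pi_0\pi_1,\pi_1\pi_1\>=\pi_1$, and precomposition preserves $\leq$, $\smile$ and distributes over $+$.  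This buys a self-contained proof of the part the paper leaves implicit.  For the second clause your proof is essentially the paper's computation run in the opposite direction: the paper starts from $\rs{\pi_0\<1,0\>D[f]}\,D[f]$ (resp.\ $\rs{D[f]}\<\pi_0,0\>D[f]$), inserts the decomposition $1=\<0,\pi_1\>+\<\pi_0,0\>$, and only then invokes (strong) additivity of $D[f]$ and \dr{2}; you instantiate the additivity of $D[f]$ at $\<\pi_0,0\>$ and $\<0,\pi_1\>$ first and then clean up with \dr{2}, Proposition \ref{propLA}(ii) and Lemma \ref{lemmaAltComp}.  Same key ingredients, slightly tidier flow.

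Two small repairs are needed.  First, the precomposition map ``$\<\<1,0\>,\<0,1\>\>:A\times A\to(A\times A)\times(A\times A)$'' does not typecheck as written; what your computation actually uses is the instantiation of additivity of $D[f]$ at $x=\<\pi_0,0\>$, $y=\<0,\pi_1\>$ (whose sum is $\<\pi_0,\pi_1\>=1$), i.e.\ precomposition with $\<\<\pi_0,0\>,\<0,\pi_1\>\>$.  Second, and more substantively, in the strongly additive case the asserted equality $\rs{\pi_0\<1,0\>D[f]}=\rs{D[f]}$ is false unless $f$ is total: when $0f=0$ the map $\pi_0\<1,0\>D[f]=\<\pi_0,0\>D[f]$ is \emph{total}, since $\rs{\<\pi_0,0\>D[f]}=\rs{\<\pi_0,0\rs{f}\>}=\rs{0f}=1$, whereas $\rs{D[f]}=1\times\rs{f}$.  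What you actually need, and what does follow from $0f=0$, is $\rs{\rs{D[f]}\,\pi_0\<1,0\>D[f]}=\rs{D[f]}\,\rs{0f}=\rs{D[f]}$; then the inequality $\rs{D[f]}\,\pi_0\<1,0\>D[f]\leq D[f]$ has matching restrictions, hence is an equality, and your displayed chain $D[f]=\rs{D[f]}\,\pi_0\<1,0\>D[f]\leq\pi_0\<1,0\>D[f]$ is exactly right.  With these adjustments the proof is complete.
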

\begin{proof}
The proof that $f$ being (strongly) additive implies $f$ (strongly) additive is the same as for total differential categories (\cite{cartDiff}, pg. 19) with $\smile$ or $\leq$ replacing $=$ when one invokes the additivity of $f$.  The form of $D[f]$ in each case, however, takes a bit more work.  We begin with a short calculation:
	\[ \<0,\pi_1\>\rs{\pi_1f} = \rs{\<0,\pi_1\> \pi_1f} \<0,\pi_1\> = \rs{\pi_1f} \<0,\pi_1\> \]
and
	\[ \<\pi_0,0\> \rs{\pi_1f} = \rs{\<\pi_0,0\> \pi_1 f } \<\pi_0,0\> = \rs{0f}\<\pi_0,0\>. \]
Now, if $f$ is additive, we have:
	\begin{eqnarray*}
	&   & \rs{\pi_0 \<1,0\>D[f]} D[f] \\
	& = & \rs{\<\pi_0,0\>\pi_1f} D[f] \\
	& = & \rs{0f}\rs{\<\pi_0,0\>} D[f] \mbox{ by the second calculation above,} \\
	& = & \rs{0f}D[f] \\
	& = & \rs{0f}\rs{\pi_1f}D[f] \\
	& = & \rs{0f}\rs{\pi_1f}(\<0,\pi_1\> + \<\pi_0,0\>)D[f] \\
	& = & (\rs{\pi_1f}\<0,\pi_1\> + \rs{0f}\<\pi_0,0\>)D[f] \\
	& = & (\<0,\pi_1\>\rs{\pi_1f} + \<\pi_0,0\>\rs{\pi_1f})D[f] \mbox{ by both calculations above,} \\
	& \leq & \<0,\pi_1\>D[f] + \<\pi_0,0\>D[f] \mbox{ since $D[f]$ is additive,} \\
	& = & \rs{\pi_1f}0 + \<\pi_0,0\>D[f] \mbox{ by \dr{2},} \\
	& \leq & 0 + \<\pi_0,0\>D[f] \\
	& = & \pi_0\<1,0\>D[f]
	\end{eqnarray*}
so that $D[f] \smile \pi_0 \<1,0\>D[f]$, as required.  If $f$ is strongly additive, consider
	\begin{eqnarray*}
	&   & \rs{D[f]}\<\pi_0,0\>D[f] \\
	& = & \rs{\pi_1f}\<\pi_0,0\>D[f] \\
	& = & \rs{\pi_1f}0 + \<\pi_0,0\>D[f] \\
	& = & \<0,\pi_1\>D[f] + \<\pi_0,0\>D[f] \\
	& = & (\<0,\pi_1\>\rs{\pi_1f} + \<\pi_0,0\>\rs{\pi_1f}) D[f] \mbox{ since $D[f]$ is strongly additive,} \\
	& = & \rs{\pi_1f}\rs{0f}(\<0,\pi_1\> + \<\pi_0,0\>)D[f] \mbox{ by the calculations above,} \\
	& = & \rs{\pi_1f}\rs{0}(1)D[f] \mbox{ since $f$ strongly additive,} \\
	& = & \rs{\pi_1f}D[f] \\
	& = & D[f]
	\end{eqnarray*}
so that $ D[f] \leq \pi_0\<1,0\>D[f]$, as required.
\end{proof}

%
%

Any differential restriction category has the following differential restriction subcategory:
\begin{proposition}
If $\X$ is a differential restriction category, then $\X_0$, consisting of the maps which preserve 0 if it is in their domain (i.e., satisfying $0f \leq 0$), is a differential restriction subcategory.
\end{proposition}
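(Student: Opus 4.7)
The plan is to verify that $\X_0$ is closed under each of the operations of a differential restriction category: identities, composition, restriction, pairings, projections, zeros, addition, and differentiation. Once closure is established, all the equational axioms carry over automatically from $\X$, so $\X_0$ inherits the differential restriction structure and is a subcategory. The only non-trivial step is closure under the differential.

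For the restriction and cartesian/additive parts, the checks should be short applications of the axioms. Identities satisfy $0 \cdot 1 = 0$. For composition, if $0f \leq 0$ and $0g \leq 0$, then $0(fg) = (0f)g = \rst{0f}\cdot 0g \leq 0g \leq 0$. Any restriction idempotent $\rst{h}$ is automatically in $\X_0$, since by \rfour\ we have $0\rst{h} = \rst{0h}0 \leq 0$; in particular, $\rst{f} \in \X_0$ whenever $f \in \X_0$. For cartesian structure, I write $0: 1 \to X \times Y$ as $\langle 0, 0 \rangle$ using Proposition \ref{propCLA}(i); then $0\pi_0 = \rst{0}0 = 0$, and similarly for $\pi_1$, while $0\langle f, g\rangle = \langle 0f, 0g\rangle \leq \langle 0, 0\rangle = 0$ for $f, g \in \X_0$, by parts (iii) and (iv) of Proposition \ref{propCart}. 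For addition, zero maps trivially satisfy $0 \cdot 0 = \rst{0}0 = 0$, and if $f, g \in \X_0$ then $0(f+g) = 0f + 0g \leq 0 + 0 = 0$ using left additivity together with Proposition \ref{propLA}(iii).

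The key step, and the only one making essential use of the differential axioms, is closure under $D[-]$. If $f \in \X_0$, then $0: 1 \to X \times X$ is again $\langle 0, 0\rangle$, and by \dr{2} we compute $\langle 0, 0\rangle D[f] = \rst{0f}\, 0 \leq 0$. Thus $D[f] \in \X_0$ regardless of what $f$ does elsewhere, i.e.\ the hypothesis $0f \leq 0$ is used only to get $\rst{0f} \leq 1$, which is automatic anyway. So in fact $D[f] \in \X_0$ for every map $f$ of $\X$, which is even stronger than needed.

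With closure in hand, the axioms \dr{1}--\dr{9}, the restriction axioms, the left additive axioms, and the cartesian axioms all hold in $\X_0$ because they are equations satisfied in $\X$ between maps that remain in $\X_0$. No obstacle of any real substance arises; the only conceptual point worth flagging is that the proof of closure under $D[-]$ depends on the fact that \dr{2} forces the differential to vanish (up to a restriction idempotent) when the direction variable is zero, so $\X_0$ is tailored precisely to absorb the partial zero behavior produced by differentiation.
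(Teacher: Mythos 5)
Your proposal is correct and follows the same route as the paper: the paper's proof consists precisely of the key computation $\<0,0\>D[f] = \rs{0f}\,0 \leq 0$ from \dr{2}, declaring the remaining closure checks immediate, which is exactly your central step (you merely spell out the routine closures under composition, restriction, pairing, and addition explicitly).
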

\begin{proof}
The result is immediate, since the differential has this property:
 \[\<0,0\>D[f] = \rs{0f}0 \leq 0.  \]
\end{proof}

Finally, note that any differential restriction functor preserves additive, strongly additive, and linear maps:
\begin{proposition}\label{diffFunctors}
If $F$ is a differential restriction functor, then
\begin{enumerate}[(i)]
	\item $F$ preserves additive maps;
	\item $F$ preserves strongly additive maps;
	\item $F$ preserves linear maps.
\end{enumerate}
\end{proposition}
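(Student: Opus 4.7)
The plan is to exploit that a differential restriction functor preserves everything visible in the defining (in)equalities: composition, restriction (hence the derived relations $\leq$ and $\smile$), addition, zeros, projections, pairings, and the differential $D[-]$. The only subtlety is that the definitions of additive and strongly additive are universally quantified over test maps $x,y$, so a direct ``apply $F$ pointwise'' argument does not obviously work; the fix is to replace the definitions with the single-equation characterizations already available in the paper.

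For linearity, the definition $D[f] \smile \pi_0 f$ is already a single equation. Applying $F$ to both sides gives $F(D[f]) = D[Ff]$ and $F(\pi_0 f) = \pi_0 \, Ff$, and since $F$ preserves composition and restriction it preserves the compatibility relation $\smile$. Hence $f$ linear implies $Ff$ linear.

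For additivity, I would invoke Proposition \ref{propCLA}(iv), which states that $f$ is additive if and only if $(\pi_0 + \pi_1)f \smile \pi_0 f + \pi_1 f$ and $0f \smile 0$. Applying $F$ and using preservation of projections, addition, zero, and composition yields
\[ (\pi_0 + \pi_1)(Ff) \smile \pi_0 (Ff) + \pi_1 (Ff) \qquad \text{and} \qquad 0 (Ff) \smile 0, \]
which by the same characterization says that $Ff$ is additive. The argument for strongly additive is identical in shape, replacing $\smile$ with $\leq$ (for the first clause) and $\smile$ with $=$ (for the second clause), using Proposition \ref{propCLA}(v); preservation of $\leq$ follows from preservation of restriction and composition.

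The main potential pitfall is the universally quantified form of the original definitions: one cannot simply ``take $x, y$ in the target $\Y$'' from a statement about $x, y$ in $\X$. Routing through the internal characterizations of Proposition \ref{propCLA}(iv), (v) eliminates this issue, reducing each item to a one-line functoriality check. No other calculation is needed.
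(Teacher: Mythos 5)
Your proposal is correct and takes essentially the same route as the paper, whose entire proof is the one-line observation that a differential restriction functor preserves $\leq$ and $\smile$ (along with composition, addition, zeroes, projections, pairings and $D$), so the defining (in)equalities for additive, strongly additive and linear maps are preserved. Your explicit detour through Proposition \ref{propCLA}(iv),(v) to avoid quantifying over test maps in the target category is a sound and welcome way of making precise what the paper's ``follows automatically'' leaves implicit.
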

\begin{proof}
Since any restriction functor preserves $\leq$ and $\smile$, the result follows automatically.
\end{proof}

\section{Rational functions}\label{rat}



Thus far, we have only seen a single, analytic example of a differential restriction category.  This section rectifies this situation by presenting 
a class of examples of differential restriction categories with a more algebraic flavour.  Rational functions over a commutative ring have an obvious 
formal derivative.  Thus, rational functions are a natural candidate for differential structure.  Moreover, rational functions have an aspect of 
partiality: one thinks of a rational function as being undefined at its poles -- that is wherever the denominator is zero.  

To capture this partiality,  we provide a very general construction of rational functions from which we extract a 
(partial) Lawvere theory of rational functions for any commutative rig and whence, in particular, for any commutative ring. We will 
then show that, for each commutative ring $R$, this category of rational functions over $R$ is a differential restriction category.  

Moreover, we will also show that these categories of rational functions embed into the partial map category of affine schemes with respect to 
localizations.  Thus, we relate these categories of rational functions to categories which are of traditional interest in algebraic geometry.


\subsection{The fractional monad}
\label{rat:frac}


In order to provide a general categorical account of rational functions,  it is useful to first have a monadic construction for fractions.  
When a construction is given by a monad, not only can one recover substitution -- as composition in the Kleisli category -- but also one has the whole 
category of algebras in which to interpret structures.   The main difficulty with the construction of fractions is that, to start with, one has to find both 
an algebraic interpretation of the construction, and a setting where it becomes a monad.  

A formal fraction is a pair $(a,b)$, which one thinks of as $\frac{a}{b}$, with addition and multiplication defined as expected for fractions.  If one 
starts with a commutative ring and one builds these formal fractions the very first peculiarity one encounters is that, to 
remain algebraic, one must allow the pair $(a,0)$ into the construction: that is one must allow division by zero.  Allowing division by zero, $\frac{a}{0}$, introduces a 
number of problems. For example, because $\frac{a}{0} + \frac{-a}{0} = \frac{0}{0}$ and not, as one would like, $\frac{0}{1}$, one loses negatives.    
One can, of course, simply abandon negatives and  settle for working with commutative rigs.  However, this does not resolve all the problems. Without cancellation, fractions 
under the usual addition and multiplication will not be a rig: binary distributivity of multiplication over addition will fail -- as will the nullary distribution 
(that is $0 \cdot x=0$).  Significantly, to recover the binary distributive law,  requires only a limited ability to perform cancellation: one needs precisely the equality 
$\frac{a}{a^2} = \frac{1}{a}$.  By imposing this equality, one can recover,  from the fraction construction applied to a rig, a {\em weak\/} rig  -- weak because the 
nullary distributive law has been lost (although the equalities $0 \cdot 0 = 0$ and $0 \cdot x = 0 \cdot x^2$ are retained).  As we shall show below, this construction 
of fractions does then produce a monad on the category of weak rigs.   Furthermore, the algebras for this monad, {\em fractional rigs\/}, can be used to provide 
a general description of rational functions.

An algebraic structure, closely related to our notion of a fractional rig, which was proposed in order to solve very much the same sort of problems, is that of a 
{\em wheel\/} \cite{wheel}.  Wheels also arise from formal fraction constructions, but the equalities imposed on these fractions is formulated differently.  In particular, this means 
that the monadic properties over weak rigs -- which are central to the development below -- do not have a counterpart for wheels.  Nonetheless, the theory developed here has many 
parallels in the theory of wheels. Certainly the theory of wheels illustrates the rich possibilities for algebraic structures which can result from allowing division by zero,
and there is a nice discussion of the motivation for studying such structures in \cite{wheel}.  

Technically a wheel, as proposed in \cite{wheel}, does not satisfy the binary distributive law (instead, it satisfies $(x+y)z +0z = xz + yz$ -- where notably $0z \not= 0$ in general) and 
in this regard it is a weaker notion than a fractional rig.  A wheel also has an involution with $x^{**} = x$, while fractional rigs have a star operation satisfying
the weaker requirement $x^{***} = x^{*}$.  Thus, the structures are actually incomparable, although they certainly have many common features.

A {\bf weak commutative rig} $R = (U(R),\cdot,+,1,0)$ (where $U(R)$ is the underlying set) is a set with two commutative associative operations, $\_\cdot\_$ with 
unit $1$, and $\_ + \_$ with unit $0$ which satisfies the binary distributive law $x \cdot (y + z) = x \cdot y + x \cdot z$, has $0 \cdot 0 = 0$, and 
$0 \cdot x = 0 \cdot x \cdot x$ (but in which the nullary distributive law fails, so in general $x \cdot 0 \not= 0$).  Weak rigs with evident homomorphisms 
form a category ${\bf wCRig}$. 

For convenience, when manipulating the terms of a (weak) rig, we shall tend to drop the multiplication symbol, writing $x \cdot y$ simply by juxtaposition as $xy$.

Notice that there is a significant difference between a weak rig and a rig: a weak rig $R$ can have a non-trivial ``zero ideal'', $R_0 = \{ 0r |r \in R \}$.  
Clearly $0 \in R_0$, and it is closed to addition and multiplication.  In fact, $R_0$ itself is a weak rig with the peculiar property that $0=1$.  
To convince the reader that weak rigs with $0=1$ are a plausible notion, consider the natural numbers with the addition {\em and\/} multiplication given by maximum: 
this is a weak rig in which necessarily the additive and multiplicative units coincide.   The fact that, in this example, the addition and multiplication are the same   
is not a coincidence: 

\begin{lemma}
\label{zero-ideal}
In a weak commutative rig $R$, in which $0=1$, we have:
\begin{enumerate}[(i)]
\item Addition and multiplication are equal: $x+y = x y$;
\item Addition -- and so multiplication -- is idempotent, making $R$ into a join semilattice (where $x \leq y$ if $x+y=y$).
\end{enumerate}
\end{lemma}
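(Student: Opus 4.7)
The plan is to exploit the hypothesis $0 = 1$ to derive two key idempotence facts: multiplicative idempotence ($x = x \cdot x$) and additive idempotence ($x + x = x$). Both parts of the lemma will fall out of these almost immediately.

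First I would establish multiplicative idempotence. Since $0 = 1$, we have $x = x \cdot 1 = x \cdot 0 = 0 \cdot x$ by commutativity, and the weak-rig axiom $0 \cdot x = 0 \cdot x \cdot x$ then gives $x = 0 \cdot x \cdot x = 1 \cdot x^2 = x^2$. For additive idempotence, the same trick together with \emph{binary} distributivity does the job: $x = x \cdot 0 = x \cdot (0 + 0) = x \cdot 0 + x \cdot 0 = x + x$. Note that I am careful to use only binary distributivity, since nullary distributivity fails in a weak rig. Part (ii) is then immediate, as a commutative monoid whose operation is idempotent is precisely a join semilattice with $x \leq y$ iff $x + y = y$.

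For part (i) I would prove the two inequalities $x + y \leq xy$ and $xy \leq x + y$ with respect to this semilattice order, and conclude by antisymmetry. For the first inequality, binary distributivity gives
\[ xy = x(y + 0) = xy + x \cdot 0 = xy + x \cdot 1 = xy + x, \]
so $x \leq xy$. By commutativity of multiplication, $y \leq xy$ symmetrically, and joining (using additive idempotence to collapse $xy + xy$) yields $x + y \leq xy$. For the reverse inequality, apply multiplicative idempotence to $x + y$: expand
\[ x + y = (x+y)^2 = x^2 + xy + yx + y^2 = x + xy + xy + y = x + xy + y \]
using commutativity, distributivity on both sides, multiplicative idempotence, and additive idempotence. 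This says exactly $xy \leq x + y$, and antisymmetry finishes the proof.

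I do not expect any real obstacle; the axioms of a weak rig together with $0 = 1$ collapse things very quickly. The only subtlety to watch is keeping track of what is \emph{not} available — in particular, avoiding the nullary distributive law $0 \cdot x = 0$, which is precisely what fails in this setting and which makes $x \cdot 0 = x \cdot 1 = x$ the usable identity to drive the argument.
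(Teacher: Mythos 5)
Your proof is correct, and its core is identical to the paper's: both first derive additive idempotence from $x = x\cdot 0 = x(0+0) = x\cdot 0 + x\cdot 0 = x+x$ and multiplicative idempotence from $x = 0x = 0xx = x^2$, and both then expand $(x+y)^2 = x^2+xy+yx+y^2 = x+xy+y$. The only divergence is how part (i) is finished. The paper continues the equational chain directly, using $1=0$ twice more: $x+xy+y = x(1+y)+y = x(0+y)+y = xy+y = (x+1)y = (x+0)y = xy$. You instead stop at $x+y = x+xy+y$, read it as $xy \leq x+y$ in the semilattice order of part (ii), prove $x+y \leq xy$ separately via $xy = x(y+0) = xy + x\cdot 0 = xy + x$ (and symmetrically for $y$), and conclude by antisymmetry. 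Your route requires establishing (ii) before (i), whereas the paper proves (i) purely equationally and gets (ii) as a corollary; but the order-theoretic finish is arguably more transparent, since each inequality is a one-line computation and antisymmetry of the idempotent-commutative-monoid order is immediate. Both arguments are careful to use only binary distributivity and the identities $0\cdot 0 = 0$ and $0x = 0xx$, which is exactly the point of working in a weak rig.
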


\proof 
When $1=0$, to show that $x+y = x y$ it is useful to first observe that both are addition and multiplication are idempotent:
$$x+x = x0+x0 = x(0+0) = x0 = x ~~~\mbox{and}~~~ xx = 0x0x = 0xx=0x=x.$$
Now we have the following calculation:
\begin{eqnarray*}
x+y & = & (x+y)(x+y) = x^2 +xy +yx +y^2 = x + xy +y \\
       & = & x(1 +y) +y = x(0+y) + y = xy + y \\
       & = & (x+1)y = (x+0)y = xy 
\end{eqnarray*} 
Thus, one now has a join semilattice determined by this operation.
\endproof

Both rigs and rings, of course, are weak rigs in which $R_0 = \{ 0 \}$.  

Define the {\bf fractions}, $\fr(R)$, of a weak commutative rig $R$ as the set of pairs $U(R) \x U(R)$ modulo the equivalence relation generated by
$(r,as) \sim (ar,a^2s)$, with the following ``fraction'' operations:
\begin{center}
\begin{tabular}{ll}
$(r,s) + (r',s') := (r s' + s r',s s'),$~~~~~~~ & $0 := (0,1),$ \\
$(r,s) \cdot (r',s') := (r r',s s'),$ &  $1 := (1,1).$
\end{tabular}
\end{center}

Now it is not at all obvious that this structure is, with this equivalence, a weak commutative rig. To establish this, it is useful to analyze the equivalence 
relation more carefully.

We shall, as is standard, write $a | s$ to mean $a$ divides $s$, in the sense that there is an $s'$ with $s = a s'$. 
We may then write the generating relations for the equivalence above as $(r,s) \rightarrowtriangle_a (r',s')$ where $r' =a r$, $s' = a s$ 
and $a | s$.  Furthermore, we shall say $a$ {\bf iteratively divides} $r$, written 
$a |^{*} r$, in case there is a decomposition $a = a_1  \dots a_n$ such that  $a_1 | a_2 \dots  a_n  r$, and  
$a_2 | a_3 \dots a_n \cdot r$, and ... , and $a_n | r$. Then define $(r,s) \rightarrowtriangle^{*}_a (r',s')$
to mean $r' = a r$, $s' = a s$ and $a |^{*} s$.  

Observe that to say $(r,s) \rightarrowtriangle^*_a (r',s')$ is precisely to say there is a decomposition $a=a_1  \dots  a_n$ such that 
 $$(r,s) \rightarrowtriangle_{a_n} (a_n r, a_n  s) \rightarrowtriangle_{a_{n-1}} \dots 
           \rightarrowtriangle_{a_1} (a_1 \dots a_n  r,a_1 \dots a_n r) = (r',s').$$
Thus $\_ \rightarrowtriangle^{*} \_$ is just the transitive reflexive closure of $\_ \rightarrowtriangle \_$, the generating relation of the equivalence.

Next, say that $(r_1,s_1) \sim (r_2,s_2)$ if and only if there is a $(r_0,s_0)$ and $a,b \in R$ such that 
$$\xymatrix@=10pt{ & (r_0,s_0) & \\ (r_1,s_1) \ar[ur]_{*}^a & & (r_1,s_1) \ar[ul]^{*}_b }.$$
Then we have:

\begin{proposition}
For any weak commutative rig, the relation $(r_1,s_1) \sim (r_2,s_2)$ on $R \x R$ is the equivalence relation generated by $\rightarrowtriangle$.  Furthermore,
it is a congruence with respect to fraction addition and multiplication, turning $R \x R/\sim$ into a weak commutative rig $\fr(R)$.
\end{proposition}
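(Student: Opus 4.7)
The plan has four stages: show that $\sim$ is an equivalence relation, identify it with the equivalence generated by $\rightarrowtriangle$, verify it is a congruence for fraction addition and multiplication, then check the weak commutative rig axioms on the quotient. Reflexivity (take $a = b = 1$) and symmetry are immediate from the shape of the definition of $\sim$, so the content of the first stage is transitivity, which I expect to be the main obstacle.

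Transitivity I would establish through a confluence lemma for $\rightarrowtriangle^{*}$: if $(r, s) \rightarrowtriangle^{*}_{a} (r_1, s_1)$ and $(r, s) \rightarrowtriangle^{*}_{c} (r_2, s_2)$, then both arrive at the common successor $(acr, acs)$. The nontrivial check is that $c \,|^{*} as$; given a decomposition $c = c_1 \cdots c_n$ iteratively dividing $s$, the same decomposition iteratively divides $as$, because whenever $c_{i+1}\cdots c_n s = c_i t_i$ one also has $c_{i+1}\cdots c_n (as) = c_i (a t_i)$. A symmetric argument combined with commutativity of multiplication makes the two targets coincide. Transitivity of $\sim$ then follows: given $(r_1, s_1) \sim (r_2, s_2)$ via common successor $(r_{12}, s_{12})$ and $(r_2, s_2) \sim (r_3, s_3)$ via common successor $(r_{23}, s_{23})$, confluence applied to the two outgoing arrows from $(r_2, s_2)$ gives a common successor $(r_0, s_0)$ of $(r_{12}, s_{12})$ and $(r_{23}, s_{23})$, and composing $\rightarrowtriangle^{*}$-arrows (which is allowed since $\rightarrowtriangle^{*}$ is already observed to be the reflexive-transitive closure of $\rightarrowtriangle$) yields witnesses that both $(r_1, s_1)$ and $(r_3, s_3)$ reach $(r_0, s_0)$.

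Stage two is then routine: each generator $(r, s) \rightarrowtriangle_a (ar, as)$ is itself an instance of $\sim$ with common successor $(ar, as)$, so the generated equivalence is contained in $\sim$, while any $\sim$-witness decomposes into $\rightarrowtriangle$-steps that already lie in the generated equivalence. For the congruence property it suffices to check that a single step $(r_1, s_1) \rightarrowtriangle_a (ar_1, as_1)$ is respected by each operation; in both cases the two outputs differ by an $a$-factor in each coordinate, and $a \mid s_1 s_2$ follows from $a \mid s_1$, so the generator $\rightarrowtriangle_a$ relates them directly.

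Finally, for the weak commutative rig axioms, commutativity, associativity, and the unit laws reduce termwise to the corresponding identities in $R$. The equality $0 \cdot 0 = 0$ is just $(0, 1)(0, 1) = (0, 1)$, and the weak axiom $0 \cdot x = 0 \cdot x \cdot x$ amounts to $(0, s) \sim (0, s^2)$, which is the generator step $(0, s) \rightarrowtriangle_s (0, s^2)$. Binary distributivity is the only axiom that uses cancellation nontrivially: expanding $x(y + z)$ and $xy + xz$ for $x = (r, s)$ yields pairs whose second version differs from the first by a factor of $s$ in each coordinate, and since $s$ divides the larger denominator, a single $\rightarrowtriangle_s$-step relates them. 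Thus every verification beyond transitivity reduces to such a direct one-step computation with the generating relation.
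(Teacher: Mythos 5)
Your stages one through three are correct and are essentially the paper's argument: where the paper pushes out single generating steps (from $(r,s)$, the steps $\rightarrowtriangle_a$ and $\rightarrowtriangle_b$ both complete to $(abr,abs)$) and appeals to transitivity of $\rightarrowtriangle^*$, you prove the diamond directly for labelled $\rightarrowtriangle^*$-arrows by checking that $c\,|^*\,s$ implies $c\,|^*\,as$; the reduction of the congruence property to single generator steps and the binary distributivity computation are exactly as in the paper.

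The gap is in stage four, and it sits precisely where the ``weak'' in weak rig matters: you implicitly use the nullary distributive law $0\cdot t=0$, which is not available in $R$. The additive unit law does \emph{not} reduce termwise: $(0,1)+(a,b)=(0b+a,b)$, and $0b+a\neq a$ in general, so one needs a genuine two-sided expansion, $(0b+a,b)\rightarrowtriangle_b(0b^2+ab,b^2)=((0+a)b,b^2)=(ab,b^2)\leftarrowtriangle_b(a,b)$, which uses the retained identity $0b=0b^2$. Likewise your verification of $0\cdot x=0\cdot x\cdot x$ misidentifies the representatives: with $x=[(r,s)]$ they are $(0r,s)$ and $(0r^2,s^2)$, not $(0,s)$ and $(0,s^2)$, and the single step $(0r,s)\rightarrowtriangle_s(0rs,s^2)$ does not reach $(0r^2,s^2)$, because $0rs=0r^2$ is not a weak-rig identity (in $R=\fr(\N)$ with $r=1$ and $s=(1,2)$ one has $0rs=(0,2)\neq(0,1)=0r^2$). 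Here too a two-sided join is needed: both sides rewrite to $(0rs,s^3)$, using $0rs^2=0rs=0r^2s$. These are exactly the calculations the paper records, and they are the reason the axiom $0t=0t^2$ is part of the definition of a weak rig; without them your stage four does not establish that $R\x R/\sim$ is a weak commutative rig.
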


\proof
That $\sim$ contains the generating relations and is contained in the equivalence relation generated by the generating relations is clear.  That it 
is symmetric and reflexive is also clear.  What is less clear is that it is transitive: for that we need the transitivity of $\rightarrowtriangle^*$ -- which is 
immediate -- and the ability to pushout the generating relations with generating relations:
$$\xymatrix@=10pt{ & (r,s) \ar[dl]_a \ar[dr]^b \\ 
             (a r,a s) \ar[dr]_b & & (b r,b s)  \ar[dl]^a \\ 
             & (a b r, a  b s) }$$
This shows that it is an equivalence relation.

To see that this is a congruence with respect to the fraction operations it suffices (given symmetries) to show that if 
$(r,s) \rightarrowtriangle_a (a r,a s) = (r',s')$ that $(r,s) + (p,q) \rightarrowtriangle_a (r',s') + (p,q)$, and similarly for multiplication.
That this works for multiplication is straightforward. For addition we have:
$$(r,s) + (p,q) = (r q + s p, s q) \rightarrowtriangle_a (a (r q + s p),a s q)  = (a r,a s) + (p,q)= (r',s') + (p,q)$$
where $a | s q$ as $a | s$.

Finally, we must show that this is a weak commutative rig.  It is clear that the multiplication has unit $(1,1)$, and is commutative and associative.
Similarly for addition it is clearly commutative, the unit is $(0,1)$ as:
\begin{eqnarray*}
(0,1)+(a,b) & =  & (0b+a,b) \rightarrowtriangle_b ((0b+a)b,b^2) = (0b^2 + ab,b^2) = (0b+ab,b^2) \\
                     & = & ((0+a)b,b^2) = (ab,b^2) \leftarrowtriangle_b (a,b).
\end{eqnarray*}
Furthermore, $(0,1) (0,1) = (0,1)$ and  $(0,1)(a,b)= (0,1)(a,b)^2$ as:
$$(0,1)(a,b) = (0a,b) \rightarrowtriangle_{b^2} (0ab^2,b^3) = (0a^2b,b^3) \leftarrowtriangle_b (0a^2,b^2) = (0,1)(a,b)^2.$$
That addition is associative is a standard calculation.  The only other non-standard aspect is binary distributivity:
\begin{eqnarray*}
(a,b) ((c,d)+(e,f)) & = & (a,b) (c  f + d e,d f) \\
  & = & (a c f + a d e,b d f) \\
  & \rightarrowtriangle_b & (a b  c  f + a  b  d  e,b^2  d f) \\
  & = & (a c,b d) + (a e,b f) \\
  & = & (a,b) (c,d) + (a,b) (e,f).
\end{eqnarray*}
\endproof

Notice that forcing binary distributivity to hold implies
$$(1,b) = (1,b) ((1,1)+(0,1)) \equiv (1,b) (1,1) + (1,b) (0,1) = (1,b)+(0,b) = (b+0b,b^2)=(b,b^2)$$
so that the generating equivalences above {\em must} hold, when distributivity is present.  This means we are precisely forcing binary distributivity of multiplication over
fraction addition with these generating equivalences.  Note also that nullary distributivity, even when one starts with a rig $R$, will not hold 
in $\fr(R)$, as $(0,0) = (0,0)(0,1)$ and $(0,1)$ are distinct unless $0=1$.

It is worth briefly considering some examples:
\begin{enumerate}[(1)]
\item Any lattice $L$ is a rig. $\fr(L)$ has as its underlying set pairs $\{ (x,y) | x,y \in L, x \leq y \}$ as, in this case, $(x,y)\sim (x \wedge y,y)$.
These are the set of intervals of the lattice.  The resulting addition and multiplication are both idempotent and are, respectively, the join and meet 
for two different ways of ordering intervals.  For the multiplication the ordering is $(x,y) \leq (x',y')$ if and only if $x \leq x'$ and $y \leq y'$.  For the addition 
the ordering\footnote{This order is known as the ``modal interval inclusion'' in the rough set literature and the meet with respect to this order is a 
well-known database operation related to ``left outer joins''!} is  $(x,y) \leq (x',y')$ if and only if $x' \wedge y \leq x$ and $y \leq y'$.  

Notice that the zero ideal consists of all intervals $(\bot,a)$.
\item In any unique factorization domain, $R$, such as the integers or any polynomial ring over a unique factorization domain, the equality in $\fr(R)$ 
may be expressed by reduction (as opposed to the expansion given above). This reduction to a canonical form performs cancellation while the factor is not 
eliminated from the denominator.  Thus, in $\fr(\Z)$ we have $(18,36)$ reduces to $(3,6)$ but no further reduction is allowed as this would eliminate a factor 
(in this case $3$) from the denominator.

In any rig $R$, as zero divides zero, we have $(r,0) = (0,0)$ for every $r \in R$.  The zero ideal will, in general, be  
quite large as it is $\fr{R}_0 = \{ (0,r)| r \in R\}$.
\item A special case of the above is when $R$ is a field. In this case $(x,y) = (xy^{-1},1)$ when $y \not= 0$ and when $y=0$ 
then, as above, $(x,0) = (0,0)$.  Thus, in this case the construction adds a single point ``at infinity'', $\infty = (0,0)$.  Note that the zero 
ideal is $\{ 0,\infty \}$.
\item The initial weak commutative rig is just the natural numbers, $\N$.  Thus, it is of some interest to know what 
$\fr(\N)$ looks like as this will be the initial algebra of the monad.  The canonical form of the elements is, as for unique factorization domains, determined by canceling 
factors from the fractions while the denominator remains divisible by that factor. Addition and multiplication are performed as usual 
for fractions and then reduced by canceling in this manner to the canonical form.   The zero ideal consists of $(0,0)$ and element of the form $(0,p_1p_2...p_n)$ where 
the denominator is a (possibly empty) product of distinct primes.
\end{enumerate}

Clearly we always have a weak rig homomorphism:
$$\eta: {\cal R} \to \fr({\cal R}); r \mapsto (r,1)$$  
Furthermore this is always a faithful embedding: if $(r,1) \sim (s,1)$, then we have $(u,v) \rightarrowtriangle^*_a (r,1)$ and  
$(u,v) \rightarrowtriangle^*_b (s,1)$.  This means $a$ iteratively divides $1$ but this means $a = a_1 \cdot \dots \cdot a_n$ where 
$a_n | 1$ which, in turn, means $a_n$ is a unit (i.e. has an inverse). But now we may argue similarly for $a_{n-1}$ and this eventually 
gives that $a$ itself is a unit.  Similarly $b$ is a unit and as $a \cdot v = 1 = b \cdot v$ it follows $a=b$ and whence that $r=s$.

In order to show that $\fr$ is a monad, we will use the ``Kleisli triple'' presentation of a monad.  For this we need a combinator 
$$\infer{\#(f): \fr(R) \to \fr(S)}{f : R \to \fr(S)}$$
such that $\#(\eta) = 1$, $\eta \#(f) = f$ and $\#(f)\#(g) = \#(f\#(g))$. Recall that given this, the functor is defined by $\fr(f) := \#(f \eta)$ and  
the multiplication is defined by $\mu_X := \#(1_{\fr(X)})$.  

We define this combinator as $\#(f)(x,y) := [(x_1 y_2^2,x_2 y_1 y_2)]$, where $[(x_1,x_2)] = f(x)$ and $[(y_1,y_2)] = f(y)$.  To simplify 
notation we shall write $(x_1,x_2) \in f(x)$, rather than $[(x_1,x_2)] = f(x)$, to mean $(x_1,x_2)$ is in the equivalence class determined by 
$f(x)$. 

Our very first problem is to prove that this is well-defined.  That is if $(x,y) \sim (x',y')$ that $\#(f)(x,y) \sim \#(f)(x',y')$ and as this 
is a little tricky we shall give an explicit proof.  First note that it suffices to prove this for a generating equivalence: so we may assume 
that $(x,y) \rightarrowtriangle_a (x',y')=(ax,ay)$ (where this also means $a | y$) and we must prove that $\#(f)(x,y) \sim \#(f)(ax,ay)$.  
Now $\#(f)(ax,ay) = (x'_1(y'_2)^2,x'_2y'_1y'_2)$ where $(x'_1,x'_2) \in f(ax)$ and $(y'_1,y'_2) \in f(ay)$. But we have $f(ax) \sim f(a)f(x)$ and 
$f(ay) \sim f(a)f(y)=f(a)f(a)f(z)$ where $y=az$ thus, letting $(a_1,a_2) \in f(a)$ and $(z_1,z_2) \in f(z)$, there are $\alpha$,$\beta$,$\gamma$, 
and $\delta$ such that 
$$\xymatrix@=10pt{ & (\beta a_1 x_1,\beta a_2 x_2) & & & (\delta a_1 a_1 z_1, \delta a_2 a_2 z_2) \\
            (x'_1,x'_2) \ar[ur]^{\alpha} & & (a_1 x_1,a_2 x_2) \ar[ul]_{\beta} & 
            (y'_1,y'_2)  \ar[ur]^{\gamma} & & (a_1 a_1 z_1,a_2 a_2 z_2)  \ar[ul]_{\delta} }$$
we may now calculate:
\begin{eqnarray*}
(x'_1 {y'_2}^2,y'_1y'_2x'_2) & \rightarrowtriangle_\alpha & (\beta  a_1 x_1  {y'_2}^2,y'_1y'_2 \beta a_2 x_2) \\
   & \rightarrowtriangle_\gamma & (\beta  a_1 x_1 \delta a_2 a_2 z_2 y'_2,\delta a_1 a_1 z_1 y'_2 \beta a_2 x_2) \\
   & \rightarrowtriangle_\gamma & (\beta  a_1 x_1 \delta a_2 a_2 z_2 \delta a_2 a_2 z_2,\delta a_1 a_1 z_1 \delta a_2 a_2 z_2 \beta a_2 x_2) \\
   & = & (\beta \delta^2 a_1 x_1 a_2^4 y_2^2,\beta \delta^2 a_1^2 a_2^3 z_1 z_2 x_2) \\
   & \leftarrowtriangle_{\beta\delta^2a_2^2a_1} &  (x_1 (a_2 z_2)^2,(a_1 z_1)(a_2 z_2) x_2)  \\
   & = & (x_1 y_2^2,y_1 y_2 x_2) \in \#(f)(x,y) 
\end{eqnarray*}
   
This is the first step in proving:

\begin{proposition}
$(\fr,\eta, \mu)$ is a monad, called the {\bf fractional monad}, on ${\bf wCRig}$
\end{proposition}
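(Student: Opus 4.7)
The plan is to verify the three Kleisli triple axioms for $(\fr,\eta,\#)$—namely $\#(\eta) = 1$, $\eta\#(f) = f$, and $\#(f)\#(g) = \#(f\#(g))$—and separately to check that each $\#(f)$ is itself a weak rig homomorphism, so that it is a legitimate morphism in ${\bf wCRig}$. Well-definedness of $\#(f)$ on equivalence classes has already been established in the displayed calculation above, so the remaining work is algebraic manipulation on representatives. Throughout I would use the freedom, granted by well-definedness, to choose any convenient representative for a class.

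For the two easier identities I would just compute. For $\#(\eta_R)$: since $\eta(z) = (z,1)$, taking $(x_1,x_2) = (x,1)$ and $(y_1,y_2) = (y,1)$ gives $\#(\eta)(x,y) = (x\cdot 1^2,\ 1\cdot y\cdot 1) = (x,y)$. For $\eta\cdot\#(f) = f$: given $r\in R$ we have $\eta(r) = (r,1)$, and because $f$ is a weak rig homomorphism, $f(1) = (1,1)$; picking $(x_1,x_2) \in f(r)$ the definition yields $\#(f)(r,1) = (x_1\cdot 1^2,\ x_2\cdot 1\cdot 1) = (x_1,x_2) \in f(r)$.

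The hard identity is the associativity $\#(f)\#(g) = \#(f\#(g))$ for $f : R \to \fr(S)$ and $g : S \to \fr(T)$. I would unfold both sides on a representative $(x,y)$: take $(x_1,x_2) \in f(x)$, $(y_1,y_2) \in f(y)$, $(p_1,p_2) \in g(x_1)$, $(q_1,q_2) \in g(x_2)$, $(P_1,P_2) \in g(y_1)$, $(Q_1,Q_2) \in g(y_2)$. On the left, using that $g$ is a homomorphism, one may pick the representatives $(p_1 Q_1^2,\ p_2 Q_2^2)\in g(x_1 y_2^2)$ and $(q_1 P_1 Q_1,\ q_2 P_2 Q_2)\in g(x_2 y_1 y_2)$, giving
$$\#(f)\#(g)(x,y) = \bigl(p_1\, q_2^2\, Q_1^2\, P_2^2\, Q_2^2,\ \ p_2\, q_1\, q_2\, P_1\, P_2\, Q_1\, Q_2^3\bigr).$$
On the right, $f\#(g)(x)$ has representative $(p_1 q_2^2,\ p_2 q_1 q_2)$ and $f\#(g)(y)$ has representative $(P_1 Q_2^2,\ P_2 Q_1 Q_2)$; unfolding $\#(f\#(g))(x,y)$ with these representatives yields precisely the same expression.

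Finally I would verify that $\#(f)$ preserves $0$, $1$, multiplication, and addition. Units and multiplication are immediate: $\#(f)(0,1) = (0,1)$, $\#(f)(1,1) = (1,1)$, and the multiplicative case collapses because $f$ is a homomorphism, letting us pick compatible representatives. Preservation of addition is where I expect the main obstacle. A direct unfolding of $\#(f)((x,y) + (x',y'))$ produces a pair which differs from $\#(f)(x,y) + \#(f)(x',y')$ by a factor of $y_2 y'_2$ in both coordinates; since $y_2 y'_2$ divides the denominator of the latter sum, the generating reduction $\rightarrowtriangle_{y_2 y'_2}$ applies and identifies the two classes. The conceptual content is small, but this last step requires careful bookkeeping of divisibility in the generating equivalence, and, as in the associativity argument, an appropriate choice of representatives is essential for the syntactic computations to match.
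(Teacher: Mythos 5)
Your proposal is correct and follows essentially the same route as the paper: it takes the already-established well-definedness of $\#$ as given, checks that $\#(f)$ is a weak rig homomorphism (with the addition case handled, exactly as in the paper, by cancelling the extra factor $y_2y_2'$ via the generating relation, since it divides the denominator), and verifies the Kleisli triple identities by direct computation with representatives, the only nontrivial one being $\#(f)\#(g)=\#(f\#(g))$, where your chosen representatives make the two sides coincide on the nose just as in the paper's calculation.
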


\proof
It remains to show that $\#(f)$ is a weak rig homomorphism and satisfies the Kleisli triple requirements.
It is straightforward to check that $\#(f)$ preserves the units and multiplication.  The argument for addition 
is a little more tricky. 

First note:
\begin{eqnarray*}
f(rq+sp,sq) & = & [ ( (r_1,r_2)(q_1,q_2) + (s_1,s_2)(p_1,p_2),(s_1,s_2)(q_1,q_2) )] \\
& & \mbox{where}~ (r_1,r_2) \in f(r), (s_1,s_2) \in f(s),(p_1,p_2) \inf(p),(q_1,q_2) \in f(q) \\
                       & = & [( (r_1q_1,r_2q_2) + (s_1p_1,s_2p_2),(s_1q_1,s_2q_2))] \\
                       & = & [(r_1q_1s_2p_2 +r_2q_2s_1p_1,r_2q_2s_2p_2)] \\
f(sq) & = & [(s_1q_1,s_2q_2)] 
\end{eqnarray*} 
so that we now have:
\begin{eqnarray*}
\#(f)([(r,s)]+[(p,q)]) & = & \#(f)([(rq+sp,sq)]) \\
                       & = &[ ((r_1q_1s_2p_2 +r_2q_2s_1p_1)(s_2q_2)^2,s_1p_2q_1r_2(s_2q_2)^2)] \\
                       & \sim & [((r_1q_1s_2p_2+ r_2q_2s_1p_1)s_2q_2,s_1p_2q_1r_2q_2s_2)] \\
                       & = & [(r_1s_2^2p_2q_1q_2+ r_2s_1s_2p_1q_2^2,r_2s_1s_2p_2q_1q_2)] \\
                       & = & [(r_1s_2^2,r_2s_1s_2)] + [(p_1q_2^2,p_2q_1q_2)] \\
                       & = & \#(f)([(r,s)]) + \#(f)([(p,q)])
\end{eqnarray*}

It remains to check the monad identities for the Kleisli triple.  The first two are straightforward we shall illustrate the last identity:
\begin{eqnarray*}
\#(g)(\#(f)([(x,y)])) & = & \#(g)(x_1y_2^2,x_2y_1y_2) ~~~\mbox{where}~~(x_1,x_2) \in f(x), (y_1,y_2) \in g(y) \\
                      & = & [(x_{11}y_{12}^2(x_{22}y_{21}y_{22})^2,x_{21}y_{22}^2x_{12} y_{11}y_{12}x_{22}y_{21}y_{22})] \\
                      & & \mbox{where}~~ (x_{1i},x_{2i}) \in g(x_i), (y_{1j},y_{2j}) \in g(y_j) \\
                      & = & [( x_{11}x_{22}^2(y_{21}y_{22}y_{12})^2,x_{12}x_{22}x_{12}y_{11}y_{22}^2y_{21}y_{22}y_{12})] \\
                      & = & [(x_1'{y'_2}^2,x'_2y_1'y_2')] ~~~\mbox{where}~~~ (x_1',x_2') \in \#(g)(f(x)), (y'_1,y'_2) \in \#(g)(f(y)) \\
& & \mbox{and}~~( x_{11}x_{22}^2,x_{21}x_{22}x_{12}) \in  \#(g)([(x_1,x_2)]) =\#(g)(f(x)) \\
& & ~~~~~~~ (y_{11}y_{22}^2,y_{12}y_{22}y_{12}) \in  \#(g)([(y_1,y_2)]) =\#(g)(f(y)) \\
                      & = & \#(f\#(g))([(x,y)]).
\end{eqnarray*}
\endproof

The algebras for this monad are ``fractional rigs'' as we will now show. A 
{\bf fractional rig} is a weak commutative rig with an operation $(\_)^{*}$ such that 
\begin{itemize}
\item $1^{*} = 1$, $x^{***} = x^{*}$, $(xy)^{*} = y^{*}x^{*}$;
\item $x^{*}xx^{*}= x^{*}$ (that is, $x^{*}$ is regular);
\item $x^{*}x(y+z) = x^{*}xy+z$ (linear distributivity for idempotents).
\end{itemize}
The last axiom is equivalent to demanding $x^{*}xy = x^{*}x0 + y$.  In particular, setting $y=1$, this means that 
$x^{*}x = x^{*}x0 + 1$.

Fractional rigs are of interest in their own right.  Here are some simple observations:

\begin{lemma}
In any fractional rig:
\begin{enumerate}[(i)]
\item $xx^{*}$ is idempotent;
\item If $x$ is a unit, with $xy=1$, then $x^{*}=y$;
\item $x^{*}x^{**}x^{*} = x^{*}$;
\item $xx^{*} =(xx^{*})^{*}$;
\item $e$ is idempotent with $e^{*}=e$ if and only if there is an $x$ with $e=xx^{*}$;
\item $xx^{*}x = x^{**}$;
\item An element $x$ is regular (that is, $xx^{*}x=x$) if and only if $x=x^{**}$.
\item if $0=0^{*}$ then $0=1$, addition equals multiplication, and both operations are idempotent. 
\end{enumerate}
\end{lemma}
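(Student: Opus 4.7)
My plan is to dispatch the eight items in an order that better reflects their logical dependencies, rather than the stated numerical order. Items (i)--(v) and (viii) are each short manipulations using only the axioms, whereas (vii) requires one short clever argument, and (vi) then drops out by combining (vii) with the regularity axiom.

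For the easy items: (i) follows by substituting regularity inside $(xx^*)(xx^*) = x(x^*xx^*)$. For (ii), the reversal law $(xy)^* = y^*x^*$ together with $1^* = 1$ gives $y^*x^* = 1$; multiplying the regularity axiom on the left by $y^*$ then yields $xx^* = 1$, so $x^* = x^*(xy) = (x^*x)y = y$. For (iii), instantiate the regularity axiom $y^*yy^* = y^*$ at $y = x^{**}$ and use $x^{***} = x^*$. For (iv), the reversal law gives $(xx^*)^* = x^{**}x^*$, and the two idempotents $xx^*$ and $x^{**}x^*$ absorb each other in both directions: $(xx^*)(x^{**}x^*) = x(x^*x^{**}x^*) = xx^*$ by (iii), while $(x^{**}x^*)(xx^*) = x^{**}(x^*xx^*) = x^{**}x^*$ by regularity, so commutativity forces $xx^* = x^{**}x^*$. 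Then (v) is immediate: ($\Leftarrow$) uses (i) and (iv), and ($\Rightarrow$) takes $x = e$ to give $ee^* = ee = e$.

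The heart of the proof is (vii). The direction $x = x^{**} \Rightarrow$ regular is a single substitution into the axiom applied to $x^*$. For the converse, suppose $xx^*x = x$. The crucial observation is that commutativity together with (iv) yields
\[ x^*x^{**} = x^{**}x^* = xx^* = x^*x, \]
so $x$ and $x^{**}$ multiply with $x^*$ identically. Regrouping then gives
\[ x = xx^*x = x(x^*x) = x(x^*x^{**}) = (xx^*)x^{**} = (x^{**}x^*)x^{**} = x^{**}, \]
the last step being the regularity axiom applied to $x^*$. With (vii) in hand, (vi) drops out: $(xx^*x)^* = x^*(xx^*)^* = x^* \cdot xx^* = x^*$ by (iv) and regularity, and a short count of letters (using idempotency of $xx^*$) gives $(xx^*x)(x^*)(xx^*x) = (xx^*)^3 x = xx^*x$, showing that $xx^*x$ is regular; so by (vii), $xx^*x = (xx^*x)^{**} = (x^*)^* = x^{**}$.

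Finally, for (viii), specialise the linear distributivity $x^*x(y+z) = x^*xy + z$ at $x = y = 0$ to obtain $0^*0 \cdot z = 0^*0 \cdot 0 + z$. Under the hypothesis $0 = 0^*$, the coefficient $0^*0 = 0 \cdot 0 = 0$ by the weak-rig axiom, so the identity collapses to $0 \cdot z = z$; taking $z = 1$ gives $0 = 1$, whence the remaining conclusions follow from Lemma \ref{zero-ideal}. The main obstacle I foresee is (vii)($\Rightarrow$): the natural temptation to ``cancel'' $x^*x$ from $x = x(x^*x) = x^{**}(x^*x)$ is unavailable, and one must instead reassociate (using (iv)) to convert $x(x^*x)$ into $(xx^*)x^{**} = (x^{**}x^*)x^{**}$, where regularity for $x^*$ closes the chain.
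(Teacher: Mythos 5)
Your proof is correct and follows essentially the same equational route as the paper's: each item is dispatched by the same short manipulations with the regularity axiom, the reversal law, and the identities (iii) and (iv). The only organizational difference is that you establish (vii)\,($\Rightarrow$) before (vi); but observe that your chain $xx^*x = x(x^*x) = x(x^*x^{**}) = (xx^*)x^{**} = (x^{**}x^*)x^{**} = x^{**}$ never actually uses the regularity hypothesis on $x$, so it is already an unconditional proof of (vi) (it is the paper's computation for (vi) in compressed form), which makes your subsequent re-derivation of (vi) via regularity of $xx^*x$ sound but redundant.
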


We shall call an element $e$  a {\bf ${*}$-idempotent} when $e$ is idempotent and  $e^{*}=e$. 

\proof~
\begin{enumerate}[{\em (i)}]
\item $(xx^{*})(xx^{*}) = x(x^{*}xx^{*}) = xx^{*}$.
\item As $xx^{*}$ is idempotent if it has an inverse it is the identity. However, $yy^{*}$ is its inverse: this means $y=x^{*}$.
\item $x^{*}x^{**}x^{*} = x^{***}x^{**}x^{***} = x^{***} = x^{*}$;
\item $xx^{*} =xx^{*}x^{**}x^{*} = x^{*}xx^{*}x^{**} = x^{*}x^{**} = (xx^{*})^{*}$;
\item If $e$ is idempotent with $e^{*}=e$ then $ee^{*}= ee = e$ and the converse follows from the above.
\item $xx^{*}x = xx^{*}x^{**}x^{*}x = x^{*}xx^{*}x^{**}x = x^{**}x^{*}x = x^{**}x^{*}x^{**}x^{*}x = x^{**}x^{*}x^{**} = x^{**}$;
\item If $x$ is regular in this sense then $x^{**}=xx^{*}x=x$ so $x=x^{**}$ and the converse follows from above.
\item If $0=0^{*}$ then $0=00=00^{*}$ so $0$ is a $*$-idempotent.  This means $1=1+0=0(1+0)=01+ 0 =0$! 
\end{enumerate}
\endproof

In particular, as a consequence of the last observation, it  follows from Lemma \ref{zero-ideal} that for any fractional rig $R$, the fractional rig 
$R_{00^{*}}=\{ 00^{*}r | r \in R\}$, which we discuss further in the next section, is a semilattice.  In fact, we may say more:

\begin{lemma} In any fractional rig in which $0=1$:
\begin{enumerate}[(i)]
\item The addition and multiplication are equal and idempotent, producing a join semilattice;
\item $0=0^{*}$ and $(\_)^{*}$ is a closure operator (that is, it is monotone with $x \leq x^{*}$ and $x^{**}=x^{*}$).
\end{enumerate}
\end{lemma}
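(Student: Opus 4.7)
The plan is to handle (i) by direct appeal to Lemma \ref{zero-ideal}, since a fractional rig is in particular a weak commutative rig, and that earlier lemma already established that when $0=1$ the addition and multiplication coincide and are idempotent, which gives the join-semilattice structure with $x \leq y$ iff $x + y = y$ (equivalently $xy = y$). So (i) is essentially a citation.

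For (ii), the first assertion $0 = 0^*$ is immediate: when $0 = 1$ we have $0 = 1 = 1^* = 0^*$ using the axiom $1^* = 1$. The substantive content is showing $(\_)^*$ is a closure operator. My plan is to first establish the pointwise inequality $x \leq x^*$, then derive monotonicity from it, and finally combine monotonicity with the given axiom $x^{***} = x^*$ to pin down $x^{**} = x^*$. For $x \leq x^*$, I would rewrite the regularity axiom $x^* x x^* = x^*$ using the fact (from (i)) that multiplication equals addition and is idempotent: this says $x^* + x + x^* = x^*$, and idempotence collapses this to $x + x^* = x^*$, which is exactly $x \leq x^*$ in the semilattice order.

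For monotonicity, suppose $x \leq y$, i.e.\ $xy = y$. Then applying the axiom $(xy)^* = y^* x^*$ gives $y^* = y^* x^*$, which in the idempotent semilattice means $x^* \leq y^*$. So $(\_)^*$ is monotone. Finally, applying $x \leq x^*$ with $x$ replaced by $x^*$ yields $x^* \leq x^{**}$, and applying it with $x$ replaced by $x^{**}$ yields $x^{**} \leq x^{***}$; combined with the axiom $x^{***} = x^*$ and antisymmetry of the semilattice order, we obtain $x^* \leq x^{**} \leq x^{***} = x^*$, hence $x^{**} = x^*$.

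I expect no real obstacles here: the proof is essentially two short derivations once one notices that $0 = 1$ collapses the algebra into an idempotent commutative semiring where the regularity axiom for $(\_)^*$ forces $x \leq x^*$. The only thing to be careful about is being explicit that multiplication and addition coincide when invoking (i), so that the manipulation $x^* x x^* \mapsto x + x^*$ is justified, and that the semilattice order makes sense of the inequalities used in the final antisymmetry argument.
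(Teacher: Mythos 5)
Your proposal is correct; parts (i), the identity $0=0^*$, and the monotonicity argument via $(xy)^*=y^*x^*$ coincide with the paper's proof. Where you differ is in how the remaining two facts are obtained. The paper first observes $x\leq x^*$ is equivalent to $x^*x=x^*$, computes $x^*x=xx^*x=x^{**}$ (invoking the earlier general fractional-rig lemma), and then establishes $x^{**}=x^*$ by a purely equational chain, $x^*=x^*x^{**}x^*=x^*x^{**}=x^{**}x^*x^{**}=x^{**}x^{***}x^{**}=x^{**}$, using idempotence, the identity $x^*x^{**}x^*=x^*$, and $x^{***}=x^*$. You instead extract $x\leq x^*$ directly from the regularity axiom $x^*xx^*=x^*$ together with idempotence (collapsing it to $x+x^*=x^*$), and then deduce $x^{**}=x^*$ order-theoretically by instantiating $x\leq x^*$ at $x^*$ and $x^{**}$ and sandwiching with $x^{***}=x^*$ and antisymmetry. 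Both arguments use the same toolbox (regularity, multiplicativity of $(\_)^*$, $x^{***}=x^*$, and the idempotent semilattice structure forced by $0=1$); yours is slightly more self-contained, since it avoids citing the general lemma's identities $xx^*x=x^{**}$ and $x^*x^{**}x^*=x^*$, while the paper's equational chain has the modest advantage of not appealing to antisymmetry of the order. Your handling is sound, including the implicit use of commutativity and idempotence when collapsing $x^*+x+x^*$ to $x+x^*$ and the identification of the additive and multiplicative descriptions of the order.
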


\proof  The first part follows from Lemma \ref{zero-ideal}.  For the second part: as $0=1$ and $1^{*}=1$, the first observation is immediate.
Now $x \leq y$ if and only if $y = xy$ but then, as $(\_)^{*}$ preserves multiplication, $y^{*} = x^{*}y^{*}$ so that $x^{*} \leq y^{*}$.  Thus, $(\_)^{*}$ 
is monotone.   $x \leq x^{*}$ if and only if $x^{*}x = x^{*}$ but $x^{*}x = xx^{*}x = x^{**}$, so we are done if we can show $x^{*} = x^{**}$.  
But $x^{*} = x^{*}x^{**}x^{*} = x^{*}x^{**} = x^{**}x^{*}x^{**} = x^{**}x^{***}x^{**} = x^{**}$.
\endproof
      
We observe next that for any weak commutative rig $R$, $\fr(R)$ is a fractional rig with $(\_)^{*}$ defined by $(x,y)^{*} := (y^2,yx)$.  Notice 
first that it is straightforward to check that this is a well-defined operation which is multiplicative and that $x^{***} = x^{*}$. 
Furthermore, $(x,y)^{*}$ is regular in the sense that
$$(x,y)^{*}(x,y)(x,y)^{*} = (y^2,xy)(x,y)(y^2,xy) = (y^4x,y^3x^2) = (y^2,xy)= (x,y)^{*}.$$  
For the linear distribution observe that
$$(z,z)((p,q) + (r,s)) = (z,z)(ps+qr,qs) = (zps+zqr,zqs) = (z,z)(p,q)+(r,s).$$

Note that, for example, in $\fr(\N)$ we have, for any two primes 
$p \not= q$, $(p,q)^{*} = (q^2,pq)$ and $(p,q)^{**}=(p^2q^2,q^3p) = (p^2,pq)$ and $(p,q)^{***} = (p^2q^2,p^3q) = (q^2,pq) = (p,q)^{*}$.  
So here $x^{**} \not= x$ and $x^{**} \not= x^{*}$.   On the other hand, $(0,p)^* = (p^2,0p) = (0,0) = (0,0)^{*} = (0,p)^{**}$, thus the 
``closure'' of everything in the zero ideal is its top element, $(0,0)$.

To show that fractional weak rigs are exactly the algebras for the fractional monad, we need to show that for any fractional rig $R$, there is a structure map 
$\nu: \fr(R) \to R$ such that 
$$\xymatrix{\fr^2(R) \ar[d]_{\fr(\nu)} \ar[r]^\mu & \fr(R) \ar[d]^{\nu} \\ \fr(R) \ar[r]_{\nu} & R}$$
commutes.  Define $\nu: \fr(R) \to R; (r,s) \mapsto rs^{*}$, then:

\begin{lemma} 
For every fractional weak rig, $R$, $\nu$ as defined above is a fractional rig homomorphism.
\end{lemma}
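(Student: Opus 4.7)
The plan is to verify $\nu$ is well-defined on equivalence classes and then to check preservation of each piece of structure in turn: the constants $0,1$, multiplication, the star $(\_)^{*}$, and finally addition (which will be the main work).

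For well-definedness, it suffices to check invariance under a single generating step $(r,s) \rightarrowtriangle_a (ar,as)$, where $s = ab$. Then $\nu(ar,as) = ar(as)^{*} = arb^{*}a^{*}a^{*}$ by multiplicativity of $(\_)^{*}$, and this collapses to $rb^{*}a^{*} = r(ab)^{*} = \nu(r,s)$ using the identity $aa^{*}a^{*} = a^{*}aa^{*} = a^{*}$, which is immediate from the regularity of $a^{*}$. Preservation of $0$ and $1$ is direct from $1^{*} = 1$, and preservation of multiplication is a short calculation via multiplicativity of $(\_)^{*}$: $\nu((r,s)(r',s')) = rr'(ss')^{*} = (rs^{*})(r's'^{*})$. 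For the star, I compute $\nu((r,s)^{*}) = \nu(s^{2},sr) = s^{2}(sr)^{*} = s\cdot(ss^{*}s)\cdot r^{*} = s^{**}r^{*} = (rs^{*})^{*} = \nu(r,s)^{*}$, using $ss^{*}s = s^{**}$ from part (vi) of the preceding lemma together with $(xy)^{*} = y^{*}x^{*}$.

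The hard part will be preservation of addition, because nullary distributivity fails in a fractional rig and so ``zero-ideal'' terms of the form $ss^{*}0$ threaten to appear as unwanted residues. Starting from $(rs'+sr')(ss')^{*}$, binary distributivity and multiplicativity of $(\_)^{*}$ expand this as $rs^{*}\cdot s's'^{*} + r's'^{*}\cdot ss^{*}$. Applying the linear distributivity axiom in its equivalent form $x^{*}xy = x^{*}x0 + y$ to each factor rewrites the expression as $(s's'^{*}0 + rs^{*}) + (ss^{*}0 + r's'^{*})$, exhibiting precisely the surplus zero-ideal terms anticipated.

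The key identity that eliminates these surplus terms is $ss^{*}0 + rs^{*} = rs^{*}$. To obtain it, read linear distributivity in the other direction with $x = s$ and $y = rs^{*}$: $ss^{*}0 + rs^{*} = s^{*}s\cdot rs^{*} = r\cdot s^{*}ss^{*} = rs^{*}$, where the last step uses the regularity $s^{*}ss^{*} = s^{*}$. The analogous identity absorbs the $s'$-term, and the sum collapses to the desired $rs^{*} + r's'^{*} = \nu(r,s) + \nu(r',s')$. Thus $\nu$ is a homomorphism of fractional rigs.
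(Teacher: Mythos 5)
Your proposal is correct and proceeds essentially as the paper does: well-definedness is checked on a generating relation using regularity of $a^*$, the units, multiplication and star are verified directly, and the addition case is handled by combining regularity with the linear distributivity axiom (in your case via its equivalent form $x^*xy = x^*x0 + y$, peeling off and then reabsorbing the zero-ideal terms $ss^*0$ and $s's'^{*}0$, which is the same toolkit the paper uses when it shuffles the idempotents $ss^*$ and $qq^*$ through the sum). The only blemish is a transcription slip in the star computation: the correct chain is $s^2(sr)^* = s^2s^*r^* = (ss^*s)r^* = s^{**}r^*$, so your intermediate expression $s\cdot(ss^*s)\cdot r^*$ carries a spurious extra factor of $s$, though the identities you cite and the endpoints of the chain are right.
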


\proof
We must check that $\nu$ is well-defined and is a fractional rig homomorphism.  To establish the former it suffices to prove
$\nu(x,ay) = \nu(ax,a^2y)$ which is so as 
$$\nu(ax,a^2y) = axa^2y({a^{*}}^2y^{*})^2 = xay{a^{*}}^2{y^{*}}^2 = \nu(x,ay).$$
It is straightforward to check that multiplication and the units are preserved by $\nu$.  This leaves addition:
\begin{eqnarray*}
\nu((r,s)+(p,q)) & = & \nu(rq+sp,sq) = (rq+sp)sq{s^{*}}^2{q^{*}}^2 \\
                 & = & s^{*}q^{*}(qq^{*}(rq+sp)ss^{*}) = s^{*}q^{*}(q^2q^{*}r+s^2s^{*}p) \\
                 & = & q^2{q^{*}}^2rs^{*}+s^2{s^{*}}^2pq^{*} = qq^{*}rs^{*}+ss^{*}pq^{*}\\
                 & = & qq^{*}(rs^{*}+pq^{*})ss^{*} = ss^{*}(rs^{*}+pq^{*})qq^{*} \\
                 & = & sr{s^{*}}^2+pq{q^{*}}^2 = \nu(r,s) + \nu(p,q).
\end{eqnarray*}
Finally, $\nu$ preserves the $(\_)^*$ as 
$$\nu((x,y)^{*}) = \nu(y^2,xy) = y^2x^{*}y^{*} = x^{*}(yy^{*}y) = x^{*}y^{**} = (xy^{*})^{*} = \nu(x,y)^{*}.$$ 
\endproof

Now we can complete the story by showing not only that this definition of $\nu$ makes every fractional rig an algebra, but also 
that such an algebra inherits the structure of a fractional rig.

\begin{proposition} 
An algebra for the fractional monad is exactly a fractional rig.
\end{proposition}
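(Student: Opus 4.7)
\proof
The strategy is to establish a bijective correspondence between algebra structures $\nu : \fr(R) \to R$ on a weak commutative rig $R$, and fractional rig structures $(\_)^{*}$ on $R$. In one direction, given a fractional rig, we use the map $\nu(r,s) = rs^{*}$ defined above. In the other direction, given an algebra $\nu$, we define $r^{*} := \nu(1,r)$. We then verify that these constructions are mutually inverse.

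For the forward direction, the preceding lemma already shows $\nu(r,s) = rs^{*}$ is a weak commutative rig homomorphism, so it remains to check the two algebra laws. The unit law $\nu \eta = 1$ is immediate: $\nu(\eta(r)) = \nu(r,1) = r \cdot 1^{*} = r$. For the associativity law $\nu \mu = \fr(\nu) \nu$, let $X = [(x,y)] \in \fr^2(R)$ with $x = [(x_1,x_2)]$, $y = [(y_1,y_2)]$. Using the explicit formula for $\mu = \#(1)$, we compute
\[ \nu(\mu(X)) = \nu(x_1 y_2^2, x_2 y_1 y_2) = x_1 y_2^2 x_2^{*} y_1^{*} y_2^{*}, \]
while $\fr(\nu)(X) = [(x_1 x_2^{*}, y_1 y_2^{*})]$ gives
\[ \nu(\fr(\nu)(X)) = x_1 x_2^{*} (y_1 y_2^{*})^{*} = x_1 x_2^{*} y_2^{**} y_1^{*}. \]
Equality reduces (after canceling common multiplicative factors) to $y_2^2 y_2^{*} = y_2^{**}$, which follows from the identity $yy^{*}y = y^{**}$ established in Lemma~3.45(vi) together with commutativity.

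For the backward direction, given an algebra $\nu$, set $r^{*} := \nu(1,r)$. Since $\nu$ is a weak rig homomorphism and $\nu\eta = 1$, we have $\nu(r,s) = \nu((r,1)(1,s)) = \nu(r,1)\nu(1,s) = r s^{*}$, so $\nu$ has the expected form. The key step is a lemma: $\nu$ preserves the star operation, i.e.\ $\nu((r,s)^{*}_{\fr(R)}) = \nu(r,s)^{*}$. To prove this, apply the algebra law to $X = [(\eta(1),(r,s))] \in \fr^2(R)$: direct computation gives $\mu(X) = (s^2, rs) = (r,s)^{*}_{\fr(R)}$ and $\fr(\nu)(X) = [(1,\nu(r,s))]$, so the law yields $\nu((r,s)^{*}_{\fr(R)}) = \nu(1,\nu(r,s)) = (\nu(r,s))^{*}$. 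Now each fractional rig axiom ($1^{*} = 1$, $x^{***} = x^{*}$, $(xy)^{*} = y^{*}x^{*}$, regularity of $x^{*}$, and linear distributivity for $x^{*}x$) has already been verified to hold in $\fr(R)$. Since $\nu$ preserves multiplication, addition, units, and (by the lemma just shown) the star operation, and since every element of $R$ is of the form $\nu(r,1)$, these axioms transfer to $R$.

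Finally, the two constructions are mutually inverse: starting from a fractional rig $R$ with $\nu(r,s) = rs^{*}$, the induced operation is $\nu(1,r) = 1 \cdot r^{*} = r^{*}$, recovering the original; starting from an algebra $\nu$ and defining $r^{*} = \nu(1,r)$, the induced algebra sends $(r,s)$ to $rs^{*} = \nu(r,s)$, again recovering the original. The main obstacle is the algebra axiom check in the forward direction, since it is the one place where the subtle fractional rig identity $yy^{*}y = y^{**}$ (rather than just regularity) is needed; everything else is either formal, an instance of $\nu$ being a weak rig hom, or a consequence of identities that already hold in $\fr(R)$.
\endproof
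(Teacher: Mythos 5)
Your proof is correct, and its overall architecture matches the paper's: a fractional rig becomes an algebra via $\nu(r,s)=rs^{*}$ (your associativity check, resting on $yy^{*}y=y^{**}$, is the same calculation the paper performs), and an algebra yields a star via $r^{*}:=\nu(1,r)$, which is exactly the paper's $r^{*}=\nu(\eta(r)^{*})$ since $\eta(r)^{*}=(r,1)^{*}=(1,r)$. Where you genuinely diverge is the converse direction. Both arguments hinge on the same key fact, that $\nu$ preserves the star (the paper's $\nu(\eta(\nu(z))^{*})=\nu(z^{*})$ is precisely your $\nu(z^{*})=\nu(z)^{*}$), but the paper deduces it from two auxiliary facts it only asserts -- naturality of $(\_)^{*}$ on $\fr$ and preservation of $(\_)^{*}$ by $\mu$ -- whereas you obtain it directly by instantiating the associativity square at $[(\eta(1),(r,s))]\in\fr^2(R)$ and computing both legs from the explicit formulas for $\mu$ and $\fr(\nu)$; this is more self-contained. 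You then transfer all the fractional-rig axioms uniformly, using that $\nu$ is a surjective homomorphism preserving all operations including the star, while the paper treats the un-nested identities ad hoc and reserves its lemma for $x^{***}=x^{*}$; you also record the unit law and the mutual inverseness of the two assignments, which the paper leaves implicit. One small wording caution: ``canceling common multiplicative factors'' is not available in a weak rig; what you mean (and what is valid) is that it suffices to prove $y_2^2y_2^{*}=y_2^{**}$ and then multiply both sides by the common factor $x_1x_2^{*}y_1^{*}$ -- state it that way.
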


\proof
Every fractional rig is an algebra, that is the diagram above commutes: 
\begin{eqnarray*}
\nu(\mu((r,s),(p,q))) & = & \nu(rq^2,spq) =  \\
                      & = & rq^2s^{*}p^{*}q^{*}\\
                      & = & rs^{*}p^{*}qq^{*}q \\
                      & = & rs^{*}p^{*}q^{**} \\
                      & = & \nu(rs^{*},pq^{*}) \\
                      & = & \nu(\fr(\nu)((r,s),(p,q))
\end{eqnarray*}
Conversely an algebra $\nu: \fr(R) \to R$ has $r^* = \nu(\eta(r)^*)$. It remains to check that 
this definition turns $R$ into a fractional rig.  The identities which do not involve nested uses of $(\_)^{*}$ are straightforward.  
For example to show $r^{*}rr^{*} = r^{*}$ we have:
$$r^{*}rr^{*} = \nu(\eta(r)^*)r\nu(\eta(r)^*) = \nu(\eta(r)^*)\nu(\eta(r))\nu(\eta(r)^*) = \nu(\eta(r)^*\eta(r)\eta(r)^*) =\nu(\eta(r)^*) = r^{*}$$
where we use the fact that $\nu$ is a weak rig homomorphism and $\fr(R)$ satisfies the identity.
More difficult is to prove that $r^{***} = r^{*}$: we shall use two facts
$$\xymatrix{\fr(X) \ar@{}[dr]|{(1)} \ar[d]_{*} \ar[r]^{\fr(f)} & \fr(Y) \ar[d]^{*} \\ \fr(X) \ar[r]_{\fr(f)} & \fr(Y)}
~~~ \xymatrix{\fr^2(X)  \ar@{}[dr]|{(2)} \ar[d]_{*} \ar[r]^{\mu} & \fr(X) \ar[d]^{*} \\ \fr^2(X) \ar[r]_{\mu} & \fr(X)}$$
namely (1) the $(\_)^{*}$ on $\fr(X)$ is natural and (2) that $\mu$ preserves the $(\_)^{*}$.  We start by establishing
for any $z \in \fr(R)$ that $\nu(\eta(\nu(z))^*) = \nu(z^{*})$ as:
$$\nu(\eta(\nu(z))^*) = \nu(\fr(\nu)(\eta(z))^{*}) =_{(1)} \nu(\fr(\nu)(\eta(z)^{*})) = \nu(\mu(\eta(z)^{*})) =_{(2)} \nu(\mu(\eta(z))^{*}) = \nu(z^{*})$$
This allows the calculation:
$$r^{***} = \nu(\eta(\nu(\eta(\nu(\eta(r)^{*}))^{*}))^{*}) = \nu(\eta(\nu(\eta(r)^{**}))^{*}) = \nu(\eta(r)^{***}) = \nu(\eta(r)^{*}) = r^{*}.$$
\endproof

Let us denote the category of fractional rigs and and homomorphisms by $\text{\bf fwCRig}$.  Because this is a category of algebras over sets, 
this is a complete and cocomplete category.  Furthermore, we have established:

\begin{corollary}
The underlying functor $V: \text{\bf fwCRig} \to \text{\bf wCRig}$ has a left adjoint which generates the fraction monad on $\text{\bf wCRig}$.
\end{corollary}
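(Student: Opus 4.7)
The plan is to recognize this corollary as an immediate application of the Eilenberg--Moore construction, given the preceding proposition which identifies $\text{\bf fwCRig}$ with the category of algebras for the fractional monad $\fr$ on $\text{\bf wCRig}$.

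More concretely, I would proceed in three short steps. First, I would note that the preceding proposition provides an isomorphism of categories $\text{\bf fwCRig} \cong \text{\bf wCRig}^{\fr}$: objects correspond by sending a fractional rig $R$ to the algebra $(R,\nu_R)$ with $\nu_R(x,y) = xy^{*}$, and morphisms correspond because a map of fractional rigs is precisely a weak rig homomorphism commuting with $(\_)^{*}$, which the proposition shows is equivalent to commuting with $\nu$. Under this identification, the forgetful functor $V: \text{\bf fwCRig} \to \text{\bf wCRig}$ becomes the forgetful functor $U^{\fr}: \text{\bf wCRig}^{\fr} \to \text{\bf wCRig}$.

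Second, I would invoke the standard Eilenberg--Moore adjunction: $U^{\fr}$ has left adjoint $F^{\fr}: \text{\bf wCRig} \to \text{\bf wCRig}^{\fr}$ defined on objects by $R \mapsto (\fr(R),\mu_R)$, where $(\fr(R),\mu_R)$ is a fractional rig because $\mu$ is a structure map for the free algebra. Transporting across the isomorphism above, this gives a left adjoint to $V$ whose object part sends $R \in \text{\bf wCRig}$ to $\fr(R)$ equipped with its canonical star operation $(x,y)^{*} = (y^{2},xy)$ (which is exactly the structure already constructed to show that $\fr(R)$ is a fractional rig). The unit of the adjunction at $R$ is $\eta_R: R \to \fr(R)$.

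Finally, I would observe that the monad generated by this adjunction $V \circ F^{\fr}$ is, by the standard Eilenberg--Moore recipe, precisely $(\fr,\eta,\mu)$: the endofunctor is $U^{\fr}F^{\fr} = \fr$, the unit is $\eta$ by construction, and the multiplication at $R$ is $U^{\fr}(\epsilon_{F^{\fr}R}) = \mu_R$. Since everything here is a direct translation along the equivalence provided by the preceding proposition, there is essentially no calculation required; the only mild content is confirming that the $F^{\fr}R = (\fr(R),\mu_R)$ from the monad side really matches the fractional rig $\fr(R)$ with $(x,y)^{*} = (y^2,xy)$ from the explicit construction, which follows because $\mu_R(\eta(z)^{*}) = z^{*}$ was shown in the last proof. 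There is no real obstacle; the corollary is a formal consequence of monadicity already established.
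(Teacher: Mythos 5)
Your proposal is correct and matches the paper's intent exactly: the paper states this corollary without a separate proof, treating it as an immediate consequence of the preceding proposition identifying fractional rigs with algebras for the fractional monad, which is precisely the Eilenberg--Moore argument you spell out. The only detail you add beyond the paper is the explicit check that morphisms correspond and that the free algebra $(\fr(R),\mu_R)$ carries the star operation $(x,y)^{*}=(y^2,xy)$, which is a reasonable (and correct) elaboration of what the paper leaves implicit.
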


This observation suggests an alternative, more abstract, approach to these results: proving that the adjoint between these categories generates 
the fractional monad, in fact, suffices to prove that $\text{\bf fwCRig}$ is monadic over $\text{\bf wCRig}$.  The approach we have followed 
reflects our focus on the fractional monad itself and on its concrete development.


\subsection{Rational functions}


In any fractional rig the $*$-idempotents, $e=e^{*}=ee$, have a special role. If we force the identity $e=1$, this forces all $e'$ with $ee'=e$ 
-- this is the up-set generated by $e$ under the order $e \leq e' \Leftrightarrow ee'=e$ -- to be the identity. For fractional rigs this is an 
expression of localization.  A {\bf localization} 
in fractional rigs is any map which is universal with respect to an identity of the form $e=1$, where $e$ is a ${*}$-idempotent of the domain.  Thus the 
map $\ell_e: R \to R/\<e=1\>$ is a localization at $e$ in case whenever $f: R \to S$ has $f(e)=f(1)$ there is a unique map $f'$ such that:
$$\xymatrix{R \ar[rr]^{\ell_e} \ar[rrd]_f & & R/\<e=1\> \ar@{..>}[d]^{f'} \\ & & S}$$
Here $R/\<e=1\>$ is determined only up to isomorphism, however, there is a particular realization of $R/\<e=1\>$  as the fractional rig $R_e = \{ re |r \in R \}$, 
with the evident addition, multiplication, and definition of $(\_)^{*}$.  This gives a canonical way of representing the localization at any $e$ by the map 
$$\ell_e: R \to R_e; r \mapsto re.$$  
In particular, $\ell_{00^{*}}: R \to R_{00^{*}}$ gives a localization of any fractional rig to one in which $0=1$.

\begin{lemma} 
In $\text{\bf fwCRig}$ the class  of localizations, {\sc loc}, contains all isomorphisms and is closed to composition and 
pushouts along any map.
\end{lemma}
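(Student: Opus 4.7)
The plan is to handle the three closure properties in turn, using the fact that each localization is determined by a $*$-idempotent. The isomorphism case is immediate: an iso $\phi : R \to S$ is a localization at $1 \in R$ (which is certainly a $*$-idempotent), since the condition $f(1) = 1$ is vacuous on any $f : R \to T$ and the factorization $\phi^{-1} f$ is unique.

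For composition, suppose $\ell_1 : R \to R_1$ localizes at a $*$-idempotent $e_1 \in R$ and $\ell_2 : R_1 \to R_2$ localizes at a $*$-idempotent $e_2 \in R_1$. Using the canonical realization $R_1 = \{re_1 : r \in R\}$, write $e_2 = t e_1$ for some $t \in R$. The claim is that $e := e_1 t$ is a $*$-idempotent of $R$ and that $\ell_1 \ell_2$ is universal for $e = 1$. That $(e_1 t)^2 = e_1 t$ and $(e_1 t)^* = e_1 t$ unpack directly from $e_2^2 = e_2$ and $e_2^* = e_2$ in $R_1$, using that $R_1$ is a sub-weak-rig of $R$ with unit $e_1$ and that the $*$-operation on $R_1$ is forced by $\ell_1$ being a homomorphism to satisfy $(re_1)^* = r^* e_1$. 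For the universal property, given $f : R \to S$ with $f(e_1) f(t) = f(e) = 1$, the fact that $f(e_1)$ is idempotent with $f(t)$ as a multiplicative inverse forces $f(e_1) = f(e_1)^2 f(t) = f(e_1) f(t) = 1$, and hence $f(t) = 1$ as well; these are exactly the conditions required to factor $f$ through $\ell_1$ and then through $\ell_2$, each uniquely.

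For pushout stability, given $\ell_e : R \to R_e$ and an arbitrary $f : R \to S$, the candidate pushout is $S_{f(e)}$, where $f(e)$ remains a $*$-idempotent because fractional rig homomorphisms preserve multiplication and $(\_)^*$. The leg $S \to S_{f(e)}$ is $\ell_{f(e)}$, itself a localization, while the leg $R_e \to S_{f(e)}$ is the unique factorization of $f \ell_{f(e)}$ through $\ell_e$ (which exists since $\ell_{f(e)}(f(e)) = 1$). The pushout property is then immediate: any cocone $(g : R_e \to T,\, h : S \to T)$ satisfies $h(f(e)) = g(\ell_e(e)) = g(1) = 1$, so $h$ factors uniquely through $\ell_{f(e)}$, and the surjectivity of $\ell_e$ ensures this factorization is compatible with $g$.

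The main obstacle is the composition case: recognizing $e_1 t$ as the correct $*$-idempotent, and invoking the elementary but easily missed fact that, in a weak rig, an idempotent admitting a multiplicative inverse must equal $1$. The pushout argument, by contrast, is a direct manipulation of universal properties once the candidate $S_{f(e)}$ is spotted.
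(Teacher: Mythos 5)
Your proof is correct and follows essentially the same route as the paper: isomorphisms are universal solutions of $1=1$, the composite of canonical localizations is recognized as the localization at the single ${*}$-idempotent $e_1t = e_1e_2$ (the paper simply asserts $\ell_{e_1}\ell_{e_2}\simeq\ell_{e_1e_2}$, which you verify in detail), and the pushout along $f$ is $\ell_{f(e)}:S\to S_{f(e)}$. The only cosmetic difference is that you obtain the mediating map from the universal property of $\ell_{f(e)}$ and use that $\ell_e$ is epic, whereas the paper writes the mediating map down explicitly and uses that $\ell_{f(e)}$ is epic.
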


\proof 
All isomorphism are localizations as all isomorphisms are universal solutions to the equation $1=1$. For composition observe, in the canonical representation 
of localizations, $\ell_{e_1}\ell_{e_2} \simeq \ell_{e_1e_2}$.  Finally, the pushout 
of $\ell_e: R \to R_e$ along $f: R \to S$ is given by $\ell_{f(e)}: S \to S_{f(e)}$:
$$\xymatrix{R \ar[d]_{\ell_e} \ar[r]^f & S \ar[d]_{\ell_{f(e)}} \ar[ddr]^{k_1} \\ R_e \ar[r]^{f'} \ar[rrd]_{k_2} & S_{f(e)} \ar@{..>}[dr]|{\hat{k}} \\
            & & K}$$
First note that $f'$ is defined by $f'(er) = f(e)f(r)$, which is clearly a fractional rig homomorphism. Now suppose the outer square commutes.  If we define 
$\hat{k}(f(e)s) = k_2(f(e)s)$, then the right triangle commutes while 
$$\hat{k}(f'(er)) = \hat{k}(f(e)f(r)) = k_2(f(e)f(r)) = k_2(f(er)) = k_1(\ell_e(er)) = k_1(er),$$
showing that the left triangle commutes.  Furthermore, $\hat{k}$ is unique as $\ell_{f(e)}$ is epic, showing that the inner square is a pushout. 
\endproof

This means immediately:

\begin{proposition}
{\sc loc} is a stable system of monics in $\text{\bf fwCRig}^{\rm op}$, so that {\em ({\bf fwCRig}$^{\rm op}$,{\sc loc})} is an ${\cal M}$-category 
and, thus {\em {\sf Par}({\bf fwCRig}$^{\rm op}$,{\sc loc})} is a cartesian restriction category.
\end{proposition}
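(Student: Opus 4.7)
The plan is to directly verify the three SSM axioms for \textsc{loc} in $\text{\bf fwCRig}^{\rm op}$, exploiting the preceding lemma which already carries most of the content, and then to invoke a standard result about partial map categories over $\mathcal{M}$-categories with finite products.

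First I would check that localizations are genuinely monic in $\text{\bf fwCRig}^{\rm op}$, i.e., epic in $\text{\bf fwCRig}$. In the canonical representation, $\ell_e : R \to R_e$ with $r \mapsto re$ is surjective onto $R_e = \{re \mid r \in R\}$, and a surjective homomorphism of algebras is always an epimorphism. (This also fell out implicitly in the proof of the lemma, where the uniqueness of the mediating map $\hat k$ was justified by $\ell_{f(e)}$ being epic.)

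Next, for the three SSM axioms: \textbf{SSM.1} is immediate, since an isomorphism in $\text{\bf fwCRig}^{\rm op}$ is an isomorphism in $\text{\bf fwCRig}$, and the lemma states every such map lies in \textsc{loc} (as the universal solution of $1=1$). \textbf{SSM.2} follows from the concrete identity $\ell_{e_1}\ell_{e_2} \simeq \ell_{e_1 e_2}$ noted in the lemma: composition in $\text{\bf fwCRig}^{\rm op}$ is just composition in $\text{\bf fwCRig}$ in the other order, so the class is closed. For \textbf{SSM.3}, a pullback in $\text{\bf fwCRig}^{\rm op}$ is a pushout in $\text{\bf fwCRig}$; the lemma explicitly constructs such pushouts of $\ell_e$ along an arbitrary $f$ as $\ell_{f(e)}$, and shows $\ell_{f(e)} \in \textsc{loc}$. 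Thus $(\text{\bf fwCRig}^{\rm op},\textsc{loc})$ is an $\mathcal{M}$-category.

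For the cartesian restriction category claim, I would invoke the standard fact from \cite{restrictionI} (see also \cite{restrictionIII}) that $\text{\sf Par}(\X,\mathcal{M})$ is automatically a cartesian restriction category whenever $\X$ has finite products: the restriction of a span $(m,f)$ is $(m,m)$, and products in $\X$ lift pointwise to restriction products in the partial map category. Finite products in $\text{\bf fwCRig}^{\rm op}$ are finite coproducts in $\text{\bf fwCRig}$, which exist because $\text{\bf fwCRig}$ is monadic over \textbf{Set} (as the category of algebras for the fractional monad, by the corollary above $\text{\bf fwCRig}$ is complete and cocomplete).

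There is no real obstacle; the preceding lemma packages the essential algebraic content, and the only remaining tasks are the routine observation that localizations are epimorphisms and the standard transfer of finite products into $\text{\sf Par}$.
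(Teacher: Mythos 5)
Your proof is correct and matches the paper's approach: the paper derives this proposition as an immediate consequence of the preceding lemma (isomorphisms, composition, pushouts along any map), exactly as you do by dualizing the SSM axioms. The extra details you supply --- that localizations are epic (hence monic in $\text{\bf fwCRig}^{\rm op}$) and that finite coproducts in $\text{\bf fwCRig}$, available by cocompleteness, give the products needed for the cartesian restriction structure on $\Par$ --- are precisely the routine facts the paper leaves implicit.
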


We shall denote this partial map category $\text{\bf RAT}$ and refer to it as the category of {\bf rational functions}.  Recall that a map $R \to S$ in this category, as 
defined above, is a cospan in $\text{\bf fwCRig}$ of the form:
$$\xymatrix@=12pt{& R_e & \\ R \ar[ur]^{\ell_e} & & & S \ar[ull]_h}$$
where we use the representation $R_e = \{er | r \in R \}$.  This means, in fact, that a map in this category is equivalently a map $h: S \to R$ which preserves 
addition, multiplication, and $(\_)^{*}$ -- but does {\em not} preserve the unit of multiplication, and has $h(0) = h(1) \cdot 0$. These we shall refer to 
as {\bf corational} morphisms. Thus, $\text{\bf RAT}$ can be alternately presented as:

\begin{corollary}
$\text{\bf RAT}$ is precisely the opposite of the category of fractional rigs with corational morphisms. 
\end{corollary}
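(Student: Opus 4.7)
The plan is to exhibit a bijection between the hom-sets $\text{\bf RAT}(R,S)$ and the set of corational morphisms $S \to R$, and then check that this bijection respects identities and composition, which identifies $\text{\bf RAT}$ with the opposite of the category of fractional rigs and corational morphisms.

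First I would use the canonical representation of localizations: every localization is, up to unique isomorphism, of the form $\ell_e: R \to R_e$ for a unique $*$-idempotent $e \in R$. Consequently each morphism in $\text{\bf RAT}(R,S)$ has a unique representative cospan $R \xrightarrow{\ell_e} R_e \xleftarrow{h} S$ with $e$ a $*$-idempotent of $R$ and $h$ a fractional rig homomorphism. Given such a representative, I define $\widetilde{h}: S \to R$ to be $h$ composed with the set inclusion $R_e \hookrightarrow R$. Since in $R_e$ the operations $+$, $\cdot$, and $(\_)^{*}$ agree with those of $R$ (using $e^2=e$ and $e^{*}=e$), $\widetilde h$ preserves addition, multiplication, and $(\_)^{*}$; however $\widetilde h(1_S) = 1_{R_e} = e$, and $\widetilde h(0_S) = 0 \cdot e = \widetilde h(1_S) \cdot 0$, so $\widetilde h$ is a corational morphism.

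Next I would construct the inverse. Given a corational $\widetilde h: S \to R$, set $e := \widetilde h(1_S)$; then $e \cdot e = \widetilde h(1_S \cdot 1_S) = e$ and $e^{*} = \widetilde h(1_S^{*}) = e$, so $e$ is a $*$-idempotent. For every $s \in S$, $\widetilde h(s) = \widetilde h(1_S \cdot s) = e\, \widetilde h(s) \in R_e$, so $\widetilde h$ factors through $R_e$ as $h: S \to R_e$ followed by the inclusion. Then $h(1_S) = e = 1_{R_e}$ and $h(0_S) = \widetilde h(1_S) \cdot 0 = 0_{R_e}$, and $h$ inherits preservation of $+$, $\cdot$ and $(\_)^{*}$ from $\widetilde h$; hence $h$ is a genuine fractional rig homomorphism. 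These two assignments are manifestly mutually inverse.

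Finally I would verify functoriality of the correspondence (in the opposite sense). The identity cospan $(1,\text{id})$ clearly corresponds to the identity on $R$. For composition, consider $(e_1,h_1): R \to S$ and $(e_2,h_2): S \to T$ in $\text{\bf RAT}$. Their composite is formed by pushing out $\ell_{e_2}: S \to S_{e_2}$ along $h_1: S \to R_{e_1}$; by the pushout lemma for localizations, this pushout is $\ell_{h_1(e_2)}: R_{e_1} \to R_{h_1(e_2)}$, with the induced map $h_1': S_{e_2} \to R_{h_1(e_2)}$ sending $e_2 s \mapsto h_1(e_2)h_1(s) = h_1(e_2 s)$. So the composite cospan is $R \xrightarrow{\ell_{h_1(e_2)}} R_{h_1(e_2)} \xleftarrow{h_1' \circ h_2} T$, and the associated corational morphism sends $t \mapsto h_1(e_2 h_2(t)) = h_1(h_2(t)) = \widetilde h_1(\widetilde h_2(t))$, where in the middle step we used that $h_2(t) \in S_{e_2}$ is fixed by multiplication by $e_2$. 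This matches the composite $\widetilde h_1 \circ \widetilde h_2$ of corational morphisms, so the bijection reverses composition, as required.

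The main obstacle is the compositionality check, since it requires unwinding the explicit pushout formula and recognising the absorption identity $e_2 \cdot h_2(t) = h_2(t)$ in $S_{e_2}$; everything else is a routine verification driven by the fact that the inclusion $R_e \hookrightarrow R$ converts the data of a fractional rig homomorphism $S \to R_e$ into a corational morphism $S \to R$ and conversely.
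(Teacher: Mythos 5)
Your proposal is correct and follows exactly the route the paper intends: the paper treats the corollary as immediate from the canonical representation $R_e=\{er\mid r\in R\}$ of localizations, the identification of a cospan $R\to^{\ell_e}R_e\from^h S$ with the map $S\to R$ obtained by including $R_e$ into $R$, and the pushout lemma $\ell_{f(e)}$ for composition. Your write-up just makes explicit the bijection, the recovery of $e$ as $h(1)$, and the compositionality check that the paper leaves implicit.
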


The advantage of this presentation is that one does not have to contend with spans or pushouts: one can work directly 
with corational maps.  In particular, the corestriction of a corational map $f: S \to R$ is just $f(1) \cdot \_: R \to R$.

This category is certainly not obviously recognizable as a category of rational functions as used in algebraic geometry.  Our 
next objective is to close this gap.  In order to do this we start by briefly reviewing localization in commutative rigs.

The definition of a localization for commutative rigs is a direct generalization of the usual notion of localization for commutative rings, as in \cite{alggeomeis}.  
A {\bf localization} is a rig homomorphism $\phi: R \rightarrow S$ such that there exists a multiplicative set, $U$, with $\phi(U) \subseteq \text{units}(S)$, with 
the property that for any map $f : R \to T$, with $f(U) \subseteq \text{units}(T)$, there is a unique map $k : S \to T$ such that $f = \phi k$:
$$\xymatrix{R \ar[r]^{\phi} \ar[dr]_f & S \ar@{..>}[d]^k \\ & T}$$
A localization is said to be {\bf finitely generated} if there is a finitely generated multiplicative set $U$ for which the map 
is universal.

Denote the class of finitely generated localizations by {\sc Loc}.  We next show that {\sc Loc} is a stable system of monics in ${\bf CRig}^{\op}$, so one may form a partial 
map category for commutative rigs opposite with respect to localizations.

If $R$ is a commutative rig, and $U$ is a multiplicative closed set, $R[U^{-1}]$, is the universal rig obtained with all elements in $U$ turned into into units.  This is called 
the rig of fractions with respect to a multiplicative set $U$, as the operations in the rig are defined as for fractions with denominator chosen from $U$, see for example  
\cite{dumfooteabsalg}.  This is exactly the fractional construction described above except with denominators restricted to $U$ and with the additional ability that 
one may quotient out by arbitrary factors.   There is a canonical localization, $l_U : R \to R[U^{-1}]$; $l_U(r) = \frac{r}{1}$.  It is clear that (finitely 
generated) localizations in {\bf CRig} are epic, contain all isomorphisms, and are closed to composition.  Furthermore, we have the following:

\begin{proposition}
In $\text{\bf CRig}$, the pushout along any map of a (finitely generated) localization exists and is a (finitely generated) localization.  
\end{proposition}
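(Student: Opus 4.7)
The plan is to imitate, in the setting of commutative rigs, the construction used in the earlier lemma for $\text{\bf fwCRig}$. Given a localization $l_U: R \to R[U^{-1}]$ at a multiplicative subset $U$, and an arbitrary morphism $f: R \to S$, I will show that the pushout is obtained by localizing $S$ at $f(U)$, producing $l_{f(U)}: S \to S[f(U)^{-1}]$. Since $f$ preserves multiplication and the unit, $f(U)$ is a multiplicative subset of $S$, and it is generated by $\{f(u_1),\dots,f(u_n)\}$ whenever $U$ is generated by $\{u_1,\dots,u_n\}$.

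First I would construct the canonical square. For each $u \in U$, the element $l_{f(U)}(f(u))$ is a unit in $S[f(U)^{-1}]$, so the universal property of $l_U$ yields a unique rig homomorphism $f': R[U^{-1}] \to S[f(U)^{-1}]$ satisfying $f \, l_{f(U)} = l_U \, f'$, and this square is the proposed pushout.

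For the pushout property, suppose $g_1: R[U^{-1}] \to T$ and $g_2: S \to T$ satisfy $l_U \, g_1 = f \, g_2$. Then for any $u \in U$, $g_2(f(u)) = g_1(l_U(u))$ is a unit in $T$ (since $l_U(u)$ is a unit and $g_1$ preserves units), so $g_2$ sends all of $f(U)$ to units. The universal property of $l_{f(U)}$ then yields a unique $\hat{g}: S[f(U)^{-1}] \to T$ with $l_{f(U)} \, \hat{g} = g_2$; and a second application of the universal property of $l_U$ shows $f' \, \hat{g} = g_1$, since both sides precompose with $l_U$ to give $f \, g_2 = l_U \, g_1$. Uniqueness of $\hat{g}$ follows from $l_{f(U)}$ being epic.

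The main obstacle is purely bookkeeping: verifying that the rig of fractions $R[U^{-1}]$, built as a quotient of $R \times U$ by the equivalence $(r,u) \sim (r',u')$ iff $v u' r = v u r'$ for some $v \in U$ (the natural rig-theoretic replacement for the ring-theoretic equivalence, which would involve additive inverses), genuinely satisfies the required universal property and interacts well with $f$. Once this is confirmed, the finitely generated case follows for free from the observation about generators of $f(U)$ noted above.
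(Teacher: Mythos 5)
Your proof is correct and follows essentially the same route as the paper: localize $S$ (the paper's $A$) at the image $f(U)$ of the multiplicative set, induce the comparison map by the universal property of $l_U$, verify the pushout property by noting any cocone sends $f(U)$ to units, and get the remaining triangle and uniqueness from epicness (equivalently, the uniqueness clause of the universal property) of the localizations. The only difference is your closing worry about constructing $R[U^{-1}]$ concretely; the paper simply takes the standard rig-of-fractions construction and its universal property as given, so no further argument is needed there.
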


\proof
Let $R,A,S$ be rigs.  Let $\phi : R \to S$ be a localization, let $f : R \to A$ be a rig homomorphism, and 
let $W \subseteq R$ be the factor closed multiplicative set that $\phi$ inverts.  Then $f(W)$ is also a multiplicative set
which is finitely generated if $W$ is, so we can form the canonical localization $l_{f(W)} : A \to A[(f(W))^{-1}]$.  This means that 
$l_{f(W)} f(W) \subset
\text{units}(A[(f(W))^{-1}]$, and so we get a unique $k : S \to A[(f(W))^{-1}]$ such that the following diagram commutes
$$\xymatrix{ R \ar[r]^{\phi} \ar[d]_{f} & S \ar[d]^{k} \\  A \ar[r]_{l_{f(W)}} & A[(f(W))^{-1}]}$$

To show that this square is a pushout, suppose the outer square commutes in:

$$\xymatrix{ R \ar[r]^{\phi} \ar[d]_{f} & S \ar[d]^{k} \ar@/^1pc/[ddr]^{q_1} & \\
             A \ar@/_1pc/[drr]_{q_0} \ar[r]_{l_{f(W)}} & A[(f(W))^{-1}] \ar@{..>}[dr]^{\hat{k}}& \\
             & & Q }$$ 

If we can show that $q_0$ sends $f(W)$ to units, then we get a unique map 
$\hat{k}: A[(f(W))^{-1}] \to Q$.  Now, $q_0(f(W)) = q_1(\phi (W))$ by
commutativity; thus, $q_1 (\phi (W)) \subset \text{units}(Q)$, so
$q_0 (f(W)) \subset \text{units}(Q)$ giving $\hat{k}$.  Next, we must show that $k
\hat{k} = q_1$. However, $\phi q_1 = f q_0  = f l_{f(W)} \hat{k} = \phi k \hat{k}$
and $\phi$ is epic, $q_1 = k \hat{k}$.  Moreover, since $\hat{k}$ is the unique map this makes the bottom triangle 
commute, and the square a pushout.
\endproof

Thus {\sc Loc} is a stable system of monics in {\bf CRig}$^{op}$, and so we can form a partial map category:

\begin{proposition}
$({\bf CRig}^{\op},\text{\sc Loc})$ is an ${\mathcal M}$-category, and $\Par({\bf CRig}^{\op},\text{\sc Loc})$ is a 
cartesian restriction category.
\end{proposition}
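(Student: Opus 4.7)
The plan is to verify the three axioms \textbf{[SSM.1]}--\textbf{[SSM.3]} for \text{\sc Loc} in $\text{\bf CRig}^{\op}$, and then lift the finite product structure of $\text{\bf CRig}^{\op}$ (coming from the initial object $\N$ and tensor product coproducts in $\text{\bf CRig}$) to a cartesian restriction structure on $\Par(\text{\bf CRig}^{\op}, \text{\sc Loc})$ along the standard pattern for partial map categories.

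For \textbf{[SSM.1]}, any isomorphism $\phi : R \to S$ in $\text{\bf CRig}$ is trivially universal for inverting the multiplicative set $\{1\}$, hence belongs to \text{\sc Loc}. \textbf{[SSM.3]} (pullback stability in $\text{\bf CRig}^{\op}$, equivalently pushout stability in $\text{\bf CRig}$) is precisely the Proposition just before the statement, where the constructed pushout $\ell_{f(W)}$ is finitely generated whenever $W$ is. The substantive step is \textbf{[SSM.2]}: given finitely generated localizations $\phi_1 : R \to S$ (inverting $u_1,\dots,u_n$) and $\phi_2 : S \to T$ (inverting $v_1,\dots,v_m$), each $v_j$ can be written in $S = R[U_1^{-1}]$ as a fraction $\phi_1(r_j)/\phi_1(u_{i_1}\dots u_{i_k})$; then $\phi_1\phi_2$ is easily verified to be universal for inverting the multiplicative set in $R$ generated by $\{u_1,\dots,u_n, r_1,\dots, r_m\}$, which is finite. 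This gives closure of \text{\sc Loc} under composition.

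For the cartesian restriction structure, I would first observe that the initial object $\N$ of $\text{\bf CRig}$ is a terminal object in $\text{\bf CRig}^{\op}$; the unique map $!_R : \N \to R$ in $\text{\bf CRig}$ yields a unique total morphism to the terminal in the partial map category, and the inequality $f \cdot !_B \leq !_A$ follows from the universal property. For products, coproducts in $\text{\bf CRig}$ are tensor products $R \otimes_\N S$, yielding products in $\text{\bf CRig}^{\op}$. To lift these to restriction products in $\Par(\text{\bf CRig}^{\op}, \text{\sc Loc})$, the key point is closure of \text{\sc Loc} under products: if $\phi_i : R_i \to S_i$ are finitely generated localizations, then
\[ \phi_1 \otimes \phi_2 \;=\; (\phi_1 \otimes 1_{S_2}) \circ (1_{R_1} \otimes \phi_2) \]
is a composite of two pushouts of finitely generated localizations, hence by \textbf{[SSM.2]} and \textbf{[SSM.3]} is itself in \text{\sc Loc}. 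The standard span construction of pairings then produces restriction pairings satisfying $\rst{\langle f,g\rangle} = \rst{f}\,\rst{g}$ by pullback manipulations.

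The main obstacle is really \textbf{[SSM.2]}, because finite generation of the composite localization is not purely formal: one must unpack the description of $S$ as a rig of fractions to rewrite the second set of generators using elements of $R$. Everything else (terminal and products lifting to restriction-terminal and restriction-products, closure of \text{\sc Loc} under binary product) then reduces to reuses of the three axioms just established and the coherence of tensor products with localization.
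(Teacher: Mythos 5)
Your proposal is correct and follows essentially the same route as the paper: pushout stability is exactly the proposition proved just beforehand, your fraction-rewriting argument for closure of {\sc Loc} under composition fills in what the paper simply declares ``clear'', and the cartesian restriction structure is obtained, as in the paper, from the finite products of ${\bf CRig}^{\op}$ (initial rig $\N$, tensor product coproducts) via the standard $\Par$ construction. The only item you leave unaddressed is that finitely generated localizations are epic in ${\bf CRig}$, which is needed for {\sc Loc} to be literally a class of monics in ${\bf CRig}^{\op}$; this is immediate from the uniqueness clause of the universal property, since any two maps out of $S$ equalized by $\phi$ both invert $U$.
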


We shall call this category $\text{\bf RAT}_{\sf rig}$.  Our next objective is to prove:

\begin{theorem}
\label{rationals-on-rigs}
$\text{\bf RAT}_{\sf rig}$ is the full subcategory of $\text{\bf RAT}$ determined by the objects $\fr(W(R))$ for $R \in \text{\bf CRig}$, 
where $W$ is the inclusion of commutative rigs into weak commutative rigs.
\end{theorem}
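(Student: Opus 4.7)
The plan is to construct a fully faithful functor $\Phi: \text{\bf RAT}_{\sf rig} \to \text{\bf RAT}$ given on objects by $R \mapsto \fr(W(R))$ and argue that its essential image is exactly the full subcategory named in the theorem.

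First I would lift $\fr \circ W: \text{\bf CRig} \to \text{\bf fwCRig}$ to a morphism of $\mathcal{M}$-categories $(\text{\bf CRig}^{\op}, \text{\sc Loc}) \to (\text{\bf fwCRig}^{\op}, \text{\sc loc})$. This requires two checks: (a) $\fr \circ W$ sends a finitely generated localization $\phi: S \to S[u^{-1}]$ to a fractional-rig localization; (b) it preserves the $\mathcal{M}$-pullbacks (i.e.\ pushouts of finitely generated localizations in $\text{\bf CRig}$) used to compose partial maps. The heart of (a) is that $(u,u)$ is a $*$-idempotent of $\fr(W(S))$, since $(u,u) \rightarrowtriangle_u (u^2,u^2)$ and $(u,u)^* = (u^2,u^2)$; I would then show that the canonical localization $\ell_{(u,u)}: \fr(W(S)) \to \fr(W(S))_{(u,u)}$ is isomorphic to $\fr(W(\phi))$ by exhibiting both as initial fractional rigs under $\fr(W(S))$ in which $u$ is invertible (equivalently $(u,u)=1$). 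Part (b) follows because $\fr$, being a left adjoint, preserves all colimits, while $W$ preserves pushouts of finitely generated localizations by direct inspection.

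The resulting functor $\Phi$ is faithful because the unit $\eta: W(R) \to V(\fr(W(R)))$ is itself a faithful embedding (established in the excerpt by the ``iteratively divides $1$'' argument): equivalences of cospans in $\text{\bf fwCRig}$ of image cospans pull back through $\eta$ to equivalences already in $\text{\bf CRig}$. For fullness, the essential point is a classification of the $*$-idempotents of $\fr(W(S))$. By the lemma in the excerpt, every $*$-idempotent has the form $xx^{*}$, and a short calculation gives $(r,s)(r,s)^{*} = (rs^2,rs^2)$, so every such idempotent is equivalent to one of the form $(u,u)$ with $u = rs^2 \in S$. Consequently every localization arising in a cospan in the full subcategory is---up to isomorphism---finitely generated over $S$. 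Combined with $\fr \dashv V$ to rewrite the forward leg of a cospan as a rig homomorphism out of $R$, this exhibits every morphism in the full subcategory as the image under $\Phi$ of a cospan in $\text{\bf RAT}_{\sf rig}$.

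The main obstacle is step (a), specifically the isomorphism $\fr(W(S))_{(u,u)} \cong \fr(W(S[u^{-1}]))$. Both sides represent fractions over $S$ in which $u$ becomes invertible, but the quotienting is described differently---iterative divisibility in one case, free cancellation by powers of $u$ in the other---so some bookkeeping is required to produce mutually inverse fractional-rig homomorphisms and check that they respect the $(\_)^{*}$ operation. Once that isomorphism is in place, the remaining verifications are essentially formal consequences of the universal properties already established in the excerpt.
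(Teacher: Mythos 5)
Your overall route is essentially the paper's: build the comparison functor from $\fr\circ W$, use left-adjointness to preserve the pushouts that give composition, identify every $*$-idempotent of $\fr(W(S))$ with some $(u,u)$, $u\in S$, and identify the localization at $(u,u)$ with $\fr(W(S[u^{-1}]))$ so that cospans can be transposed back to rigs. Two of your local choices are fine and even pleasant: deducing the idempotent classification from $e=xx^{*}$ and $(r,s)(r,s)^{*}=(rs^{2},rs^{2})$ is slicker than the paper's direct chase of the equivalence relation (its Lemma \ref{extracting-rigs}(ii)), and proving $\fr(W(S))/\<(u,u)=1\> \cong \fr(W(S[u^{-1}]))$ by exhibiting both as universal fractional rigs under $\fr(W(S))$ inverting $u$ is a legitimate alternative to the paper's explicit map-plus-section construction in Lemma \ref{extracting-rigs}(v). (A small omission: {\sc Loc} consists of finitely generated localizations, so you should note that inverting a finitely generated multiplicative set is the same as inverting the single product of its generators before specializing to one $u$.)

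There is, however, a genuine gap in your full-faithfulness argument: the ingredients you cite --- the adjunction $\fr\dashv V$ and the \emph{injectivity} of $\eta:R\to\fr(W(R))$ --- do not suffice. Transposing the non-localization leg $h:\fr(W(S))\to \fr(W(R))/e\cong\fr(W(R[\Sigma_e^{-1}]))$ along $\fr\dashv V$ only produces a weak-rig homomorphism from $W(S)$ (it is a map out of $S$, not out of $R$ as you wrote) into the underlying weak rig of a fractional rig. To obtain a morphism of $\text{\bf RAT}_{\sf rig}$ you need it to land in, and correspond to, $R[\Sigma_e^{-1}]$ itself; the easy half is that homomorphisms out of a rig land in the rig elements $\{x : x\cdot 0 = 0\}$, but the crucial half is that the rig elements of $\fr(W(T))$ are exactly $\eta(T)\cong T$ --- the paper's Lemma \ref{extracting-rigs}(i), equivalently the statement that $W$ has ${\sf rig}$ as right adjoint and $\fr\circ W$ is \emph{fully} faithful. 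This surjectivity of $\eta$ onto rig elements needs its own argument (if $(r,s)(0,1)\sim(0,1)$ then $s$ must be a unit, whence $(r,s)\sim(s^{-1}r,1)$), and nothing in your proposal supplies it. The same fact is needed for your faithfulness claim: to pull an equivalence of image cospans back to $\text{\bf CRig}$ you must descend the isomorphism between the cospan vertices, i.e.\ you need every fractional-rig map $\fr(W(A_1))\to\fr(W(A_2))$ to come from a rig map, which is again fullness of $\fr\circ W$, not mere injectivity of $\eta$. Once this lemma (or the $W\dashv{\sf rig}$ adjunction and the isomorphism ${\sf rig}(\fr(W(R)))\cong R$) is added, your argument closes and agrees in substance with the paper's proof.
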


To prove  that the induced comparison between the cospan categories is full and faithful it suffices to show that the composite $W \fr$ is a full 
and faithful left adjoint which preserves and reflects localizations.  This because it will then fully represent the maps in the cospan category 
and preserve their composition -- as this is given by a colimit.  That it is a left adjoint follows from \ref{Wleftadj}, the full 
and faithfulness follows from \ref{extracting-rigs}(i) and the preservation and reflection of localizations from \ref{extracting-rigs}(iii),(iv), 
and (v).  We start with:

\begin{lemma}
\label{Wleftadj}
The inclusion functor $W: \text{\bf CRig} \to \text{\bf wCRig}$ has both a left and right adjoint.
\end{lemma}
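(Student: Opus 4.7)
The plan is to construct both adjoints explicitly, organized around the \emph{zero ideal} $R_0 = \{0r : r \in R\}$, which measures the failure of a weak rig $R$ to be a rig (indeed, $R$ is a rig precisely when $R_0 = \{0\}$, i.e.\ when the nullary distributive law holds).

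For the left adjoint $L \dashv W$, I would set $L(R) := R/\!\sim$, where $\sim$ is the smallest congruence on $R$ containing all pairs $(0 \cdot r, 0)$ for $r \in R$. The weak rig operations descend to the quotient, and by construction $0 \cdot [x] = [0 \cdot x] = [0]$ in $L(R)$, so $L(R)$ satisfies the nullary distributive law and is a genuine commutative rig. The unit $\eta_R : R \to WL(R)$ is the quotient map. Given any rig $S$ and weak rig homomorphism $f : R \to W(S)$, the computation $f(0 \cdot r) = f(0) \cdot f(r) = 0 \cdot f(r) = 0$ (valid because $S$ is a rig) shows that $f$ respects $\sim$, yielding the required unique factorization $\bar f : L(R) \to S$.

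For the right adjoint $W \dashv G$, I would take $G(R) := \{x \in R : 0 \cdot x = 0\}$, with counit $\varepsilon_R : WG(R) \to R$ the inclusion. To check that $G(R)$ is a sub-weak-rig of $R$: $0 \in G(R)$ by the axiom $0 \cdot 0 = 0$, and $1 \in G(R)$ from unitality $0 \cdot 1 = 0$; closure under addition is binary distributivity, $0(x+y) = 0x + 0y = 0$; closure under multiplication uses associativity, $0 \cdot (xy) = (0 \cdot x) \cdot y = 0 \cdot y = 0$ when $x, y \in G(R)$. Since every element of $G(R)$ is annihilated by $0$, the sub-weak-rig $G(R)$ is in fact a rig. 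For universality, given a rig $S$ and weak rig map $g : W(S) \to R$, the identity $0 \cdot g(s) = g(0 \cdot s) = g(0) = 0$ (using that $S$ is a rig) shows that $g$ lands inside $G(R)$, producing the unique factorization through $\varepsilon_R$.

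The verifications above are each routine; the only real care needed is in the closure proofs for $G(R)$, where one must appeal only to the available weak rig axioms (binary distributivity, $0 \cdot 0 = 0$, associativity, and unitality) and not inadvertently use the nullary distributive law that has been dropped. Once both universal properties are in hand, the conclusion is immediate.
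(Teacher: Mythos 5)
Your proposal is correct and follows essentially the same route as the paper: the left adjoint is obtained by quotienting to force the nullary distributive law $0\cdot x = 0$, and the right adjoint is the subrig of ``rig elements'' $\{r : r\cdot 0 = 0\}$ (your $G(R)$ is exactly the paper's ${\sf rig}(R)$, using commutativity of multiplication). Your write-up simply supplies the routine closure and universal-property checks that the paper leaves as a sketch.
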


The left adjoint arises from simply forcing the nullary distributive law to hold.  It is the form of the right adjoint which is of more immediate 
interest to us. Given any weak rig $R$, the set of {\bf rig elements} of $R$ is ${\sf rig}(R) = \{ r | r \cdot 0 = 0 \}$. Clearly rig elements 
include $0$ and $1$, and are closed under the multiplication and addition.  Thus, they form a subrig of any weak rig, and it is this rig which is easily seen 
to give the right adjoint to the inclusion $W$ above.  

This leads to the following series of observations: 

\begin{lemma} 
\label{extracting-rigs}
For any rig $R$:
\begin{enumerate}[(i)]
\item ${\sf rig}(\fr(W(R))) \cong  R$;
\item If $e$ is a ${*}$-idempotent of $\fr(W(R))$ then $e \sim (r,r)$ for some $r \in R$;
\item The up-sets of $*$-idempotents $e \in \fr(W(R))$, $uparrow\! e \{ e' | ee'=e\}$, correspond precisely to finitely 
   generated multiplicative closed subsets of $R$ which are also factor closed, $\Sigma_e= \{ r \in R | (r,r) \geq e\}$;
\item ${\sf rig}(\fr(W(R))/e) = R(\Sigma_e^{-1})$ (where $\Sigma_e^{-1}$ is a rig with $\Sigma_e$ universally inverted);
\item $\fr(W(R))/e \cong \fr(R[\Sigma_e^{-1}])$.
\end{enumerate}
\end{lemma}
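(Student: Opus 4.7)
The plan is to prove the items in the order (i), (ii), (iii), (v), (iv), relying on three ingredients throughout: the faithful embedding $\eta \colon R \hookrightarrow \fr(W(R))$ constructed in Section \ref{rat:frac}, the identity $e = e e^{*}$ that holds for every $*$-idempotent (from the preceding lemma), and the adjunction $\fr W \dashv {\sf rig}\, V$ visible from the monadicity of $\fr$ together with the description of ${\sf rig}$ as right adjoint to $W$ in \ref{Wleftadj}. For (i), $\eta$ lands in ${\sf rig}(\fr(W(R)))$ because in a rig $(r,1)\cdot(0,1)=(0,1)$. For surjectivity, the condition $(a,b)\cdot(0,1)=(0,1)$ unfolds to $(0,b)\sim(0,1)$; the pushout characterisation of $\sim$ produces a common form $(0,cb)=(0,d)$ with $c|^{*}b$ and $d|^{*}1$, and since iterative divisors of $1$ are units one concludes that $b$ is a unit, after which the single generating move $(ab^{-1},1)\rightarrowtriangle_{b}(a,b)$ shows $(a,b)=\eta(ab^{-1})$. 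For (ii), $e=e e^{*}=(a,b)(b^{2},ab)=(ab^{2},ab^{2})$ exhibits $e$ in the claimed form with $r=ab^{2}$.

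The heart of the argument is (iii). I will first establish the explicit characterisation
\[ \Sigma_{(r,r)} = \{\, s \in R : s \mid r^{n} \text{ for some } n \geq 1 \,\}, \]
from which finite generation (by $r$), multiplicative closure, and factor closure are immediate. For $\supseteq$, if $r^{n}=st$ then $(sr,sr)\rightarrowtriangle_{r}^{n-1}(sr^{n},sr^{n})$, and the generating move $(r^{n},r^{n})\rightarrowtriangle_{s}(sr^{n},sr^{n})$ is valid precisely because $s\mid r^{n}$, so that $(sr,sr)\sim(r^{n},r^{n})\sim(r,r)$. For $\subseteq$, a zigzag for $(sr,sr)\sim(r,r)$ meeting at $(u,u)$ forces $u=\alpha s r=\beta r$ with $\beta|^{*}r$; a straightforward induction on the length of the decomposition of $\beta$ upgrades iterative divisibility $\beta|^{*}r$ to ordinary divisibility $\beta\mid r^{M}$, whence $r^{M+1}=r\beta k=\alpha s r k$ displays $s$ as a divisor of $r^{M+1}$. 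Bijectivity of the correspondence is then routine: given a finitely generated factor closed multiplicative set $\Sigma$ with generators $\sigma_{1},\ldots,\sigma_{n}$, set $r=\sigma_{1}\cdots\sigma_{n}$, and verify that $\Sigma=\Sigma_{(r,r)}$ using multiplicative and factor closure together with the explicit description above.

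For (v), I build the isomorphism from both sides via universal properties. In $\fr(W(R[\Sigma_{e}^{-1}]))$ every element of $\Sigma_{e}$ is a unit, so $(r,r)\sim(1,1)$ there, and the universal property of the quotient $\fr(W(R))\to\fr(W(R))/e$ yields a fractional rig map $\fr(W(R))/e\to\fr(W(R[\Sigma_{e}^{-1}]))$. Conversely, in ${\sf rig}(\fr(W(R))/e)$ each $s\in\Sigma_{e}$ is invertible: writing $r^{n}=st$, the element $(t,r^{n})$ is a rig element (since $(r,r)=1$ in the quotient forces $(0,r^{n})=(0,1)$), and $(s,1)\cdot(t,r^{n})=(r^{n},r^{n})=1$. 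Universality of $R[\Sigma_{e}^{-1}]$ then factors $R\to{\sf rig}(\fr(W(R))/e)$ uniquely through $R[\Sigma_{e}^{-1}]$, and transposing along $\fr W\dashv{\sf rig}\, V$ produces $\fr(W(R[\Sigma_{e}^{-1}]))\to\fr(W(R))/e$. Both composites are identity on the $\eta$-image generators, so uniqueness in the universal properties forces mutual inversion. Finally, (iv) drops out by applying ${\sf rig}$ to the isomorphism in (v) and invoking (i) for the rig $R[\Sigma_{e}^{-1}]$.

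The principal obstacle is the passage from iterative to ordinary divisibility in the $\subseteq$ direction of (iii). Without cancellation in a general commutative rig, the zigzag equation $\alpha s r=\beta r$ cannot be simplified directly, and it is the inductive argument upgrading $\beta|^{*}r$ to $\beta\mid r^{M}$ (by multiplying the two divisibility certificates obtained from the induction hypothesis on the tail of the decomposition of $\beta$) that extracts the element-level divisibility relation $s\mid r^{M+1}$, which in turn makes all the remaining parts fall into place.
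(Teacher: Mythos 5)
Your proof is correct, and while it follows the same overall skeleton as the paper's (the faithful embedding $\eta$, the zigzag description of $\sim$, and the universal properties of localization), several of your individual arguments are genuinely different and in places more careful than what the paper records. For (ii) you compute $e = ee^{*}$ on a representative to get $(ab^{2},ab^{2})$ in one line, where the paper unwinds two zigzags by hand; both work, yours is cleaner. For (iii) the paper merely asserts that $\Sigma_e$ is finitely generated, multiplicative and factor closed and that a finitely generated factor-closed set is $\Sigma_{(\prod r_i, \prod r_i)}$; your explicit identification $\Sigma_{(r,r)} = \{\, s : s \mid r^{n} \text{ for some } n\,\}$, together with the induction upgrading $\beta \mathrel{|^{*}} r$ to $\beta \mid r^{M}$, supplies the element-level justification the paper leaves implicit -- this is the real content of the step and you have handled it correctly. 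For (v) the paper constructs the inverse to $h : \fr(W(R))/e \to \fr(W(R[\Sigma_e^{-1}]))$ as an explicit section at the level of underlying sets (clearing denominators by a high power of $r$), whereas you build it from the universal property of $R[\Sigma_e^{-1}]$ after showing each $s \in \Sigma_e$ becomes invertible in ${\sf rig}(\fr(W(R))/e)$, then transpose along $\fr W \dashv {\sf rig}\,V$ and conclude mutual inversion from uniqueness (using that $\ell_e$ is epic and that maps out of $\fr(W(-))$ are determined on $\eta$-generators); this is arguably more rigorous than the paper's set-level argument. Finally, you obtain (iv) as a corollary of (v) and (i) by applying ${\sf rig}$, which reverses the logical order the paper uses but is perfectly sound since your proof of (v) nowhere invokes (iv). The trade-off is that the paper's concrete manipulations make the objects visible, while your route leans on the adjunctions already established in Section \ref{rat:frac} and gives shorter, less computation-heavy verifications.
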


\proof ~
\begin{enumerate}[{\em (i)}]
\item Suppose $(r,s)(0,1) \sim (0,1)$ then $(0,1) \rightarrowtriangle_\alpha (p,q) \leftarrowtriangle_\beta (0,s)$.  It follows that 
$\alpha$ is a unit (as it must iteratively divide $1$) and so $p=0$ and $q=\alpha$.  But $q = \beta s$ so that $\beta$ and $s$ are units. 
However then $(r,s) \sim (s^{-1}r,1)$ showing each rig element is (up to equivalence) an original rig element.
\item We must have $(r,s) \rightarrowtriangle_\alpha (p,q) \leftarrowtriangle_\beta (r^2,s^2)$ and 
$(r,s) \rightarrowtriangle_\gamma (p',q') \leftarrowtriangle_\delta (s^2,rs)$ from which we have:
$(r,s) \rightarrowtriangle_{\alpha} (\beta r^2,\beta s^2) \rightarrowtriangle_{\gamma} (\beta\delta rs^2,\beta\delta srs).$
\item $\Sigma_e$ is multiplicatively closed as its idempotents are closed to multiplication, it is factor closed (that $rs \in \Sigma_e$ implies $r,s \in \Sigma_e$) 
provided $e_1e_2 \geq e$ implies $e_1 \geq e$ and $e_2  \geq e$ which is immediate.  Finally, any representative $(r,r)$ for $e$ itself will clearly 
generate the multiplicative set $\Sigma_e$, so it is finitely generated.

A factor closed multiplicative set, $U$, which is finitely generated by $\{ r_1,..,r_n\}$ is generated by a single element, namely the product of the generators, $\prod r_i$, as each 
generator is a factor of this.  However, it is then easy to see that $U = \Sigma_{\prod r_i}$.
\item First observe that forcing $e$ to be a unit forces each $e'\geq e$ to be a unit.  But forcing $e'=(r,r)$ to have $(r,r) \sim (1,1)$ forces $r$ to become 
a unit in $\fr(W(R))/e$ as $(r,1)(1,r) = (r,r) = (1,1)$.  Thus, the evident map $R \to \fr(W(R))/e$ certainly inverts every element in $\Sigma_e$.  However, 
the rig elements of $\fr(W(R))/e$ must, using a similar argument to {\em (i)} above must have their denominators invertible so that they are of the form 
$(r,s)$ where $s \in \Sigma_e$. But these elements give the rig of fractions with respect to $\Sigma_e$ as discussed below.
\item If $l: R \to R[\Sigma_e]$ is the universal map then we have $\fr(W(l)): \fr(W(R)) \to \fr(W(R[\Sigma_e]))$ where this sends $e=(r,r)$ to 
the identity as $r$ becomes a unit.  So this map certainly factors $\fr(W(l)) = \ell_e h$ where $h: \fr(W(R))/e \to \fr(W(R[\Sigma_e]))$.  
However there is also a map (at the level of the underlying sets) in the reverse direction given by $(pu_1^{-1},qu_2^1) \mapsto (pu_1^{-1}r^n,qu_2^1r^n)$ where $e=(r,r)$ and 
a high enough power $n$ is chosen so that $u_1^{-1}$ and $u_2^{-1}$ can be eliminated. This is certainly a section of $h$ as a set map which is enough to show that 
$h$ is bijective and so an isomorphism.
\end{enumerate}
\endproof

We can now complete the proof of Theorem \ref{rationals-on-rigs}:

\proof
As $W$ and $\fr$ are both left adjoints they preserve colimits and thus there is a functor $\text{\bf RAT}_{\sf rig} \to \text{\bf RAT}$ 
which carries an object $R$ to $\fr(W(R))$ and a map $R \to^l R[\Sigma_e^{-1}] \from^h S$  to 
$\fr(W(R)) \to^{\ell_e} \fr(W(R))/e \cong \fr(W(R[\Sigma_e^{-1}]))  \from^h fr(W(S))$. The preservation of colimits ensures composition 
is preserved.

It remains to show that this functor is full.  For this we have to show that given a cospan 
$$\fr(W(R)) \to^{\ell_e}  \fr(W(R))/e \from^h fr(W(S))$$
that it arises bijectively from a span in  $\text{\bf RAT}_{\sf rig}$. For this we note the correspondences:
$$\infer={\fr(W(R)) \to^{\ell_e} \fr(W(R))/e  ~~~~~~ \text{\bf fwCRig}
  }{\infer={ W(R) \to^{(\ell_e)^\flat} \fr(W(R))/e ~~~~~~ \text{\bf wCRig}
  }{R \to^{l_{\Sigma_e}} R[\Sigma_e^{-1}] = {\sf rig}(\fr(W(R))/e) ~~~~~~ \text{\bf CRig}}}$$
and also
$$\infer={\fr(W(S)) \to^h \fr(W(R))/e ~~~~~~ \text{\bf fwCRig}
  }{\infer={W(S) \to^{\eta h} (\fr(W(R))/e) ~~~~~~ \text{\bf wCRig}
  }{\infer={S \to^{(\eta h)^\flat} {\sf rig}(\fr(W(R))/e) ~~~~~~ \text{\bf CRig}
  }{S \to^{(\eta h)^\flat} R[\Sigma_e^{-1}] ~~~~~~ \text{\bf CRig}}}}$$
so that there is a bijective correspondence between the cospans of $\text{\bf RAT}$ from $\fr(W(R))$ to $\fr(W(S))$ and the cospans in 
$\text{\bf RAT}_{\sf rig}$.
\endproof 

We indicated that we had restricted $\text{\bf RAT}$ to rigs by writing $\text{\bf RAT}_{\sf rig}$.  Commutative rings sit inside rigs and, fortuitously, when 
one localizes a ring $R$ in the category of rigs one obtains a ring.  Thus, in specializing this result further to $\text{\bf RAT}_{\sf ring}$ there is nothing 
further to do!

\begin{corollary}
$\text{\bf RAT}_{\sf ring}$ is the full subcategory of $\text{\bf RAT}$ determined by the objects $\fr(W(R))$ where $R \in \text{\bf CRing}$.
\end{corollary}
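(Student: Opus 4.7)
The plan is to deduce this corollary from Theorem \ref{rationals-on-rigs} by checking that everything in the previous construction restricts cleanly from rigs to rings. Let $I: \text{\bf CRing} \hookrightarrow \text{\bf CRig}$ be the inclusion, which is obviously full and faithful. The entire argument reduces to showing that the class of finitely generated localizations in $\text{\bf CRing}$ sits inside the class in $\text{\bf CRig}$ in such a way that the induced partial map categories embed as full subcategories on the corresponding objects.

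First I would verify the key algebraic fact flagged in the paragraph preceding the corollary: if $R$ is a commutative ring and $U \subseteq R$ is a multiplicatively closed subset, then the rig of fractions $R[U^{-1}]$ computed in $\text{\bf CRig}$ is again a ring. This is immediate since the element $(-r)/s$ serves as an additive inverse for $r/s$, where $-r$ is the additive inverse of $r$ in $R$. Consequently, the canonical localization $l_U: R \to R[U^{-1}]$ in $\text{\bf CRig}$ lies in the image of $I$, and the universal property that characterizes it as a localization in $\text{\bf CRig}$ restricts to the universal property characterizing it as a localization in $\text{\bf CRing}$ (any map from $R[U^{-1}]$ to a ring $T$ is automatically a rig map to $T$ viewed as a rig). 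Thus $I$ both preserves and reflects finitely generated localizations.

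Next I would use this to identify $\text{\bf RAT}_{\sf ring}$ with the full subcategory of $\text{\bf RAT}_{\sf rig}$ on ring objects. A map $R \to S$ in $\text{\bf RAT}_{\sf ring}$ is (up to equivalence) a cospan $R \to R[U^{-1}] \leftarrow S$ in $\text{\bf CRing}$ with the left leg a localization; by the preservation step this is precisely a cospan of the same shape in $\text{\bf CRig}$ whose objects all happen to be rings, which is the same data as a map between these objects in $\text{\bf RAT}_{\sf rig}$. Composition in both categories is computed by pushout of localizations along arbitrary maps, and by the preservation result the pushout of a localization out of a ring along a ring map produces a ring, so the full subcategory inclusion is a restriction category inclusion.

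Finally, composing this identification with the full and faithful embedding $\text{\bf RAT}_{\sf rig} \hookrightarrow \text{\bf RAT}$ of Theorem \ref{rationals-on-rigs}, which sends $R \in \text{\bf CRig}$ to $\fr(W(R))$, gives the claimed description of $\text{\bf RAT}_{\sf ring}$ as the full subcategory of $\text{\bf RAT}$ on the objects $\fr(W(R))$ for $R \in \text{\bf CRing}$. There is really no serious obstacle here: the content of the corollary is entirely carried by the observation that localizations in rigs preserve ringhood, and everything else is bookkeeping about cospans and full subcategories, as the author signals by remarking ``there is nothing further to do.''
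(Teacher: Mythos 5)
Your proposal is correct and is essentially the paper's own argument: the corollary rests entirely on the observation that localizing a commutative ring in the category of rigs again yields a ring (with the universal property restricting along the inclusion $\text{\bf CRing} \hookrightarrow \text{\bf CRig}$, since any rig homomorphism between rings preserves negatives), after which one specializes Theorem \ref{rationals-on-rigs} with no further work. You have simply spelled out the bookkeeping that the paper compresses into the remark ``there is nothing further to do.''
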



\subsection{Rational polynomials}


Recall that for any commutative rig $R$, there is an adjunction between {\bf Sets} and $R/\text{\bf CRig}$.  The left adjoint takes a set $B$ to the 
free commutative $R$-algebra on $B$, giving the correspondence
$$\infer={R[x_1,\ldots,x_n] \to_{s^\sharp} S ~~~~ R/\text{\bf CRig}
        }{\{x_1,\ldots,x_n\} \to^s U(S) ~~~~ \text{\bf Sets}}$$
This correspondence gives the morphism, $s^\sharp$, which is obtained by substituting $s_i \in S$ for $x_i$ which we may present as:  
$$s^\sharp: R[x_1,\ldots,x_n] \to S; 
    \sum r_i x_1^{\alpha_{1,i}} ..x_n^{\alpha_{n,i}} \mapsto [s_i/x_i](\sum r_i x_1^{\alpha_{1,i}} \ldots x_n^{\alpha_{n,i}}) = \sum r_i s_1^{\alpha_{1,i}} \ldots s_n^{\alpha_{n,i}}$$ 
(Note that here we identify $r_i$, as is conventional, with its image in an $R$-algebra: strictly speaking we should always write $u(r_i)$ as an $R$-algebra is a map $u: R \to S$.)

The category of finitely generated free commutative $R$-algebras opposite is just the Lawvere theory for $R$-algebras: one may think of it as the category of polynomials over $R$.  
It may be presented concretely as follows: its objects are natural numbers and a map from $n$ to $m$ is an $m$-tuple of polynomials $(p_1,\ldots p_m)$ where each $p_i \in R[x_1,\ldots,x_n]$.  
Clearly the object $n$ is the $n$-fold product of the object $1$ (e.g. the projections $2 \to 1$ are $(x_1)$ and $(x_2)$ and there is only one map $()$ to $0$ making it the final object).
Composition is then given by substituting these tuples:
$$n \to^{(p_1,\ldots, p_m)} m \to^{(q_1,\ldots,q_k)} k = n \to^{([p_1/x_1,\ldots,p_m/x_m]q_1,\ldots,[p_1/x_1,\ldots,p_m/x_m]q_k)} k.$$

The aim of this section is derive a similar concrete description of the category of rational polynomials over a rig (or ring) $R$, which we shall call {\sc Rat$_R$}.  This category will again have natural 
numbers as objects and its maps will involve fractions of the polynomial rigs.  However, before we derive this concrete description, we shall provide an abstract description of this category using our understanding of rational functions developed above.

The category of rational polynomials over a commutative rig $R$ may be described in terms of the partial map category obtained from using localizations in $R/\text{\bf CRig}$.  Recall that 
objects in this coslice category are maps $u: R \to S$, and maps are triangles:
$$\xymatrix@=15pt{ & R \ar[dl]_{u_1} \ar[dr]^{u_2} \\ S_1 \ar[rr]_f & & S_2}$$
A (finitely generated) localization is just a map whose bottom arrow is a localization in $\text{\bf CRig}$. This allows us to form the category of cospans whose left leg 
is a localization: the composition is given as before by pushing out, where pushing outs is the same as in $\text{\bf CRig}$.  We may call this category 
$\text{\bf RAT}_{R/{\sf rig}}$ and, as above, we shall now argue that it is a full subcategory of a larger category of rational functions which we shall call 
$\text{\bf RAT}_{\fr(W(R))}$.  This latter category is formed by taking the cospan category of localizations in the coslice category $\fr(W(R))/\text{\bf fwCRig}$. Thus 
a typical map in this category has the form:
$$\xymatrix{ & \fr(W(R)) \ar@/_/[ddl]_{u_1} \ar[d]|{u'} \ar@/^/[ddrr]^{u_2} \\
             & S_1/e \\
            S_1 \ar[ur]_{\ell_e} & & & S_2 \ar[llu]^{h} }$$
It is now a straightforward observation that:

\begin{proposition}
\label{Rat_R}
$\text{\bf RAT}_{R/{\sf rig}}$ is the full subcategory of $\text{\bf RAT}_{\fr(W(R))}$ determined by the objects $\fr(W(u)): \fr(W(R)) \to \fr(W(S))$ 
for $u \in R/\text{\bf CRig}$.
\end{proposition}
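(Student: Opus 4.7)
The plan is to mirror the proof of Theorem \ref{rationals-on-rigs}, lifting that argument from $\text{\bf CRig}$ and $\text{\bf fwCRig}$ to the coslice categories $R/\text{\bf CRig}$ and $\fr(W(R))/\text{\bf fwCRig}$. The entire framework there required only three ingredients about the functor $\fr W$: that it is a left adjoint (hence preserves pushouts, which is what makes the cospan composition well-behaved), that it is full and faithful on the relevant objects, and that it preserves and reflects localizations. All three lift to the coslice setting.

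First, I would note that $\fr W: \text{\bf CRig} \to \text{\bf fwCRig}$ is a composite of left adjoints (by Lemma \ref{Wleftadj} and the monadicity of $\text{\bf fwCRig}$ over $\text{\bf wCRig}$), hence preserves colimits. The unit $\eta_R : R \to \fr(W(R))$ lets us define a coslice lift $\widetilde{\fr W}: R/\text{\bf CRig} \to \fr(W(R))/\text{\bf fwCRig}$ sending $u: R \to S$ to $\fr(W(u)) \circ \eta_R = \eta_S \circ u$ (really the object $\fr(W(u))$ viewed under $\fr(W(R))$). Pushouts in a coslice are computed as in the base, so $\widetilde{\fr W}$ also preserves pushouts; this ensures the induced comparison on cospan categories preserves composition.

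Second, I would observe that a morphism in $R/\text{\bf CRig}$ is a (finitely generated) localization, as used to build $\text{\bf RAT}_{R/{\sf rig}}$, exactly when its underlying morphism in $\text{\bf CRig}$ is a localization, and similarly for localizations in $\fr(W(R))/\text{\bf fwCRig}$: both facts follow from the universal property of localizations being formulated purely in terms of maps out, which lifts directly to any coslice. Consequently, since $\fr W$ preserves and reflects localizations (the work done inside the proof of Theorem \ref{rationals-on-rigs} via Lemma \ref{extracting-rigs}), so does $\widetilde{\fr W}$. Thus $\widetilde{\fr W}$ sends the ${\cal M}$-category underlying $\text{\bf RAT}_{R/{\sf rig}}$ into the ${\cal M}$-category underlying $\text{\bf RAT}_{\fr(W(R))}$ and respects the cospan composition.

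Third, I would establish full-and-faithfulness of the induced comparison on cospans using exactly the adjunction chains at the end of the proof of Theorem \ref{rationals-on-rigs}, but now inside the coslices. A cospan from $\fr(W(u_1))$ to $\fr(W(u_2))$ in $\text{\bf RAT}_{\fr(W(R))}$ is a pair consisting of a localization $\ell_e : \fr(W(S_1)) \to \fr(W(S_1))/e$ under $\fr(W(R))$ and a morphism $h : \fr(W(S_2)) \to \fr(W(S_1))/e$ under $\fr(W(R))$. Applying the adjunctions $W \dashv \text{\sf rig}$ and $\fr \dashv V$, combined with Lemma \ref{extracting-rigs}(iii)--(v), converts each leg bijectively into its analogue in $R/\text{\bf CRig}$, exactly as in the non-coslice case, and the under-$R$ (respectively, under-$\fr(W(R))$) constraints correspond under the bijection because the unit of the adjunction is natural.

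The main obstacle is purely bookkeeping: keeping track of the two layers of coslice structure (under $R$ on one side, under $\fr(W(R))$ on the other) while transporting the bijections of Theorem \ref{rationals-on-rigs}. Once one checks that the natural transformations involved commute with the units from $R$ and $\fr(W(R))$---which they do by naturality of $\eta$---the essential image of $\widetilde{\fr W}$ on cospans picks out exactly the objects $\fr(W(u))$ and the proposition follows.
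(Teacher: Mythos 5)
Your proposal is correct and takes essentially the same route as the paper, which offers no separate proof but presents the proposition as ``a straightforward observation'' following from the argument for Theorem \ref{rationals-on-rigs}: one transports that argument to the coslices, using that pushouts and localizations in a coslice are computed and detected in the base, and that the adjunction correspondences are compatible with the under-$R$ and under-$\fr(W(R))$ structure by naturality of the unit. Your write-up simply makes explicit the bookkeeping the paper leaves implicit.
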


The category of rational polynomials over $R$, {\sc Rat$_R$}, may then be described as the full subcategory of $\text{\bf RAT}_{R/{\sf rig}}$ determined by the objects 
under $R$ given by the canonical (rig) embeddings $u_n: R \to R[x_1,\ldots, x_n]$ for each $n \in \N$.  Thus, the objects correspond to natural numbers.  In $\text{\bf RAT}_R$, 
this is the full subcategory determined by the objects $\fr(W(u_n)$ and the maps are the opposite of the corational functions which fix $\fr(W(R)$.  Unwinding this has the maps 
as cospans:
$$\xymatrix{ & \fr(W(R)) \ar@/_/[ddl]_{\fr(W(u_1))} \ar[d]|{u'} \ar@/^/[ddrr]^{\fr(W(u_2))} \\
             & \fr(W(R[x_1,\ldots,x_n]))/e \\   \fr(W(R[x_1,\ldots,x_n])) \ar[ur]_{\ell_e} & & &  \fr(W(R[x_1,\ldots,x_m])) \ar[llu]^{h} }$$
where we have:
$$\infer={\{  x_1,\ldots,x_m \} \to^{{\sf sub}((\eta h)^\flat)} U({\sf rig}(\fr(W(R[x_1,\ldots,x_n]))/e))  ~~~~~~ \text{\bf Set}
 }{\infer={R[x_1,\ldots,x_m] \to^{(\eta h)^\flat} {\sf rig}(\fr(W(R[x_1,\ldots,x_n]))/e)  ~~~~~~ R/\text{\bf CRig}
 }{\infer={W(R[x_1,\ldots,x_m]) \to^{\eta h} \fr(W(R[x_1,\ldots,x_n]))/e ~~~~~~ W(R)/\text{\bf wCRig}
 }{\fr(W(R[x_1,\ldots,x_m])) \to^h \fr(W(R[x_1,\ldots,x_n]))/e ~~~~~~ \fr(W(R))/\text{\bf fwCRig}}}}$$
Thus, such a map devolves into a selecting, for each variable $x_i$, elements from the underlying set of ${\sf rig}(\fr(W(R[x_1,\ldots,x_n]))/e)$.  To select such elements amounts 
to selecting $m$ fractions from $\fr(W(R[x_1,\ldots,x_n]))$ whose denominator is in the multiplicative set $\Sigma_e$.  Now$\Sigma_e$ is a finitely generated multiplicative set,  
so it can be written as $\Sigma_e := \< p_1,...,p_k \>$, where the $p_i \in R[x_1,\ldots,x_n]$ are the generators.

This allows us to concretely define a category of rational polynomial over a commutative rig $R$.  The objects are natural numbers: the maps $n \to m$ are $m$-tuples of 
rational polynomials in $n$ variables accompanied by a finite set of polynomials called the {\bf restriction set} such that each denominator is in the factor closed 
multiplicative set generated by the restriction set.  For brevity we will write $x_1,\ldots,x_n$ as $\overrightarrow{x}^n$.

\begin{definition}
Let $R$ be a commutative rig.  Define {\sc Rat$_R$} to be the following
\end{definition}
\begin{description}
\item[Objects: ] $n \in \mathbb{N}$
\item[Arrows:  ] $n \rightarrow m$ given by a pair $\tup{f}{g}{n}{m}{U}$ where
\begin{itemize}
\item $(f_i,g_i) \in \fr(W(R[x_1,\ldots,x_n]))$ for each $i$;
\item ${\mathcal U} = \< p_1,\ldots,p_k\> \subseteq R[x_1,\ldots,x_n]$ is a finitely generated factor closed and multiplicatively closed set of polynomials;
\item Each $(f_i,g_i)$ is subject to fractional equality, every denominator $g_i$ is in ${\mathcal U}$, and any $u \in {\cal U}$ can be completely 
eliminated from the fraction (as these are inverted).
\end{itemize}
\item[Identity: ] $\stup{x_i}{1}{n}{n}{\<\>} : n \longrightarrow n$
\item[Composition: ] Given $\tup{f}{g}{n}{m}{U} : n
\longrightarrow m$ and $\tup{f'}{g'}{m}{k}{U'} : m
\longrightarrow k$, then the composition is given by substitution:
\begin{prooftree}
\AxiomC{$\tup{f}{g}{n}{m}{U}$}
\AxiomC{$\tup{f'}{g'}{m}{k}{U'}$}
\BinaryInfC{$\tup{a}{b}{n}{k}{U''}$}
\end{prooftree}

Where
\begin{itemize}
\item $(a_j,b_j) = \left[(f_i,g_i)/x_i\right](f_j',g_j')$,
\item $(\alpha_j,\alpha_j) = \left[(f_i,g_i)/x_i\right] (u_j',u_j')$ where $\< u_1',\ldots,u_w'\> = {\mathcal U}'$,
\item and ${\mathcal U}'' = \left\< u_1,\ldots,u_l,\alpha_1,\ldots,\alpha_w\right\>$, where $\<u_1,\ldots,u_l\> \in {\mathcal U}$.
\end{itemize}
\end{description}

Perhaps the one part of this concrete definition of {\sc Rat$_R$} which requires some explanation is the manner in which ${\mathcal U}''$ is obtained.  To understand what 
is happening, recall that the restriction is determined by a ${*}$-idempotent which for ${\mathcal U}'$ is $e' = (u_1' \ldots u_w',u_1' \ldots u_w')$.  To obtain the new 
${*}$-idempotent we must multiply the ${*}$-idempotent, $e$, obtained from ${\mathcal U}$, with the result of mapping (i.e. substituting) $e'$. 

Here is an example of a composition in {\sc Rat$_\mathbb{Z}$}.  Take the maps 
$$\left(x_1,x_2 \mapsto \left( \frac{5x_1x_2}{x_1},\frac{x_1x_2^2}{x_1+x_2},\frac{(x_1+x_2)^2}{3x_2}\right),\<x_1,x_1+x_2,x_2\>\right) : 2 \to 3,$$
and
$$\left(x_1,x_2,x_3 \mapsto \left(\frac{7(x_1+x_3)}{x_1x_2},\frac{x_1}{1}\right),\<4+x_3+x_1,x_1,x_2\>\right) : 3 \to 2.$$
The composite of the above maps -- without cleaning up any factors -- is:
$$\left(x_1,x_2 \mapsto\!\! \left(\frac{(105x_1x_2^2+7x_1(x_1+x_2)^2)(x_1(x_1+x_2))^2}{15x_1^4x_2^4(x_1+x_2)},\frac{5x_1x_2}{x_1}\right),\<\begin{smallmatrix}x_1,x_1+x_2,x_2,\\ 5x_1^3x_2,x_1x_2^2(x_1+x_2)^2, \\
      \left(\begin{smallmatrix} 15x_1x_2^2+12x_1x_2 \\ +x_1(x_1+x_2)^2\end{smallmatrix} \right) (3x_2x_1)^2 \end{smallmatrix}\>\right)\!\!: 2 \to 2.$$
Recall that here we have used the Kleisli composition of the fractional monad.   This can be cleaned up somewhat by using  properties of fractional rigs:
$$\left(x_1,x_2 \mapsto\!\! \left(\frac{(105x_2^2+7x_1(x_1+x_2)^2)(x_1+x_2)^2}{15x_1^2x_2^4(x_1+x_2)},\frac{5x_1x_2}{x_1}\right),\<\begin{smallmatrix}x_1,x_1+x_2,x_2,\\ 5, 3, \\
      \left(\begin{smallmatrix} 15x_2^2+12x_2 \\ +(x_1+x_2)^2\end{smallmatrix} \right) \end{smallmatrix}\>\right)\!\!: 2 \to 2.$$
Finally, we can actually eliminate factors which are in the multiplicative set:
$$\left(x_1,x_2 \mapsto\!\! \left(\frac{(105x_2^2+7x_1(x_1+x_2)^2)(x_1+x_2)}{15x_1^2x_2^4},\frac{5x_2}{1}\right),\<\begin{smallmatrix}x_1,x_1+x_2,x_2,\\ 5, 3, \\
      \left(\begin{smallmatrix} 15x_2^2+12x_2 \\ +(x_1+x_2)^2\end{smallmatrix} \right) \end{smallmatrix}\>\right)\!\!: 2 \to 2.$$
The hard work we have done with fractional rigs (in particular, Proposition \ref{Rat_R}) can now be reaped to give:

\begin{proposition}
For each commutative rig $R$, {\sc Rat$_R$} is a cartesian restriction category.
\end{proposition}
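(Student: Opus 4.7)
The plan is to realize {\sc Rat$_R$} as a full-on-objects subcategory of the cartesian restriction category $\text{\bf RAT}_{R/{\sf rig}}$ and then observe that this subcategory inherits all the required structure. By Proposition \ref{Rat_R}, $\text{\bf RAT}_{R/{\sf rig}}$ embeds fully into $\text{\bf RAT}_{\fr(W(R))}$, which is a cartesian restriction category by the general partial-map construction applied to $(\fr(W(R))/\text{\bf fwCRig})^{\op}$ with its class of localizations. So it suffices to identify the concrete category {\sc Rat$_R$} with the full subcategory of $\text{\bf RAT}_{R/{\sf rig}}$ spanned by the coslice objects $u_n : R \to R[\vx^n]$ for $n \in \N$, and then verify that this subcategory is closed under the cartesian restriction structure of the ambient category.

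First I would exhibit a bijection between concrete tuples $\tup{f}{g}{n}{m}{U}$ and cospans
$$\xymatrix@=12pt{ & \fr(W(R[\vx^n]))/e & \\ \fr(W(R[\vx^n])) \ar[ur]^{\ell_e} & & \fr(W(R[\vx^m])) \ar[ul]_h }$$
in $\fr(W(R))/\text{\bf fwCRig}$. By Lemma \ref{extracting-rigs}(iii), a finitely generated factor-closed multiplicative set $\mathcal{U} = \langle p_1,\ldots,p_k \rangle$ corresponds to the $*$-idempotent $e = (p_1 \cdots p_k,\, p_1 \cdots p_k)$. The free/forgetful adjunction between $\mathbf{Sets}$ and $R/\text{\bf CRig}$, combined with the chain of correspondences exhibited just before the definition of {\sc Rat$_R$}, then identifies $h$ with an $m$-tuple of rig elements of $\fr(W(R[\vx^n]))/e$; by Lemma \ref{extracting-rigs}(iv) these are exactly fractions whose denominators can be taken in the multiplicative set $\Sigma_e$ and whose factors from $\Sigma_e$ may be cancelled freely --- which is precisely the side condition on the $(f_i,g_i)$. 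Composition in the cospan category is by pushout, and pushout in $\fr(W(R))/\text{\bf fwCRig}$ along a localization $\ell_{e'}$ is again a localization $\ell_{h(e')}$, so the composed restriction idempotent is $e \cdot h(e')$; translated into generators this is exactly the rule $\mathcal{U}'' = \langle u_1,\ldots,u_l,\alpha_1,\ldots,\alpha_w \rangle$ of the concrete definition. Finally, the abstract restriction $(m,m)$ of $(m,f)$ becomes, under the bijection, the identity tuple in $n$ variables paired with the same restriction set $\mathcal{U}$, so the implicit restriction on {\sc Rat$_R$} agrees with the one inherited from $\text{\bf RAT}$.

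For the cartesian structure, the object $0$ corresponds to $u_0 : R \to R$, which is initial in $R/\text{\bf CRig}$ and hence a restriction terminal object in $\text{\bf RAT}_{R/{\sf rig}}$. For binary products, the isomorphism $R[\vx^{n+m}] \cong R[\vx^n] \otimes_R R[\vx^m]$ exhibits $n+m$ as the coproduct of $n$ and $m$ in $R/\text{\bf CRig}$, hence as the product in $(R/\text{\bf CRig})^{\op}$, with projections given by the evident inclusions of variables. Passing to the coslice under $\fr(W(R))$, the composite left adjoint $\fr \circ W$ preserves this coproduct, and the fully faithful inclusion into $\text{\bf RAT}_{\fr(W(R))}$ preserves restriction products, so $n+m$ remains the restriction product of $n$ and $m$ in {\sc Rat$_R$}. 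Closure under the restriction terminal and under restriction products makes the full subcategory itself a cartesian restriction category.

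The main technical obstacle is the bookkeeping: tracking through the monadic and adjoint data to confirm that the substitution-and-union recipe defining composition concretely really is Kleisli composition for the fractional monad followed by pushout along $\ell_e$, and that finitely generated factor-closed multiplicative sets $\mathcal{U} \subseteq R[\vx^n]$ correspond, modulo the pointwise equivalence on fractions, to the $*$-idempotents of $\fr(W(R[\vx^n]))$. Once these identifications are pinned down, every clause required of a cartesian restriction category follows mechanically from the partial-map construction on $\text{\bf RAT}_{\fr(W(R))}$.
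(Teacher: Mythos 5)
Your proposal is correct and follows essentially the same route as the paper: the paper defines {\sc Rat$_R$} abstractly as the full subcategory of $\text{\bf RAT}_{R/{\sf rig}}$ (hence, via Proposition \ref{Rat_R}, of $\text{\bf RAT}_{\fr(W(R))}$) on the objects $u_n$, derives the concrete tuple-and-restriction-set description by exactly the adjoint/monadic unwinding you describe, and then reaps the cartesian restriction structure from the partial-map construction, with the terminal object $n=0$ and products $n+m$ coming from closure of these objects under coproducts in the coslice. Your write-up just makes explicit the bookkeeping that the paper disposes of with ``the hard work we have done with fractional rigs can now be reaped.''
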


The restrictions are, in this presentation, given by the multiplicative sets.

A final remark which will be useful in the next section.  If $R$ is a ring, then the rig of polynomials 
$R[x_1,\ldots,x_n]$ is also a ring. Thus, as before, there is nothing extra to be done to define 
{\sc Rat$_R$} for a commutative ring.


\subsection{Differential structure on rational polynomials}


To be a differential restriction category, {\sc Rat$_R$} must have cartesian left additive structure.  

\begin{proposition}
For each commutative rig, $R$, {\sc Rat$_R$} is a cartesian left additive restriction category.
\end{proposition}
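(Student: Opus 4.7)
The plan is to invoke Theorem \ref{thmAddCart}: since we already know {\sc Rat$_R$} is a cartesian restriction category, it suffices to equip each object with a total commutative monoid structure satisfying the exchange axiom. The structure then induces all the left additive data and axioms automatically.

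First, I would define the monoid structure. Since every object $n$ is the $n$-fold product of $1$, and the exchange axiom forces $+_{n}$ on products to be coordinatewise, it is enough to specify the structure on $1$ and then lift componentwise. Concretely, set
\[ +_1 = \left(\overrightarrow{x}^2 \mapsto (x_1 + x_2, 1), \<\>\right) : 2 \to 1, \qquad 0_1 = \left(\overrightarrow{x}^0 \mapsto (0,1), \<\>\right) : 0 \to 1, \]
and more generally define $+_n : 2n \to n$ by the $n$-tuple $((x_1 + x_{n+1}, 1), \ldots, (x_n + x_{2n}, 1))$ with empty restriction set, and $0_n : 0 \to n$ by the constant zero $n$-tuple with empty restriction set. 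Both are total maps since their restriction sets are empty.

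Next I would verify the commutative monoid axioms on each $n$. Associativity, commutativity, and unit laws reduce to the same identities in the polynomial rig $R[x_1, \ldots, x_n]$, which hold because $R$ (and hence every polynomial rig over it) is a commutative monoid under $+$. Since all maps involved are total polynomial tuples, equality in {\sc Rat$_R$} collapses to equality of polynomial tuples via substitution, so no subtlety with restriction sets arises.

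Finally I would check the exchange axiom $+_{n \times m} = \mathrm{ex} \cdot (+_n \times +_m)$. Both sides are total maps $(n+m) \times (n+m) \to (n+m)$ whose $i$-th component, for $i \le n$, is $x_i + x_{n+m+i}$ and for $i > n$ is $x_i + x_{n+m+i}$; the exchange simply permutes the input variables so that the two $+_n$ and $+_m$ act on the correct pairs, matching the componentwise definition of $+_{n+m}$. With the monoid data in place and compatible with the exchange, Theorem \ref{thmAddCart} delivers the left additive restriction structure, and combined with the already-established cartesian restriction structure this makes {\sc Rat$_R$} a cartesian left additive restriction category. The only mild obstacle is bookkeeping the indices in the exchange axiom; conceptually there is nothing deep to prove, since the construction is forced by the theorem.
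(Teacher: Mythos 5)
Your proposal is correct and follows essentially the same route as the paper: equip each object $n$ with the canonical total commutative monoid structure given by the coordinatewise polynomial addition $2n \to n$, observe that the exchange coherence holds, and invoke Theorem \ref{thmAddCart} to obtain the left additive restriction structure on top of the already-established cartesian restriction structure. The paper additionally records that the induced addition on homsets is precisely fraction addition in $\fr(R[x_1,\ldots,x_n])$ with union of restriction sets, but this is an observation rather than an extra proof obligation.
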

\begin{proof}
Each object is canonically a total commutative monoid by the map: 
$$( (x_i)_{i= 1,\dots,2n} \mapsto (x_i+x_{n+i})_{i=1,\dots,n}): 2n \to n$$
and this clearly satisfies the required exchange coherence (see \ref{thmAddCart}).

If $\tup{p}{q}{n}{m}{U} , \tup{p'}{q'}{n}{m}{V} : n \longrightarrow m$ are arbitrary parallel maps then
\begin{eqnarray*}
& &\tup{p}{q}{n}{m}{U} + \tup{p'}{q'}{n}{m}{V} \\
&=& \tupc{p_i{q'}_i+{p'}_iq}{q_iq'}{n}{m}{{\mathcal U}\cup{\mathcal V}}
\end{eqnarray*}
so we are using the addition defined in $\fr(R[x_1,\ldots,x_n])$.  
\end{proof}

It remains to define the differential structure of {\sc Rat$_R$}.   We will use formal partial derivatives to define this structure.  Formal partial derivatives are 
used in many places: in Galois theory the formal derivative is used to determine if a polynomial has repeated roots \cite{galoisstewart}, and in algebraic geometry 
the rank of the formal Jacobian matrix is used to determine if a local ring is regular \cite{alggeomeis}.  Finally, it is also important to note that here we must 
assume we start with a commutative ring, rather than a rig: negatives are required to define the formal derivative of a rational function.

\begin{proposition}
If $R$ is a commutative ring, then {\sc Rat$_R$} is a differential restriction category.
\end{proposition}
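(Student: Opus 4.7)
The plan is to define the differential $D[h]: 2n \to m$ on a rational polynomial map $h = \tup{f}{g}{n}{m}{U}: n \to m$ using the quotient rule for formal partial derivatives. Writing $y_1, \dots, y_n$ for the first $n$ (tangent) variables and $x_1, \dots, x_n$ for the last $n$ (base-point) variables, I would set the $i$-th component of $D[h]$ to be the fraction
\[
\Big(\, \sum_{j=1}^{n} y_j \big( (\partial_{x_j} f_i)\, g_i - f_i\, (\partial_{x_j} g_i) \big),\; g_i^{\,2}\,\Big),
\]
with restriction set $U$ interpreted as polynomials in the $x$-variables only (no constraints on the $y_j$). The whole verification then reduces to checking the nine axioms \dr{1}--\dr{9}.

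Axiom \dr{9} is immediate from the construction, since all denominators and restriction generators of $D[h]$ involve only the $x$-variables, so $\rst{D[h]} = 1 \x \rst{h}$. Axioms \dr{1}, \dr{3}, \dr{4} follow directly from additivity of the formal partial derivative, $\partial x_i / \partial x_j = \delta_{ij}$, and its compatibility with pairing. \dr{2} holds because each component of $D[h]$ is linear in the $y_j$ by construction; evaluating at $\<0,g\>$ produces the zero fraction whose restriction set is precisely the image of $U$ under substitution by $g$, which is exactly $\rst{gh}$. For \dr{8}, the restriction $\rst{h}$ is represented in {\sc Rat}$_R$ by the tuple $(x_i,1)_i$ with restriction set $U$, whose formal partial derivatives give $(y_i,1)_i$ with the same restriction; this is $(1 \x \rst{h})\pi_0$.

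The substantive axioms are \dr{5}, \dr{6}, and \dr{7}. For \dr{5} (chain rule), given $h: n \to m$ and $k: m \to p$, one substitutes the fractions of $h$ into the formal partial derivatives of each component of $k$; expanding using the standard multivariable chain rule gives an expression that matches $\<D[h], \pi_1 h\>D[k]$ component by component, both in its numerator-over-denominator form and in its restriction set. \dr{7} reduces to the classical symmetry of mixed partials for polynomials, which lifts to rational functions by two applications of the quotient rule and is unchanged by the swap in question, as that swap only permutes two differentiation directions without affecting the base point. For \dr{6}, since $D[h]$ is linear in its first $n$ variables, $D[D[h]]$ is affine in the inner tangent block, and substituting $\<\<g,0\>,\<h,k\>\>$ collapses the expression to $\<g,k\>D[h]$; the factor $\rst{h}$ appears from the base-point component of the inner argument via the substitution of $U$ by $h$.

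The main obstacle is the restriction bookkeeping in the chain rule \dr{5}: one must verify that the factor-closed multiplicative set of denominators produced by differentiating the substituted fractions agrees with the restriction set prescribed by Kleisli composition in {\sc Rat}$_R$ applied to $D[h]$ and $D[k]$. Since the only new denominators introduced by formal differentiation are squares of generators of the existing restriction sets $U$ and (after substitution) $U'$, and any such square lies in the factor-closed multiplicative set it generates, the two restriction sets coincide, completing the verification.
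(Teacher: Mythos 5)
Your construction is essentially the paper's: the differential is the formal Jacobian (quotient rule) applied to the direction variables, with the restriction set simply carried over in the base-point variables, and the verification reduces to the known total cartesian-differential argument plus restriction-set bookkeeping under Kleisli substitution, which is exactly how the paper proceeds (it checks \dr{2}, \dr{6}, \dr{8}, \dr{9} explicitly and notes the new denominators $g_i^2$ cause no trouble since the restriction sets are multiplicatively and factor closed). One small correction to your sketch of \dr{6}: the factor $\rst{h}$ arises because $h$ is a component of the argument pairing $\<\<g,0\>,\<h,k\>\>$, so its restriction set $\mathcal{W}$ enters the composite's restriction; the restriction set of $D[D[f]]$ itself involves only the last $n$ variables and is therefore substituted by $k$, not by $h$.
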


Given a ring, $R$, there is a formal partial derivative for elements of
$R[x_1,\ldots,x_n]$.  Let $f = \displaystyle\sum_l a_l x_1^{l_1} \cdots
x_n^{l_n}$ be a polynomial.  Then the formal partial derivative of $f$ with
respect to the variable $x_k$ is 
$$\frac{\partial f}{\partial x_k} = \displaystyle\sum_l l_ka_l x_1^{l_1} \cdots
x_{k-1}^{l_{k-1}} x_k^{l_k-1} x_{k+1}^{l_{k+1}} \cdots x_n^{l_n}$$
Extend the above definition to rational functions,  where $g = \frac{p}{q}$ by
$$\frac{\partial g}{\partial x_k} = \frac{\frac{\partial p}{\partial x_k} q - p
\frac{\partial q}{\partial x_k}}{q^2}.$$
From the above observation, one can show that the unit must have an additive inverse and, thus,  every element 
must have an additive inverse.  This means we need a ring to define the differential structure on rational functions.
Now, if we have $f = \left( f_1,\ldots,f_m\right) =\left( \frac{p_1}{q_1},\ldots,\frac{p_m}{q_m} \right)$, an $m$-tuple of rational
functions in $n$ variables over $R$, then we can define the formal Jacobian at a point of $R^n$ as the $m \times n$ matrix
$$J_f (y_1,\ldots,y_n) = \left[
\begin{matrix}
\frac{\partial f_1}{\partial x_1} (y_1,\ldots,y_n) & \ldots & \frac{\partial f_1}{\partial x_n} (y_1,\ldots,y_n)\\
\vdots & \ddots & \vdots \\
\frac{\partial f_m}{\partial x_1} (y_1,\ldots,y_n)& \ldots & \frac{\partial f_m}{\partial x_n} (y_1,\ldots,y_n)
\end{matrix}
\right]$$
Finally, consider {\sc Rat$_R$} where $R$ is a commutative ring.  Then, define the differential structure to be
\begin{prooftree}
\AxiomC{$\tup{p}{q}{n}{m}{U} : n \to m$}
\RightLabel{$D[\underline{\ \ }]$}
\UnaryInfC{$\left(\overrightarrow{x}^{2n} \mapsto \left(\left(J_{(p_i,q_i)}(x_{n+1},\ldots,x_{2n})\right) \cdot \left( x_1 ,
\ldots , x_n\right)\right) , \left[ x_{n+i} / x_i \right] {\mathcal R}\right): 2n = n \times n \to m$}
\end{prooftree}
For example, consider {\sc Rat$_{\mathbb{Z}}$} and the map $\left(x_1,x_2 \mapsto \left(\frac{1}{x_1},\frac{x_1^2}{1+x_2}\right),\<x_1,1+x_2\>\right)$.  
Then the differential of this map is
$$\left(x_1,x_2,x_3,x_4 \mapsto \left(\frac{-x_1}{x^2_3},\frac{2x_3x_1(x_4+1)-x_3^2x_2}{(x_4+1)^2}\right),\<x_3,1+x_4\>\right)$$

\begin{proof}
In \cite{cartDiff}, the category of smooth functions between finite dimensional $\mathbb{R}$ vector spaces is established
as an example of a cartesian differential category using the Jacobian as the differential structure.  The proof for showing that {\sc Rat$_R$} is a
differential restriction category is much the same, so we will highlight the places 
where the axioms have changed and new axioms have been added.

\medskip

\noindent
{\bf [DR.2]} Consider the second part of {\bf [DR.2]},  $\<0,g\>D[f] = \rs{gf}0$: it has been modified by the addition of the restriction constraint.
Let $f = (\overrightarrow{x}^n \mapsto (f_i,f'_i)_m,{\mathcal V})$ and  $g = (\overrightarrow{x}^k \mapsto (g_i,g'_i)_n,{\mathcal U})$ then it is clear 
that we must show
\begin{eqnarray*}
\left[0 /x_i,(g_i,g_i')/x_{n+i} \right] {\mathcal V}' &=& \left[ (g_i,g_i') /x_i \right] {\mathcal V}
\end{eqnarray*} 
where ${\mathcal V}' = [x_{n+i}/x_i] {\mathcal V}$ so that ${\mathcal V}'$ is just ${\mathcal V}$ with variable indices shifted by $n$.  Thus, these
substitutions are clearly equal.

\medskip

\noindent
{\bf [DR.6]} 
Consider the maps 
	\[ g = \tup{g}{g'}{k}{n}{U}, h = \tup{h}{h'}{k}{n}{W}, \mbox{ and } k = \tup{k}{k'}{k}{n}{W}. \]
The restriction set for $D[f]$ is ${\mathcal V}' = \rst{h}\<
[x_{n+i}/x_i]{\mathcal V}$, and the restriction set for $D[D[f]]$ is ${\mathcal
V}'' = [x_{2n+j}/x_j] {\mathcal V'} = [x_{3n+i}/x_i]{\mathcal V}$.  
We must prove $\< \< g,0\>,\<h,k\>\> D[D[f]] = \rst{h}\< g,k\>D[f]$ which translates to:

\begin{eqnarray*}
\lefteqn{\hspace{-65pt} \left(\overrightarrow{x}^k \mapsto  \left( (g_1,g_1'),\ldots,(g_n,g_n'),0, \ldots ,0,(h_1,h_1'),\ldots,(h_n,h_n'),(k_1,k_1'),\ldots,(k_n,k_n') \right), \left\< {\mathcal U} \cup {\mathcal W} \cup {\mathcal T}\right\>\right)} \\
D[D[f]] & = & \rs{\tup{h}{h'}{k}{n}{T}} \\
& &  ~~~~~~ \left(\overrightarrow{x}^k \mapsto \left( (g_1,g_1') , \ldots , (g_n,g_n'),(k_1,k_1'),\ldots, (k_n,k_n') \right), \left\<{\mathcal U} \cup {\mathcal W}\right\>\right) D[f].
\end{eqnarray*}
The rational functions of the maps are easily seen to be the same.  It remains to prove that the restriction sets are the same, that is:
\begin{align*}
&\left\<\left(U \cup W \cup T \right) \cup
\left[(g_i,g_i')/x_i , 0_i/x_{n+i} , (h_i,h_i')/x_{2n+i},(k_i,k_i')/x_{3n+i}\right] {\mathcal V}''\right\>\\
&= \left\< U \cup \left( W \cup T \cup \left[(g_i,g_i')/x_i , (k_i,k_i')/x_{n+i}\right]{\mathcal
V}'\right)\right\>
\end{align*}
This amounts to showing: 
\begin{eqnarray*}
\left[(g_i,g_i')/x_i , 0_i/x_{n+i} , (h_i,h_i')/x_{2n+i},(k_i,k_i')/x_{3n+i}\right] {\mathcal V}''
&=& \left[(g_i,g_i')/x_i , (k_i,k_i')/x_{n+i}\right]{\mathcal V}'.
\end{eqnarray*}
which is immediate from the variable shifts which are involved.
\medskip

\noindent
{\bf [DR.8]} Let $f = \left(\overrightarrow{x}^n \mapsto (f_i){i=1}^m,{\mathcal V}\right) : n \to m$ then 
\begin{eqnarray*}
(1 \times \rs{f}) \pi_0
&=& \left(\overrightarrow{x}^{2n} \mapsto (x_i)_{i=1}^{2n} , [x_{n+i}/x_i]{\mathcal V} \right) \pi_0 \\
&=& \left(\overrightarrow{x}^{2n} \mapsto (x_i)_{i=1}^{n} , [x_{n+i}/x_i]{\mathcal V} \right) \\
&=& \left(\overrightarrow{x}^{2n} \mapsto I_{n\times n} \vec{x} , [x_{n+i}/x_i]{\mathcal V} \right) \\
&=& \left(\overrightarrow{x}^{2n} \mapsto \left(J_{(x_i)_i}(\overrightarrow{x}_{n+1}^{2n})\right)\cdot
\overrightarrow{x}_{1}^{n},[x_{n+i}/x_i]{\mathcal V} \right) \\
&=& D[\rs{f}].
\end{eqnarray*}

\medskip

\noindent
{\bf [DR.9]}
Considering $f = (\overrightarrow{x}^n \mapsto (f_i,f'_i)_{i=1}^m,{\mathcal V})$, we have 
\begin{eqnarray*}
1 \times \rs{f }
&=& \stup{x_i}{1}{n}{n}{\{\}} \times \stup{x_i}{1}{n}{n}{V} \\
&=& \stupc{x_i}{1}{2n}{2n}{\left[x_{n+i}/x_i\right]{\mathcal V}} \\
&=& \rs{D[f]}.
\end{eqnarray*}
\end{proof}

\subsection{Further properties of {\sc Rat$_R$}}

In this section we will describe three aspects of {\sc Rat$_R$}.  First we will prove that {\sc Rat$_R$} has nowhere defined maps for each $R$.  Next, 
after briefly introducing the definition of $0$-unitariness for restriction categories, we will show that if $R$ is an integral domain, then {\sc Rat$_R$} 
is a $0$-unitary restriction category.  Finally, we will show that {\sc Rat$_R$} does not in general have joins.  

Recall from section \ref{subsecJoins} that a restriction category $\X$ has {\bf nowhere defined maps}, if for each $\X(A,B)$ there is a map $\emptyset_{AB}$ 
which is a bottom element for $\X(A,B)$, and these are preserved by precomposition.  We will show that {\sc Rat$_R$} always has nowhere defined maps.  Intuitively, 
a nowhere defined rational function should be one whose restriction set ${\mathcal U}$ is the entire rig $R[x_1, \ldots, x_n]$.  This can be achieved with a 
finitely generated set by simply considering the set generated by $0$, since any such polynomial is in the factor closure of $0$.    

\begin{proposition}\label{rathaszeros}
For any commutative rig $R$, {\sc Rat$_R$} has nowhere defined maps given by 
	\[ \rzro{n}{m}. \]
\end{proposition}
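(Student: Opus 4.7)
The plan is to verify the two conditions defining a nowhere-defined map: that $\rzro{n}{m}$ is the bottom element of $\text{\sc Rat}_R(n,m)$, and that $h \circ \rzro{n}{m} = \rzro{k}{m}$ for every $h : k \to n$. The key observation driving both is that $\<0\>$, the factor-closed multiplicative set generated by $0$, is the entire polynomial rig: for any $p \in R[x_1,\ldots,x_n]$, the equation $0 = p \cdot 0$ exhibits $p$ as a factor of $0$. In the abstract cospan presentation of Proposition~\ref{Rat_R}, $\<0\>$ corresponds to the ${*}$-idempotent $(0,0) \in \fr(W(R[x_1,\ldots,x_n]))$, whose localization is the trivial fractional rig (one element, with $0=1$); up to isomorphism there is a unique cospan with this rig in the middle, which will turn out to be $\rzro{n}{m}$.

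First I would note that $\rzro{n}{m}$ is well-formed: every denominator is $1$, and $1 \in \<0\>$ since $0 = 1 \cdot 0$. For the bottom-element claim, pick an arbitrary $f = \tup{p}{q}{n}{m}{V}$ and compute $\rs{\rzro{n}{m}} \cdot f$. The restriction of $\rzro{n}{m}$ is $\stupc{x_i}{1}{n}{n}{0}$, and the composition formula produces rational parts $(p_j,q_j)$ (identity substitution on the variables) together with restriction set $\<0, v_1, \ldots, v_k\> = \<0\>$, since $0$ is already a generator. Passing to the localization of $\fr(W(R[\overrightarrow{x}^n]))$ at $(0,0)$---which is the trivial fractional rig---every fraction collapses to the canonical representative $(1,1)$, so the composite equals $\rzro{n}{m}$. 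Hence $\rzro{n}{m} \leq f$, as $f$ was arbitrary.

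For preservation by precomposition, let $h = \tup{f}{g}{k}{n}{U}$ and compute $h \circ \rzro{n}{m}$. Substituting $(f_i,g_i)$ for $x_i$ in the constant fractions $(1,1)$ leaves them unchanged, while substituting in the constant generator $0$ of the restriction set yields $0$ again. The composite therefore has rational parts $(1,1)_{j=1}^m$ and restriction set $\<\mathcal{U}, 0\> = \<0\>$ (since $0$ lies in the generating set), giving exactly $\rzro{k}{m}$.

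The main obstacle is the step collapsing $(p_j,q_j)$ to the canonical form $(1,1)$ in the concrete tuple picture. The concrete elimination rule ``any $u \in \mathcal{U}$ may be completely eliminated from the fraction'' does not immediately pair $(p_j,q_j)$ with $(1,1)$ when their gcd is trivial, so the cleanest argument appeals to Proposition~\ref{Rat_R}: both tuples represent cospans whose middle object is the unique (trivial) localization of $\fr(W(R[\overrightarrow{x}^n]))$ at $(0,0)$, and any two such cospans are equal in $\text{\sc Rat}_R$.
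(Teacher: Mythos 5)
Your proof is correct and follows essentially the same route as the paper's: both required verifications (the bottom-element property via $\rs{\rzro{n}{m}}\,f = \rzro{n}{m}$, and preservation under precomposition) are carried out with the same composition computations, resting on the fact that the factor-closed multiplicative closure of $\<0\>$ is all of $R[x_1,\ldots,x_n]$. The only real difference is in how the final collapse of the composite's components is justified: the paper uses the quick concrete observation $0a_i = 0b_i = 0$, while you appeal to Proposition \ref{Rat_R} together with the (correct) observation that the localization at the ${*}$-idempotent $(0,0)$ is the trivial fractional rig, so any two cospans through it coincide — an equivalent, and if anything slightly more carefully justified, version of the same step.
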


\begin{proof}
First, note 
$$\rs{\rzro{n}{m}} = \stupc{x_i}{1}{n}{n}{0} = \rzro{n}{n}$$
 since $0x_i=0$.
Next, note that $R[x_1,\ldots,x_n] = \left\<0\right\> = \left\<\left\<0\right\> \cup {\mathcal U}\right\>$.  Let $(a_i,b_i) = \left[(1,1)/x_i\right](p_i,q_i)$; clearly for each $i$, 
$$0 = 0a_i = 0b_i.$$
Thus, the following equalities are clear:
\begin{align*}
&\rzro{n}{n}\tup{p}{q}{n}{m}{U} \\
&= \tupc{a}{b}{n}{m}{\left\<0\right\> \cup {\mathcal U}}\\
&= \rzro{n}{m},
\end{align*}
so that this map is the bottom element.  Now consider
\begin{align*}
& \tup{p}{q}{n}{m}{U}\rzro{m}{k} \\
&= \stupc{1}{1}{n}{k}{{\mathcal U}\cup \left\<0\right\>}\\
&= \rzro{n}{k},
\end{align*}
so that these maps are preserved by precomposition, which completes the proof that {\sc Rat$_R$} has nowhere defined maps.
\end{proof}

Now, if $R$ is an integral domain, we would expect that whenever two rational functions agree on some common restriction idempotent, then they should be equal wherever they are both defined.  To make this idea explicit, we will introduce the concept of $0$-unitary for restriction categories\footnote{This is related to the concept of $0$-unitary from inverse semigroup theory (see \cite{lawson}); the relationship will be explored in detail in a future paper.}.  

Let $\X$ be a restriction category with nowhere defined maps.  To define $0$-unitariness, we first define a relation $\leq_0$ on parallel arrows, called the {\bf $0$-density relation}, as follows:
$$f \leq_0 g \mbox{ if } f \leq g \text { and  } hf = \emptyset \text{ implies } hg = \emptyset.$$
$\X$ is a {\bf $0$-unitary} restriction category when for any $f,g,h$:
$$f \geq_0 h \leq_0 g \text{ implies } f \smile g .$$

\begin{lemma}\label{zerounitaryzeros}
Let $\X$ be a restriction category with nowhere defined maps, and assume $h \leq_0 f$.  Then if $f$ or $h$ equals $\emptyset$, then both $f$ and $h$ equal $\emptyset$.
\end{lemma}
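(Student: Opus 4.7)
The plan is to handle the two directions of the implication separately, using only the definitions of $\leq_0$, of nowhere defined maps, and of the restriction order. Neither direction should require anything clever; the lemma is essentially an unpacking.

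For the ``$h = \emptyset$ implies $f = \emptyset$'' direction, I would apply the second clause of $h \leq_0 f$ (namely, $kh = \emptyset$ implies $kf = \emptyset$) to the identity morphism $k = 1$. Since $1 \cdot h = h = \emptyset$, we conclude $1 \cdot f = \emptyset$, i.e., $f = \emptyset$.

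For the ``$f = \emptyset$ implies $h = \emptyset$'' direction, I would use the first clause of $h \leq_0 f$, namely $h \leq f$, which unfolds to $\rst{h} f = h$. Substituting $f = \emptyset$ and invoking the fact that $\emptyset$ is preserved by precomposition (so $\rst{h}\emptyset = \emptyset$), we get $h = \emptyset$ directly.

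The main (very minor) obstacle is just keeping the bound variable in the definition of $\leq_0$ notationally distinct from the $h$ appearing in the lemma statement; once that is sorted, both directions are one-line computations. No auxiliary result is needed beyond the bottom-element property and preservation of nowhere defined maps under precomposition, both of which are part of the hypothesis.
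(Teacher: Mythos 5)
Your proof is correct and follows essentially the same route as the paper: the direction $f=\emptyset \Rightarrow h=\emptyset$ via $h=\rs{h}f=\rs{h}\emptyset=\emptyset$, and the direction $h=\emptyset \Rightarrow f=\emptyset$ by instantiating the second clause of $\leq_0$ at the identity. Nothing is missing.
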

\begin{proof}
Since $h \leq_0 f$, we have $h=\rs{h}f$, and whenever $kh = \emptyset$, $kf=\emptyset$.

First assume that $f = \emptyset$.  Then $h=\emptyset$ since
$$h = \rs{h}f = \rs{h} \emptyset = \emptyset.$$

Next, assume that $h = \emptyset$.  Then by $0$-unitariness, 
$$1h = h = \emptyset \text{ implies } 1f = \emptyset,$$
which completes the proof.
\end{proof}

Now we prove that {\sc Rat$_R$} is a $0$-unitary restriction category when $R$ is an integral domain.

\begin{proposition}\label{ratzerounitary}
Let $R$ be an integral domain.  Then {\sc Rat$_R$} is a $0$-unitary restriction category.
\end{proposition}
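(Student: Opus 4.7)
The plan is to leverage the fact that $R[x_1, \ldots, x_n]$ is an integral domain whenever $R$ is, so that non-trivial localizations of it remain injective, forcing two rational function representations that agree in such a localization to already agree as elements of $\fr(W(R[x_1,\ldots,x_n]))$.

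I would proceed by cases on whether $h = \emptyset$. If $h = \emptyset$, then Lemma \ref{zerounitaryzeros} applied to both $h \leq_0 f$ and $h \leq_0 g$ yields $f = g = \emptyset$, whence $\rs{f} g = \emptyset = \rs{g} f$ trivially. The substantive case is $h \neq \emptyset$: here the finitely generated factor-closed multiplicative set $\Sigma_{\mathcal W}$ generated by $h$'s restriction set $\mathcal{W}$ does not contain $0$, so by Lemma \ref{extracting-rigs}(v) the localization $\fr(W(R[\vec{x}]))/e_{\mathcal{W}} \cong \fr(W(R[\vec{x}][\Sigma_{\mathcal W}^{-1}]))$ is non-trivial, and the underlying map $R[\vec{x}] \to R[\vec{x}][\Sigma_{\mathcal W}^{-1}]$ is injective.

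Writing $f = \tup{p}{q}{n}{m}{U}$, $g = \tup{p'}{q'}{n}{m}{V}$, and $h = \tup{a}{b}{n}{m}{W}$, the relation $h \leq f$ (that is, $\rs{h} f = h$) translates, via the concrete composition formula, into $(p_i, q_i) \sim (a_i, b_i)$ in the localized fractional rig. I would then argue from the description of $\fr$ on an integral domain that the injectivity above lifts this equivalence back to $\fr(W(R[\vec{x}]))$ itself, giving $(p_i, q_i) \sim (a_i, b_i)$ there. The same argument applied to $h \leq g$ yields $(p'_i, q'_i) \sim (a_i, b_i)$, and transitivity delivers $(p_i, q_i) \sim (p'_i, q'_i)$ in $\fr(W(R[\vec{x}]))$.

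To finish, I compute both $\rs{f} g$ and $\rs{g} f$ from the composition formula: each yields a tuple with restriction set $\langle \mathcal{U} \cup \mathcal{V}\rangle$ and with fractionally-equivalent rational functions, so the two composites coincide, proving $f \smile g$. The main technical hurdle is the lifting step in the third paragraph: one must analyse the equivalence classes of $\fr$ applied to an integral domain and verify that for representatives with non-zero denominators the relation $(p,q) \sim (a,b)$ reduces to the classical identity $pb = aq$ in $R[\vec{x}]$, then observe that such an identity is both preserved and reflected by an injective localization map, so equivalence in the localized fractional rig coincides with equivalence in $\fr(W(R[\vec{x}]))$.
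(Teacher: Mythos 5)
Your overall skeleton (split on $\emptyset$ using Lemma \ref{zerounitaryzeros}, observe $0$ is not in the relevant multiplicative sets, use that $R[x_1,\ldots,x_n]$ is a domain, reduce to polynomial identities, conclude compatibility) is the paper's strategy, but the central ``lifting'' step rests on a false claim about the equivalence relation of the fractional construction. In $\fr$ of an integral domain, $(p,q)\sim(a,b)$ does \emph{not} reduce to the cross-multiplication identity $pb=aq$, even for nonzero denominators: the generating relation only allows a factor to be introduced or removed while it still divides the denominator. The paper's own example makes this point: in $\fr(\Z)$, $(18,36)\sim(3,6)$ but $(3,6)\not\sim(1,2)$, although $3\cdot 2=6\cdot 1$. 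Consequently, equivalence in the localized fractional rig $\fr(W(R[\vec{x}]))/e_{\mathcal W}\cong\fr(W(R[\vec{x}][\Sigma_{\mathcal W}^{-1}]))$ is strictly coarser than equivalence in $\fr(W(R[\vec{x}]))$, and your proposed lift of $(p_i,q_i)\sim(a_i,b_i)$ back to $\fr(W(R[\vec{x}]))$ fails. A concrete failure already occurs for $h\leq f$ itself: take $f=\left(x\mapsto (1,1),\<\,\>\right)$ and $h=\left(x\mapsto (x,x),\<x\>\right)$; then $\rs{h}f=h$, so the fractions $(1,1)$ and $(x,x)$ agree in the localization at $\<x\>$, but they are not $\sim$-equivalent in $\fr(W(\Z[x]))$. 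Injectivity of $R[\vec{x}]\to R[\vec{x}][\Sigma_{\mathcal W}^{-1}]$ reflects \emph{ring} identities, not the finer fractional equivalence, so it cannot do the work you assign to it.

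The repair is to lift less: from the hypotheses you get (and only need) cross-multiplication identities in $R[\vec{x}]$. Since the denominators of $f$ and $g$ lie in the inverted multiplicative set, equality of maps such as $\rs{h}f=\rs{h}g$ means the corresponding fractions become equal after inverting $\<\mathcal{W}\cup\mathcal{U}\>$, i.e.\ there is some $W_i\in\<\mathcal{W}\cup\mathcal{U}\>$ with $W_i f_i g_i'=W_i f_i' g_i$; because $R$ is a domain and $0\notin\<\mathcal{W}\cup\mathcal{U}\>$, one has $W_i\neq 0$ and can cancel to get $f_i g_i'=f_i' g_i$ in $R[\vec{x}]$. (This is exactly how the paper argues, working directly from $\rs{h}f=h=\rs{h}g$ rather than comparing $f$ and $g$ separately with $h$ as you do; if you keep your two-step route you must similarly extract two polynomial identities against $(a_i,b_i)$ and cancel the nonzero $b_i$.) Then $\rs{f}g=\rs{g}f$ follows because both composites carry the restriction set $\<\mathcal{U}\cup\mathcal{V}\>$, in whose localization the denominators $f_i',g_i'$ are units, so the cross-multiplication identity does give equality there; no equivalence in $\fr(W(R[\vec{x}]))$ itself is needed or available.
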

\begin{proof}

Consider the maps:
$$\tup{f}{f'}{n}{m}{U}, \tup{g}{g'}{n}{m}{V},~\mbox{ and}~ \tup{h}{h'}{n}{m}{W}$$  
Assume:
\begin{eqnarray*}
\tup{h}{h'}{n}{m}{W} & \leq_0 & \tup{f}{f'}{n}{m}{U} \\ 
\tup{h}{h'}{n}{m}{W} & \leq_0 & \tup{g}{g'}{n}{m}{V}.
\end{eqnarray*}
Now if any of the above maps are $\emptyset$, then lemma (\ref{zerounitaryzeros}) says that all three of the above equal $\emptyset$; therefore,
$$\tup{f}{f'}{n}{m}{U} \smile \tup{g}{g'}{n}{m}{V}.$$
Thus, suppose all three are not $\emptyset$.  Then $0 \not \in {\mathcal U},{\mathcal V},\text{ or }{\mathcal W}$.  Then we have
\begin{eqnarray*}
\lefteqn{\tupc{f}{f'}{n}{m}{{\mathcal W} \cup {\mathcal U}} } \\
&= &  \rs{\tup{h}{h'}{n}{m}{W}}\tup{f}{f'}{n}{m}{U} \\
&= & \rs{\tup{h}{h'}{n}{m}{W}}\tup{g}{g'}{n}{m}{V} \mbox{ since $\rs{h}f = h = \rs{h}g$,} \\
&= & \tupc{g}{g'}{n}{m}{{\mathcal W} \cup {\mathcal V}} .
\end{eqnarray*}
Now, since $R$ is an integral domain, the product of two nonzero elements is nonzero.  Thus, $0 \not \in \left\< {\mathcal W} \cup {\mathcal U}\right\>$.  Thus for each $i$, there is a $W_i \not = 0 \in \left\< {\mathcal W} \cup {\mathcal U}\right\>$ such that $W_if_ig_i' = W_if_i'g_i$.  Moreover, the fact that $R$ is an integral domain also gives the cancellation property: if $a \ne 0$, $ac=ab$ implies $c=b$.  Thus, we have that $f_ig_i' = f_i'g_i$, which proves
$$\tup{f}{f'}{n}{m}{U} \smile \tup{g}{g'}{n}{m}{V}.$$

Thus, when $R$ is an integral domain, {\sc Rat$_R$} is a $0$-unitary restriction category.
\end{proof}

It may seem natural to ask if {\sc Rat$_R$} has finite joins, especially if $R$ has unique factorization.  If $R$ is a unique factorization domain, it is easy to show that 
any two compatible maps in {\sc Rat$_R$} will have the form 
$$\tup{P}{Q}{n}{m}{U} \smile \tup{P}{Q}{n}{m}{V},$$
where $\gcd(P_i,Q_i)=1$.  Thus $Q_i \in U,V$, so $Q_i \in \left\< {\mathcal U} \cap {\mathcal V} \right\>$.  Thus from the order theoretic nature of joins, the only candidate 
for the join is $\tupc{P}{Q}{n}{m}{{\mathcal U} \cap {\mathcal V}}$.  However, reducing the restriction sets of compatible maps by intersection does not define a join restriction 
structure on {\sc Rat$_R$}, as stability under composition will not always hold.  For a counterexample, consider the maps 
$$\left(1,\<x-1\> \right) \smile \left(1,\<y-1\>\right).$$
By the above discussion, $\left(1,\<\<x-1\> \cap \<y-1\>\>\right)$  must be $\left(1,\<1\>\right)$.  We will show that $s(f \vee g) \not = sf \vee sg$.  Consider the map 
$\left((x^2,x^2),\{\}\right)$.  Then 
$$\left((x^2,x^2),\{\}\right) \left(1,\<1\>\right) = \left(1,\<1\>\right).$$
However, 
	\[ \left((x^2,x^2),\{\}\right) \left(1,\<x-1\> \right) = \left(1,\<x+1,x-1\>\right) \]
and
	\[ \left((x^2,x^2),\{\}\right)\left(1,\<y-1\>\right)=\left(1,\<x+1,x-1\>\right).  \] 
The ``join'' of the latter two maps is $\left(1,\<x+1,x-1\>\right) \not = \left(1,\<1\>\right)$.  Thus, in general {\sc Rat$_R$} does not have joins.


\section{Join completion and differential structure}\label{sectionJoins}


In the final two sections of the paper, our goal is to show that when one adds joins or relative complements of partial maps, differential structure is 
preserved.  These are important results, as they show that one can add more logical operations to the maps of a differential restriction category, while 
retaining the differential structure.  

\subsection{The join completion}

As we have just seen, a restriction category need not have joins, but there is a universal construction which freely adds joins to any restriction category.  
We show in this section that if the original restriction category has differential structure, then so does its join completion.  By join completing 
{\sc Rat$_R$}, we thus get a restriction category which has both joins and differential structure, but is very different from the differential 
restriction category of smooth functions defined on open subsets of $\R^n$.  

The join completion we describe here was first given in this form in \cite{boolean}, but follows ideas of Grandis from \cite{manifolds}. 

\begin{definition}
Given a restriction category $\X$, define $\jn(\X)$ to have:
\begin{itemize}
	\item objects: those of $\X$;
	\item an arrow $X \to^A Y$ is a subset $A \subseteq \X(X,Y)$ such that $A$ is down-closed (under the restriction order), and elements are pairwise compatible;
	\item $X \to^{1_X} X$ is given by the down-closure of the identity, $\da 1_X$;
	\item the composite of $A$ and $B$ is $\{fg: f \in A, g \in B \}$;
	\item restriction of $A$ is $\{\rs{f}: f \in A \}$;
	\item the join of $(A_i)_{i \in I}$ is given by the union of the $A_i$. 
\end{itemize}
\end{definition}

From \cite{boolean}, we have the following result:

\begin{theorem}
$\jn(\X)$ is a join-restriction category, and is the left adjoint to the forgetful functor from join restriction categories to restriction categories.  
\end{theorem}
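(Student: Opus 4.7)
The plan is to verify the theorem in two stages: first that $\jn(\X)$ is a join restriction category, and then that it satisfies the claimed universal property.

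For the first stage, I would start by checking that the data defines a category. The key structural observation is that the setwise composite $AB = \{fg : f \in A, g \in B\}$ is automatically down-closed: if $h \leq fg$ with $f \in A, g \in B$, then $h = \rs{h}fg = (\rs{h}f)g$ with $\rs{h}f \leq f$ in $A$. Pairwise compatibility of $AB$ follows since compatibility is preserved by composition in $\X$. Identities and associativity are then routine. The restriction axioms [R.1]--[R.3] reduce to elementwise identities in $\X$ together with down-closure. The one subtle axiom is [R.4], which asks $\rs{AB} \cdot A = A \cdot \rs{B}$. The inclusion $A\rs{B} \subseteq \rs{AB}A$ is direct from [R.4] in $\X$, while for the reverse, given $f, f' \in A$ and $g \in B$, one uses compatibility $f \smile f'$ to rewrite
\[ \rs{fg}f' = \rs{fg}\rs{f'}f = \rs{f'g}f \leq f'\rs{g}, \]
placing $\rs{fg}f'$ in the down-closed set $A\rs{B}$. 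Joins are then given by set-theoretic unions of the underlying down-sets, which is immediately stable under left composition since composition is setwise.

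For the second stage, define the unit $\iota: \X \to \jn(\X)$ to be the identity on objects with $\iota(f) = \da f$; this is a restriction functor (preservation of composition uses [R.4] in $\X$). Given a join restriction category $\Y$ and a restriction functor $F: \X \to \Y$, define $\hat{F}: \jn(\X) \to \Y$ on objects as $F$ and on an arrow $A$ by
\[ \hat{F}(A) = \bigvee_{f \in A} F(f). \]
This is well-defined because any restriction functor preserves compatibility, so $\{F(f) : f \in A\}$ is a compatible family in $\Y$. Functoriality, preservation of restriction, and preservation of joins all follow from Proposition~\ref{propJoins}, which ensures joins in $\Y$ commute with both composition and restriction. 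The equation $\hat{F}\iota = F$ holds because $f$ is the top of $\da f$ (so $\bigvee_{g \leq f}F(g) = F(f)$, using that $F$ preserves $\leq$). For uniqueness, any join-preserving extension $G$ must satisfy $G(A) = G(\bigvee_{f \in A}\da f) = \bigvee_{f \in A}G(\iota f) = \bigvee_{f \in A}F(f)$, since every $A$ is the join of $\{\da f : f \in A\}$ in $\jn(\X)$.

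The main technical obstacle is the verification of [R.4] in $\jn(\X)$; once the elementwise identity $\rs{fg}f' = \rs{f'g}f$ for compatible $f, f'$ is established, the remaining restriction axioms and the universal property follow from fairly direct calculations leaning on Proposition~\ref{propJoins}.
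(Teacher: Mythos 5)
A preliminary remark: the paper offers no proof of this theorem at all --- it is quoted from \cite{boolean} --- so there is no in-paper argument to compare yours against; I am assessing your proposal on its own terms. Your overall strategy is the natural one, and most of it is sound: the composite $AB$ is indeed down-closed and pairwise compatible, the [R.4] chain does go through (for compatible $f,f'\in A$ one has $\rs{fg}f'=\rs{f}f'\rs{g}\le f'\rs{g}$, giving $\rs{AB}A\subseteq A\rs{B}$, the other inclusion being elementwise [R.4]), and the adjunction argument via $\hat F(A)=\bigvee_{f\in A}F(f)$, using Proposition \ref{propJoins} to commute joins with composition and restriction, is correct, including the uniqueness step. (Note in passing that [R.1] uses pairwise compatibility of $A$, not only down-closure, to see $\rs{f}g\in A$ for $f,g\in A$.)

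There is, however, a genuine gap at the joins themselves. You assert that joins are set-theoretic unions, but the join axiom demands a join of every family $(A_i)$ that is pairwise compatible \emph{as arrows of $\jn(\X)$}, and it is not immediate that $\bigcup_i A_i$ is then pairwise compatible \emph{elementwise} --- i.e.\ that it is an arrow of $\jn(\X)$ at all --- nor do you verify the least-upper-bound property, which requires knowing what the restriction order of $\jn(\X)$ is. Both points rest on a lemma you need to state and prove: $A\smile B$ in $\jn(\X)$ if and only if $f\smile g$ in $\X$ for every $f\in A$, $g\in B$ (equivalently, $A\le B$ in $\jn(\X)$ if and only if $A\subseteq B$). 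The argument is short but not vacuous: if $\rs{A}B=\rs{B}A$, then for $f\in A$, $g\in B$ the element $\rs{f}g$ lies in $\rs{B}A$, hence is of the form $\rs{g_1}f_1\le f_1\in A$, hence lies in $A$ by down-closure; being in $A$ it is compatible with $f$, and the compatibility $\rs{f}g\smile f$ yields $\rs{g}f=\rs{\rs{f}g}\,f=\rs{f}\,(\rs{f}g)=\rs{f}g$, i.e.\ $f\smile g$. With this lemma the union is a legitimate arrow, the order on $\jn(\X)$ is inclusion, the union is evidently the least upper bound, and your later appeal to $A=\bigvee_{f\in A}\da f$ in the uniqueness part of the adjunction is also justified. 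Without it, the central piece of the statement --- that $\jn(\X)$ actually \emph{has} joins --- is unproved.
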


Note that this construction destroys any existing joins.  This can be dealt with: for example, if one wishes to join complete a restriction category which already has empty maps (such as {\sc Rat$_R$}) and one wants to preserve these empty maps, then one can modify the above construction by insisting that each down-closed set contain the empty map.  

Because we will frequently be dealing with the down-closures of various sets, the following lemma will be extremely helpful.

\begin{lemma}\label{lemmaDC}(Down-closure lemma)
Suppose $\X$ is a restriction category, and $A, B \subseteq \X(A,B)$.  Then we have:
\begin{enumerate}[(i)]
	\item $\da A \da B = \da (AB) $;
	\item $\rs{\da A} = \da (\rs{A})$;
	\item if $\X$ is cartesian, $\< \da A, \da B \> = \da \<A, B \>$;   
	\item if $\X$ is left additive, $\da A + \da B = \da(A + B);$
	\item if $\X$ has differential structure, $D[\da A] = \da D[A]$.
\end{enumerate}
\end{lemma}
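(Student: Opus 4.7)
The plan is to establish each identity by showing both inclusions. The forward inclusions ($\subseteq$) follow immediately from the monotonicity of the operations involved: composition, restriction, pairing, addition, and differentiation all preserve the order $\leq$, via basic restriction identities together with Propositions \ref{propCart}(iv), \ref{propLA}(iii), and \ref{propDiff}(ii). Concretely, if $f \leq a$ and $g \leq b$ with $a \in A$, $b \in B$, then $fg \leq ab$, $\rs{f} \leq \rs{a}$, $\<f,g\> \leq \<a,b\>$, $f+g \leq a+b$, and $D[f] \leq D[a]$, so each constructed element lies in the corresponding down-closure on the right.

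For the reverse inclusions ($\supseteq$), I factor an element $h$ sitting below a constructed map, using the characterization $h \leq k \iff h = \rs{h}k$ together with the relevant structural axiom. For (i), given $h \leq ab$, write $h = \rs{h}ab = (\rs{h}a)b$, where $\rs{h}a \leq a$, so $h \in \da A \,\da B$. For (ii), a restriction idempotent $e \leq \rs{a}$ satisfies $\rs{ea} = \rs{\rs{e}a} = \rs{e}\rs{a} = e$ by Lemma \ref{lemma:restriction}(iii), while $ea \leq a$, so $e = \rs{ea} \in \rs{\da A}$. For (iii), given $h \leq \<a,b\>$, recover components via $h = h\<\pi_0,\pi_1\> = \<h\pi_0,h\pi_1\>$ (Proposition \ref{propCart}(iii)), and observe $h\pi_0 \leq \<a,b\>\pi_0 \leq a$, and similarly $h\pi_1 \leq b$. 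For (iv), given $h \leq a+b$, distribute using left additivity: $h = \rs{h}(a+b) = \rs{h}a + \rs{h}b$, with $\rs{h}a \leq a$ and $\rs{h}b \leq b$.

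Part (v) is the most delicate, because the naive image $\{D[f] : f \in \da A\}$ is not automatically down-closed inside $\X$: a map $h \leq D[a]$ has $\rs{h} \leq 1 \x \rs{a}$, but this restriction idempotent need not be of the form $1 \x \rs{f}$ for any $f \leq a$, so $h$ is not in general expressible as $D[f]$. Accordingly, the identity is to be read in $\jn(\X)$, where $D[A]$ stands for the down-closure $\da\{D[f] : f \in A\}$. Under this reading, $\da D[A] \subseteq D[\da A]$ is immediate from $A \subseteq \da A$, while the opposite direction is exactly the content of Proposition \ref{propDiff}(ii): $f \leq a$ implies $D[f] \leq D[a]$, so every $D[f]$ with $f \in \da A$ lies in $\da\{D[a] : a \in A\}$.

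The main obstacle is recognising the correct interpretation in (v): unlike composition, restriction, pairing, and addition, the differential does not carry down-closed sets to down-closed sets, and the equality must be read modulo down-closure (which is precisely how $D$ will need to be defined on arrows of $\jn(\X)$ in the next section). Once this convention is fixed, the result reduces to the monotonicity lemmas already established, and the five parts together furnish exactly the calculational machinery needed to push the differential restriction structure through the join completion.
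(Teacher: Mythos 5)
Your proof is correct and follows essentially the same route as the paper: forward inclusions by order-preservation of composition, restriction, pairing, addition, and differentiation, and reverse inclusions by factoring $h = \rs{h}(\cdot)$ and pushing the restriction idempotent inside (your use of $h = \<h\pi_0,h\pi_1\>$ in (iii) is only a cosmetic variant of the paper's $h = \<\rs{h}f,g\>$). Your explicit reading of (v) via the down-closure $\da\{D[f]: f \in A\}$ is precisely the convention the paper adopts when defining $D$ on $\jn(\X)$, and its own proof of (v) implicitly uses the same reading.
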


\begin{proof}
\begin{enumerate}[(i)]
	\item If $h \in \da (AB)$, then $\exists f \in A, g \in B$ such that $h \leq fg$.  So $\rs{h}fg = h$, and $\rs{h}f \in \da A$, $b \in \da B$, so $h \in \da A \da B$. Conversely, if $mn \in \da A \da B$, there exists $f, g$ such that $m \leq f \in A, n \leq g \in B$.  But composition preserves order, so $mn \leq fg$, so $mn \in \da (AB)$.
	
	\item Suppose $h \in \rs{\da A}$.  So there exists $f \in A$ such that $h \leq f$.  Since restriction preserves order, $\rs{h} \leq \rs{f}$.  But since $h \in \rs{\da A}$, $h$ is idempotent, so we have $h \leq \rs{f}$.  So $h \in \da (\rs{A})$.  Conversely, suppose $h \in \da (\rs{A})$, so $h \leq \rs{f}$ for some $f \in A$.  Then we have $h = \rs{h}\, \rs{f} = \rs{\rs{h} f}$, so $h$ is idempotent and $h \leq f$, so $h \in \rs{\da A}$.   
	
	\item Suppose $h \in \da \<A, B \>$, so $h \leq \<f,g\>$ for $f \in A, g \in B$.  Then $h = \rs{h}\<f,g\> = \<\rs{h}f,g\>$, and $\rs{h}f \in \da A$, $g \in \da B$, so $h \in \< \da A, \da B \>$.  Conversely, suppose $h \in \< \da A, \da B \>$, so that $h = \<m,n\>$ where $m\leq f \in A, n \leq g \in B$.  Since pairing preserves order, $h = \<m,n\> \leq \<f,g\>$, so $h \in \da \<A, B \>$.  
	
	\item Suppose $h \in \da A + \da B$, so $h = m + n$, where $m \leq f \in A$, $n \leq g \in B$.  Since addition preserves order, $h = m + n \leq f + g$, so $h \in \da(A + B)$.  Conversely, suppose $h \in \da(A + B)$.  Then there exist $f \in A, g \in B$ so that $h \leq f + g$.  Then $h = \rs{h}(f + g) = \rs{h}f + \rs{h}g$ (by left additivity), so $h \in \da A + \da B$.  
	
	\item Suppose $h \in D[\da A]$.  Then there exists $m \leq f \in A$ so that $h \leq D[m]$.  But differentiation preserves order, so $h \leq D[m] \leq D[f]$, so $h \in \da D[A]$.  Conversely, suppose $h \in D[A]$.  Then there exists $f \in A$ so that $h \leq D[f]$, so $h \in D[\da A]$.

\end{enumerate}
\end{proof}

\subsection{Cartesian structure}

We begin by showing that cartesianess is preserved by the join completion.

\begin{theorem}
If $\X$ is a cartesian restriction category, then so is $\jn(\X)$.  
\end{theorem}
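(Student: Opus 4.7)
The plan is to lift the cartesian structure pointwise from $\X$ to $\jn(\X)$ by taking down-closures. Keep $T$ as the restriction terminal object with unique total map $\hat{!}_A := \da !_A$, and keep $A \times B$ as the product with projections $\hat\pi_0 := \da \pi_0$ and $\hat\pi_1 := \da \pi_1$. For arrows $F: C \to A$ and $G: C \to B$ of $\jn(\X)$, define the pairing by
\[
  \<F,G\>_{\jn} := \{\<f,g\> : f \in F,\ g \in G\}.
\]
By Lemma \ref{lemmaDC}(iii) this set is already down-closed, and Proposition \ref{propCart}(v) guarantees its elements are pairwise compatible, so it is a bona fide arrow of $\jn(\X)$.

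Most axioms then follow by routine applications of Lemma \ref{lemmaDC}. For the terminal, totality $\rs{\hat{!}_A} = \da\rs{!_A} = \da 1_A$ is the identity of $\jn(\X)$ at $A$, and $\hat{!}_T = \da 1_T$; the laxness property $F\hat{!}_B \subseteq \hat{!}_A$ holds because each $f \in F$ satisfies $f!_B \leq !_A$ in $\X$. For products, $\rs{\<F,G\>_{\jn}} = \{\rs{f}\rs{g} : f \in F,\ g \in G\} = \rs{F}\rs{G}$ is immediate, and lax commutativity $\<F,G\>_{\jn}\hat\pi_0 = \rs{\<F,G\>_{\jn}}F$ reduces, via the identity $\<f,g\>\pi_0 = \rs{g}f$ from $\X$, to the equality $\da\{\rs{g}f : f \in F,\ g \in G\} = \rs{F}\rs{G}F$ (the forward inclusion by taking $f' = f$ in a typical element $\rs{f'}\rs{g}f$; the reverse by down-closure of $\rs{F}\rs{G}F$).

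The hardest step is uniqueness. For the terminal, any total $M : A \to T$ in $\jn(\X)$ has $1_A \in \rs{M} = \da 1_A$, forcing a total element into $M$ which must equal $!_A$, and compatibility of the remaining elements with $!_A$ pins $M$ down to $\da !_A$ (both inclusions come out via $m' = \rs{m'}!_A$). For products, suppose $M$ is another pairing. The inclusion $M \subseteq \<F,G\>_{\jn}$ uses that any $m : C \to A \times B$ in a cartesian restriction category satisfies $m = \<m\pi_0, m\pi_1\>$ (since the right-hand side verifies the defining pairing equations for $m$), while $M\hat\pi_0 \subseteq F$ and $M\hat\pi_1 \subseteq G$ give $m\pi_0 \in F$ and $m\pi_1 \in G$. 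The reverse inclusion $\<F,G\>_{\jn} \subseteq M$ is the delicate step: given $\<f,g\> \in \<F,G\>_{\jn}$, I use $\rs{\<f,g\>} = \rs{f}\rs{g} \in \rs{F}\rs{G} = \rs{M}$ to pick $m \in M$ with $\rs{m} = \rs{f}\rs{g}$; then compatibility of $m\pi_0$ with $f$ inside $F$, combined with $\rs{m\pi_0} = \rs{m} = \rs{f}\rs{g} \leq \rs{f}$, forces $m\pi_0 = \rs{g}f$, and symmetrically $m\pi_1 = \rs{f}g$, whereupon uniqueness of pairings in $\X$ yields $m = \<f,g\>$, so $\<f,g\> \in M$.
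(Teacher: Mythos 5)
Your proof is correct, and it sets up exactly the same structure as the paper's: the terminal object, products, projections, terminal maps and pairings are all lifted pointwise (with down-closures where needed), and totality, $\rs{\<F,G\>}=\rs{F}\,\rs{G}$, and lax commutativity are checked by the same routine computations. The genuine difference is in how uniqueness of the pairing is established. For the inclusion $M \subseteq \<F,G\>_{\jn}$, the paper chooses, for each $m \in M$, witnesses $f \in F$, $g \in G$ with $m\pi_0 = \rs{g}f$, then a second element $m' \in M$ with $m'\pi_1 = \rs{f}g$, and deduces $m \leq \<f,g\>$ before invoking down-closure; you instead use the identity $m = \<m\pi_0, m\pi_1\>$ together with $m\pi_0 \in F$ and $m\pi_1 \in G$, which gives exact membership with no auxiliary witnesses. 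For the reverse inclusion the paper again produces an element $c$ of the candidate and further witnesses $f', g'$, showing $\<f,g\> \leq c$; you locate $m$ by matching restrictions ($\rs{m} = \rs{f}\,\rs{g} \in \rs{M}$) and then exploit pairwise compatibility inside $F$ and $G$ to force $m\pi_0 = \rs{g}f$ and $m\pi_1 = \rs{f}g$, whence $m = \<f,g\>$ on the nose by uniqueness of pairings in $\X$. Your route is somewhat tighter --- it trades the paper's compatibility-chasing with extra witnesses for a restriction-matching argument --- and you also spell out the uniqueness of the total map to the terminal object, which the paper leaves implicit. Both arguments are valid.
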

\begin{proof}
We define $1$ and $X \times Y$ as for $\X$, the projections to be $\da \pi_0$ and $\da \pi_1$, the terminal maps to be $\da (!_A)$, and 
	\[ \< A, B \> := \{\<f,g\>: f \in A, g \in B \} \]
This is compatible by Proposition \ref{propCart}, and down-closed since if $h \leq \<f,g\>$, then
	\[ h = \rs{h}\<f,g\> = \<\rs{h}f,g\> \]
so since $A$ is down-closed, this is also in $\<A,B\>$.  

The terminal maps do indeed satisfy the required property, as
	\[ \rs{A}\da (!_A) = \rs{A}!_A = \{ \rs{f}!_A: f \in A \} = \{f : f \in A\} = A, \]
as required.  

To show that $\< - , - \>$ satisfies the required property, consider
	\[ \<A,B\>\da \pi_0 = \{ \<f,g\>\pi_0: f \in A, g \in B \} = \{ \rs{g}f: f \in A, g \in B \} = \rs{B}A \]
and similarly for $\da \pi_1$.  

We now need to show that $\< - , - \>$ is universal with respect to this property.  That is, suppose there exists a compatible down-closed set of arrows $C$ with the property that $C \da \pi_0 = \rs{B}A$ and $C \da \pi_1 = \rs{A}B$.  We need to show that $C = \<A,B\>$.

To show that $C \subseteq \<A,B\>$, let $c \in C$.  Since $\da (C \pi_0) = C \da \pi_0 = \rs{B}A$, there exists $f \in A, g \in B$ such that $c\pi_0 = \rs{g}f$.  Then, since $\da (C \pi_1) = C \da \pi_1 = \rs{A}B$, there exists a $c'$ such that $c'\pi_1 = \rs{f}g$.  Then
	\[ \rs{c'}c\pi_0 = \rs{c'}\rs{c}c\pi_0 = \rs{c'}\rs{c}\, \rs{g}f \]
and since $c \smile c'$,
	\[ \rs{c'}c\pi_1 = \rs{c'}\rs{c}c'\pi_1 = \rs{c'}\rs{c}\rs{f}g \]
Thus, by the universality of $\rs{c'}\rs{c}\<f,g\>$, $\rs{c'}c = \rs{c'}\rs{c}\<f,g\>$.  Thus
	\[ c \leq \rs{c'}c = \rs{c'}\rs{c}\<f,g\> \leq \<f,g\>, \]
so since $\<A,B\>$ is down-closed, $c \in \<f,g\>$.  

To show that $\<A,B\> \subseteq C$, let $f \in A, g \in B$.  Then there exists $c$ such that 
	\[ c\pi_0 = \rs{g}f = \<f,g\>\pi_0. \]  
Thus, there exists $f' \in A,g' \in B$ such that 
	\[ c \pi_1 = \rs{f'}g' = \<f',g'\>\pi_1. \]
Now, we have
	\[ \rs{\<f',g'\>}\<f,g\>\pi_0 = \rs{\<f',g'\>}\, \rs{\<f,g\>}\<f,g\>\pi_0 = \rs{\<f',g'\>}\, \rs{\<f,g\>}c\pi_0 \]
and since $f \smile f'$ and $g \smile g'$, $\<f,g\> \smile \<f',g'\>$, so we also get
	\[ \rs{\<f',g'\>}\<f,g\>\pi_1 = \rs{\<f',g'\>}\, \rs{\<f,g\>}\<f',g'\>\pi_1 = \rs{\<f',g'\>}\, \rs{\<f,g\>}c \pi_1. \]
Thus, by the universality of $\rs{\<f',g'\>}\<f,g\>$,
	\[ \<f,g \> \leq \rs{\<f',g'\>}\<f,g\> = \rs{\<f',g'\>}\, \rs{\<f,g\>}c \leq c\, . \]
Since $C$ is down-closed, this shows $\<f,g\> \in C$, as required.  
\end{proof}

\subsection{Left additive structure}

Next, we show that left additive structure is preserved.  

\begin{theorem}
If $\X$ is a left additive restriction category, then so is $\jn(\X)$, where 
	\[ 0_{\jn(\X)} := \da 0 \mbox{ and } A + B := \{f + g: f \in A, g \in B \} \].  
\end{theorem}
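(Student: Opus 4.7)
The plan is to verify, in order, that the proposed data gives a well-defined left additive restriction structure on $\jn(\X)$. First I would check that $A+B$ is actually an arrow of $\jn(\X)$, i.e.\ a down-closed, pairwise-compatible family. Compatibility is immediate from Proposition \ref{propLA}(iv), since every pair $f,f' \in A$ and $g,g' \in B$ is pairwise compatible. Down-closure is also easy: if $h \leq f+g$ with $f\in A$ and $g \in B$, then left additivity in $\X$ gives $h = \rs{h}(f+g) = \rs{h}f + \rs{h}g$, and $\rs{h}f \in A$, $\rs{h}g \in B$ by down-closure of $A$ and $B$. The commutative monoid structure on $\jn(\X)(X,Y)$ is then inherited from $\X$ pointwise, with the only non-trivial unit calculation being $A + \da 0 = A$: each element has the form $f + \rs{g}0 = \rs{g}(f+0) = \rs{g}f \leq f$ by Proposition \ref{propLA}(ii), which lies in $A$, while the inclusion $A \subseteq A + \da 0$ is immediate.

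Next I would verify the restriction interaction axioms. For $\rs{A+B} = \rs{A}\rs{B}$, both sides unfold to the set $\{\rs{f}\rs{g} \mid f \in A, g \in B\}$ (using $\rs{f+g} = \rs{f}\rs{g}$ in $\X$ and the definitions of restriction and composition in $\jn(\X)$). The equality $\rs{\da 0} = \da 1 = 1_{\jn(\X)}$ follows directly from Lemma \ref{lemmaDC}(ii) together with $\rs{0} = 1$ in $\X$.

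The hard part will be left additivity of composition, $A(B+C) = AB + AC$. The inclusion $A(B+C) \subseteq AB + AC$ is straightforward from $a(b+c) = ab+ac$. For the converse, an arbitrary element of $AB+AC$ has the form $ab + a'c$ with possibly \emph{distinct} $a,a' \in A$, so one must merge them into a single element of $A$. The key trick is to set $a'' := \rs{ab}\rs{a'c}a$; since $a \smile a'$ in $A$, the identity $\rs{a'}a = \rs{a}a'$ together with $\rs{a'c} \leq \rs{a'}$ shows $\rs{a'c}a = \rs{a'c}\rs{a}a'$, whence $a'' = \rs{ab}\rs{a'c}a'$ as well. This $a''$ lies in $A$ (as $a'' \leq a$), and a short calculation using Proposition \ref{propLA}(i) yields
\[ a''(b+c) = a''b + a''c = \rs{a'c}ab + \rs{ab}a'c = ab + a'c. \]
For the zero axiom $A \cdot \da 0 = \rs{A}\cdot \da 0$, one observes that any $fg$ with $g \leq 0$ satisfies $fg = \rs{fg}\,0 = \rs{f}(\rs{fg}\,0) \in \rs{A}\cdot \da 0$, while any $\rs{f'}k$ with $k \leq 0$ satisfies $\rs{f'}k \leq \rs{f'}0 = f'\cdot 0 \in A\cdot \da 0$; both sides thus coincide with $\da\{\rs{f}0 \mid f \in A\}$.

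Throughout, Lemma \ref{lemmaDC}(iv) ($\da A + \da B = \da(A + B)$) provides a convenient bookkeeping device that lets one reduce most computations on down-closed sets to pointwise calculations in $\X$; the main conceptual obstacle is precisely the merging argument for $A(B+C) \supseteq AB+AC$, which forces one to exploit the compatibility of $A$ in an essential way.
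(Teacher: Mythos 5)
Your proposal is correct and follows essentially the same route as the paper: the same pointwise definitions, the same compatibility and down-closure checks via Proposition \ref{propLA}, and the same computation $\rs{A+B}=\rs{A}\,\rs{B}$, with Lemma \ref{lemmaDC} handling the bookkeeping. The only difference is thoroughness: the paper leaves the axioms $A(B+C)=AB+AC$ and $A\cdot\da 0=\rs{A}\,\da 0$ implicit, whereas your merging element $a''=\rs{ab}\,\rs{a'c}\,a$ correctly supplies the one genuinely non-routine inclusion $AB+AC\subseteq A(B+C)$, exploiting the pairwise compatibility of $A$ exactly where it is needed.
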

\begin{proof}
By Proposition \ref{propLA}, $A+B$ is a compatible set.  For down-closed, suppose $h \leq f + g$.  Then $h = \rs{h}(f + g) = \rs{h}f + \rs{h}g$.  Since $A$ and $B$ are down-closed, $\rs{h}f \in A$, $\rs{h}g \in B$, so $h \in A + B$. 

That this gives a commutative monoid structure on each hom-set follows directly from Lemma \ref{lemmaDC}, as does $\rs{0} = \da 1$.  Finally,
	\[ \rs{A+B} = \rs{\{f + g: f \in A, g\in B\}} = \{\rs{f+g}: f\in A, g\in B\} = \{\rs{f}\rs{g}: f \in A, g\in B\} = \rs{A}\, \rs{B}. \]
so that $\jn(\X)$ is a left additive restriction category.  
\end{proof}

\begin{theorem}
If $\X$ is a cartesian left additive restriction category, then so is $\jn(\X)$. 
\end{theorem}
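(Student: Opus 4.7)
The plan is to verify the two coherence conditions that distinguish a cartesian left additive restriction category from one that is separately cartesian and left additive, namely (a) $(A+B) \times (C+D) = (A \times C) + (B \times D)$ together with $\da 0 \times \da 0 = \da 0$, and (b) the projections $\da \pi_0$, $\da \pi_1$ and the diagonal $\da \Delta$ are additive in $\jn(\X)$. Both will be immediate from the corresponding facts in $\X$ once we unfold the set-theoretic definitions using the Down-closure Lemma (Lemma \ref{lemmaDC}).

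First, unpacking the cartesian and left additive structure of $\jn(\X)$ set up in the preceding theorems, one has $A + B = \{f + g : f \in A, g \in B\}$ and $A \times B = \<\da\pi_0 \cdot A,\, \da \pi_1 \cdot B\> = \da\{f \times g : f \in A, g \in B\}$, where the second equality uses Down-closure Lemma (i) and (iii) together with the fact that $A \times B$ is itself down-closed. Hence
\begin{eqnarray*}
(A + B) \times (C + D)
& = & \da\{(f+g) \times (h+k) : f \in A,\, g \in B,\, h \in C,\, k \in D\} \\
& = & \da\{(f \times h) + (g \times k) : f \in A,\, g \in B,\, h \in C,\, k \in D\} \\
& = & (A \times C) + (B \times D),
\end{eqnarray*}
where the middle equality is the corresponding axiom $(f+g)\times(h+k) = (f\times h)+(g\times k)$ in $\X$ and the last equality applies Down-closure Lemma (iv). Similarly $\da 0 \times \da 0 = \da(0 \times 0) = \da 0$.

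For (b), additivity of $\da \pi_0$ reduces to the equality $\da\{(f+g)\pi_0 : f \in A, g \in B\} = \da\{f\pi_0 + g\pi_0 : f \in A, g \in B\}$, which follows from additivity of $\pi_0$ in $\X$; the zero clause $\da 0 \cdot \da \pi_0 = \da(0 \cdot \pi_0) = \da 0$ uses Down-closure Lemma (i) and the fact that $0\pi_0 = 0$ in $\X$ (as $\pi_0$ is total and additive). The cases of $\da \pi_1$ and $\da \Delta = \da\<1,1\>$ are entirely parallel. The main obstacle here is essentially absent: every clause in the definition of a cartesian left additive restriction category is an equation between arrows built from composition, pairing, addition, and zero, and the Down-closure Lemma asserts that $\da(-)$ commutes with each of these operations, so every axiom in $\X$ transports verbatim to $\jn(\X)$ by applying $\da(-)$ throughout.
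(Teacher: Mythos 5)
Your proof is correct, but it takes a different route from the paper. The paper disposes of this theorem in one line by appealing to Theorem \ref{thmAddCart}: since $\X$ is cartesian left additive, each object carries a canonical \emph{total} commutative monoid structure $+_X$, $0_X$ satisfying the exchange axiom, and these total maps transport along $f \mapsto \da f$ into $\jn(\X)$ (which is already known to be a cartesian restriction category); the monoid equations and the exchange axiom, being equations among maps built from these total maps and the cartesian structure, are preserved, so $\jn(\X)$ is cartesian left additive by the characterization theorem, with the induced addition $\<A,B\>\da(+_X)$ agreeing with the already-defined $A+B$. You instead verify the defining coherence conditions directly -- that $(A+B)\times(C+D) = (A\times C)+(B\times D)$, $\da 0 \times \da 0 = \da 0$, and that $\da\pi_0$, $\da\pi_1$, $\da\Delta$ are additive -- by unfolding everything to down-closures of element-wise operations and invoking Lemma \ref{lemmaDC}; the reduction of additivity of the (total) maps $\da\pi_i$ and $\da\Delta$ to on-the-nose equations is legitimate by the proposition that total maps are additive iff they preserve sums (and zero). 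What the paper's route buys is economy: once Theorem \ref{thmAddCart} is in place, no axiom-by-axiom check is needed, and the same device is reused verbatim for the classical completion. What your route buys is self-containedness and an explicit description of how the product functor and addition interact in $\jn(\X)$, at the cost of repeating the kind of bookkeeping that Theorem \ref{thmAddCart} was designed to package once and for all; your closing remark that every axiom ``transports verbatim'' under $\da(-)$ is essentially a hands-on shadow of that theorem.
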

\begin{proof}
Immediate from Theorem \ref{thmAddCart}.
\end{proof}

\subsection{Differential structure}

Finally, we show that differential structure is preserved.  There is one small subtlety, however.  To define the pairing or addition of maps in $\jn(\X)$, we merely needed to add or pair pointwise, as the resulting set was automatically down-closed and pairwise compatible if the original was.  However, note that $A$ being down-closed does not imply $\{D[f]: f \in A \}$ down-closed.  Axiom \dr{9} requires that differentials be total in the first component.  However, this is not always true of an arbitrary $h \leq D[f]$.  Thus, to define the differential in the join completion, we make take the down-closure of $\{D[f]: f \in A \}$.

\begin{theorem}
If $\X$ is a differential restriction category, then so is $\jn(\X)$, where
	\[ D[A] := \da \{ D[f]: f \in A \} \]
\end{theorem}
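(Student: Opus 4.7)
The plan is to verify that $D[A] := \da\{D[f] : f \in A\}$ is a well-defined morphism of $\jn(\X)$ and that it satisfies the nine axioms $[\dr{1}]$--$[\dr{9}]$.

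First I would confirm $D[A]$ is a morphism of $\jn(\X)$: it is down-closed by construction, and for pairwise compatibility, if $h \leq D[f]$ and $h' \leq D[g]$ with $f, g \in A$, then $f \smile g$ gives $D[f] \smile D[g]$ by Proposition \ref{propDiff}(iii); since $\rs{h}\rs{h'}$ is a restriction idempotent below both $\rs{D[f]}$ and $\rs{D[g]}$, the compatibility of $D[f]$ and $D[g]$ easily yields $\rs{h}h' = \rs{h'}h$, so $h \smile h'$.

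For the axioms, the uniform strategy is: expand $D[A] = \da D'[A]$ where $D'[A] = \{D[f] : f \in A\}$, push down-closures outward via the Down-closure Lemma \ref{lemmaDC} (which handles composition, pairing, addition, restriction, and differentiation separately), apply the corresponding axiom in $\X$ pointwise, and reassemble on the other side. For example, the first half of $[\dr{1}]$ becomes $D[A+B] = \da\{D[f+g] : f \in A, g \in B\} = \da\{D[f]+D[g]\} = D[A] + D[B]$ via $[\dr{1}]$ in $\X$ and Lemma \ref{lemmaDC}(iv). The second half $D[\da 0] = \da 0$ follows from Proposition \ref{propDiff}(i): for any $f \leq 0$, $D[f] = (1 \times \rs{f})D[0] \leq 0$, and conversely $\da 0 \subseteq \da\{D[0]\} \subseteq D[\da 0]$. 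Axioms $[\dr{3}]$, $[\dr{4}]$, $[\dr{5}]$, and $[\dr{7}]$ all go through by the same template using parts (i), (iii), (iv), (v) of Lemma \ref{lemmaDC}. The second halves of $[\dr{2}]$ and $[\dr{6}]$, which feature a restriction on the right-hand side, additionally require commuting a restriction across a down-closure via Lemma \ref{lemmaDC}(ii).

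The main obstacle is $[\dr{9}]$, namely $\rs{D[A]} = \da 1 \times \rs{A}$. The left side computes to $\da\{\rs{D[f]} : f \in A\} = \da\{\<\pi_0, \pi_1 \rs{f}\> : f \in A\}$ by Lemma \ref{lemmaDC}(ii) and $[\dr{9}]$ in $\X$. The right side must be unfolded using the $\jn(\X)$ definitions of the product of morphisms (as a pairing of post-composed projections) and of pairing itself (the direct-product-of-sets formula); this produces $\<\da\pi_0, \da\{\pi_1 \rs{f} : f \in A\}\> = \{\<b,c\> : b \leq \pi_0, \; c \leq \pi_1 \rs{f} \text{ for some } f \in A\}$. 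The two descriptions agree: in one direction because pairing preserves $\leq$ (Proposition \ref{propCart}(iv)), and in the other because any $m \leq \<\pi_0, \pi_1\rs{f}\>$ can be rewritten as $\<\rs{m}\pi_0, \pi_1\rs{f}\>$ via Proposition \ref{propCart}(ii). Axiom $[\dr{8}]$ reduces to essentially the same calculation once $[\dr{9}]$ is in hand, after which all nine axioms are confirmed and the proof is complete.
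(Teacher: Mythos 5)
Your proposal is correct and follows essentially the same route as the paper: both reduce each axiom to its counterpart in $\X$ by pushing the down-closures outward with Lemma \ref{lemmaDC} and applying the axiom pointwise. The additional details you supply (the pairwise-compatibility check for well-definedness via Proposition \ref{propDiff}(iii), and the explicit unwinding of \dr{9}) are points the paper leaves implicit, but they are handled correctly and do not change the method.
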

\begin{proof} Checking the differential axioms is a straightforward application of our down-closure lemma.  For example, for \dr{1}, by the down-closure lemmas,
	\[ D[0_{\jn(\X)}] =  D[\da 0] = \da D[0] = \da 0 = 0_{\jn(\X)} \]
and 
	\[ D[A + B] = \da \{D[f+g]: f \in A, g \in B \} = \da \{D[f] + D[g]: f \in A, g \in B \} = D[A] + D[B]\, . \]
Similarly, to check \dr{5}:
	\[ D[AB] = \da \{D[fg]: f \in A, g \in B \} = \da \{\<D[f],\pi_1 f\>D[g]: f \in A, g \in B \} = \<D[A],\da \pi_1 A \> DB \]
where the last equality follows from several applications of the down-closure lemmas.  All other axioms similarly follow.  
\end{proof}

Finally, it is easy to see the following:
\begin{proposition}
The unit $\X \to \jn(\X)$, which sends $f$ to $\da f$, is a differential restriction functor.
\end{proposition}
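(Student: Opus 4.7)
The plan is to verify each clause of the definition of a differential restriction functor in turn, with almost all of the work already encapsulated in the down-closure lemma (Lemma \ref{lemmaDC}) together with the definitions of composition, pairing, projections, addition, zero, and differentiation in $\jn(\X)$.

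First I would establish functoriality and restriction preservation. The identity in $\jn(\X)$ is $\da 1_X$, which is exactly the image of $1_X$. For composition, the image of $fg$ is $\da(fg)$, while the composite of $\da f$ and $\da g$ in $\jn(\X)$ is $\{hk : h \in \da f, k \in \da g\}$, which by part (i) of Lemma \ref{lemmaDC} is exactly $\da(fg)$. Restriction preservation $\rs{\da f} = \da \rs{f}$ is part (ii) of the same lemma.

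Next I would handle the cartesian and additive clauses. On objects the unit is the identity, so products are preserved strictly. The projections and terminal maps of $\jn(\X)$ are defined as $\da \pi_0, \da \pi_1, \da !_A$, so they are literally the images of the projections and terminal maps of $\X$. Preservation of pairing is $\<\da f, \da g\> = \da\<f,g\>$, which holds because $\da \<f,g\>$ is a compatible down-closed set satisfying the universal property in $\jn(\X)$ (alternatively, directly from Lemma \ref{lemmaDC}(iii)). Preservation of addition is Lemma \ref{lemmaDC}(iv), and $0_{\jn(\X)} = \da 0$ holds by definition.

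Finally, for the differential, we must show $D[\da f] = \da D[f]$. By definition, $D[\da f] = \da \{D[g] : g \in \da f\}$. Since $g \leq f$ implies $D[g] \leq D[f]$ by Proposition \ref{propDiff}(ii), every $D[g]$ in this set lies in $\da D[f]$, giving $\subseteq$; conversely, $D[f]$ itself is in $\{D[g] : g \in \da f\}$ since $f \in \da f$, giving $\supseteq$. This is essentially Lemma \ref{lemmaDC}(v). There is no real obstacle here: the proof is a routine assembly of the down-closure identities and the definitions of the structural operations in $\jn(\X)$, and the only mild subtlety, already anticipated when $D[A]$ was defined, is the extra down-closure in the differential, which is harmless precisely because $D$ is monotone.
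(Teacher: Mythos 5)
Your proposal is correct and takes essentially the same route as the paper: the paper's proof simply asserts the result is immediate from the cartesian, additive, and differential structure already constructed on $\jn(\X)$, and your explicit verification via the down-closure lemma (Lemma \ref{lemmaDC}), together with the monotonicity of $D$ for the extra down-closure in the differential, is exactly the routine check that assertion leaves implicit.
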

\begin{proof}
The result immediately follows, given the additive, cartesian, and differential structure of $\jn(\X)$.
\end{proof}

Thus, by Proposition \ref{diffFunctors}, we have
\begin{corollary}
If $\X$ is a differential restriction category, and $f$ is additive/strongly additive/linear, then so is $\da f$ in $\jn(\X)$.
\end{corollary}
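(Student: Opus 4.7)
The plan is to invoke the two results that immediately precede the corollary and chain them together. The preceding proposition establishes that the unit $\eta: \X \to \jn(\X)$ sending $f \mapsto \da f$ is a differential restriction functor: it preserves finite products strictly, preserves addition and zero, and commutes with $D$. Once that functor is in hand, the corollary should follow by a single direct application of Proposition \ref{diffFunctors}, which asserts that any differential restriction functor preserves additive, strongly additive, and linear maps.

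More concretely, I would write: let $f : A \to B$ in $\X$ be additive (respectively strongly additive, linear). Then by Proposition \ref{diffFunctors} applied to the functor $\eta$ of the preceding proposition, $\eta(f)$ has the corresponding property in $\jn(\X)$. Since $\eta(f) = \da f$ by definition of $\eta$, this gives exactly the claim.

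The only thing worth double-checking is that Proposition \ref{diffFunctors} really applies to all three notions simultaneously, which it does because each notion (additive, strongly additive, linear) is formulated entirely in terms of the operations and relations that a differential restriction functor is defined to preserve: composition, pairing, projections, restriction, addition, zero, and the differential, together with the induced relations $\leq$ and $\smile$. Since any restriction functor automatically preserves $\leq$ and $\smile$ (the observation cited in the proof of Proposition \ref{diffFunctors}), no further work is needed.

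I do not anticipate any real obstacle here; the main intellectual content is packaged in Proposition \ref{diffFunctors} and in showing that $\eta$ is a differential restriction functor, both of which are already established. The corollary is essentially the functorial consequence of those two facts, and its proof reduces to the sentence already given in the excerpt, namely: \emph{the result immediately follows given the additive, cartesian, and differential structure of $\jn(\X)$ and Proposition~\ref{diffFunctors}}.
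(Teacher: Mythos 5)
Your proposal is correct and matches the paper's own argument exactly: the corollary is stated as an immediate consequence of the preceding proposition (that $f \mapsto \da f$ is a differential restriction functor) combined with Proposition \ref{diffFunctors}. No further commentary is needed.
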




\section{Classical completion and differential structure}\label{sectionClassical}

In our final section, we show that differential structure is preserved when we add relative complements to a join restriction category.  This process will greatly expand the possible domains of definition for differentiable maps, even in the standard example.  The standard example (smooth maps on open subsets) does not have relative complements.  By adding them in, we add smooth maps between any set which is the complement of an open subset inside some other open subset.  Of course, this includes closed sets, and so by applying this construction, we have a category of smooth maps defined on all open, closed and half open-half-closed sets.  This includes smooth functions defined on points; as we shall see below, this captures the notion of the germ of a smooth function.

\subsection{The classical completion}

The notion of classical restriction category was defined in \cite{boolean} as an intermediary between arbitrary restriction categories and the Boolean restriction categories of \cite{booleanManes}.  

\begin{definition}
A restriction category $\X$ with restriction zeroes is a \textbf{classical restriction category} if
\begin{enumerate}
	\item the homsets are locally Boolean posets (under the restriction order), and for any $W \to^f X, Y \to^g Z$, 
		\[ \X(X,Y) \to^{f \circ (-) \circ g} \X(W,Z) \]
	is a locally Boolean morphism;
	\item for any disjoint maps $f,g$ (that is, $\rs{f}\rs{g} = \emptyset$), $f \vee g$ exists.
\end{enumerate} 
\end{definition}

\begin{example} 
Sets and partial functions form a classical restriction category.
\end{example}
  
For our purposes, the following alternate characterization of the definition, which describes classical restriction categories as join restriction categories with relative complements, is more useful.

\begin{definition}
If $f' \leq f$, the \textbf{relative complement} of $f'$ in $f$, denoted $f \setminus f'$, is the unique map such that 
\begin{itemize}
	\item $f \setminus f' \leq f$;
	\item $g \wedge (f \setminus f') = \emptyset$;
	\item $f \leq g \vee (f \setminus f')$.
\end{itemize}
\end{definition}

The following can be found in \cite{boolean}:
\begin{proposition}
A classical restriction category is a join restriction category with relative complements $f \setminus f'$ for any $f' \leq f$.
\end{proposition}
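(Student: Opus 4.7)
The plan is to extract the two pieces of structure directly from the defining axioms. Relative complements fall out immediately from the locally Boolean structure on principal downsets: given $f' \leq f$, the principal downset $\downarrow f$ is a Boolean algebra by the first axiom, and I define $f \setminus f'$ to be the Boolean complement of $f'$ inside this lattice. This complement satisfies $f \setminus f' \leq f$, $f' \wedge (f \setminus f') = \emptyset$, and $f' \vee (f \setminus f') = f$; the first two give two of the defining conditions, and the third gives the remaining $f \leq f' \vee (f \setminus f')$.

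For compatible binary joins, given $f \smile g$ let $m = \overline{f}g = \overline{g}f$, which is a common lower bound of $f$ and $g$. Form $g \setminus m$ via relative complements and note that
\[
\overline{g \setminus m} \cdot \overline{f} \;=\; \overline{g \setminus m} \cdot \overline{g} \cdot \overline{f} \;=\; \overline{g \setminus m} \cdot \overline{m} \;=\; \overline{\overline{g \setminus m} \cdot m} \;=\; \emptyset,
\]
where the final equality uses that $g \setminus m$ and $m$ are Boolean complements inside $\downarrow g$. Hence $f$ and $g \setminus m$ are disjoint, so $f \vee (g \setminus m)$ exists by the second axiom; since $g = m \vee (g \setminus m) \leq f \vee (g \setminus m)$, a short check confirms this map is the join of $f$ and $g$ in the restriction order. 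Preservation under composition then follows from composition being a locally Boolean morphism: it preserves meets, relative complements, and disjoint joins inside principal downsets, so $h(f \vee g) = h(f \vee (g \setminus m)) = hf \vee h(g \setminus m) = hf \vee hg$ after performing the analogous decomposition of the compatible pair $hf \smile hg$ with meet $hm$.

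The main obstacle is lifting binary compatible joins to arbitrary compatible families, since the classical-restriction axiom only supplies joins of disjoint pairs. Finite families extend by straightforward induction using the same meet-complement decomposition. For infinite compatible families, I would rely on the intended completeness built into the phrase ``locally Boolean poset'' so that every disjoint family has a join in each principal downset; any compatible family can then be replaced by a pairwise disjoint family with the same upper bounds via an ordinal-indexed removal of pairwise meets, and preservation by composition transfers from the disjoint case through the locally-Boolean-morphism hypothesis applied in each principal downset.
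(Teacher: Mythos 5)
Your construction of the relative complement (as the Boolean complement of $f'$ in the Boolean algebra $\downarrow\! f$) and of binary compatible joins is correct, and it is essentially the standard argument; note that the paper does not prove this proposition itself but quotes it from \cite{boolean}. The decomposition of a compatible pair via $m=\rs{f}g$ and $g\setminus m$ is exactly the right mechanism, your disjointness computation is valid, and the ``short check'' you defer does go through: $m$ and $g\setminus m$ are disjoint, so their join exists in the hom-poset by the second classical axiom, and it must coincide with their Boolean join $g$ in $\downarrow\! g$; hence any upper bound of $f$ and $g\setminus m$ lies above $g$, and $f\vee(g\setminus m)$ is the least upper bound of $\{f,g\}$. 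Preservation of these (finite, built from disjoint) joins under composition is what the locally-Boolean-morphism hypothesis is for, as you say.

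The genuine gap is your final paragraph. ``Locally Boolean poset'' carries no completeness assumption---each principal downset is a Boolean algebra, not a complete one---so the ``intended completeness'' you lean on is not available. Moreover, even with complete downsets the argument would fail: an infinite pairwise-compatible (or pairwise-disjoint) family need not have any common upper bound, so its join cannot be manufactured inside a principal downset, and the classical axiom only provides \emph{binary} disjoint joins, so your ordinal-indexed disjointification already presupposes the infinite joins you are trying to build. Indeed, the statement with arbitrary joins (as ``join restriction category'' is defined in Section 2) does not follow from the classical axioms at all: an incomplete Boolean algebra (e.g.\ the finite--cofinite algebra on $\N$), viewed as a one-object restriction category with composition given by meet and $\rs{f}=f$, satisfies all the classical axioms yet lacks infinite compatible joins. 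What the classical structure genuinely yields is finite compatible joins (including the empty join, from the restriction zeroes) together with relative complements---which is also all that is used of this proposition later in the paper, where classical maps are finite disjoint unions of pieces. So you should either read the proposition with finite joins, or add an explicit completeness hypothesis; as written, your passage from finite to arbitrary families is the step that fails.
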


Just as one can freely add joins to an arbitrary restriction category, so too can one freely add relative complements to a join restriction category.  We will first describe this completion process, then show that cartesian, additive, and differential structure is preserved when classically completing.  This is of great interest, as classically completing adds in a number of new maps, even to the standard examples.  

\begin{definition}
Let $\X$ be a join restriction category.  A {\bf classical piece} of $\X$ is a pair of maps $(f,f'): A \to B$ such that $f' \leq f$.  
\end{definition}
One thinks of a classical piece as a formal relative complement. 
\begin{definition}
Two classical pieces $(f,f'), (g,g')$ are {\bf disjoint}, written $(f,f') \perp (g,g')$, if $\rs{f}\rs{g} = \rs{f'}\rs{g} \vee \rs{f}\rs{g'}$.  A {\bf raw classical piece} consists of a finite set of classical pieces, $(f_i, f_i')$ that are pairwise disjoint, and is written
	\[ \bigsqcup_{i \in I} (f_i, f_i'): A \to B. \]
\end{definition}

One defines an equivalence relation on the set of raw classical maps by:
\begin{itemize}
	\item {\bf Breaking:} $(f,f') \equiv (ef,ef') \sqcup (f, f' \vee fe)$ for any restriction idempotent $e = \rs{e}$,
	\item {\bf Collapse:} $(f,f) \equiv \emptyset$.
\end{itemize}
The first part of the equivalence relation says that if we have some other domain $e$, then we can split the formal complement $(f,f')$ into two parts: the first part, $(ef, ef')$, inside $e$, and the second, $(f, f' \vee fe)$, outside $e$.  The second part of the equivalence is obvious: if you formally take away all of $f$ from $f$, the result should be nowhere defined.  

\begin{definition}
A {\bf classical map} is an equivalence class of raw classical maps.
\end{definition}

\begin{proposition}
Given a join restriction category $\X$, there is a classical restriction category $\cl(\X)$ with 
\begin{itemize}
	\item objects those of $\X$, 
	\item arrows classical maps,
	\item composition by 
		\[ \bigsqcup_{i \in I}(f_i, f_i') \bigsqcup_{j \in J} (g_j, g_j') := \bigsqcup_{i,j} (f_ig_j, f_i'g_j \vee f_ig_j'), \]
	\item restriction by
		\[ \rs{\bigsqcup_{i \in I} (f_i, f_i')} := \bigsqcup_{i \in I} (\rs{f_i}, \rs{f_i'}) , \]
	\item disjoint join is simply $\sqcup$ of classical pieces,
	\item relative complement is
		\[ (f,f')\setminus (g,g') := (f,f' \vee \rs{g}f) \sqcup (\rs{g'}f, \rs{g'}f'). \]
\end{itemize}
\end{proposition}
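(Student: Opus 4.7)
The plan is to first fix the intuition that a classical piece $(f, f')$ with $f' \leq f$ formally represents the relative complement $f \setminus f'$, so that a raw classical map $\bigsqcup_i (f_i, f_i')$ corresponds to $\bigvee_i (f_i \setminus f_i')$, with pairwise disjointness ensuring the join is well-behaved. The breaking relation $(f, f') \equiv (ef, ef') \sqcup (f, f' \vee fe)$ is precisely the claim that one can split $f \setminus f'$ along any restriction idempotent $e$, while collapse $(f, f) \equiv \emptyset$ expresses that $f \setminus f$ is empty. The identity on $X$ in $\cl(\X)$ is the single-piece raw classical map $(1_X, \emptyset)$.

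The first technical task is to verify that each stated operation sends raw classical maps to raw classical maps. For composition, one checks $f_i' g_j \vee f_i g_j' \leq f_i g_j$ (both joinands lie below $f_i g_j$ since $f_i' \leq f_i$ and $g_j' \leq g_j$, using monotonicity of composition), and that pairwise disjointness of the factor pieces yields pairwise disjointness of the composites by distribution of restriction over the join. Restriction is immediate; disjoint join $\sqcup$ is a valid operation exactly when its two sides have disjoint overall restrictions. Next I would establish well-definedness on equivalence classes. It suffices to check that each operation respects the generating moves (breaking and collapse) applied to any factor. Collapse is straightforward. Breaking is the main bookkeeping step: a break of one factor by an idempotent $e$ induces a corresponding break of the composite, transporting $e$ across via \rfour. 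With well-definedness established, the category axioms and the restriction axioms [R.1]--[R.4] follow by piecewise appeal to the underlying join restriction structure of $\X$.

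Finally I would verify the classical structure. Since $\cl(\X)$ inherits restriction zeros from the empty raw classical map, the disjoint join $\sqcup$ is patently an upper bound of its summands in the restriction order, and minimality follows because any common upper bound must, after breaking along the restrictions of the summands, contain all their pieces. For relative complements, I would check directly that $(f, f' \vee \rs{g}f) \sqcup (\rs{g'}f, \rs{g'}f')$ satisfies the three defining properties: containment in $(f, f')$ follows from a breaking of $(f, f')$ along $\rs{g}$; disjointness from $(g, g')$ comes from the explicit $\rs{g}$ and $\rs{g'}$ factors; and the covering property follows by breaking $(f, f')$ along $\rs{g}$ and $\rs{g'}$ and comparing pieces. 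The main obstacle throughout will be the well-definedness of composition under breaking, which requires careful accounting of how a break on one factor produces a compensating break of the composite via \rfour and the join axioms of Proposition \ref{propJoins}; once settled, all remaining verifications are mechanical applications of the join restriction structure.
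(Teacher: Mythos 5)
The paper does not actually prove this proposition in-text: it is imported from \cite{boolean}, so your direct verification is in effect a reconstruction of that reference rather than a parallel to an argument in this paper. Your overall plan has the right shape: check that each operation lands in raw classical maps (including $f_i'g_j \vee f_ig_j' \leq f_ig_j$ and preservation of pairwise $\perp$), check invariance under the two generating moves, then verify the restriction axioms piecewise, and finally the join and complement structure. Your identification of the main bookkeeping burden --- invariance of composition under breaking of the second factor, handled by transporting the breaking idempotent across the composite with \rfour\ (so that a break of $(g,g')$ along $e$ corresponds to a break of the composite along $\rs{fe}$) --- is the correct mechanism, though it is only gestured at.

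There is, however, a genuine gap at the point where you conclude classicality. You verify only the disjoint join $\sqcup$ and the relative complement. The paper's definition of a classical restriction category asks for locally Boolean homset posets (with composition locally Boolean) plus disjoint joins, and the alternate characterization you implicitly lean on asks that $\cl(\X)$ be a \emph{join} restriction category --- compatible joins, stable under composition --- with relative complements. Neither follows from what you check: compatible but non-disjoint classical maps exist, and their join must be manufactured, e.g.\ as $F \sqcup \bigl(G \setminus \rs{F}\,G\bigr)$, after which the universal property and stability under composition still have to be verified; none of this appears in your plan. Relatedly, your criterion for when $\sqcup$ is legitimate (``disjoint overall restrictions'') is imprecise: the actual condition is pairwise disjointness of pieces, $\rs{f}\,\rs{g} = \rs{f'}\,\rs{g} \vee \rs{f}\,\rs{g'}$, which permits the underlying domains to overlap provided the overlap is absorbed by the primed components; it is exactly this formulation that makes $\rs{F}\,\rs{G} = \emptyset$ in $\cl(\X)$ unwind to pairwise $\perp$ of pieces. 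Repairing these points, and carrying out the deferred \rfour\ computations, would be needed before the argument is complete.
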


In \cite{boolean}, this process is shown to give a left adjoint to the forgetful functor from classical restriction categories to join restriction categories.

We make one final point about the definition.  We defined $(f,f') \perp (f_0, f_0')$ if $\rs{f}\rs{f_0} = \rs{f'}\rs{f_0} \vee \rs{f}\rs{f_0'}$.  Note, however, that it suffices that we have $\leq$, since 
	\[ \rs{f'}\rs{f_0} \vee \rs{f}\rs{f_0'} \leq \rs{f}\rs{f_0} \vee \rs{f}\rs{f_0} = \rs{f}\rs{f_0} \]
We will often use this alternate form of $\perp$ when checking whether maps we give are well-defined.  

\subsection{Cartesian structure}
Our goal is to show that if $\X$ has differential restriction structure, then so does $\cl(\X)$.  We begin by showing that cartesian structure is preserved, and for this we begin by define the pairing of two classical maps.  

\begin{lemma}
Given a join restriction category $\X$ and maps $\bigsqcup (f_i,f_i')$ from $Z$ to $X$ and $\bigsqcup (g_j, g_j')$ from $Z$ to $Y$ in $\cl(\X)$, the following:
	\[ \left< \bigsqcup_{i} (f_i, f_i'), \bigsqcup_j (g_j, g_j') \right> := \bigsqcup_{i,j} \left( \<f_i,g_j\>, \<f_i',g_j\> \vee \<f_i,g_j'\> \right) \]
is a well-defined map from $Z$ to $X \times Y$ in $\cl(\X)$.  
\end{lemma}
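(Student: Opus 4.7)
The plan is to verify three things in turn: (a) each formal piece $(\<f_i,g_j\>,\<f_i',g_j\>\vee\<f_i,g_j'\>)$ is a legitimate classical piece, (b) distinct pieces in the family are pairwise disjoint, and (c) the construction respects the equivalence relation generated by breaking and collapse, so that it descends from raw classical maps to classical maps.

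For (a), I would first use Proposition~\ref{propCart}(iv) together with $f_i'\leq f_i$ and $g_j'\leq g_j$ to conclude that $\<f_i',g_j\>\leq\<f_i,g_j\>$ and $\<f_i,g_j'\>\leq\<f_i,g_j\>$; these two maps are then below a common bound so in particular compatible, and their join is $\leq\<f_i,g_j\>$, giving a valid classical piece. For (b), take $(i,j)\neq(i',j')$. The intersection of restrictions is
\[ \rs{\<f_i,g_j\>}\,\rs{\<f_{i'},g_{j'}\>} \;=\; \rs{f_i}\rs{f_{i'}}\,\rs{g_j}\rs{g_{j'}}, \]
while the restriction of the ``small'' part of each piece expands using $\rs{\<f,g\>}=\rs{f}\rs{g}$ and Proposition~\ref{propJoins}(ii) into four terms. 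If $i\neq i'$ I would use $(f_i,f_i')\perp(f_{i'},f_{i'}')$ to rewrite $\rs{f_i}\rs{f_{i'}}=\rs{f_i'}\rs{f_{i'}}\vee\rs{f_i}\rs{f_{i'}'}$, and each of the two resulting summands is absorbed by one of the four terms of the RHS; if instead $j\neq j'$, symmetric use of $(g_j,g_j')\perp(g_{j'},g_{j'}')$ handles it. So $\perp$ holds in every case.

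For (c), I need to show two things. Collapse: if some $(f_i,f_i')=(f,f)$ in the first family, then the induced piece is $(\<f,g_j\>,\<f,g_j\>\vee\<f,g_j'\>)=(\<f,g_j\>,\<f,g_j\>)$ since $\<f,g_j'\>\leq\<f,g_j\>$, which collapses to $\emptyset$; the second family is symmetric. Breaking: suppose we break $(f_i,f_i')$ by a restriction idempotent $e$ into $(ef_i,ef_i')\sqcup(f_i,f_i'\vee f_ie)$. The key calculation is that the corresponding breaking of $(\<f_i,g_j\>,\<f_i',g_j\>\vee\<f_i,g_j'\>)$ by $e$ gives
\[
(e\<f_i,g_j\>,\,e(\<f_i',g_j\>\vee\<f_i,g_j'\>))\,\sqcup\,(\<f_i,g_j\>,\,\<f_i',g_j\>\vee\<f_i,g_j'\>\vee\<f_i,g_j\>e),
\]
and using Proposition~\ref{propCart}(ii) (i.e.\ $e\<x,y\>=\<ex,y\>$ and $\<x,y\>e=\<xe,y\>$) together with the distribution of pairing over joins from the join--cartesian compatibility proposition, this rewrites as
\[
(\<ef_i,g_j\>,\,\<ef_i',g_j\>\vee\<ef_i,g_j'\>)\,\sqcup\,(\<f_i,g_j\>,\,\<f_i'\vee f_ie,g_j\>\vee\<f_i,g_j'\>),
\]
which is exactly the image of the broken first family under the pairing formula. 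A symmetric argument handles breaking in the second family.

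The only step I expect to require real care is (c), and specifically the breaking computation: one must know that $e$ commutes with pairing on the left and right in the appropriate lax sense, and that pairing distributes over joins. Given those facts (Proposition~\ref{propCart} and the join/cartesian compatibility proposition), the verification is a direct substitution; the disjointness check in (b) is the most tedious but is algorithmic once the expansion of $\rs{f_i}\rs{f_{i'}}$ and $\rs{g_j}\rs{g_{j'}}$ via $\perp$ is written down.
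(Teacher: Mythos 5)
Your proposal is correct and follows essentially the same three-step route as the paper's proof (each $(\<f_i,g_j\>,\<f_i',g_j\>\vee\<f_i,g_j'\>)$ is a legitimate classical piece, pairwise disjointness using that $\leq$ suffices for $\perp$, and invariance under collapse and breaking); your explicit case split $i\neq i'$ versus $j\neq j'$ in the disjointness check is in fact slightly more careful than the paper's computation, which invokes both disjointness hypotheses at once even though only one is available when the other index coincides. One small correction: breaking is precomposition by a restriction idempotent on the \emph{domain} (the ``$fe$'' in the paper's definition of breaking is a typo, as its later use $(1,e)(f,f')=(f,ef\vee f')$ shows), so the identity needed in your breaking computation is $e\<x,y\>=\<ex,y\>$ from Proposition \ref{propCart}(ii) together with distribution of pairing over joins; the right-handed version $\<x,y\>e=\<xe,y\>$ you quote is not well-typed and is not part of that proposition, though with this reading your calculation is exactly the paper's (performed there for the second family via $\<f,eg\>=e\<f,g\>$).
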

\begin{proof}
First, we need to check 
	\[ ( \<f,g\>, \<f',g\> \vee \<f,g'\>) \]
defines a classical piece.  Indeed, since $f' \smile f$ and $g \smile g'$, the two maps being joined are compatible, so we can take the join.  Also, since $f' \leq f$ and $g' \leq g$, the right component is less than or equal to the left component.

Now, we need to check that
	\[ \bigsqcup_{i,j} \left( \<f_i,g_j\>, \<f_i',g_j\> \vee \<f_i,g_j'\> \right) \]
defines a raw classical map.  That is, we need to check that the pieces are disjoint.  That is, we need to show that if 
	\[ (f,f') \perp (f_0, f_0') \mbox{ and } (g,g') \perp (g_0, g_0') \]
then
	\[ (\<f,g\>, \<f',g\> \vee \<f,g'\>) \perp (\<f_0, g_0\>, \<f_0', g_0'\> \vee \<f_0, g_0' \> ). \]
Consider:
\begin{eqnarray*}
&   & \rs{\<f,g\>}   \rs{\<f_0, g_0 \>} \\
& = & \rs{f}\rs{f_0} \rs{g} \rs{g_0} \\
& = & (\rs{f'} \rs{f_0} \vee \rs{f} \rs{f_0'})(\rs{g'}\rs{g_0} \vee \rs{g} \rs{g_0'}) \\
& = & \rs{f'}\rs{f_0}\rs{g'}\rs{g_0} \vee \rs{f}\rs{f_0'} \rs{g'} \rs{g_0} \vee \rs{f'}\rs{f_0} \rs{g} \rs{g_0'} \vee \rs{f} \rs{f_0'} \rs{g} \rs{g_0'} \\
& \leq & \rs{f}\rs{g'}\rs{f_0}\rs{g_0} \vee \rs{f}\rs{g} \rs{f_0} \rs{g_0} \vee \rs{f'}\rs{g} \rs{f_0} \rs{g_0} \vee \rs{f} \rs{g} \rs{f_0} \rs{g_0'} \\
& = & (\rs{f'}\rs{g} \vee \rs{f}\rs{g'})(\rs{f_0}\rs{g_0}) \vee (\rs{f}\rs{g})(\rs{f_0'} \rs{g_0} \vee \rs{f_0}\rs{g_0'}) \\
& = & \rs{\<f',g\> \vee \<f,g'\>} \rs{\<f_0,g_0\>} \vee \rs{\<f,g\>} \rs{\<f_0',g_0\> \vee \<f_0, g_0'\>} 
\end{eqnarray*}
so that \[ (\<f,g\>, \<f',g\> \vee \<f,g'\>) \perp (\<f_0, g_0\>, \<f_0', g_0'\> \vee \<f_0, g_0' \> ), \]
as required.  

Finally, we need to check that this is a well-defined classical map.  Thus, we need to check it is well-defined with respect to collapse and breaking.  For collapse, consider
	\[ \<(f,f'), (g,g) \> = (\<f,g\>, \<f',g\> \vee \<f,g\>) = (\<f,g\>, \<f,g\>) \equiv \emptyset \]
as required.  

For breaking, suppose we have
	\[ (g,g') \equiv (g,g' \vee eg) \perp (eg, eg') \]
Then 
\begin{eqnarray*}
&   & \<(f,f'), (g,g' \vee eg) \perp (eg, eg') \> \\
& = & (\<f,g\>, \<f',g\> \vee \<f,g' \vee eg\>) \perp (\<f,eg\>, \<f',eg\> \vee \<f,eg'\>) \\
& = & (\<f,g\>, \<f',g\> \vee e\<f,g' \vee eg\>) \perp (e\<f,g\>, e(\<f',g\> \vee \<f,g'\>) \\
& \equiv & (\<f,g\>, \<f',g\> \vee \<f,g'\>) \\
& = & \<(f,f'), (g,g') \>
\end{eqnarray*}
as required.  Thus, the above is a well-defined classical map.  
\end{proof} 

We now give some lemmas about our definition.  Note that once we show that this pairing does define cartesian structure on $\cl(\X)$ , these lemmas follow automatically, as they are true in any cartesian restriction category (see Lemma \ref{propCart})  However, we will need these lemmas to establish that this does define cartesian structure on $\cl(\X)$.

\begin{lemma}\label{lemmaCart1}
Suppose we have maps $f: Z \to X$, $g: Z \to Y$, and $e = \rs{e}: Z \to Z$ in $\cl(\X)$.  Then $e\<f,g\> = \<ef,g\> = \<f,eg\>$.
\end{lemma}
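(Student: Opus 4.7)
The plan is to unfold everything to raw classical representatives and reduce the identity to the corresponding fact in $\X$, namely Proposition \ref{propCart}(ii): $e\<h_1,h_2\> = \<eh_1,h_2\> = \<h_1,eh_2\>$ for a restriction idempotent $e$ of $\X$. Since $e = \rs{e}$ in $\cl(\X)$, I may pick a raw representative $e = \bigsqcup_k (e_k, e_k')$ in which, by the formula $\rs{\bigsqcup (e_k,e_k')} = \bigsqcup (\rs{e_k},\rs{e_k'})$, each $e_k$ and $e_k'$ is already a restriction idempotent of $\X$. Similarly, write $f = \bigsqcup_i (f_i, f_i')$ and $g = \bigsqcup_j (g_j, g_j')$.

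First I would apply the preceding lemma to obtain
\[ \<f,g\> = \bigsqcup_{i,j}\bigl(\<f_i,g_j\>,\, \<f_i', g_j\> \vee \<f_i, g_j'\>\bigr), \]
and then use the definition of composition in $\cl(\X)$, together with the fact that composition in $\X$ distributes over binary joins, to get
\[ e\<f,g\> = \bigsqcup_{k,i,j}\bigl(e_k\<f_i,g_j\>,\, e_k'\<f_i,g_j\> \vee e_k\<f_i', g_j\> \vee e_k\<f_i,g_j'\>\bigr). \]
A parallel computation on the other side first yields $ef = \bigsqcup_{k,i}(e_k f_i,\, e_k' f_i \vee e_k f_i')$, and then pairing with $g$ gives
\[ \<ef, g\> = \bigsqcup_{k,i,j}\bigl(\<e_k f_i, g_j\>,\, \<e_k' f_i, g_j\> \vee \<e_k f_i', g_j\> \vee \<e_k f_i, g_j'\>\bigr), \]
where I have used the distributivity $\<u \vee v, w\> = \<u,w\> \vee \<v,w\>$ from the earlier proposition on joins and cartesian structure. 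Matching the two displays piece-by-piece via Proposition \ref{propCart}(ii) in $\X$ (which lets me slide each restriction idempotent $e_k$ or $e_k'$ in and out of a pairing) gives equality of raw classical maps, and hence $e\<f,g\> = \<ef,g\>$. The symmetric form $\<h_1, e h_2\>$ of the same proposition identifies $\<ef,g\>$ with $\<f, eg\>$ termwise, completing the three-way equality.

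The only real obstacle is bookkeeping: ensuring that the triply-indexed raw classical maps on the two sides have matching piece structure and that the three-term complement joins line up identically. Once the representatives are written out, the equality is piece-by-piece in $\X$ and uses nothing beyond the restriction-idempotent sliding lemma and distributivity of pairing and composition over joins, both of which are already established.
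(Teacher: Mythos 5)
Your proposal is correct and takes essentially the same route as the paper: reduce to classical pieces, compute both sides using the definitions of composition and pairing in $\cl(\X)$, and match them termwise via Proposition \ref{propCart}(ii) together with distributivity of pairing over joins. The paper simply states ``it suffices to show the result for classical pieces'' and does the single-piece computation, whereas you carry the full $\bigsqcup$ indexing through explicitly; the content is identical.
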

\begin{proof}
It suffices to show the result for classical pieces.  Thus, consider
\begin{eqnarray*}
&   & \<(e,e')(f,f'), (g,g') \> \\
& = & \<(ef, e'f \vee ef'), (g,g') \> \\
& = & (\<ef, g\>, \<e'f \vee ef', g\> \vee \<ef,g'\>) \\
& = & (e\<f,g\>, e'\<f,g\> \vee e\<f',g\> \vee e\<f,g'\> \\
& = & (e,e')(\<f,g\>, \<f',g\> \vee \<f,g'\> \\
& = & (e,e')\<(f,f'), (g,g') \>
\end{eqnarray*}
as required.  Putting the $e$ in the right component is similar.  
\end{proof}

\begin{lemma}\label{lemmaCart2}
For any $c$ in $\cl(\X)$, $\<c\pi_0, c\pi_1\> = c$.
\end{lemma}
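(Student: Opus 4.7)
The plan is to reduce the statement to computations at the level of raw classical maps, using the definitions of composition and pairing together with the properties of cartesian restriction categories from Proposition \ref{propCart}.

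First, I would fix a representative $c = \bigsqcup_{i \in I} (f_i, f_i')$. The projections are themselves single classical pieces $(\pi_0,\emptyset)$ and $(\pi_1,\emptyset)$, so the composition rule gives $c\pi_0 = \bigsqcup_i (f_i\pi_0, f_i'\pi_0)$ and $c\pi_1 = \bigsqcup_j (f_j\pi_1, f_j'\pi_1)$. Applying the definition of pairing in $\cl(\X)$ then yields
\[ \<c\pi_0, c\pi_1\> = \bigsqcup_{i,j} \bigl(\<f_i\pi_0, f_j\pi_1\>,\ \<f_i'\pi_0,f_j\pi_1\> \vee \<f_i\pi_0, f_j'\pi_1\>\bigr). \]
The goal is then to show that the diagonal terms $(i=j)$ recover $(f_i,f_i')$, while the off-diagonal terms $(i\neq j)$ collapse to $\emptyset$.

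For the diagonal, I would use the fact that $\pi_0,\pi_1$ are total together with the universality of pairing in the underlying cartesian restriction category. Since $\rs{f_i\pi_0} = \rs{f_i\pi_1} = \rs{f_i}$, Proposition \ref{propCart}(i) gives $\<f_i\pi_0,f_i\pi_1\>\pi_0 = \rs{f_i}f_i\pi_0 = f_i\pi_0$ and analogously for $\pi_1$, so universality forces $\<f_i\pi_0, f_i\pi_1\> = f_i$. Using Proposition \ref{propCart}(ii) and $f_i' = \rs{f_i'}f_i$, one computes $\<f_i'\pi_0, f_i\pi_1\> = \rs{f_i'}\<f_i\pi_0,f_i\pi_1\> = f_i'$, and symmetrically for the other summand, so the right component becomes $f_i' \vee f_i' = f_i'$.

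The main obstacle, and the step requiring the disjointness hypothesis, is handling the off-diagonal terms. For $i \neq j$ the pieces $(f_i,f_i')$ and $(f_j,f_j')$ satisfy $\rs{f_i}\rs{f_j} = \rs{f_i'}\rs{f_j} \vee \rs{f_i}\rs{f_j'}$. Computing the restrictions of the two components of the $(i,j)$ pair gives $\rs{\<f_i\pi_0, f_j\pi_1\>} = \rs{f_i}\rs{f_j}$ and, using Proposition \ref{propJoins}(ii), $\rs{\<f_i'\pi_0,f_j\pi_1\> \vee \<f_i\pi_0,f_j'\pi_1\>} = \rs{f_i'}\rs{f_j} \vee \rs{f_i}\rs{f_j'}$, so by disjointness these restrictions coincide. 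Since Proposition \ref{propCart}(iv) gives $\<f_i'\pi_0,f_j\pi_1\> \vee \<f_i\pi_0,f_j'\pi_1\> \leq \<f_i\pi_0, f_j\pi_1\>$ and the restriction order is antisymmetric on elements with equal restriction, the two components are equal. Hence this piece has the form $(h,h)$ and collapses to $\emptyset$ in $\cl(\X)$. Combining the diagonal and off-diagonal contributions, $\<c\pi_0, c\pi_1\> = \bigsqcup_i (f_i,f_i') = c$, as required.
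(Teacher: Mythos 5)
Your proof is correct, and its core computation is the same as the paper's, but you go further than the paper does. The paper simply declares ``it suffices to show the result for classical pieces'' and then verifies the identity on a single piece $(c,c')$: $\<(c,c')(\pi_0,\emptyset),(c,c')(\pi_1,\emptyset)\> = (\<c\pi_0,c\pi_1\>, \<c'\pi_0,c\pi_1\>\vee\<c\pi_0,c'\pi_1\>) = (c, \rs{c'}c \vee \rs{c'}c) = (c,c')$, which is exactly your diagonal case. What the paper leaves implicit is that the pairing of $c\pi_0$ and $c\pi_1$ for a multi-piece representative $\bigsqcup_i(f_i,f_i')$ produces cross terms indexed by $i\neq j$, so the reduction to single pieces is not automatic at this stage (the cartesian structure of $\cl(\X)$ is still being established, so one cannot appeal to it). Your off-diagonal argument fills this in cleanly: disjointness gives $\rs{f_i}\rs{f_j} = \rs{f_i'}\rs{f_j}\vee\rs{f_i}\rs{f_j'}$, totality of the projections and Proposition \ref{propJoins}(ii) show the two components of the $(i,j)$ piece have the same restriction, Proposition \ref{propCart}(iv) gives the inequality between them, and hence they are equal, so the piece is of the form $(h,h)$ and collapses. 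The only cosmetic quibble is the diagonal step: rather than invoking uniqueness of pairings (which in the paper's formulation also involves the restriction condition $\rs{\<f,g\>}=\rs{f}\rs{g}$), it is slightly cleaner to use Proposition \ref{propCart}(iii) directly, $\<f_i\pi_0,f_i\pi_1\> = f_i\<\pi_0,\pi_1\> = f_i$, which is in effect what the paper does; either way the step is sound. In short, your argument is the paper's computation plus an explicit justification of the reduction the paper glosses over.
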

\begin{proof}
It suffices to show the result for classical pieces.  Thus, consider
\begin{eqnarray*}
&   & \<(c,c')(\pi_0,\emptyset), (c,c')(\pi_1, \emptyset)\> \\
& = & \<(c\pi_0, c'\pi_0), (c\pi_1, c'\pi_1) \> \\
& = & (\<c\pi_0, c\pi_1\>, \<c'\pi_0, c\pi_1\> \vee \<c\pi_0, c'\pi_1\>) \\
& = & (c, \rs{c'}\<c\pi_0, c\pi_1\> \vee \rs{c'}\<c\pi_0, c\pi_1\>) \\
& = & (c, \rs{c'}c \vee \rs{c'}c) \\
& = & (c,c')
\end{eqnarray*}
as required.
\end{proof}

It will be most helpful if we can give an alternate characterization of when two classical maps are equivalent.  To that, we prove the following result:

\begin{theorem}
In $\cl(\X)$, $(f,f') \equiv (g,g')$ if and only if there exist restriction idempotents $e_1, \ldots,  e_n$ such that for any $I \subseteq \{1, \ldots, n\}$, if we define 
	\[ e_I := \left( \bigcirc_{i \in I} e_i, \left(\bigcirc_{i \in I} e_i \right)\left(\bigvee_{j \not\in I} e_j \right) \right) \]
(where $\bigcirc$ denotes iterated composition) then for each such $I$,
	\[ e_I(f,f') = e_I(g,g') \]
or they both collapse to the empty map.  
\end{theorem}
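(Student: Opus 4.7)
The plan is to prove the two directions separately. A useful preliminary observation is that $e_I(f,f')$ coincides with the composition $e_I \cdot (f,f')$ in $\cl(\X)$, so it may be manipulated via ordinary composition rules rather than by unpacking its definition each time.

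For the backward direction, the key lemma is the decomposition $(f,f') \equiv \bigsqcup_{I \subseteq \{1,\ldots,n\}} e_I(f,f')$, valid for any finite collection $e_1, \ldots, e_n$. I would prove this by induction on $n$: at $n=0$ the only term is $e_\emptyset(f,f') = (f,f')$; for the inductive step, apply the breaking rule with $e_n$ to split $(f,f')$ into $(e_n f, e_n f') \sqcup (f, f' \vee e_n f)$, then apply the inductive hypothesis with $e_1, \ldots, e_{n-1}$ to each piece, checking that the resulting pieces correspond to the subsets $I$ containing $n$ and not containing $n$ respectively. Given this lemma, the backward direction is immediate: if $e_I(f,f') = e_I(g,g')$ (or both collapse) for each $I$, then $(f,f') \equiv \bigsqcup_I e_I(f,f') \equiv \bigsqcup_I e_I(g,g') \equiv (g,g')$, with collapsing pieces eliminated via the collapse rule.

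For the forward direction, I would start from an equivalence chain $(f,f') = r_0, r_1, \ldots, r_k = (g,g')$ in which each step is a single breaking or collapse applied to a subpiece. Let $E = \{e_1, \ldots, e_n\}$ collect all restriction idempotents appearing in the breaking steps. The crucial absorption property, verified by direct computation, is that for $e_j \in E$ the composite $e_I \cdot (e_j, \emptyset)$ equals $e_I$ when $j \in I$ and collapses otherwise, and dually $e_I \cdot (1, e_j)$ equals $e_I$ when $j \notin I$ and collapses otherwise. Consequently the $E$-refinement of a raw classical map — the disjoint union $\bigsqcup_I e_I \cdot c$ over $I$ and over pieces $c$ of the map — is preserved by every step of the chain: breaking $(p,p')$ by $e_j$ yields two pieces whose $E$-refinements partition according to $j \in I$ versus $j \notin I$ and together recover $\bigsqcup_I e_I(p,p')$, while collapsing $(h,h)$ removes only pieces $e_I \cdot (h,h) = (a h, a h)$ which themselves collapse. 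Hence the $E$-refinements of $(f,f')$ and $(g,g')$ agree.

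The main obstacle will be extracting the piecewise equality $e_I(f,f') = e_I(g,g')$ for each individual $I$ from this agreement of $E$-refinements. Since raw classical maps are unordered disjoint unions, one needs the index $I$ to be recoverable from the piece itself. My plan is to enlarge $E$ with auxiliary idempotents — such as $\rst{f}$, $\rst{g}$, $\rst{f'}$, $\rst{g'}$, and any additional data needed to distinguish the support regions of the various $e_I$-pieces — so that pieces at different indices have visibly different supports in $\cl(\X)$ and cannot be conflated. With this separation in place, the equality of the $E$-refinements as raw classical maps forces the piecewise equality $e_I(f,f') = e_I(g,g')$ or simultaneous collapse, for each $I$.
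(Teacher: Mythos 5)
Your backward direction is sound and is essentially the paper's own argument: the inductive computation showing that breaking successively along $e_1,\ldots,e_n$ produces exactly the family $\{e_I(f,f')\}_{I\subseteq\{1,\ldots,n\}}$ (via $(e,\emptyset)(f,f')=(ef,ef')$ and $(1,e)(f,f')=(f,\,ef\vee f')$) is the heart of the paper's proof, and your decomposition $(f,f')\equiv\bigsqcup_I e_I(f,f')$ is the same induction packaged as a lemma.

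The forward direction, however, has a genuine gap, and you have located it yourself but not closed it. The paper does not re-derive the matching of pieces: it invokes the result from the classical-completion paper (\cite{boolean}) that breaking and collapse form a rewrite system with the property that two equivalent raw classical maps can be refined into families of pieces which correspond pairwise, each corresponding pair being either equal or both collapsing. That citation is precisely what lets one pass from ``the refinements agree as unordered disjoint unions'' to ``$e_I(f,f')=e_I(g,g')$ for each individual $I$.'' Your substitute --- enlarging $E$ with auxiliary idempotents such as $\rst{f},\rst{g},\rst{f'},\rst{g'}$ so that pieces at different indices have distinguishable supports --- is not a proof: distinct indices $I\neq I'$ can yield pieces $e_I(f,f')$ and $e_{I'}(g,g')$ with identical restrictions (for instance when several $e_i$ agree on the relevant domain, or when $f$ and $g$ have equal restrictions), and you give no recipe guaranteed to separate them. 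Moreover, your invariance argument treats only single rewrite steps applied to a single piece; an equivalence chain interleaves breakings and collapses on sub-pieces of multi-piece raw maps, and showing the $E$-refinement is a well-defined invariant of the full equivalence relation is itself a confluence-type statement. Either cite the rewrite-system result as the paper does, or supply a genuine normal-form/confluence argument; as written, the last step of your forward direction does not go through.
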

\begin{proof}
As discussed in \cite{boolean}, breaking and collapse form a system of rewrites, so that if two maps are equivalent, they can be broken into a series of pieces, each of which are either equal or both collapse to the empty map.  Thus, it suffices to show that the above is what occurs after doing $n$ different breakings along the idempotents $e_1, \ldots, e_n$.  To this end, note that the two pieces left after breaking $(f,f')$ by $e$ are given by precomposing with $(e,\emptyset)$ and $(1,e)$; indeed:
	\[ (e,\emptyset)(f,f') = (ef,ef') \mbox{ and } (1,e)(f,f') = (f,ef \vee f') \]
Thus, if $n=1$, the result holds.  Now assume by induction that the result holds for $k$.  Then for any subset $I \subseteq \{1, \ldots n\}$, breaking $e_I$ by $(e_{k+1})$ gives the pieces
	\[ (e_{n+1}, \emptyset)(\circ e_i, (\circ{e_i})(\vee e_j) = (e_{n+1} \circ e_i, (e_{n+1} \circ{e_i})(\vee e_j)) \]
and
	\[ (1, e_{n+1})(\circ e_i, (\circ e_i)(\vee e_j) = (\circ e_i, (\circ e_i)(e_{n+1}) \vee (\circ e_i)(\vee e_j)) = (\circ e_i, (\circ e_i)(e_{n+1} \vee e_j)) \]
Thus, we get all possible idempotents $e_{I'}$, where $I' \subseteq \{1, \ldots, n+1 \}$, as required.  
\end{proof}

\begin{theorem}
If $\X$ is a cartesian restriction category, then so is $\cl(\X)$.
\end{theorem}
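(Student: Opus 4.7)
The plan is to carry the cartesian structure straight across from $\X$: take the same terminal object $T$ and the same products $X \times Y$, take projections in $\cl(\X)$ to be the ``total'' classical pieces $(\pi_0,\emptyset)$ and $(\pi_1,\emptyset)$, take the terminal map to be $(!_A,\emptyset)$, and extend the pairing defined in the preceding lemma to general classical maps in the only way compatible with disjoint joins:
\[ \left\langle \bigsqcup_i (f_i,f_i'), \bigsqcup_j (g_j, g_j') \right\rangle := \bigsqcup_{i,j} \left( \langle f_i,g_j\rangle, \langle f_i',g_j\rangle \vee \langle f_i,g_j'\rangle \right). \]
The well-definedness on classical pieces was already handled in the preceding lemma, and extension to $\sqcup$ is automatic because pairing of pieces turns disjointness into disjointness (same computation as in the preceding lemma).

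Next I would verify the two key equational facts on classical pieces and extend them piecewise. First, that the restriction of a pairing factors as a product of restrictions,
\[ \overline{\langle (f,f'),(g,g')\rangle} = (\overline{\langle f,g\rangle}, \overline{\langle f',g\rangle\vee \langle f,g'\rangle}) = (\bar f \bar g, \bar{f'}\bar g \vee \bar f\bar{g'}) = \overline{(f,f')}\,\overline{(g,g')}, \]
using the composition rule in $\cl(\X)$ for the last equality. Second, the lax commutativity axiom, which on pieces unfolds as
\[ \langle (f,f'),(g,g')\rangle (\pi_0,\emptyset) = (\bar g f, \bar g f' \vee \bar{g'} f) = \overline{\langle(f,f'),(g,g')\rangle}\,(f,f') \]
(the last equality expands the composition $(\bar f\bar g, \bar{f'}\bar g\vee\bar f\bar{g'})\circ(f,f')$ and simplifies using $\bar f f = f$ and $\bar f f' = f'$), and similarly with $\pi_1$ on the right. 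Both facts propagate to general classical maps by distributing pairing and composition over $\sqcup$.

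For uniqueness, suppose $d:Z \to X\times Y$ in $\cl(\X)$ satisfies $d(\pi_0,\emptyset) = \bar d (f,f')$ and $d(\pi_1,\emptyset) = \bar d (g,g')$. Applying Lemma \ref{lemmaCart2} gives $d = \langle d\pi_0, d\pi_1\rangle = \langle \bar d(f,f'), \bar d(g,g')\rangle$, and then Lemma \ref{lemmaCart1} moves the restriction $\bar d$ out, yielding $d = \bar d \langle (f,f'),(g,g')\rangle$; combined with the restriction identity above, $d$ must equal the pairing on its own domain, and two parallel maps in $\cl(\X)$ that agree after restricting by each other are equal. Finally, the required property of the terminal object, $(f,f')(!_B,\emptyset) \leq (!_A,\emptyset)$, is immediate from the corresponding inequality in $\X$ applied componentwise.

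The main obstacle is not any single computation but the bookkeeping for the equivalence relation generated by breaking and collapse: one must confirm that every construction above respects breaking $(f,f') \equiv (ef,ef') \sqcup (f,f'\vee fe)$ and collapse $(f,f)\equiv\emptyset$. For pairing this was handled in the preceding lemma; for the composite constructions (restriction of pairing, projections applied to a pairing) it follows by the same pattern, since precomposition by $(e,\emptyset)$ and $(1,e)$ commutes with pairing by Lemma \ref{lemmaCart1}, so breaking on either side of a pairing gives the breaking of the pairing, and a pairing with a collapsing piece becomes a pairing whose components agree, which collapses.
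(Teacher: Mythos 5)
Your construction of the structure (terminal maps $(!_A,\emptyset)$, projections $(\pi_i,\emptyset)$, the pairing $\bigsqcup_{i,j}(\<f_i,g_j\>,\<f_i',g_j\>\vee\<f_i,g_j'\>)$, the restriction identity, and the lax commutativity computed on pieces and spread over $\sqcup$) is exactly the paper's, but your treatment of the universal property is genuinely different. The paper formulates universality order-theoretically -- if $c(\pi_0,\emptyset)\leq (f,f')$ and $c(\pi_1,\emptyset)\leq (g,g')$ then $c\leq\<(f,f'),(g,g')\>$ -- and, because these inequalities are only equalities of equivalence classes, it first proves a separate characterization theorem (equivalence holds iff the maps agree, or both collapse, after breaking along every subset of a finite list of idempotents $e_1,\dots,e_n$), and then merges the two idempotent lists coming from the two hypotheses to reassemble $\rs{c}\<f,g\>\equiv c$. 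You avoid that combinatorial bookkeeping entirely: working with honest equalities of classical maps, you use Lemma \ref{lemmaCart2} ($\<d\pi_0,d\pi_1\>=d$) and Lemma \ref{lemmaCart1} (slipping restriction idempotents through pairings) to get $d=\<\rs{d}(f,f'),\rs{d}(g,g')\>=\rs{d}\,\<(f,f'),(g,g')\>$ in two lines. Since the well-definedness of all operations on equivalence classes has already been checked, this is legitimate, and since the inequality hypotheses $c\pi_i\leq\cdot$ are (the projections being total) equivalent to the exact lax-commutativity equations, your formulation of uniqueness is equivalent to the paper's; your route is shorter, while the paper's buys the explicit $e_I$-breaking criterion, which is of independent use. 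Two small points you should tighten: the last step of your uniqueness argument needs the clause $\rs{d}=\rs{(f,f')}\,\rs{(g,g')}$ from the definition of restriction products -- only then does $d=\rs{d}\,\<(f,f'),(g,g')\>$ together with $\rs{d}=\rs{\<(f,f'),(g,g')\>}$ give $d=\<(f,f'),(g,g')\>$ (your phrase ``agree after restricting by each other'' is doing this work implicitly); and when you invoke Lemma \ref{lemmaCart2} for an arbitrary classical map $\bigsqcup_i(c_i,c_i')$ rather than a single piece, note that the cross terms $\left(\<c_i\pi_0,c_j\pi_1\>,\,\<c_i'\pi_0,c_j\pi_1\>\vee\<c_i\pi_0,c_j'\pi_1\>\right)$ for $i\neq j$ have both components with the same restriction by disjointness, hence collapse -- the paper states the lemma for all of $\cl(\X)$ but only argues it on pieces, so this is worth a sentence. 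Similarly, for the terminal object you should record not just $f!_B\leq !_A$ but that every total classical map into $T$ is $(!_A,\emptyset)$, which follows piecewise from $(f_i,f_i')=(\rs{f_i},\rs{f_i'})(!_A,\emptyset)$, as in the paper.
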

\begin{proof}
Define the terminal object $T$ as for $\X$, and the unique maps by $!_A := (!_A, \emptyset)$.  Then for any classical map $\bigsqcup (f_i, f_i')$, we have
	\[ \bigsqcup (f_i, f_i') = \bigsqcup (!_A \rs{f_i}, !_A \rs{f_i'}) = \left(\bigsqcup (\rs{f_i}, \rs{f_i'})\right)(!_A, \emptyset) \]
as required.  So $\cl(\X)$ has a partial final object.  

We define the product objects $A \times B$ as for $\X$, the projections by $(\pi_0, \emptyset)$ and $(\pi_1, \emptyset)$, and the product map as above.  To show that our putative product composes well with the projections, consider
\begin{eqnarray*}
&   & \<(f,f'), (g,g')\>(\pi_0, \emptyset) \\
& = & (\<f,g\>, \<f',g\> \vee \<f,g'\>) (\pi_0, \emptyset) \\
& = & (\<f,g\>\pi_0, \<f',g\>\pi_0 \vee \<f,g'\>\pi_0) \\
& = & (\rs{g}f, \rs{g}f' \vee \rs{g'}\rs{f}) \\
& = & (\rs{g}, \rs{g'})(f, f') \\
& = & \rs{(g,g')}(f,f')
\end{eqnarray*}
as required.  Composing with $\pi_1$ is similar.  

Finally, we need to show that the universal property holds.  It suffices to show that if $c\pi_0 \leq f$ and $c\pi_1 \leq g$, then $c \leq \<f,g\>$.  Suppose we have the first two inequalities, so that 
	\[ \rs{c}f \equiv c\pi_0 \mbox{ by breaking with idempotents $(e_1, \ldots, e_n)$} \]
and
	\[ \rs{c}g \equiv c\pi_1 \mbox{ by breaking with idempotents $(d_1, \ldots, d_m)$}. \]
We claim that $\rs{c}\<f,g\> \equiv c$ by breaking with idempotents $(e_1, \ldots e_n, d_1, \ldots, d_m)$.  By the previous theorem, it suffices to show they are equal (or both collapse to the empty map) when composing with an element of the form in the theorem for an arbitrary subset $K \subseteq \{1, \ldots n, n+1, \ldots n+m\}$.  However, if $I = K \cap \{1, \ldots, n\}$ and $J = K \cap \{n+1, \ldots n+m\}$, then such an element can be written as 
	\[ (e_I,e_Ie_{I'})(d_J, d_Jd_{J'}) \]
since that equals
	\[ (e_Id_J, (e_Id_J)(e_{I'} \vee d_{J'})) \]
which is $e_K$.  Thus, writing $e$ for $(e_I,e_Ie_{I'})$ and $d$ for $(d_J, d_Jd_{J'})$, it suffices to show that $ed\rs{c}\<f,g\> = edc$ (or they both collapse to the empty map).  However, we know that 
	\[ e \rs{c}f = ec\pi_0 \mbox { and } d\rs{c}g = dc\pi_1 \]
(or one or the other collapses to the empty map).  Pairing the above equalities, we get
	\[ \<e\rs{c}f,d\rs{c}g\> = \<ec\pi_0, dc\pi_1 \> \]
which, by lemma \ref{propCart}, reduces to
	\[ (ed)\rs{c}\<f,g\> = edc \]
as required.  If either equality has both sides collapsing to the empty map, then both sides of the above collapse to the empty map, since we showed earlier that pairing is well-defined when applied to collapsed maps.  Thus, we have the required universal property, and $\cl(\X)$ is cartesian.
\end{proof}

\subsection{Left additive structure}

Next, we show that left additive structure is preserved.  We begin by defining the sum of two maps.  

\begin{lemma}
Suppose that $\X$ is a left additive restriction category with joins.  Given maps $\bigsqcup (f_i,f_i')$ and $\bigsqcup (g_j, g_j')$ from $X$ to $Y$ in $\cl(\X)$, the following:
	\[ \bigsqcup_{i,j} (f_i + g_j, (f'_i + g_j) \vee (f_i + g_j')) \]
is a well defined map from $X$ to $Y$ in $\cl(\X)$.
\end{lemma}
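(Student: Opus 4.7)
The plan is to mirror the well-definedness argument given just above for pairing. I need to verify three things in turn: (i) each summand $(f_i+g_j,(f_i'+g_j)\vee(f_i+g_j'))$ is a genuine classical piece, (ii) distinct indexed pieces are pairwise disjoint, and (iii) the formula is constant on equivalence classes of its two arguments, i.e.\ it respects collapse and breaking in each slot.

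For (i), monotonicity of $+$ (Proposition \ref{propLA}(iii)) together with $f_i' \leq f_i$ and $g_j' \leq g_j$ yields $f_i'+g_j \leq f_i+g_j$ and $f_i+g_j' \leq f_i+g_j$. Compatibility of $f_i'+g_j$ with $f_i+g_j'$ follows from Proposition \ref{propLA}(iv) applied to $f_i' \smile f_i$ and $g_j \smile g_j'$, so their join exists and lies below $f_i+g_j$.

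For (ii), given $(f,f')\perp(f_0,f_0')$ and $(g,g')\perp(g_0,g_0')$, I will compute $\rs{f+g}\,\rs{f_0+g_0} = (\rs{f}\rs{f_0})(\rs{g}\rs{g_0})$ and substitute the two disjointness hypotheses to write each factor as a binary join. Distributing then gives four terms, each of which is absorbed into one of the summands of $\rs{(f'+g)\vee(f+g')}\,\rs{f_0+g_0} \vee \rs{f+g}\,\rs{(f_0'+g_0)\vee(f_0+g_0')}$ by the simple inequalities $\rs{f'}\leq\rs{f}$, $\rs{g'}\leq\rs{g}$ and their subscripted analogues. This is formally parallel to the disjointness verification in the earlier pairing lemma, so I expect no surprises.

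For (iii), collapse is immediate: $(f,f)+(g,g') = (f+g,(f+g)\vee(f+g')) = (f+g,f+g) \equiv \emptyset$, and symmetrically in the other slot. Breaking is the main obstacle. Working in the first slot with a source idempotent $e=\rs{e}$, I need $((ef,ef')\sqcup(f,f'\vee ef)) + (g,g')$ to be equivalent to $(f,f')+(g,g') = (f+g,(f'+g)\vee(f+g'))$. The key algebraic identity I will exploit is
\[
  ef+g \;=\; ef+eg \;=\; e(f+g),
\]
where the first equality is Proposition \ref{propLA}(ii) and the second is left additivity; the analogous identities hold with $f,g$ replaced by $f',g'$. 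Combined with distributivity of $+$ over binary joins (used to rewrite $(f'\vee ef)+g$ as $(f'+g)\vee(ef+g)$) and the commutativity of joins with left composition by $\rs{e}$, these identities let me match, piece-by-piece, the result of summing-then-not-yet-broken against the breaking of the sum by $e$. The hard part is precisely this bookkeeping: after breaking the first argument, the natural expression involves $ef+g$, while breaking the combined sum by $e$ produces $e(f+g)=ef+eg$, and only after invoking the left-additivity identity above do all the joined summands in the second component line up correctly. The second-argument breaking case is symmetric.
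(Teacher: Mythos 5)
Your proposal is correct and follows exactly the route the paper takes: the paper's proof of this lemma simply states that the argument is nearly identical to the well-definedness proof for the pairing, and your three checks (each summand is a classical piece via Proposition \ref{propLA}, disjointness by the same distribute-and-absorb computation on restrictions, and collapse/breaking handled by $e(f+g)=ef+g$, left distributivity of composition over joins, and distributivity of $+$ over joins) are precisely that adaptation. The identification of breaking as the only step needing the left-additive identities matches how the analogous cases are treated elsewhere in the paper, so nothing is missing.
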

\begin{proof}
The proof is nearly identical to that for showing that the pairing definition gives a well-defined classical map.

\end{proof}

\begin{theorem}
If $\X$ has the structure of a left additive restriction category, then so does $\cl(\X)$, where addition of maps is defined as above, and the zero map is given by $(0,\emptyset)$. 
\end{theorem}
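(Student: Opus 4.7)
The plan is to verify the left-additive restriction axioms on $\cl(\X)$ componentwise, reducing each identity on classical pieces $(f,f'), (g,g')$ to its counterpart in $\X$ (using Proposition \ref{propLA}, the commutative-monoid laws in $\X$, and the joins-of-compatible-maps results established earlier, in particular $f+\emptyset=\emptyset$ and $h(a\vee b)=(ha)\vee(hb)$). There are three tasks: well-definedness of $+$ as an operation on equivalence classes of raw classical maps; the commutative-monoid structure together with $\rs{0}=1$ and $\rs{f+g}=\rs{f}\,\rs{g}$; and left additivity $f(g+h)=fg+fh$ along with $f0=\rs{f}\,0$.

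Well-definedness splits into closure under $\perp$, collapse, and breaking. The $\perp$-check is a direct transcription of the pairing-lemma computation: $\rs{(f'+g)\vee(f+g')}=\rs{f'}\,\rs{g}\vee\rs{f}\,\rs{g'}$ plays the role that $\rs{\<f',g\>\vee\<f,g'\>}$ played there, and the same join manipulations go through. Collapse is immediate, since $(f,f)+(g,g')$ has second component $(f+g)\vee(f+g') = f+g$, matching the first. Breaking is the one place with real content: after breaking $(g,g')$ by a source-idempotent $e$, both sides must rewrite to a common breaking of $(f,f')+(g,g')$, and this requires the $\X$-identity $ef+eg = f+eg$. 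This is obtained by rewriting each side via Proposition \ref{propLA}(i) as $\rs{g}\,ef + \rs{e}\,\rs{f}\,g$, using the absorption $\rs{e}f = ef$ valid for any restriction idempotent $e$.

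With well-definedness in hand, the remaining axioms are direct componentwise calculations. Commutativity, associativity, and the unit law $(0,\emptyset)+(f,f')=(f,f')$ follow from the corresponding identities in $\X$ together with $f+\emptyset=\emptyset$. The restriction laws unfold as $\rs{(f,f')+(g,g')} = (\rs{f}\,\rs{g},\,\rs{f'}\,\rs{g}\vee\rs{f}\,\rs{g'}) = \rs{(f,f')}\,\rs{(g,g')}$, and $\rs{(0,\emptyset)} = (1,\emptyset) = 1_{\cl(\X)}$. For left additivity, expanding both $(f,f')((g,g')+(h,h'))$ and $(f,f')(g,g')+(f,f')(h,h')$ gives identical first components $fg+fh$, and second components $(f'g+f'h)\vee(fg'+fh)\vee(fg+fh')$ versus $(f'g+fh)\vee(fg'+fh)\vee(fg+f'h)\vee(fg+fh')$. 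The LHS is pointwise $\leq$ the RHS (since $f'\leq f$ yields $f'g+f'h \leq f'g+fh$ and $\leq fg+f'h$), while both have the same restriction $\rs{f'}\,\rs{g}\,\rs{h}\vee\rs{f}\,\rs{g'}\,\rs{h}\vee\rs{f}\,\rs{g}\,\rs{h'}$ (the extra RHS terms contribute only $\rs{f'}\,\rs{g}\,\rs{h}$, already present on the LHS via $\rs{f'}\leq\rs{f}$), so they coincide as $\X$-morphisms. The identity $f0=\rs{f}\,0$ is similarly immediate once $f\emptyset=\emptyset$ is invoked, and the extension from classical pieces to raw classical maps is automatic by distributivity of $+$ over $\sqcup$.

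The main obstacle is the breaking case of well-definedness, the only step that uses genuinely nontrivial arithmetic in $\X$ rather than componentwise bookkeeping; the key lemma is the morphism identity $ef+eg = f+eg$, which reconciles the syntactically distinct first pieces produced by breaking the two sides of the equivalence and thereby certifies that $+$ descends to $\cl(\X)$.
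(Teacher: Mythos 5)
Your proposal is correct and follows essentially the same route as the paper: addition is defined piecewise, well-definedness is handled as the exact analogue of the pairing lemma (with $e(f+g)=ef+g=f+eg$ playing the role of $e\<f,g\>=\<f,eg\>$ in the breaking step, just as you identify), and the monoid, restriction, and left-additivity axioms are checked componentwise by reduction to identities in $\X$. The only small divergence is in the left-additivity computation, which the paper closes purely equationally by rewriting the cross-terms $f'g+fh$ and $fg+f'h$ as $\rs{f'}(fg+fh)=f'(g+h)$ so the two joins agree joinand-by-joinand, whereas you compare the joins via $\leq$ together with equality of restrictions; both arguments work, though your shorthand $\rs{f}\,\rs{g}\,\rs{h}$ should strictly be read as $\rs{f\rs{g}\rs{h}}$, since $\rs{f}$ and $\rs{g}$ are idempotents on different objects.
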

\begin{proof}
It is easily checked that the addition and zero give each homiest the structure of a commutative monoid.  For the restriction axioms,
\begin{eqnarray*}
&   & \rs{(f,f') + (g,g')} \\
& = & \rs{(f+g, (f'+g) \vee(f+ g'))} \\
& = & (\rs{f+g}, \rs{f'+g} \vee \rs{f + g'}) \\
& = & (\rs{f}\rs{g}, \rs{f'}\rs{g} \vee \rs{f}\rs{g'}) \\
& = & (\rs{f}, \rs{f'})(\rs{g}, \rs{g'}) \\
& = & \rs{(f,f')} \rs{(g,g')}
\end{eqnarray*}
and clearly $(0,\emptyset)$ is total.  For the left additivity, consider
\begin{eqnarray*}
&   & (f,f')(g,g') + (f,f')(h,h') \\
& = & (fg, f'g \vee fg') + (fh, f'h \vee fh') \\
& = & (fg + fh, ((f'g \vee fg') + fh) \vee (fg + (f'h \vee fh'))) \\
& = & (fg + fh, (f'g + fh) \vee (fg' + fh) \vee (fg + f'h) \vee (fg + fh')) \\
& = & (fg + fh, \rs{f'}(fg + fh) \vee \rs{f'}(fg + fh) \vee (fg' + fh) \vee (fg + fh')) \mbox{ since $f' \leq f$} \\
& = & (f(g+h), \rs{f'}f(g+h) \vee f(g'+h) \vee f(g+h')) \\
& = & (f(g+h), f'(g+h) \vee f(g'+h) \vee f(g+h')) \\
& = & (f,f')(g+h, (g'+h) \vee (g+h')) \\
& = & (f,f')((g,g') + (h,h')) 
\end{eqnarray*}
as required.  Thus $\cl(\X)$ is a left additive restriction category. 
\end{proof}

\begin{theorem}
If $\X$ has the structure of a cartesian left additive restriction category, then so does $\cl(\X)$.  
\end{theorem}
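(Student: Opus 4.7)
The plan is to appeal to Theorem \ref{thmAddCart}, which reduces the verification of cartesian left additive restriction structure to checking that each object carries a total commutative monoid structure coherent with the cartesian structure via the exchange axiom. Since we have already established that $\cl(\X)$ is both cartesian and left additive, the bulk of the work is done; what remains is to identify the total commutative monoid structure on each object of $\cl(\X)$ and verify the single exchange coherence.

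First, I would observe that the obvious embedding $\iota : \X \to \cl(\X)$ given by $f \mapsto (f, \emptyset)$ is a functor which preserves the restriction, the cartesian structure (products, projections, pairings, terminal object), and the left additive structure (addition and zeros). Indeed, from the definitions of composition, pairing, and sum in $\cl(\X)$, one sees immediately that $(f,\emptyset)(g,\emptyset) = (fg,\emptyset)$, $\<(f,\emptyset),(g,\emptyset)\> = (\<f,g\>,\emptyset)$, and $(f,\emptyset) + (g,\emptyset) = (f+g,\emptyset)$, and similarly that $\iota$ sends the projections, pairing, terminal maps, and zero of $\X$ to the corresponding data in $\cl(\X)$. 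In particular, $\iota$ preserves totality, so the total commutative monoid structure $+_X : X \times X \to X$ and $0_X : 1 \to X$ from $\X$ lifts to a total commutative monoid structure $\iota(+_X), \iota(0_X)$ on each object $X$ in $\cl(\X)$.

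Next, I would verify that this lifted monoid structure induces precisely the addition on $\cl(\X)(A,B)$ that was defined in the previous theorem. By Theorem \ref{thmAddCart} (in the ``if'' direction), the addition coming from a total commutative monoid on $B$ is given by $(f,f') + (g,g') := \<(f,f'),(g,g')\>\iota(+_B)$; unwinding the pairing and composition formulas in $\cl(\X)$ shows this equals $(f+g,(f'+g)\vee (f+g'))$, which matches our definition. Likewise, $\iota(!_A)\iota(0_B) = (0_{AB},\emptyset)$ is the chosen zero. Thus the canonical left additive structure induced from this monoid is exactly the one already on $\cl(\X)$.

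Finally, I would check the exchange axiom
\[ \iota(+_{X\times Y}) = \iota(\mbox{ex})\,(\iota(+_X)\times \iota(+_Y)) \]
in $\cl(\X)$; but since $\iota$ is a product-preserving functor, this is just the image under $\iota$ of the exchange axiom already holding in $\X$. By Theorem \ref{thmAddCart} (in the ``only if'' direction), this gives $\cl(\X)$ the structure of a cartesian left additive restriction category, and a direct check confirms the result coincides with the addition defined above. The only minor obstacle is the bookkeeping needed to confirm that the induced addition from Theorem \ref{thmAddCart} matches the explicit definition of $+$ on classical pieces; everything else is essentially formal, since the embedding $\iota$ does all the coherence work for us.
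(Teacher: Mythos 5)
Your proposal is correct and follows exactly the paper's route: the paper's entire proof is ``Immediate from Theorem \ref{thmAddCart},'' and you have simply spelled out the details of why that theorem applies (the embedding $f \mapsto (f,\emptyset)$ carries the total commutative monoids and the exchange coherence into $\cl(\X)$, and the induced addition agrees with the one already defined on classical pieces).
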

\begin{proof}
Immediate from Theorem \ref{thmAddCart}.
\end{proof}

\subsection{Differential structure}

Finally, we show that if $\X$ has differential restriction structure, so does $\cl(\X)$.  We first need to define the differential of a map.

\begin{lemma}
If $\X$ is a differential join restriction category, and $\bigsqcup (f_i, f_i')$ is a map from $X$ to $Y$ in $\cl(\X)$, then the following:
	\[ \bigsqcup (D[f_i], D[f_i']) \]
is a well-defined map in $\cl(\X)$ from $X \times X$ to $Y$.
\end{lemma}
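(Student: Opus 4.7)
The plan is to check three things: that each $(D[f_i], D[f_i'])$ is a legitimate classical piece, that the family of pieces remains pairwise disjoint after differentiating, and that the construction descends through the equivalence generated by breaking and collapse.

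For the first point, Proposition \ref{propDiff}(ii) gives $D[f_i'] \leq D[f_i]$ from $f_i' \leq f_i$, so $(D[f_i], D[f_i'])$ is a valid classical piece from $X \times X$ to $Y$. For the second point, I would use \dr{9} to rewrite every restriction of a derivative as $\rs{D[h]} = 1 \times \rs{h}$. Given disjointness $\rs{f_i}\rs{f_j} \leq \rs{f_i'}\rs{f_j} \vee \rs{f_i}\rs{f_j'}$ and the fact, established earlier, that $1 \times (-)$ preserves joins, I would compute
\begin{align*}
\rs{D[f_i]}\rs{D[f_j]} &= (1 \times \rs{f_i})(1 \times \rs{f_j}) = 1 \times \rs{f_i}\rs{f_j} \\
&\leq 1 \times (\rs{f_i'}\rs{f_j} \vee \rs{f_i}\rs{f_j'}) \\
&= (1 \times \rs{f_i'}\rs{f_j}) \vee (1 \times \rs{f_i}\rs{f_j'}) \\
&= \rs{D[f_i']}\rs{D[f_j]} \vee \rs{D[f_i]}\rs{D[f_j']},
\end{align*}
giving the required disjointness in $\cl(\X)$.

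The main work is the third point, respect for the equivalence relation. Collapse is immediate: if $f = f'$ then $D[f] = D[f']$, so $(D[f], D[f]) \equiv \emptyset$. For breaking, suppose a piece $(f,f')$ is split by a restriction idempotent $e = \rs{e}: X \to X$ into $(ef, ef') \sqcup (f, f' \vee ef)$. The key identities are $D[ef] = (1 \times e)D[f]$ and $D[ef'] = (1 \times e)D[f']$ from Proposition \ref{propDiff}(i), together with $D[f' \vee ef] = D[f'] \vee (1 \times e)D[f]$ from Proposition \ref{propjoindiffcompat}(ii). Substituting, the image of the broken sum becomes
$$\bigl((1 \times e)D[f],\, (1 \times e)D[f']\bigr) \sqcup \bigl(D[f],\, D[f'] \vee (1 \times e)D[f]\bigr),$$
which is exactly the breaking of $(D[f], D[f'])$ by the restriction idempotent $1 \times e = \rs{1 \times e}$ on $X \times X$. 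Hence the construction respects breaking.

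I do not expect a real obstacle here: \dr{8}, \dr{9}, Proposition \ref{propDiff}, and Proposition \ref{propjoindiffcompat} between them ensure that differentiation interacts correctly with restriction, prefixing by a restriction idempotent of the form $1 \times e$, and joins, so the classical-piece structure transports through essentially mechanically. The only place where one must be a little careful is the disjointness computation, because it rests on the product of the form $1 \times (-)$ being a join-preserving operation on restriction idempotents of $X$, which in turn depends on the compatibility of the cartesian and join structure assumed on $\X$.
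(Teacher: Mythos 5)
Your proposal is correct and follows essentially the same route as the paper's proof: classical pieces via monotonicity of $D$, disjointness via $\rs{D[f]} = 1 \times \rs{f}$ together with the compatibility of products and joins, and breaking handled by $D[ef] = (1 \times e)D[f]$ so that the image breaks along the restriction idempotent $1 \times e$. The only cosmetic differences are that you use the $\leq$ form of disjointness (which the paper explicitly sanctions) and cite Proposition \ref{propjoindiffcompat} explicitly where the paper uses it silently.
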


\begin{proof}
If $f' \leq f$, then $D[f'] \leq D[f]$, so it is a well-defined classical piece.  If $(f,f') \perp (g,g')$, then
\begin{eqnarray*}
&   & \rs{Df}\rs{Dg} \\
& = & (1 \times \rs{f})(1 \times \rs{g}) \\
& = & 1 \times \rs{f}\rs{g} \\
& = & 1 \times (\rs{f'}\rs{g} \vee \rs{f}\rs{g'}) \mbox{ since $(f,f') \perp (g,g')$} \\
& = & (1 \times \rs{f'}\rs{g}) \vee (1 \times \rs{f}\rs{g'}) \\
& = & (1 \times \rs{f'})(1 \times \rs{g}) \vee (1 \times \rs{f})(1 \times \rs{g'}) \\
& = & \rs{Df'} \rs{Dg} \vee \rs{Df}\rs{Dg'} 
\end{eqnarray*}
so $(Df, Df') \perp (Dg, Dg')$, so it is a well-defined raw classical map.

That this is well-defined under collapsing is obvious.  For breaking, suppose we have
	\[ (f,f') \equiv (f,f' \vee ef) \perp (ef, ef') \]
for some restriction idempotent $e = \rs{e}$.  Then consider
\begin{eqnarray*}
&   & D[(f,f' \vee ef) \perp (ef, ef')] \\
& = & (Df, Df' \vee D(ef)) \perp (D(ef), D(ef')) \\
& = & (Df, Df' \vee (1 \times e)Df) \perp ((1 \times e)Df, (1 \times e) Df')) \mbox{ by lemma \ref{propDiff}} \\
& \equiv & (Df, Df') \mbox{ by breaking along the restriction idempotent $(1 \times e)$.}
\end{eqnarray*}
Thus the map is well-defined under collapsing and breaking, so is a well-defined classical map.
\end{proof}

\begin{theorem}
If $\X$ is a differential join restriction category, then so is $\cl(\X)$, with the differential of $\bigsqcup (f_i, f_i')$ given above.
\end{theorem}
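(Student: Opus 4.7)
The plan is to verify the nine differential axioms for $\cl(\X)$ with the defined differential, using the already-established cartesian left additive structure. Since composition, pairing, addition and the differential all distribute componentwise over $\sqcup$ of classical pieces, it suffices to check each axiom on a single piece $(f,f')$ with $f' \leq f$. The axioms split into a routine group \dr{1}, \dr{3}, \dr{8}, \dr{9}, each of which reduces to its $\X$-counterpart applied to $f$ and $f'$ separately (with Proposition \ref{propjoindiffcompat} handling any $\vee$ that appears in a second component), and a cross-term group \dr{2}, \dr{4}, \dr{5}, \dr{6}, \dr{7}, where the definitions of classical pairing and composition introduce extra summands in the second component that need to be collapsed.

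The main obstacle is the chain rule \dr{5}. Unfolding the left side $D[(f,f')(g,g')] = D[(fg,\, f'g \vee fg')]$ gives, via Proposition \ref{propjoindiffcompat} and \dr{5} in $\X$, a pair whose second component is $\<Df', \pi_1 f'\>Dg \vee \<Df, \pi_1 f\>Dg'$, while the right side $\<D[(f,f')], \pi_1(f,f')\>D[(g,g')]$ unfolds by the definitions of pairing and composition of classical pieces into a pair whose second component is $\<Df', \pi_1 f\>Dg \vee \<Df, \pi_1 f'\>Dg \vee \<Df, \pi_1 f\>Dg'$. To reconcile these, the key identity is
\[ \<Df', \pi_1 f\> \;=\; \<Df, \pi_1 f'\> \;=\; \<Df', \pi_1 f'\> \qquad (f' \leq f). \]
Its proof combines \dr{9} in $\X$ (which gives $\rs{Df'} = 1 \times \rs{f'}$) with a short universal-property argument showing that $1 \times \rs{f'}$ also equals $\rs{\pi_1 f'}$, together with Proposition \ref{propCart}(ii), which lets this common restriction idempotent be absorbed into either component of $\<Df, \pi_1 f\>$. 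With the identity in hand, the two extra right-side summands collapse onto the term $\<Df', \pi_1 f'\>Dg$ already present on the left, and the two second components agree.

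The remaining cross-term axioms \dr{2}, \dr{4}, \dr{6}, and \dr{7} follow the same template: unfold each side on a single classical piece, apply the corresponding $\X$-axiom in the first component, and use the key identity above together with Proposition \ref{propDiff} (preservation of order and compatibility by $D$) to match the extra summands in the second component. Assembling these checks shows that $\cl(\X)$ satisfies all nine differential axioms; since joins are already present by construction, $\cl(\X)$ is in fact a differential join restriction category as claimed.
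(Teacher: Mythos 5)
Your proposal is correct and takes essentially the same route as the paper: reduce to single classical pieces, verify the routine axioms componentwise, and handle the cross-term axioms (the paper details \dr{2} and \dr{5}) by absorbing the primed terms through $D[\rs{f'}f] = (1\times\rs{f'})D[f]$, \dr{9}, and Proposition \ref{propCart}, which is exactly your key identity $\<D[f'],\pi_1 f\> = \<D[f],\pi_1 f'\> = (1\times\rs{f'})\<D[f],\pi_1 f\>$. The paper merely runs the \dr{5} computation from the right-hand side (using Proposition \ref{propDiff} to rewrite $(1\times\rs{f'})D[fg]$ as $D[f'g]$) rather than expanding the left side via join preservation, so the two arguments coincide in substance.
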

\begin{proof}
Most axioms involve a straightforward calculation and use of the lemmas we have developed.   We shall demonstrate the two most involved calculations: \dr{2} and \dr{5}.  For \dr{2}, consider
\begin{eqnarray*}
&   & \<(g,g'), (k,k')\>D(f,f') + \<(h,h'),(k,k')\>D(f,f') \\
& = & (\<g,k\>,\<g',k\> \vee \<g,k'\>)(Df,Df') + (\<h,k\>, \<h',k\> \vee \<h,k'\>)(Df,Df') \\
& = & (\<g,k\>Df,\<g',k\>Df \vee \<g,k'\>Df \vee \<g,k\>Df) + (\<h,k\>Df, \<h',k\>Df \vee \<h,k'\>Df \vee \<h,k\>Df') \\
& = & (\<g,k\>Df + \<h,k\>Df, [\<g',k\>Df + \<h,k\>Df] \vee [\<g,k'\>Df + \<h,k\>Df] \vee [\<g,k\>Df' + \<h,k\>Df] \\
&   & \vee [\<g,k\>Df + \<h',k\>Df] \vee [\<g,k\>Df + \<h,k'\>Df] \vee [\<g,k\>Df + \<h,k\>Df']) \\
\end{eqnarray*}
We can simplify a term like $\<g,k'\>Df$ as follows:
	\[ \<g,k'\>Df = \<g,\rs{k'}k\>Df = \rs{k'}\<g,k\>Df \]
And for a term like $\<g,k\>Df'$, we can simplify it as follows:
	\[ \<g,k\>Df' = \<g,k\>D(\rs{f'}f) = \<g,k\>(1 \times \rs{f'})Df = \<g,k\rs{f'}\>Df = \<g,\rs{kf'}k\>Df = \rs{kf'}\<g,k\>Df \]
Thus, continuing the calculation above, we get
\begin{eqnarray*}
& = & (\<g,k\>Df + \<h,k\>Df, [\<g',k\>Df + \<h,k\>Df] \vee \rs{k'}[\<g,k\>Df + \<h,k\>Df] \vee \rs{kf'}[\<g,k\>Df + \<h,k\>Df] \\
&   & \vee [\<g,k\>Df + \<h',k\>Df] \vee \rs{k'}[\<g,k\>Df + \<h,k'\>Df] \vee \rs{kf'}[\<g,k\>Df + \<h,k\>Df']) \\
& = & (\<g,k\>Df + \<h,k\>Df, [\<g',k\>Df + \<h,k\>Df] \vee [\<g,k\>Df + \<h',k\>Df] \\
&   & \vee \rs{k'}[\<g,k\>Df + \<h,k\>Df] \vee \rs{kf'}[\<g,k\>Df + \<h,k\>Df]) \\
& = & (\<g,k\>Df + \<h,k\>Df, [\<g',k\>Df + \<h,k\>Df] \vee [\<g,k\>Df + \<h',k\>Df] \\
&   & \vee [\<g,k'\>Df + \<h,k'\>Df] \vee [\<g,k\>Df' + \<h,k\>Df']) \mbox{ using the above calculations in reverse}\\
& = & (\<g+h,k\>Df, \<g'+h,k\>Df \vee \<g+h',k\>Df \vee \<g+h,k'\>Df \vee \<g+h,k\>Df') \mbox{ by \dr{2} for $\X$} \\
& = & (\<g+h,k\>, \<g'+h,k\> \vee \<g+h',k\> \vee \<g+h,k'\>)(Df, Df') \\
& = & \<(g+h, (g'+h) \vee (g+h')),(k,k')\>(Df, Df') \\
& = & \<(g,g') + (h,h'), (k,k')\>D(f,f') \\
\end{eqnarray*}
as required.  For \dr{5}, consider
\begin{eqnarray*}
&   & \<D(f,f'), (\pi_1, \emptyset)(f,f')\>D(g,g') \\
& = & \<(Df, Df'), (\pi_1 f, \pi_1 f')\>(Dg,Dg') \\
& = & (\<Df, \pi_1f\>, \<Df',\pi_1\> \vee \<Df, \pi_1 f'\>)(Dg, Dg') \\
& = & (\<Df, \pi_1f\>Dg, \<Df',\pi_1f\>Dg \vee \<Df, \pi_1f'\>Dg \vee \<Df, \pi_1f\>Dg')
\end{eqnarray*}

Now, we can simplify 
	\[ \<Df',\pi_1f\> = \<D(\rs{f'}f),\pi_1f\> = \<(1 \times \rs{f'})Df,\pi_1f\> = (1 \times \rs{f'})\<Df,\pi_1f\> \]
(where the second equality is by Lemma \ref{propDiff}), and
	\[ \<Df,\pi_1f'\> = \<Df, \pi_1\rs{f'}f\> = \<Df, (1 \times \rs{f'})\pi_1f\> = (1 \times \rs{f'})\<Df, \pi_1f \> \]
where the second equality is by lemma \ref{propCart}.  Thus, the above becomes
\begin{eqnarray*}
& = & (\<Df, \pi_1f\>Dg, (1 \times \rs{f'})\<Df, \pi_1f\>Dg \vee \<Df, \pi_1f\>Dg') \\
& = & (D(fg), (1 \times \rs{f'}D(fg) \vee D(fg')) \mbox{ by \dr{5} for $\X$} \\
& = & (D(fg), D(f'g) \vee D(fg')) \mbox{ by Lemma \ref{propDiff}} \\
& = & D(fg, f'g \vee fg') \\
& = & D((f,f')(g,g'))
\end{eqnarray*}
as required.  
\end{proof}

Now that we know that the classical completion of a differential restriction category is again a differential restriction category, it will be interesting to see what type of maps are in the classical completion of the standard model.  For example, consider two functions: $f(x) = 2x$ defined everywhere but $x=5$, and $g(x) = 2x$ defined everywhere.  Taking the relative complement of these maps gives a map defined \emph{only} at $x=5$, and has the value $2x = 10$ there.  But if differential structure is retained, in what sense is this map ``smooth''?

Of course, this map is really an equivalence class of maps.  In particular, imagine we have a restriction idempotent $e = \rs{e}$ (that is, an open subset), which includes $5$.  Then we have
	\[ (f,f') \equiv (ef,ef') \sqcup (f,f'\vee ef) = (ef,ef') \sqcup (f,f) \equiv (ef,ef') \]
So that this map is actually equivalent to any other map defined on an open subset which includes $5$.  This is precisely the definition of the \emph{germ} of a function at $5$.  Thus, the classical completion process adds germs of functions at points.

Of course, it also allows us to take joins of germs and regular maps, so that for example we could take the join of the above map, and something like $\frac{x-1}{x-5}$, giving a total map which has ``repaired'' the discontinuity of the second map at $5$.  The fact that this restriction category is a differential restriction category is perhaps now much more surprising.  Clearly, this will be an example that will need to be explored further.  \


Finally, given the additive, cartesian, and differential structure of $\cl(\X)$, the following is immediate:
\begin{proposition}
The unit $\X \to \cl(\X)$, which sends $f$ to $(f,\emptyset)$, is a differential restriction functor.
\end{proposition}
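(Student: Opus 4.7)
My plan is to verify in turn each of the conditions in the definition of a differential restriction functor for the unit $\eta: \X \to \cl(\X); f \mapsto (f,\emptyset)$. Since the image consists of singleton raw classical pieces of the form $(f,\emptyset)$, all the verifications reduce to checking that each of the operations on $\cl(\X)$, when applied to such singletons, lands back in the image in the way it should. Functoriality is immediate from the composition formula $(f,\emptyset)(g,\emptyset) = (fg, \emptyset\cdot g \vee f\cdot\emptyset) = (fg,\emptyset)$, and from $(1,\emptyset)$ being the identity in $\cl(\X)$. The restriction axiom is immediate too: $\rs{(f,\emptyset)} = (\rs{f},\rs{\emptyset}) = (\rs{f},\emptyset)$.

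The main content lies in observing that the ``second component'' of each structural operation in $\cl(\X)$ collapses to $\emptyset$ when applied to maps of the form $(f,\emptyset)$. For addition, the definition gives $(f,\emptyset) + (g,\emptyset) = (f+g, (\emptyset+g) \vee (f + \emptyset))$, and using the first part of the compatibility of joins with left additive structure ($f + \emptyset = \emptyset$), this is just $(f+g,\emptyset) = \eta(f+g)$. The zero map is $\eta(0) = (0,\emptyset)$ by definition. For the cartesian structure, the objects and the projections $(\pi_0,\emptyset), (\pi_1,\emptyset)$ are defined on the nose, and the pairing reduces similarly: $\langle (f,\emptyset),(g,\emptyset)\rangle = (\langle f,g\rangle, \langle \emptyset,g\rangle \vee \langle f,\emptyset\rangle) = (\langle f,g\rangle,\emptyset)$ using that $\langle \emptyset, g\rangle = \emptyset$ from the compatibility of joins and cartesian structure. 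Finally, the differential is $D[(f,\emptyset)] = (D[f], D[\emptyset]) = (D[f],\emptyset)$, since $D[\emptyset] = \emptyset$ by Proposition \ref{propjoindiffcompat}.

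Putting all of this together, $\eta$ sends identities, composites, restrictions, sums, zeros, projections, pairings, and derivatives in $\X$ to the corresponding structure in $\cl(\X)$ strictly. There is no real obstacle: the work was already done in establishing that $\cl(\X)$ inherits cartesian, left additive, and differential restriction structure, and in the preliminary observation that join restriction structure makes $\emptyset$ absorbing for all the relevant operations. The one pattern to highlight is that each of the three ``collapse'' identities $f + \emptyset = \emptyset$, $\langle f, \emptyset\rangle = \emptyset$, and $D[\emptyset] = \emptyset$ is exactly what one needs to see that the ``formal complement'' in each classical piece $(f,\emptyset)$ stays $\emptyset$ after applying the operation.
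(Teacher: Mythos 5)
Your proposal is correct and matches the paper's (very terse) argument: the paper simply declares the result immediate from the already-established cartesian, left additive, and differential structure of $\cl(\X)$, and your verification that each second component collapses to $\emptyset$ via $f+\emptyset=\emptyset$, $\<f,\emptyset\>=\emptyset$, $D[\emptyset]=\emptyset$, and $\emptyset g = f\emptyset = \emptyset$ is exactly the computation being left implicit.
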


And as a result, we have the following:
\begin{corollary}
Suppose $\X$ is a differential restriction category with joins, and $f' \leq f$.  Then:
\begin{enumerate}[(i)]
	\item if $f$ is additive in $\X$, then so are $(f,\emptyset)$ and $(f,f')$ in $\cl(\X)$;
	\item if $f$ is strongly additive in $\X$, then so is $(f,\emptyset)$ in $\cl(\X)$;
	\item if $f$ is linear in $\X$, then so are $(f,\emptyset)$ and $(f,f')$ in $\cl(\X)$.
\end{enumerate}
\end{corollary}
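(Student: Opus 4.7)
The plan is to exploit the Proposition just preceding the corollary, namely that the unit $\eta: \X \to \cl(\X); f \mapsto (f,\emptyset)$ is a differential restriction functor, together with Proposition \ref{diffFunctors}, which asserts that any differential restriction functor preserves additive, strongly additive, and linear maps. This immediately disposes of the claims about $(f,\emptyset)$ in all three parts (i), (ii), (iii): since $f$ has the relevant property in $\X$, $\eta(f) = (f,\emptyset)$ has the same property in $\cl(\X)$.

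The remaining work is to deduce the statement for the more general map $(f,f')$ appearing in parts (i) and (iii). The key step here is to observe that $(f,f') \leq (f,\emptyset)$ in $\cl(\X)$. This is a short calculation using the definitions of restriction and composition given in the construction of $\cl(\X)$:
\[ \rs{(f,f')}(f,\emptyset) \;=\; (\rs{f},\rs{f'})(f,\emptyset) \;=\; (\rs{f}f,\,\rs{f'}f \vee \rs{f}\emptyset) \;=\; (f, f'), \]
where the last equality uses $f' \leq f$ (so $\rs{f'}f = f'$) and the fact that composition with $\emptyset$ yields $\emptyset$.

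Given this inequality, parts (i) and (iii) for $(f,f')$ follow directly from the earlier closure results: part (iv) of the proposition on additive maps (``if $g \leq f$ and $f$ is additive, then $g$ is additive'') and part (v) of Proposition \ref{propAdd} (the analogous fact for linearity). Since $(f,\emptyset)$ is already known to be additive (resp.\ linear) in $\cl(\X)$, and $(f,f') \leq (f,\emptyset)$, the map $(f,f')$ inherits additivity (resp.\ linearity).

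There is no obstacle or subtlety beyond this, and indeed the asymmetry in the statement of the corollary — strongly additive is claimed only for $(f,\emptyset)$ and not for $(f,f')$ — is precisely because strong additivity is \emph{not} preserved by passing to smaller maps (as noted in the discussion following the definition of strongly additive: restriction idempotents need not be strongly additive, so $\leq$-descents of strongly additive maps need not be strongly additive). Thus the proof will be a brief application of (a) functoriality of $\eta$, (b) the one-line check that $(f,f') \leq (f,\emptyset)$, and (c) the downward closure of additivity and linearity under $\leq$.
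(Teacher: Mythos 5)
Your proposal is correct and follows essentially the same route as the paper: the claims about $(f,\emptyset)$ come from the unit being a differential restriction functor together with Proposition \ref{diffFunctors}, and the claims about $(f,f')$ come from observing $(f,f') \leq (f,\emptyset)$ (the paper phrases this as $(f,f')$ being a relative complement of $f$) and invoking the downward closure of additivity and linearity under the restriction order. Your explicit computation of $\rs{(f,f')}(f,\emptyset) = (f,f')$ and your remark explaining why strong additivity is asserted only for $(f,\emptyset)$ are just slightly more detailed versions of what the paper leaves implicit.
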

\begin{proof}
By Proposition \ref{diffFunctors}, $(f,\emptyset)$ retains being additive/strongly additive/linear, and since $(f,f')$ is a relative complement, $(f,f') \leq f$, so is additive/linear if $f$ is.
\end{proof}

	
\section{Conclusion}\label{sectionConclusion}

There are a number of different expansions of this work that are possible; here we mention the most immediate.   A construction given in \cite{manifolds} allows one to build a new restriction 
category of manifolds out of any join restriction category.  For example, applying this construction to continuous functions defined on open subsets of $\R^n$ gives a category of real manifolds.  
An obvious expansion of the present theory is to understand what happens when we apply this construction to a differential restriction category with joins.  Clearly, this will build categories 
of smooth maps between smooth manifolds.  In general, however, one should not expect this to again be a differential restriction category, as the derivative of a smooth manifold map $f: M \to N$ 
is not a map $M \times M \to N$, but instead a map $TM \to TN$, where $T$ is the tangent bundle functor.  Thus, we must show that one can describe the tangent bundle of any object in the manifold 
completion of a differential restriction category.  This leads one to consider using the tangent space as a basis for axiomatizing this sort of differential structure.  This is the subject of a 
future paper, and will allow for closer comparisons between the theory presented here and synthetic differential geometry.

\section*{Acknowlegements}

The authors are grateful to both the referee and the editor for their handling of an error concerning the fractional monad construction.  In the 
first version of the paper we had rather stupidly failed to observe that the fraction construction was not distributive unless one insisted on the equation 
$(a,a^2) = (1,a)$.  The referee had therefore suggested we should add the equation. This resulted in the expanded section on the fractional monad and, the  
non-standard -- but we felt interesting -- presentation of the rational functions where we were forced to use weak rigs.


\begin{thebibliography}{}


\bibitem[Blute et. al. 2008]{cartDiff}
    Blute,~R., Cockett, ~J and Seely,~R.  (2008) Cartesian differential categories.  \emph{Theory and Applications of Categories}, \textbf{22}, 622--672.  
\bibitem[Bucciarelli et. al. 2010]{resourceCalc}
    A. Bucciarelli, T. Ehrhard, and G. Manzonetto. (2010) Categorical models for simply typed resource calculi. To appear in \emph{26th Conference on the Mathematical Foundations of
Programming Semantics (MFPS2010)}.
\bibitem[Carlstr\"{o}m 2004]{wheel}
    Carlstr\"{o}m,~J (2004) {Wheels -- on division by zero}, Mathematical. Structures in Comp. Sci., Vol. 14, Issue 1, 143--184.
\bibitem[Chen 1977]{chen}
    Chen,~K.  (1977) Iterated path integrals.  \emph{Bulletin of the American Mathematical Society}, \textbf{85}~(5), 831--879.  
\bibitem[Cockett and Manes 2009]{boolean}
    Cockett,~J. and Manes,~E.  (2009) Boolean and classical restriction categories.  \emph{Mathematical Structures in Computer Science}, \textbf{19}~(2), 357--416.
\bibitem[Cockett and Lack 2002]{restrictionI}
    Cockett,~J. and Lack,~S.  (2002) Restriction categories I: categories of partial maps.  \emph{Theoretical computer science}, \textbf{270}~(2), 223--259.
\bibitem[Cockett and Lack 2007]{restrictionIII}
    Cockett,~J. and Lack,~S.  (2007) Restriction categories III: colimits, partial limits, and extensivity.  \emph{Mathematical Structures in Computer Science}, \textbf{17}, 775--817.  
\bibitem[Cockett and Guo 2006]{CockettGuoRest06}
    Cockett,~J. and Guo,~X. (2006) Stable meet semilattice fibrations and free restriction categories. \emph{Theory and applications of categories}, \textbf{16}, 307--341.
\bibitem[Cockett and Hofstra 2008]{turing}
    Cockett,~J and Hofstra,~P.  (2008) Introduction to turing categories.  \emph{Annals of pure and applied logic}, \textbf{156}~(2--3), 183--209.  
\bibitem[Cohn 2002]{cohnbasalg}
        Cohn,~P.  (2002) Basic Algebra: Groups, Rings, and Fields. \emph{Springer}, 352.
\bibitem[Dubuc 1979]{dubuc}
    Dubuc,~E.  (1979) Sur la mod\`ele de la geom\'etrie diff\'erentielle synth\'etique.  \emph{Cahiers de Topologie et Geom\'etrie Diff\'erential Cat\'egoriques}, \textbf{XX}~(3).  
\bibitem[Dummit and Foote 2004]{dumfooteabsalg}
        Dummit,~D. and Foote,~R.  (2004) Abstract Algebra.  \emph{John Wiley and Sons Inc}, 655-709.
\bibitem[Eisenbud 2004]{alggeomeis}
        Eisenbud,~E.  Commutative Algebra with a View Toward Algebraic Geometry.  \emph{Springer (Graduate Texts in Mathematics)}, 57--86,  404--410.
\bibitem[Erhard and Regnier 2003]{diffLambda}
    Ehrhard,~T., and Regnier,~L.  (2003) The differential lambda-calculus.  \emph{Theoretical Computer Science}, \textbf{309}~(1), 1--41.
\bibitem[Fr\"olicher 1982]{fro}
    Fr\"olicher,~A.  (1982) Smooth structures.  In \emph{Category Theory (Gummersbach, 1981)}, volume 962 of \emph{Lecture Notes in Mathematics}, 69--81.  Springer, Berlin.
\bibitem[Golan 1992]{golsemirings}
        Golan, ~J.  (1992) The Theory of Semirings with Applications in Mathematics and Theoretical Computer Sciecne. \emph{Longman Scientific and Technical}.
\bibitem[Grandis 1989]{manifolds}
    Grandis,~M.  (1989) Manifolds as Enriched Categories.  \emph{Categorical Topology (Prague 1988)}, 358--368.  
\bibitem[Hartshorne 1997]{hartshalgeom}
        Hartshorne, R~.  (1997) Algebraic Geometry.  \emph{Springer (Graduate Texts in Mathematics)}.
\bibitem[Hungerford 2000]{hungerfalg}
        Hungerford, ~T.  (2000) Algebra.  \emph{Springer (Graduate Texts in Mathematics)}.
\bibitem[Lawson 1998]{lawson}
	Lawson, ~M.  (1998) Inverse Semigroups: The Theory of Partial Symmetries.  \emph{World Scientific}.
\bibitem[Lindstrum 1967]{lindabsalg}
        Lindstrum, ~A.  (1967) Abstract Algebra. \emph{Holden-Day}.
\bibitem[Manes 2006]{booleanManes}
    Manes,~E.  (2006) Boolean restriction categories and taut monads.   \emph{Theoretical Computer Science}, \textbf{360}, 77--95.  
\bibitem[Moerdijk and Reyes 1991]{reyes} 
    Moerdijk,~I. and Reyes,~G.  (1991) \emph{Models for Smooth Infinitesimal Analysis}, Springer.  
\bibitem[Kock 2006]{kock}
    Kock,~A.  (2006) \emph{Synthetic Differential Geometry}, Cambridge University Press (2nd ed.).  Also available at http://home.imf.au.dk/kock/sdg99.pdf.
\bibitem[Robinson and Rosolini 1988]{pCategories}
    Robinson,~E. and Roslini,~G.  (1988) Categories of partial maps.  \emph{Information and Computation}, vol. 79, 94--130.  
\bibitem[Sikorski 1972]{sik}
    Sikorski,~R.  (1972) Differential Modules.  \emph{Colluquia Mathematica}, \textbf{24}, 45--79.  
\bibitem[Stacey 2008]{comparativeSmooth}
    Stacey,~A.  (2008) Comparative Smootheology.  Available at arxiv.org/0802.2225.
\bibitem[Stewart 2004]{galoisstewart}
	Stewart,~I.  (2004) Galois Theory.  \emph{Chapman and Hall}, 185--187.
\bibitem[Stewart 2003]{stecalc}
	Stewart,~J.  (2003) Calculus.  \emph{Thompson}, 1077--1085.
\bibitem[Zariski and Samuel 1975]{zarcomalg}
	Zariski,~O. and Pierre Samuel.  (1975) Commutative Algebra.  \emph{Springer (Graduate Texts in Mathematics)}, 42--49.

\end{thebibliography}
\end{document}